\documentclass[10pt,a4paper,twoside,reqno]{amsart}
\input prooftree.sty
\usepackage[utf8]{inputenc}
\usepackage[OT1]{fontenc}
\usepackage[english]{babel}
\usepackage{dsfont}
\usepackage{amsfonts}
\usepackage{amsmath}
\usepackage{wasysym}
\usepackage{amsthm}
\usepackage{amssymb}
\usepackage{float}
\usepackage{textcomp}
\usepackage{xcolor}
\usepackage{xspace}
\usepackage{graphicx} 
\usepackage{fancybox}
\usepackage{fancyhdr}
\usepackage{mathrsfs}
\usepackage{mathtools}
\usepackage{tikz}
\usetikzlibrary{arrows}
\usetikzlibrary{arrows.meta}
\usetikzlibrary{calc}
\usepackage{centernot}
\usepackage{listings}
\usepackage{faktor}
\usepackage{bm}
\usepackage{setspace}
\usepackage{hyperref}
\usepackage[capitalize]{cleveref}
\usepackage{newlfont}
\usepackage{geometry}
\usepackage{ccaption}
\usepackage{tipa}
\usepackage{units}
\usepackage[autostyle,italian=guillemets]{csquotes}
\usepackage{comment}

\usepackage{stmaryrd}
\usepackage{bbm}
\usepackage{guit}
\usepackage{tikz-cd}
\usepackage{enumerate}

\usepackage[shortlabels]{enumitem}
\everymath{\displaystyle}

\theoremstyle{definition}
\newtheorem{defi}{Definition}[subsection]
\newtheorem{nota}[defi]{Notation}
\theoremstyle{plain}
\newtheorem{theo}[defi]{Theorem}
\newtheorem{prop}[defi]{Proposition}
\newtheorem{cor}[defi]{Corollary}
\newtheorem{lemma}[defi]{Lemma}
\theoremstyle{remark}
\newtheorem{es}[defi]{Example}
\newtheorem{ese}[defi]{Examples}
\newtheorem{running}[defi]{Running example}
\newtheorem{rmk}[defi]{Remark}

\newtheoremstyle{note}% ⟨name⟩
{3pt}% ⟨Space above⟩1
{3pt}% ⟨Space below⟩1
{}% ⟨Body font⟩
{}% ⟨Indent amount⟩2
{\textup}% ⟨Theorem head font⟩
{.}% ⟨Punctuation after theorem head⟩
{.5em}% ⟨Space after theorem head⟩3
{}% ⟨Theorem head spec (can be left empty, meaning ‘normal’)⟩

\theoremstyle{note}
\newtheorem{drule}{Rule}

\newtheorem{trule}{Rule}

\newtheorem{frule}{Rule}

\newcommand{\B}{{\mathcal B}}
\newcommand{\C}{{\mathcal C}}
\newcommand{\D}{{\mathcal D}}
\newcommand{\E}{{\mathcal E}}
\newcommand{\F}{{\mathcal F}}
\newcommand{\G}{{\mathcal G}}
\renewcommand{\H}{{\mathcal H}}

\newcommand{\J}{{\mathcal J}}
\newcommand{\K}{{\mathcal K}}
\renewcommand{\L}{{\mathcal L}}
\newcommand{\M}{M}
\newcommand{\N}{N}

\newcommand{\R}{{\mathcal R}}
\renewcommand{\S}{{\mathcal S}}

\newcommand{\catfont}{\mathsf}
\newcommand\Cat{\catfont{Cat}}

\newcommand\FinCat{\catfont{Cat}_{\textup{fp}}}
\newcommand{\Gpd}{\catfont{Gpd}}

\newcommand{\blim}{{\bb b}$-$\mathsf{lim}}

\newcommand{\Lex}{\catfont{Lex}}
\newcommand{\Fc}{\catfont{Fc}}
\newcommand{\Pb}{\catfont{Pb}}
\newcommand{\Rex}{\catfont{Rex}}
\newcommand{\RLex}{\catfont{RLex}}
\newcommand{\Reg}{\catfont{Reg}}
\newcommand{\Radj}{\catfont{Radj}}
\newcommand{\Ladj}{\catfont{Ladj}}
\newcommand{\Mult}{\catfont{Mult}}
\newcommand{\RMult}{\catfont{RMult}}
\newcommand{\MonCat}{\catfont{MonCat}}

\newcommand{\Ex}{\catfont{Ex}}
\newcommand{\Mal}{\catfont{Mal}}
\newcommand{\RMal}{\catfont{RMal}}
\newcommand{\SmAb}{\catfont{SmAb}}
\newcommand{\Ptm}{\catfont{Ptm}}
\newcommand{\IsoFib}{\catfont{IsoFib}}
\newcommand{\GFib}{\catfont{GFib}}
\newcommand{\Cmp}{\catfont{Cmp}}
\newcommand{\Clan}{\catfont{Clan}}

\newcommand\IsoReg{\catfont{IsoReg}}

\newcommand\Set{\catfont{Set}}

\newcommand{\LL}{{\mathbb L}}

\newcommand{\TT}{{\mathbb T}}

\newcommand\Syn{\catfont{Syn}}
\newcommand\Str{\catfont{Str}}
\newcommand\Mod{\catfont{Mod}}

\newcommand\dom{\mathsf{dom}}
\newcommand\cod{\mathsf{cod}}
\newcommand\comp{\mathsf{comp}}
\newcommand\fst{\mathsf{fst}}
\newcommand\snd{\mathsf{snd}}
\newcommand\id{\mathsf{id}}
\newcommand{\defeq}{\equiv_{\mathrm{def}}}
\newcommand{\br}[1]{\llbracket{#1}\rrbracket}
\newcommand{\bb}[1]{\,\mathbbm{#1}}

\newcommand{\tx}{\mathsf}

\newcommand{\dw}{\downarrow\kern-3pt}
\newcommand{\op}{^\textnormal{op}}

\makeatother

\makeatletter
\newcommand{\changeoperator}[1]{%
	\csletcs{#1@saved}{#1@}%
	\csdef{#1@}{\changed@operator{#1}}%
}
\newcommand{\changed@operator}[1]{%
	\mathop{%
		\mathchoice{\textstyle\csuse{#1@saved}}
		{\csuse{#1@saved}}
		{\csuse{#1@saved}}
		{\csuse{#1@saved}}%
	}%
}
\makeatother

\makeatletter
\def\@tocline#1#2#3#4#5#6#7{\relax
	\ifnum #1>\c@tocdepth % then omit
	\else
	\par \addpenalty\@secpenalty\addvspace{#2}%
	\begingroup \hyphenpenalty\@M
	\@ifempty{#4}{%
		\@tempdima\csname r@tocindent\number#1\endcsname\relax
	}{%
		\@tempdima#4\relax
	}%
	\parindent\z@ \leftskip#3\relax \advance\leftskip\@tempdima\relax
	\rightskip\@pnumwidth plus4em \parfillskip-\@pnumwidth
	#5\leavevmode\hskip-\@tempdima
	\ifcase #1
	\or\or \hskip 1em \or \hskip 2em \else \hskip 3em \fi%
	#6\nobreak\relax
	\hfill\hbox to\@pnumwidth{\@tocpagenum{#7}}\par% <---- \dotfill -> \hfill
	\nobreak
	\endgroup
	\fi}
\makeatother

\changeoperator{sum}
\changeoperator{prod}
\changeoperator{coprod}

\newcommand{\ie}{{i.e.}\,}
\newcommand{\eg}{{e.g.}\,}
\newcommand{\cf}{cf.\xspace}
\newcommand{\etc}{etc.\xspace}
\newcommand{\co}{\colon}
\newcommand{\pr}{\mathsf{pr}}
\newcommand{\pru}{\mathsf{pr}_u}
\newcommand{\prd}{\mathsf{pr}_d}
\newcommand{\prl}{\mathsf{pr}_l}
\newcommand{\prr}{\mathsf{pr}_r}
\newcommand{\prp}{\mathsf{prop}}

\newcommand{\ffk}{{\sc ffk}}
\newcommand{\var}{\mathsf{var}}
\setlist[enumerate]{label=(\roman*),itemsep=0ex}
\crefname{equation}{}{}

\crefname{lemma}{Lemma}{Lemmas}
\crefname{theo}{Theorem}{Theorems}
\crefname{rmk}{Remark}{Remarks}
\crefname{defi}{Definition}{Definitions}
\crefname{conj}{Conjecture}{Conjectures}
\crefname{ex}{Example}{Examples}
\crefname{ese}{Example}{Examples}
\crefname{sec}{Section}{Sections}
\crefname{prop}{Proposition}{Propositions}
\crefname{hyp}{Assumption}{Assumptions}
\crefname{running}{Example}{Examples} 
\crefname{drule}{Rule}{Rules}
\crefname{trule}{Rule}{Rules}
\crefname{frule}{Rule}{Rules}

\title[isoregular theories]{isoregular theories, accessible 2-categories, \\ and free constructions}
\author{N. Gambino and G. Tendas}
\address{Department of Mathematics, The University of Manchester, Alan Turing Building, Oxford
Road,  Manchester M13 9PL, United Kingdom}
\email{nicola.gambino@manchester.ac.uk}
\email{giacomo.tendas@manchester.ac.uk}
\date{\today}

\begin{document}
	
\begin{abstract} We introduce isoregular theories, in which it is possible to express 
existential quantification up to unique isomorphism, as typically used  to characterise
category-theoretic universal constructions, such as limits. We then develop a functorial semantics
for isoregular theories and prove that their 2-categories of models are accessible with
flexible limits. We apply these results by showing that a number of 2-categories
of interest in general category theory, categorical algebra, and categorical logic 
are models of isoregular theories, thereby establishing that they are accessible 
2-categories with flexible limits and obtaining a number of new free constructions. 
\end{abstract}

\keywords{2-categories, accessibility, flexible limits, categorical logic, functorial semantics}

\subjclass{18N10, 18C35, 03G30, 18E08, 18E13} 
	
\maketitle
\begin{comment}
	{\small
		\noindent{\bf Keywords:} \\
		{\bf Mathematics Subject Classification:} \\
		{\bf Competing Interests:} the author declares none.
	}
\end{comment}

\setcounter{tocdepth}{1}
\tableofcontents

\section{Introduction}

\subsection{Context and motivation}
Fields, Hilbert spaces, and other set-based mathematical structures can be studied 
uniformly not only via category theory or mathematical logic, but also via a combination
thereof, known as categorical logic,  founded by Lawvere~\cite{Law63:articolo}. Categorical logic is based on three key ideas: first, logical theories can be identified with certain categories (via the construction of syntactic categories, for example); secondly, set-theoretic models of a theory can be regarded as structure-preserving functors into the category of sets;
and, finally, categories of models can often be characterised in a  purely categorical, syntax-free, way. 
For example, algebraic theories are identified with categories with finite products, their models are regarded as product-preserving functors into the category of sets, and their categories of models
are characterised as algebraic varieties~\cite{ARValgebraic}. This point of view offers a powerful interplay of ideas. For example, categorical embedding theorems (such as the embedding theorem for regular categories~\cite{Bar86:articolo}) are closely related to logical completeness theorems, reconstruction theorems 
(as in the theory of ultracategories~\cite{Mak87:articolo,Lur18:articolo}) allow us to recover a theory from its category of models, and adjoint functor theorems (as available for locally presentable categories~\cite{AR94:libro}) enable us to establish the existence of free models. In this context, accessible categories occupy a prominent role~\cite{MP89:libro} since
they are exactly the categories of set-based structures axiomatisable in (infinitary) first-order logic. For example, the categories of fields and of Hilbert spaces are accessible.

Motivation from several directions, including the research programme of categorification in algebra, the theory of stacks in algebraic geometry, and the semantics of programming language in theoretical computer science, leads naturally to the study of category-based, rather than set-based, structures, such as categories with additional structure or properties (\eg monoidal categories or categories with finite limits). 
Additional complications and subtleties arise immediately. First, these structures assemble themselves most naturally into 2-categories, in which one has structure-preserving functors as morphisms, and appropriate natural transformations as 2-cells. Secondly, the morphisms of primary interest are those that preserve the structure only up to coherent isomorphism, not strictly (as this occurs  rarely in practice). The theory of 2-categories developed over the last fifty years, most notably by the Australian school, provides an extensive 
analysis of categories with structure (see~\cite{BlackwellR:twodmt,KellyGM:mongcl,KellyGM:monccc} for example). Part of the challenge of the subject is to work as strictly as possible, so as to exploit $\Cat$-enriched
category theory and avoid the complex coherence considerations typical of the fully weak, bicategorical, setting, while retaining sufficient generality, which is often achieved via the proof of suitable strictification results~\cite{lack20102}. In this context, the class of 2-categorical limits know as \emph{flexible limits}~\cite{BKPS89:articolo} is of particular
importance, for example because they can be understood as the homotopically well-behaved ones~\cite{LackS:homtam}. In recent years, there has also been significant interest and progress in developing a counterpart of the theory of accessible categories in the 2-dimensional setting~\cite{Bou2021:articolo,DiLibertiI:biabp2c,LR12:articolo,BLV:articolo}.

Yet, some important questions remain open, for example regarding whether some 2-categories of interest in categorical logic and categorical algebra are accessible 2-categories with flexible limits or concerning the existence of free categories with various kinds of structure. The motivation for this paper was to solve some of these open questions. For this, we  develop some aspects of categorical logic in the 2-categorical setting, building on earlier work of the second-named author and Rosick\'y on an enriched version of categorical logic~\cite{RT25ERegular,RT23EUA}. 

The problems that we tackle can be readily understood in an example. Let us consider the 2-category~$\Lex$ of categories with finite limits, functors that preserve finite limits, and natural transformations. 
Here, by a category with finite limits we mean a category with the property that every finite diagram
has a limit. Accordingly, by a finite limit-preserving functor we mean a functor that sends a finite
limit diagram to a finite limit diagram. This is in contrast with categories with \emph{chosen} finite limits
and functors that preserve (up to canonical isomorphism) the chosen finite limits, which give rise to a 2-category, written
$\Lex_c$ here (where the subscript abbreviates `chosen'). The 2-category $\Lex_c$ can be understood
with 2-dimensional monad theory~\cite{BlackwellR:twodmt}. Indeed, there is a 2-monad $T \co
\Cat \to \Cat$ such that the associated 2-category~$T\text{-}\tx{Alg}$ of strict algebras, 
pseudomorphisms and algebra 2-cells is $\Lex_c$. The fact that limits exist up to unique isomorphism
is then characterised by saying that the 2-monad $T$ is a lax idempotent~\cite{KockA:monwsa}, or property-like~\cite{KellyG:prolt}. With this results in place, one can establish various 2-categorical completeness
and cocompleteness properties of~$\Lex_c$, which can be transferred to  $\Lex$ since
the 2-functor $U \co \Lex_c \to \Lex$ forgetting the choice of limits is a 2-equivalence.

Pushing further this line of research, and building on earlier work by Makkai~\cite{MakkaiM:gensfc-I,MakkaiM:gensfc-II}, Bourke has recently shown that~$\Lex$  and related 2-categories, such as the 2-category $\Reg$ of regular categories and the
2-category $\GFib$ of Grothendieck fibrations,  are accessible 2-categories with filtered colimits
and flexible limits,  finite flexible limits commuting with filtered colimits, and accesible retract equivalences~\cite{Bou2021:articolo}. Importantly, some of these
examples go beyond the theory covered by 2-dimensional monad theory, since the categories of interest are not necessarily 2-monadic (or pseudo-monadic).
Yet, the approach of~\cite{Bou2021:articolo} - which is somehow inspired by the theory of sketches - 
does not seem to be easily applicable to other 2-categories of interest, such as some of importance in categorical algebra,
such as the 2-categories of Mal'cev, protomodular, and semiabelian categories, 
 and categorical logic, such as the 2-categories of comprehension categories and of clans\footnote{Pseudo-monadicity of 2-categories 
 closely related to that of clans~\cite{UemuraT:genfst}, is subject of ongoing work of Bourke and Jel\'inek.}.
  In a different vein, taking a fully bicategorical approach, Di Liberti and Osmond have shown that $\Lex$  is locally finitely bipresentable in a sense defined in~\cite{DiLibertiI:biabp2c}. Apart from the complex coherence conditions involved
in the fully bicategorical point, this approach leads  to bicategorical accessibility and completeness.

\subsection{Main results} Here, we develop and apply an alternative approach. Our starting point is the observation
that  the definition
of categories with finite limits involves axioms asserting the existence of some structure that
 is unique, up to unique isomorphism, when it exists. This form of quantification is somehow between unique existence (which is part of so-called finite limit theories) and ordinary existence (which is part of so-called regular theories) and hence 
has no counterpart in the 1-categorical setting.

We therefore introduce what we call \emph{isoregular theories}, which are logical theories in which one can express this form of essentially unique existential quantification. We then introduce the notions of a model of an isoregular theory (now taking
values in $\Cat$ rather than $\Set)$ and prove 2-categorical counterparts of key results of categorical logic. 
First, we develop a functorial semantics for isoregular theories. This is done by isolating the fundamental properties of the syntactic 2-category  $\Syn(\TT)$ of an isoregular theory $\TT$, which we do introducing the notion of an isoregular 
2-category (\cref{thm:isoregular}). Then, we  show (\cref{theorem:models=functors}) that models of $\TT$ are the same thing as  2-functors $M \co \Syn(\TT) \to \Cat$ preserving the isoregular structure. With this in place, we prove that the 2-category
of models of an  isoregular theory is accessible with flexible limits and that the forgetful 2-functor to
$\Cat$ has a left biadjoint (\cref{theo.acc+flex}), therefore ensuring the existence of free models.

We then obtain several applications in a uniform manner. In particular,  we obtain new proofs that $\Lex$, $\Reg$, $\GFib$ are accessible with flexible limits, subsuming the core of Bourke's results in~\cite{Bou2021:articolo}.
Building on this, we show that the 2-categories
$\Mal$, $\Ptm$, $\SmAb$ of Malt'sev, protomodular and semiabelian categories
(\cref{thm:mal,thm:ptm,thm:smab}), as well as the 2-categories  $\Cmp$ and $\Clan$
of  comprehension categories and clans (\cref{thm:comp,thm:clan}), 
are accessible with flexible limits, which does not seem to be known.
Additionally, a variety of forgetful functors are shown to admit left biadjoints, therefore establishing
a number of new free constructions. In particular, we obtain a new proof of the existence of
Joyal's finite free bicompletions (\cref{thm:lex-rex-rlex}). 

These results are a mere sample of the range of possible applications: as discussed in~\cref{sec:applications},  the methods introduced
here apply to several other 2-categories of interest, including those of pretoposes, multicategories, left
adjoint functors, \etc

\subsection{Technical aspects}  We conclude this introduction with some comments
on the more technical aspects of the paper. 

First, when it comes to the formulation of the syntax of isoregular theories we combine ideas
of enriched categorical logic~\cite{RT25ERegular,RT23EUA} and of type theory~\cite{nordstrom-petersson-smith:ml}.
Isoregular theories have rules for forming terms, rules for forming propositions, and rules for logical entailment, corresponding to three forms of judgements:
\[
 s \co X \mathrlap{,} \qquad A \co \prp \mathrlap{,} \qquad A_1, \ldots, A_n \co A \mathrlap{.}
 \]
These express that $s$ is a well-formed term, that $A$ is a well-formed proposition, that that $A_1, \ldots, A_n$ entail $A$, respectively, as in logic-enriched
type theories~\cite{AczelP:gentti}. 
In this setting, we can write a deduction rule asserting that, in order to be able to form existential quantification of the form~$(\exists y \co Y) B(x,y)$ only when, for $x \co X$, there exists at most a unique, up to unique isomorphism, $y \co Y$ such that~$B(x,y)$ holds. Thus, the formation rules for propositions depend on the deduction rules for logical entailment. This is in contrast with first-order logic, where one first defines the
set of well-formed formulas and then, separately, defines the set of theorems of a theory by giving the deduction rules for logical entailment. It also differs from the approach in~\cite[D.1.3, Definition~1.3.4]{Joh02:libro}, as we avoid introducing 2-dimensional regular theories and carving out isoregular theories out of them. Building on~\cite{RT25ERegular,RT23EUA}, it will also be essential to have at our disposal  propositions of the form $ A^{[1]}$, where $A$ is a proposition, which enable us to express properties of morphisms, rather than just objects, in the intended models. 
We do not make use of dependent sorts, so as to be able to build directly on~\cite{RT25ERegular,RT23EUA} and remain close to the presentation of~\cite{Joh02:libro}, and leave this as a possible direction for future work.

When it comes to characterising the 2-categorical structure of the syntactic categories of isoregular theories, our notion of an isoregular 2-category (\cref{thm:isoregular})
appears rather natural. Just as a regular 1-category is a 1-category with finite limits in which every kernel pair has a coequaliser and regular epimorphisms are stable under pullback, an isoregular 2-category is a 2-category with finite 2-limits in which every \emph{fully faithful} kernel pair has a coequaliser and \emph{fully faithful} regular epimorphisms are stable under pullback. In particular, isoregular 2-categories are {\sc ff}-regular 2-categories in the sense of~\cite{BG14:articolo} (see \cref{thm:ff-regular} for details).

Just as in a regular category every morphism can be factored as a regular epimorphism followed by a monomorphism, in an isoregular 2-category
every morphism with a fully faithful kernel pair can be factored as a fully faithful regular epimorphism followed by a monomorphism, which gives us exactly what is needed in order to characterise existence up to unique isomorphism. Indeed, morphisms with a fully faithful kernel pair 
admit several equivalent characterisations, including that of being the morphisms that are faithful and whose fibers are subcontractible (\cref{thm:ff-kernel-char}), an aspect that brings out the connection with the notion of a 
 \emph{homotopy proposition} of Univalent Foundations and Homotopy Type Theory, as we discuss in \cref{thm:hott}. The factorisation of such morphisms then amounts to being able to existentially quantify over these fibers. Indeed, in the syntactic 2-category of an isoregular theory, the composite
 \[
 \begin{tikzcd} 
 \{  x \co X, y \co Y \ |  \ B(x,y) \} \ar[r, >->] & X \times Y \ar[r] & X 
 \end{tikzcd} 
 \]
 has a fully faithful kernel if and only if it is provable that it is faithful and, for $x \co X$, there exists at most a unique, up to unique isomorphism, $y \co Y$ such that $B(x,y)$ holds. Existential quantification then corresponds to the factorisation
  \[
 \begin{tikzcd}[column sep = large] 
&  \{ x \co X \ | \ (\exists y \co Y) B(x,y) \} \ar[dr, >->]   &   \\
 \{ x \co X, y \co Y \ | \ B(x,y) \} \ar[ur, ->>] \ar[r, >->] & X \times Y \ar[r] & X  \mathrlap{.}
  \end{tikzcd}
  \]

In order to prove that models of isoregular theories are accessible 2-categories with flexible
limits, we first provide a functorial semantics (\cref{theorem:models=functors}), describing models as what we call isoregular functors, \ie 2-functors preserving the structure of an isoregular 2-category:
\[
 \Mod(\TT)\ \cong  \IsoReg[\Syn(\TT),\Cat]  \mathrlap{,} 
\]
and then, separately, establish that 2-categories of isoregular 2-functors are  accessible with flexible
limits (\cref{theo.acc+flex}), a result that provides additional evidence for the usefulness of the notion of
an isoregular 2-category. Clearly, working in the 2-categorical (rather than bicategorical) setting offers
significant simplifications throughout the development of the theory and does not prevent the intended applications.

\subsection{Outline of the paper}

\cref{sec:isoregular-categories} introduces isoregular 2-categories and establishes some of their properties, including various equivalent characterisations of morphisms with fully faithful kernel. \cref{sec:isoregular-theories} introduces syntax and semantics of isoregular theories, including the soundness theorem.
\cref{sec:functorial-semantics} establishes the functorial semantics for isoregular theories, by introducing
syntactic 2-categories. Using the functorial semantics, \cref{sec:accessibility} establishes the accessibility of models of isoregular theories. We conclude the paper in \cref{sec:applications} with applications.

\subsubsection*{Acknowledgements.} Nicola Gambino is grateful to the
Department of Mathematics and the School of Natural Sciences of The University of Manchester
for granting sabbatical leave, during which this paper was written. Giacomo Tendas acknowledges with gratitude the support of the EPSRC postdoctoral fellowship via grant EP/X027139/1. We thank John Bourke, Richard Garner, V\'it Jel\'inek, Marino Gran, and Ji\v{r}\'{i} Rosick\'y for helpful conversations.

%
%
%\red Nicola: We need to be careful with the notation $\mathcal{J}_{\mathsf{ffk}}$, as later we use it for a map in the syntactic category. Here, the notation should match, expressing that
%the composite map 
%\[
%\{ x \co X, y \co Y \ | \ B(x,y) \} \to \{ x \co X, y \co Y \ | \ \top \} \to \{ x \co X \ | \ \top \}
%\] 
%is an \ffk-morphism. \black

\section{Isoregular 2-categories} 
\label{sec:isoregular-categories}

\subsection{Preliminaries} We assume that the readers are familiar with the theory of 
2-categories and confine ourselves to fix some notation and terminology. See~\cite{lack20102} for the basics, \cite{KellyGM:eleo2c}
for 2-categorical limits in general and~\cite{BKPS89:articolo} for flexible 2-limits.
Our development will focus on \emph{cartesian 2-categories}, \ie 2-categories with all
finite 2-categorical limits, including a terminal object, products, pullbacks, equalisers.

We write $\Cat$ for the 2-category of small categories, functors and natural transformations.
For a 2-category $\K$, we write $\K(A,B)$ for the hom-category of morphisms from $A$ to
$B$ and 2-cells between them. The composite of $f \co A \to B$ and $g \co B \to C$ will be
denoted $g \circ f$ or simply~$gf$. Similar conventions are adopted for composition of
morphisms and 2-cells. We write $\K_0$ for the underlying 1-category of $\K$.

Let $\K$ be a 2-category. The \emph{power} of an object $A \in K$ by a small category $\bb c$ is
an object $A^{\bb c} \in \K$ together with a functor, called \emph{evaluation}, $\mathrm{ev}_{\bb c} \co {\bb c} \to \K(A^{\bb c}, A)$
 which is universal, in the sense that for every $X  \in \K$, the induced functor 
\[
\K(X, A^{\bb c}) \to \K(X,A)^{\bb c}
\]
is an isomorphism of categories. When $\K$ has powers by a small category $\bb c$,
these determine a 2-functor $(-)^{\bb c} \co \K \to \K$. In the following, we shall be primarily interested in
powers by finitely presentable categories. For a natural number $n$, we write $[n]$ for the nerve
of the poset $0 < 1 \ldots < n$. In particular, $[1]$ is the category with two objects ($0$ and $1$)
and a single non-identity map (from~$0$ to~$1$). In this case, the evaluation functor determines
two maps $\pi_0 \co A^{[1]} \to A$, $\pi_1 \co A^{[1]} \to A$. When~$\K$ has binary products, we write
$\pi_A \co A^{[1]} \to A \times A$ for their pairing, \ie $\pi_A \defeq (\pi_0, \pi_1)$.

Since we will not generally assume the ambient 2-category $\K$ to have all powers and copowers, the 2-dimensional universal property of
a conical 2-limit in $\K$ will have to be checked explicitly, rather than inferred from its 1-dimensional
one in $\K_0$. However, we have the following  observation.

\begin{lemma} \label{thm:enhance-conical-with-powers}
Let $\K$ be a 2-category with powers by $[1]$. If a conical limit exists in the
underlying category $\K_0$ of $\K$ and it is preserved by the functor $(-)^{[1]} \co \K_0 \to
\K_0$, then this limit is a 2-limit in~$\K$.
\end{lemma}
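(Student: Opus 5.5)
Let $D \co \bb d \to \K_0$ be a diagram with limit cone $(L, \lambda_d \co L \to Dd)$ in $\K_0$, and assume that $(L^{[1]}, \lambda_d^{[1]})$ is a limit of $D^{[1]}$ in $\K_0$. We want to show that for every $X \in \K$ the comparison functor
\[
\K(X,L) \to \lim_{d} \K(X,Dd)
\]
is an isomorphism of categories, where the limit on the right is computed in $\Cat$. On objects this functor is a bijection, because that is exactly the $1$-dimensional universal property of $L$ in $\K_0$. So the work is on morphisms: we must show that every compatible family of $2$-cells $\alpha_d \co f_d \Rightarrow g_d \co X \to Dd$ comes from a unique $2$-cell $\alpha \co f \Rightarrow g \co X \to L$. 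Here compatible means $D(u)\alpha_d = \alpha_{d'}$ for all $u \co d \to d'$, and $f,g$ are the maps induced by the families $(f_d)$ and $(g_d)$.

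The key step is to convert $2$-cells into $1$-cells using the power by $[1]$. The universal property of the power gives, naturally in $Y$, an isomorphism $\K(X, Y^{[1]}) \cong \K(X,Y)^{[1]}$ on objects. Under it, a $2$-cell $\beta \co h \Rightarrow k \co X \to Y$ corresponds to a $1$-cell $\hat\beta \co X \to Y^{[1]}$ with $\pi_0 \hat\beta = h$ and $\pi_1 \hat\beta = k$. Naturality in $Y$ means that for $t \co Y \to Y'$ we have $\widehat{t\beta} = t^{[1]}\hat\beta$.

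The argument then runs as follows.
\begin{enumerate}
\item Translate the family $(\alpha_d)$ into maps $\hat\alpha_d \co X \to (Dd)^{[1]}$. By naturality and compatibility, $D(u)^{[1]} \hat\alpha_d = \hat\alpha_{d'}$, so these maps form a cone over $D^{[1]}$ in $\K_0$.
\item Since $L^{[1]}$ is the limit of $D^{[1]}$, there is a unique $a \co X \to L^{[1]}$ with $\lambda_d^{[1]} a = \hat\alpha_d$ for all $d$.
\item Let $\alpha$ be the $2$-cell corresponding to $a$. By naturality, $\lambda_d \alpha$ corresponds to $\lambda_d^{[1]} a = \hat\alpha_d$, so $\lambda_d\alpha = \alpha_d$.
\item Identify the domain and codomain of $\alpha$. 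We have $\lambda_d \pi_0 a = \pi_0 \lambda_d^{[1]} a = f_d$, using naturality of $\pi_0$ (which is itself an instance of the naturality above). Uniqueness in $\K_0$ then gives $\pi_0 a = f$, and likewise $\pi_1 a = g$.
\end{enumerate}

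Uniqueness of $\alpha$ follows from the uniqueness of $a$ in step (ii), because the correspondence $\beta \mapsto \hat\beta$ is bijective. Functoriality of the comparison functor is automatic, so it is a bijective-on-morphisms functor that is also bijective on objects, hence an isomorphism.

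The main point needing care is the naturality $\widehat{t\beta} = t^{[1]}\hat\beta$, which is how the $2$-functor $(-)^{[1]}$ acts on morphisms. It follows directly from the definition of $t^{[1]}$ as the map induced by $t \circ \mathrm{ev}$, together with the universality of evaluation. Everything else is routine bookkeeping with the two universal properties.
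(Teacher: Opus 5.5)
Your proof is correct: the identity $\widehat{t\beta}=t^{[1]}\hat\beta$ turns a compatible family of 2-cells into a cone over $D^{[1]}$. Preservation of the limit by $(-)^{[1]}$ then makes the comparison functor bijective on morphisms, and it is already bijective on objects. The paper states this lemma without proof as a standard fact; your argument is the expected one, and it is the same reduction of 2-dimensional statements to 1-dimensional ones via powers by $[1]$ that the paper uses elsewhere, for instance in the proof of \cref{theorem:models=functors}.
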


For an object $X \in \K$, we write $\K(X, -) \co \K \to \Cat$ for the induced representable 2-functor. These 2-functors allow us to extend to a general 2-category $\K$ many concepts that can be defined in $\Cat$. For example,
a  morphism $f\colon A\to B$ in $\K$ is called \emph{full} if, for every $X\in\K$, the functor 
\begin{equation}
\label{equ:representable-applied-to-f} 
\K(X,f)\colon \K(X,A)\to\K(X,B)
\end{equation}
is full. When the fullness condition is restricted to
2-cells in the codomain that are isomorphisms or equalities, we say that $f$ is \emph{full on isomorphisms} or
\emph{full on identities}, respectively. Explicitly, for $f$ to be full on identities means that, 
given $a \co X \to A$ and $a' \co X \to A$, if $fa = fa'$ there exists
a 2-cell $\alpha \co a \Rightarrow a'$ such that $f \alpha = 1_{fa}$. 
The notions of a faithful and a fully faithful morphism are defined analogously. 
Note that, if $f$ is faithful and full on identities, then the 2-cell $\alpha$ above is unique and
an isomorphism. 

If $\K$ has powers by $[1]$, $f$  is fully faithful if and only if the following square is a pullback:
\[
\begin{tikzcd}
A^{[1]} \ar[r, "f^{[1]}"] \ar[d, "\pi_A"'] & B^{[1]} \ar[d, "\pi_B"] \\
A \times A \ar[r, "f \times f"'] & B \times B \mathrlap{.}
\end{tikzcd}
\]
In a 2-category $\K$ a morphism is said to be a \emph{monomorphism} if it is representably so.
Hence, every monomorphism is faithful. A morphism is
a \emph{regular epimorphism} if it is a coequalizer (in the 2-categorical sense) of a parallel pair of maps.
If $\K$ has finite limits, a morphism is a monomorphism (a regular epimorphism) if and only if it is 
a monomorphism (a regular epimorphism, respectively) in the underlying category $\K_0$.

%\begin{defi} Let $f\colon A\to B$ be a morphism  in $\K$. We say that 
% $f$ is a {\em fully faithful regular epimorphism} if it is a regular epimorphism
% and fully faithful.
%\end{defi}

\subsection{Isoregular 2-categories}

Let $\K$ be a  2-category with finite 2-limits, to remain fixed for the rest of this section. Let $f \co A \to B$ be a morphism in $\K$.
Recall that the kernel pair of $f$ is the pullback
\[
\begin{tikzcd}
\Delta_f \ar[d, "\pi_1"'] \ar[r, "\pi_2"] & A \ar[d, "f"] \\
A \ar[r, "f"'] & B \mathrlap{.}
\end{tikzcd}
\]

\begin{prop} \label{thm:ff-kernel} Let $f \co A \to B$ be a morphism in $\K$. The following conditions are equivalent:
\begin{enumerate}
\item the morphism $\pi_1 \co \Delta_f \to A$ is fully faithful,
\item the morphism $\pi_2 \co \Delta_f \to A$ is fully faithful.
\end{enumerate}
\end{prop}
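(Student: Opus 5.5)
The plan is to work representably and reduce both conditions to one intrinsic property of $f$, which is visibly symmetric in the two legs of $\Delta_f$. I take $\Delta_f$ exactly as the paper defines it, namely as the 2-categorical pullback of $f$ along itself. I will not construct the swap isomorphism $\Delta_f\to\Delta_f$. Instead I translate each condition into elementwise terms and check that they coincide.

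\textbf{Step 1.} Fix $X\in\K$. Since $\Delta_f$ is a 2-limit, $\K(X,-)$ sends it to a pullback in $\Cat$, so $\K(X,\Delta_f)$ is isomorphic to the following category:
\begin{itemize}
\item its objects are pairs $(a,b)$ of morphisms $a,b\colon X\to A$ with $fa=fb$;
\item its morphisms $(a,b)\to(a',b')$ are pairs of 2-cells $\alpha\colon a\Rightarrow a'$ and $\beta\colon b\Rightarrow b'$ with $f\alpha=f\beta$.
\end{itemize}
Under this identification, $\K(X,\pi_1)$ and $\K(X,\pi_2)$ are the two projections.

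\textbf{Step 2.} Unwind condition (i). Full faithfulness of $\pi_1$ means the following. For all objects $(a,b),(a',b')$ and every 2-cell $\alpha\colon a\Rightarrow a'$, there is exactly one $\beta\colon b\Rightarrow b'$ with $f\beta=f\alpha$. Call this property $P(a,a',b,b')$. Condition (i) is thus the statement
\[
\forall X\;\forall a,a',b,b'\colon X\to A \text{ with } fa=fb,\ fa'=fb' :\quad P(a,a',b,b').
\]
The same unwinding for $\pi_2$ shows that condition (ii) is the statement $\forall X\,\forall a,a',b,b' :\, P(b,b',a,a')$, quantified over the same tuples.

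\textbf{Step 3.} Observe that the side conditions $fa=fb$ and $fa'=fb'$ are symmetric under the renaming $(a,a')\leftrightarrow(b,b')$. Therefore the two universally quantified statements range over the same set of tuples and are equivalent. In other words, the symmetry of the pullback is used only at the level of hom-categories, where it is just a relabelling.

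\textbf{Sanity check and main obstacle.} Specialising to $b=a$ and $b'=a'$ shows that either condition forces $f$ to be faithful. The existence clause then says that a 2-cell $fb\Rightarrow fb'$ lifts to $b\Rightarrow b'$ as soon as it lifts between some other pair over the same boundary. This gives a manifestly symmetric intrinsic reformulation, which I would record because it is presumably what the later characterisation in \cref{thm:ff-kernel-char} builds on.

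The only real point of care is Step 1. One must use the 2-dimensional universal property of the pullback, which holds since $\K$ has finite 2-limits, and not only the 1-dimensional one. Otherwise the morphisms of $\K(X,\Delta_f)$ would not be correctly described as compatible pairs of 2-cells.
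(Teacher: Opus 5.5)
Your proof is correct. The paper states this proposition without proof. The evident intended argument is the global swap. The universal property of the pullback gives a unique $\sigma \co \Delta_f \to \Delta_f$ with $\pi_1\sigma = \pi_2$ and $\pi_2\sigma = \pi_1$, and uniqueness forces $\sigma\sigma = 1$. Hence $\K(X,\pi_2) = \K(X,\pi_1)\circ \K(X,\sigma)$ with $\K(X,\sigma)$ an isomorphism of categories, so one projection is (representably) fully faithful if and only if the other is.

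Your Step~3 is this same symmetry carried out inside each hom-category: your relabelling $(a,b)\mapsto(b,a)$, $(\alpha,\beta)\mapsto(\beta,\alpha)$ is just $\K(X,\sigma)$. So the underlying idea coincides and only the bookkeeping differs. The global version is shorter and uses only the 1-dimensional universal property of $\Delta_f$, namely existence and uniqueness of $\sigma$ in $\K_0$. The ``point of care'' you flag in Step~1 is therefore an artefact of your route, not a requirement of the statement. What your unfolding buys in exchange is the explicit elementwise form of the condition: uniqueness of lifts (equivalently, faithfulness of $f$) plus the lifting clause (equivalent to fullness on identities). This anticipates part~(iv) of \cref{thm:ff-kernel-char}.
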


When the equivalent conditions of \cref{thm:ff-kernel} hold, we say that $f$ 
has a \emph{fully faithful kernel pair}. 
In the 2-category $\Cat$ of small categories, if a  functor $f \co A \to B$ has a fully faithful kernel pair, then its coequaliser is again fully faithful and therefore is a fully faithful regular epimorphism. The essential properties of this situation are axiomatised in the notion of an isoregular 2-category,
which we introduce next.

\begin{defi} \label{thm:isoregular}
	We say that a 2-category $\K$ is {\em isoregular} if
	\begin{enumerate}
		\item it has all finite 2-limits;
		\item every fully faithful kernel pair has a fully faithful coequaliser;
		\item fully faithful regular epimorphisms are stable under pullback.
	\end{enumerate}
\end{defi}

\begin{ese} The following  2-categories are isoregular. 
\begin{itemize}
\item The 2-category $\Cat$ of small categories. In $\Cat$, a functor is a fully faithful regular epimorphism if and only if it is surjective on objects and fully faithful, which is the case if and only if it is a retract equivalence. 
\item  2-categories of prestacks, \ie 2-categories of the form $[\C \op,\Cat]$ for any small 2-category $\C$;
\item 2-categories of stacks, \ie left exact localisations of 2-categories of prestacks, \cf~\cite{StreetR:twodst} and~\cite[Section~6.1]{BG14:articolo}.
\end{itemize}
\end{ese}

%Note that if $\K$ is isoregular with flexible limits then is complete (since it has equalisers, products, and powers); thus, if it is moreover accessible, it is locally presentable. 

\begin{rmk} \label{thm:ff-regular} 
Every isoregular 2-category is an {\sc ff}-regular 2-category in the sense of \cite[Section~7]{BG14:articolo}.
Indeed, a 2-category {\sc ff}-regular if it satisfies conditions
(i) and (iii) of \cref{thm:isoregular}, and, in place of (ii), it is required that every kernel pair of a fully faithful map admits a fully faithful coequaliser. Since the kernel pair of a fully faithful map is fully faithful, isoregular 2-categories are {\sc ff}-regular. As we shall see in \cref{thm:gfib-ffk}, the possibility of taking coequalisers of morphisms with fully faithful kernel pairs,
rather than just of fully faithful morphisms, will be important for some applications.
\end{rmk}

Morphisms with a fully faithful kernel pair, or \ffk-morphisms for short, 
 are fundamental for the definition of  an isoregular 2-category and therefore it is useful to have some equivalent characterisations of them. We provide these in~\cref{fact-morph} below, but for this we need a definition and a preliminary lemma.
 In analogy with the definition of a subterminal object in a 1-category, we define a object $C$ of a 2-category $\K$
 to be \emph{subcontractible} if for every $X \in \K$ and $f, g \co X \to C$, there exists a unique (and hence necessarily invertible) $\alpha \co f \Rightarrow g$. Equivalently, for every $X \in \K$, the category $\K(X,C)$ is either empty or contractible.
 In $\Cat$, a subcontractible object is a category that is either empty or contractible. For example, consider a category $D$
 and define $C$ to be the full subcategory of $D$ spanned by the terminal objects of $D$. Then $C$ is subcontractible.
   
 \begin{lemma} \label{thm:ffk-subcontr}
 Assume that $\K$ has a terminal object.   An object $C \in \K$ is subcontractible if and only if the unique map $f \co C \to 1$ is an \ffk-morphism.
 \end{lemma}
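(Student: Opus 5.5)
The plan is to identify the kernel pair of $f \co C \to 1$ explicitly and then unwind what fully faithfulness of its projection means representably. Since $1$ is terminal, the pullback of $f$ along itself is the product: $\Delta_f = C \times C$, with $\pi_1, \pi_2$ the two product projections. By \cref{thm:ff-kernel} it suffices to show that $C$ is subcontractible if and only if $\pi_1 \co C \times C \to C$ is fully faithful. Everything will be checked representably. For each $X \in \K$, the functor $\K(X,\pi_1)$ is, up to the isomorphism $\K(X, C\times C) \cong \K(X,C)\times\K(X,C)$, the first projection
\[
p_X \co \K(X,C) \times \K(X,C) \to \K(X,C).
\]
So the claim reduces to a statement about categories: a category $D = \K(X,C)$ satisfies ``$p \co D \times D \to D$ is fully faithful'' if and only if $D$ is empty or contractible (equivalently, between any two objects there is exactly one morphism).

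For the direction ($\Leftarrow$), I assume that every hom-set of $D$ is a singleton. Then the hom-sets of $D\times D$ are products of singletons, and $p$ maps each of them bijectively onto a singleton. So $p$ is fully faithful, and the empty case is trivial.

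For the direction ($\Rightarrow$), fix $a, b \in D$. Consider the objects $(a,a)$ and $(a,b)$ of $D \times D$. Full faithfulness gives a bijection
\[
D(a,a) \times D(a,b) \cong D(a,a),
\]
namely the first projection. Because $D(a,a)$ contains $1_a$, this projection is surjective for any set $D(a,b)$, so injectivity is what matters. Injectivity forces $D(a,b)$ to have at most one element. To see that it is nonempty, I use fullness: $1_a$ must have a preimage, which is a pair $(1_a, \beta)$ with $\beta \co a \to b$. Hence $D(a,b)$ is a singleton, as required.

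This argument is elementary, and the only point requiring care is that it works for each $X$ separately. The paper's definition of subcontractible asks, for every $X$ and every pair $f,g \co X \to C$, for a unique 2-cell $f \Rightarrow g$. That is exactly the hom-set condition on $\K(X,C)$ just derived, so no extra coherence is needed. The step I expect to need the most attention is the identification $\Delta_f \cong C \times C$ together with the claim that $\K(X,\pi_1)$ corresponds to the projection. This holds because finite 2-limits are representable, so $\K(X,-)$ sends the 2-pullback over $1$ to a pullback of categories over the terminal category $\K(X,1) = 1$, that is, to a product.
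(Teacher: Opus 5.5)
Your proof is correct and follows the paper's argument. Since $1$ is terminal, the kernel pair of $C \to 1$ is $C \times C$, so being an \ffk-morphism means that each projection $\K(X,C)\times\K(X,C)\to\K(X,C)$ is fully faithful, which is exactly subcontractibility; you spell out the final category-level equivalence that the paper asserts without detail. One small slip: the first projection $D(a,a)\times D(a,b)\to D(a,a)$ is \emph{not} surjective for an arbitrary set $D(a,b)$, since it fails when $D(a,b)=\emptyset$. Your next step, using fullness to produce $\beta$, repairs this, so the argument stands.
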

 
 \begin{proof} By definition, $f \co C \to 1$ is an \ffk-morphism if and only if, for every $X \in \K$, the projection functor $\pi_1 \co
 \K(X,C) \times \K(X,C) \to \K(X,C)$ is fully faithful. But this means exactly that $C$ is subcontractible.
 \end{proof}

\begin{prop} \label{thm:ff-kernel-char} \label{fact-morph} Let $f \co A \to B$ be a morphism in $\K$. The following conditions are equivalent:
\begin{enumerate}
\item the morphism $f$ is an \ffk-morphism, \ie it has a fully faithful kernel pair,
\item  for every $X\in\K$, the functor
	\[
	\K(X,f) \co \K(X,A) \to \K(X, B)
	\] 
	has a fully faithful kernel pair,
\item the diagram
\[
\begin{tikzcd}
A \ar[r, "1_A"] \ar[d, "1_A"'] & A \ar[d, "f"] \\
A \ar[r, "f"'] & B
\end{tikzcd}
\]
is a semistrict pullback, \ie for every $a' \co X \to A$ and $a'' \co X \to A$ such that
$f a' = fa''$, there exists $a \co X \to A$, unique up to unique isomorphism, and isomorphisms 
$\alpha' \co a \Rightarrow a'$, $\alpha'' \co a \Rightarrow a''$ such that $f \alpha' = 1_{fa}$
and $f \alpha'' = 1_{fa}$.
\item the morphism $f$ is faithful and full on identities,
\item the unique morphism $r_f \co A \to \Delta_f$ such that $\pi_1 \circ r = 1_A$ and $\pi_2 \circ r = 1_A$ is an equivalence.
\item the morphism $f \co A \to B$ is a subcontractible object of the slice 2-category $\K_{/B}$.
\end{enumerate}
\end{prop}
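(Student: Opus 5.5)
The strategy is to reduce everything to the case $\K = \Cat$ by representability, and then handle the $\Cat$-level statements directly. Two facts make this work. First, the kernel pair $\Delta_f$ is a 2-categorical pullback, so $\K(X,\Delta_f) \cong \K(X,A)\times_{\K(X,B)}\K(X,A)$ naturally in $X$. Second, full faithfulness, faithfulness and fullness on identities are all defined representably. Together these give (i)$\Leftrightarrow$(ii) immediately: $\K(X,\pi_1)$ is, up to this isomorphism, the first projection of the kernel pair of $\K(X,f)$. Conditions (iii) and (v) are also representable. The semistrict pullback condition in (iii) is already phrased through generalized elements. For (v), a morphism $r_f$ is an equivalence iff every $\K(X,r_f)$ is an equivalence of categories, naturally in $X$, by the 2-categorical Yoneda lemma. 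So it suffices to prove the equivalences of (ii), (iii), (iv), (v) for a single functor $F = \K(X,f) \co \mathcal A \to \mathcal B$, uniformly in $X$.

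Working in $\Cat$, write $\Delta_F$ for the category whose objects are pairs $(a',a'')$ with $Fa' = Fa''$ and whose morphisms are pairs $(u',u'')$ with $Fu' = Fu''$.

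(ii)$\Rightarrow$(iv). Suppose $\pi_1 \co \Delta_F \to \mathcal A$ is fully faithful.
\begin{itemize}
\item Faithfulness: given $u,v \co a \to b$ with $Fu = Fv$, the pairs $(u,u)$ and $(u,v)$ are two morphisms $(a,a)\to(b,b)$ with the same first component, so $u = v$.
\item Full on identities: given $Fa = Fa'$, the objects $(a,a)$ and $(a,a')$ of $\Delta_F$ both lie over $a$. Fullness of $\pi_1$ gives a morphism $(1_a,\alpha)\co (a,a)\to(a,a')$, so $F\alpha = F1_a = 1_{Fa}$.
\end{itemize}

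(iv)$\Rightarrow$(ii). Given $(a',a'')$, $(b',b'')$ and $u \co a' \to b'$, fullness on identities gives unique isomorphisms $\phi \co a' \cong a''$ and $\psi \co b' \cong b''$ over identities. Set $u'' = \psi u \phi^{-1}$; then $Fu'' = Fu$. Uniqueness of the second component follows from faithfulness of $F$.

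(iv)$\Leftrightarrow$(iii). This is a reformulation. For (iv)$\Rightarrow$(iii), take $a = a'$ together with the unique isomorphism $a' \cong a''$ over the identity. For (iii)$\Rightarrow$(iv), fullness on identities is read off directly. Faithfulness needs more care: I expect to apply (iii) with the generalized elements given by 2-cells, that is, with $X$ replaced by the domain of a power, or equivalently to use the uniqueness clause of (iii) applied to 2-cells.

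(iv)$\Leftrightarrow$(v). The map $r_F \co a \mapsto (a,a)$ satisfies $\pi_1 r_F = 1$. Hence $r_F$ is an equivalence iff $\pi_1$ is an equivalence (with inverse $r_F$). Under (ii), $\pi_1$ is fully faithful and also surjective on objects, since it has the section $r_F$. Conversely, if $\pi_1$ is an equivalence it is fully faithful. So (v)$\Leftrightarrow$(i) at the representable level. The representable equivalences are natural in $X$, so they assemble into an equivalence in $\K$.

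(i)$\Leftrightarrow$(vi). The product of $f$ with itself in $\K_{/B}$ is exactly $\Delta_f \to B$. The terminal object of $\K_{/B}$ is $1_B$, and the unique map $f \to 1_B$ is $f$ itself. By \cref{thm:ffk-subcontr} applied in $\K_{/B}$, $f$ is subcontractible there iff $\Delta_f \to f$ is fully faithful in $\K_{/B}$. The remaining step is to check that a morphism over $B$ is fully faithful in $\K_{/B}$ iff it is fully faithful in $\K$. The hom-categories of $\K_{/B}$ are fibres of $\K(X,A)\to\K(X,B)$ over fixed $x$, with 2-cells lying over the identity. One direction is clear. For the converse I would use that $\Delta_f$ is a pullback: a 2-cell in $\K$ between maps into $\Delta_f$ whose first component lies over $1$ has its second component over $1$ as well. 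Alternatively, I would show directly that subcontractibility in the slice is (iv).

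I expect this last step to be the main obstacle: the slice 2-category only sees 2-cells over identities, while full faithfulness in $\K$ involves arbitrary 2-cells. Fullness on identities is what bridges the two.
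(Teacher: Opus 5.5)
Your overall route matches the paper's: reduce representably to $\Cat$, and get (vi) from \cref{thm:ffk-subcontr} in $\K_{/B}$. Your arguments for (i)$\Leftrightarrow$(ii)$\Leftrightarrow$(iv)$\Leftrightarrow$(v) and (iv)$\Rightarrow$(iii) are fine. The gap is in the two steps you leave open, (iii)$\Rightarrow$(iv) and (vi)$\Rightarrow$(i).

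In both, what is missing is faithfulness of $f$ on 2-cells that are neither invertible nor over an identity. For these two conditions, your reduction to ``a single functor $F=\K(X,f)$'' is not valid. For fixed $X$, (iii) only speaks about objects of $\K(X,A)$ and isomorphisms over identities. Likewise, $\K_{/B}((X,x),(A,f))$ is just the strict fibre of $\K(X,f)$ over $x$. Neither forces faithfulness. Let $\mathcal A$ be the free parallel pair $u,v\co a\rightrightarrows b$ and let $F\co\mathcal A\to[1]$ identify $u$ and $v$. Then $F$ is injective on objects and $\mathcal A$ has no non-identity isomorphisms, so the object-level form of (iii) holds and every fibre is contractible, yet $F$ is not faithful.

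So the hypothesis must be used at some other object, and your suggested mechanism does not supply one. Turning 2-cells out of $X$ into 1-cells would need a copower $[1]\cdot X$, which $\K$ (assumed only to have finite 2-limits) need not have. Similarly, in (vi) the bridge is not fullness on identities. The slice already gives that, and it yields existence of the lift $\psi\sigma\phi^{-1}$ exactly as in your (iv)$\Rightarrow$(ii). What the slice does not give is uniqueness of that lift, which is again faithfulness.

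The missing idea is to apply the hypothesis to the generic pair $\pi_1,\pi_2\co\Delta_f\to A$. In (iii), treat it as a generalized element: you get $c$ and isomorphisms $\alpha'\co c\Rightarrow\pi_1$, $\alpha''\co c\Rightarrow\pi_2$ over identities, and you set $\theta=\alpha''\alpha'^{-1}$. In (vi), treat $\pi_1,\pi_2$ as two morphisms $(\Delta_f,f\pi_1)\to(A,f)$ of $\K_{/B}$, and let $\theta$ be the unique 2-cell between them. Either way $\theta\co\pi_1\Rightarrow\pi_2$ satisfies $f\theta=1_{f\pi_1}$.

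Now let $u,v\co a\Rightarrow b\co X\to A$ with $fu=fv$. Then $(u,v)$ is a 2-cell $(a,a)\Rightarrow(b,b)$ between maps $X\to\Delta_f$, and the interchange law gives $(\theta\circ(b,b))\cdot u=v\cdot(\theta\circ(a,a))$. The whiskerings $\theta\circ(a,a)\co a\Rightarrow a$ and $\theta\circ(b,b)\co b\Rightarrow b$ lie over identities, hence are identities. In (vi) this is subcontractibility of the fibres over $fa$ and $fb$. In (iii) it is the uniqueness clause: compare the solutions $(a,1_a,1_a)$ and $(a,1_a,\theta\circ(a,a))$ for the pair $(a,a)$. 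Hence $u=v$, and with faithfulness in hand the rest of your arguments go through.

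The paper compresses all of this into ``can be checked representably'' and an omitted calculation, so you are right that these steps need care. As written, though, the proposal does not close them.
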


\begin{proof} The equivalence between~(i) and~(ii) is immediate. Since
the characterisations in parts (iii)--(v), can all be checked representably, the claim follows once we prove their equivalence with~(i) in~$\Cat$. This is a straightforward calculation that we omit. The equivalence with (vi) follows by~\cref{thm:ffk-subcontr},
recalling that $1_B \co B \to B$ is a terminal object of $\K_{/B}$.
\end{proof}

\begin{rmk}[\ffk-morphisms in $\Cat$]  \label{thm:ffk-in-cat}
In the 2-category $\Cat$, a functor $f \co A\to B$ is an \ffk-morphism
if and only if it is faithful and full on identities, \ie for every $a,a'\in A$ such that $fa=fa'$ there exists a map $u \colon a\to a'$ in $A$ such that $f(u) =1_{fa}$. In these circumstances, such a map $u$ is unique and an isomorphism.
Equivalently, by part~(vi) of \cref{thm:ff-kernel-char}, $f$ is faithful and, for every $b \in B$, the (strict) fiber category
$f^{-1}(b)$, given by the subcategory of $A$ spanned by the objects $a \in A$ such that $f(a) = b$ and those morphisms $h\in A$ such that $f(h)=1_b$, 
is either empty or contractible. 
\end{rmk}

\begin{es}
Let $I$ and $B$ be categories and define $A$ as the full subcategory 
of $B^I \times B \times (B^{I})^{[1]} $ spanned by triples $(b, a, \alpha\co \Delta a\to b)$, where $b \in B^I$ and
$(a, \alpha)$ is a limit cone for the diagram $b$. 
Then, the projection functor $f \co A \to B^I$ is an \ffk-morphism, since it is faithful and its fiber over $b \in B^I$
is either empty (if $b$ does not have a limit in $B$) or contractible (if a limit exists, as the limit cones over $b$ are unique 
up to unique isomorphism).
\end{es}

\begin{cor} \leavevmode
\begin{enumerate}
\item Every monomorphism is an \ffk-morphism.
\item Every fully faithful morphism is an \ffk-morphism.
\end{enumerate}
\end{cor}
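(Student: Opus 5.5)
Both parts follow directly from characterisation~(iv) of \cref{thm:ff-kernel-char}: a morphism $f \co A \to B$ is an \ffk-morphism if and only if it is faithful and full on identities. So for each part I will check these two properties, working representably: fix $X \in \K$ and consider the functor $\K(X,f) \co \K(X,A) \to \K(X,B)$.

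For~(i), let $f$ be a monomorphism. As the paper notes, monomorphisms are representably monic and hence faithful. For fullness on identities, take $a, a' \co X \to A$ with $fa = fa'$. Since $f$ is a monomorphism, $a = a'$, and the identity 2-cell $1_a \co a \Rightarrow a'$ satisfies $f 1_a = 1_{fa}$. So $f$ is faithful and full on identities, and (iv) gives the claim.

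One point needs care: whether ``monomorphism'' means representably monic only on objects, i.e.\ that $\K(X,f)$ is injective on objects. If so, faithfulness must be derived rather than assumed. I would use the fact stated earlier that, since $\K$ has finite limits, being a monomorphism is equivalent to being a monomorphism in $\K_0$. Given 2-cells $\alpha, \beta \co a \Rightarrow a'$ with $f\alpha = f\beta$, each corresponds to a morphism $X \to A^{[1]}$. These morphisms become equal after composing with $f^{[1]}$. The power $(-)^{[1]}$ is a right 2-adjoint-type construction, so it preserves the kernel-pair characterisation of monos (equivalently, the pullback of $f$ along itself being trivial). Hence $f^{[1]}$ is monic and $\alpha = \beta$. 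This is the only mildly delicate step. Since the paper asserts ``every monomorphism is faithful'' directly, I would simply cite that.

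For~(ii), let $f$ be fully faithful. Then $f$ is faithful by definition. Full on identities is a special case of fullness: given $a, a'$ with $fa = fa'$, the identity 2-cell $1_{fa} \co fa \Rightarrow fa'$ lifts to some $\alpha \co a \Rightarrow a'$ with $f\alpha = 1_{fa}$. Again (iv) applies. Alternatively, one can argue directly from the definition: the kernel pair of a fully faithful map is fully faithful, as observed in \cref{thm:ff-regular}, because pullbacks of fully faithful maps are fully faithful, so $\pi_1$ is fully faithful.
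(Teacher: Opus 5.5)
Your proposal is correct and takes essentially the same approach as the paper. Both reduce, representably, to the characterisation of \ffk-morphisms as faithful morphisms that are full on identities (part~(iv) of \cref{thm:ff-kernel-char}, or \cref{thm:ffk-in-cat} in $\Cat$), and then observe that monomorphisms and fully faithful morphisms have these two properties; your extra argument that monomorphisms are faithful and your alternative argument for~(ii) via pullback-stability of fully faithful maps are sound but not needed.
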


\begin{proof} Both part~(i) and part~(ii) can be checked representably and they 
hold in $\Cat$ since \ffk-morphisms there are faithful and full on identities (\cf \cref{thm:ffk-in-cat}).
\end{proof} 

\ffk-morphisms are not closed under composition. For example, let us work in $\Cat$ and consider the composite functor 
\[
\begin{tikzcd} 
\{0\} + \{1 \} \ar[r, "F"] & J \ar[r] & \{ \ast \} \mathrlap{,} 
\end{tikzcd}
\]
where $J$ is the category with two objects and an isomorphism between them and the functor $F$ is the 
inclusion of the endpoints. The composite is not an \ffk-morphism, even if each of the composites.
This fact could be understood recalling that, by part~(vi) of \cref{thm:ff-kernel-char}, being an \ffk-morphism
involves a property of the fibers, which is not necessarily preserved by composition. 
However, the following closure properties with respect to fully faithful morphisms and monomorphisms hold.

\begin{lemma} Let $f \co A \to B$ be an \ffk-morphism.
\begin{enumerate}
\item For every monomorphism $b \co B \rightarrowtail B'$, the composite $b \circ f \co A \to B'$ is an \ffk-morphism.
\item For every fully faithful morphism $a \co A' \to A$, the composite $f \circ a \co A' \to B$ is an
\ffk-morphism.
\item For every fully faithful morphism $a \co A' \to A$ and monomorphism $b \co B \rightarrowtail B'$,
the composite $b \circ f \circ a \co A' \to B'$ is an \ffk-morphism.
\end{enumerate}
\end{lemma}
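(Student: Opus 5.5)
We will argue representably, just as in the proof of the preceding corollary. By \cref{thm:ff-kernel-char}, being an \ffk-morphism is equivalent to being faithful and full on identities, and both conditions are defined representably. Monomorphisms and fully faithful morphisms are also defined representably. It therefore suffices to prove the three claims in $\Cat$, applying them to the functors $\K(X,f)$, $\K(X,a)$ and $\K(X,b)$ for each $X \in \K$. So we fix functors $f \co A \to B$ that is faithful and full on identities, $a \co A' \to A$ fully faithful, and $b \co B \to B'$ injective on objects and morphisms.

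For part~(i), faithfulness of $b \circ f$ holds because faithful functors compose and $b$ is faithful. For fullness on identities, take $x, x' \in A$ with $bfx = bfx'$. Since $b$ is injective on objects, $fx = fx'$, so there is $u \co x \to x'$ with $fu = 1_{fx}$. Then $bfu = 1_{bfx}$, as required.

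For part~(ii), $f \circ a$ is faithful as a composite of faithful functors. Now take $y, y' \in A'$ with $fay = fay'$. Fullness on identities of $f$ gives $u \co ay \to ay'$ with $fu = 1$. Since $a$ is full, $u = a(v)$ for some $v \co y \to y'$. Then $(fa)(v) = fu = 1_{fay}$, which is what we need.

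Part~(iii) follows by applying part~(ii) to $f$ and $a$, and then part~(i) to the resulting \ffk-morphism $f \circ a$ and the monomorphism $b$.

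The only point requiring care is the reduction to $\Cat$. We must check that $\K(X,-)$ sends monomorphisms to injective-on-objects-and-morphisms functors. A monomorphism in a 2-category is defined as a representable monomorphism, that is, $\K(X,b)$ is a monomorphism in $\Cat$, and monomorphisms in $\Cat$ are exactly the functors injective on objects and arrows. Similarly, $\K(X,a)$ is fully faithful by definition. With these observations, the argument in $\Cat$ goes through without any other obstacle.
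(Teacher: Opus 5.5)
Your proposal is correct and follows the paper's proof. The paper also deduces parts~(i) and~(ii) from characterisation~(iv) of \cref{thm:ff-kernel-char} (faithful and full on identities), checked representably, and obtains~(iii) by combining~(i) and~(ii); you simply spell out the elementary verification in $\Cat$ that the paper leaves implicit.
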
 

\begin{proof} Parts~(i) and~(ii) follow from, say, part~(iv) of \cref{thm:ff-kernel-char}. Part~(iii) is an immediate
consequence of parts~(i) and~(ii).
\end{proof} 

\begin{cor} \label{thm:comp-easy}
The composite of a fully faithful morphism followed by a monomorphism is an \ffk-morphism.  \qed
\end{cor}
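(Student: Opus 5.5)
This is the special case of part~(iii) of the preceding lemma in which the \ffk-morphism in the middle is an identity. So the plan is to check that identities are \ffk-morphisms and then invoke that lemma.

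Concretely, let $a \co A' \to A$ be fully faithful and $b \co A \rightarrowtail B'$ a monomorphism. I would first observe that $1_A \co A \to A$ is an \ffk-morphism. This can be seen in either of two ways. It is fully faithful, so the preceding corollary applies. Alternatively, via part~(iv) of \cref{thm:ff-kernel-char}: $1_A$ is faithful, and it is full on identities because for $a' = a''$ one may take the identity 2-cell.

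Then, applying part~(iii) of the lemma with $f = 1_A$, the composite $b \circ 1_A \circ a = b \circ a$ is an \ffk-morphism. For a more self-contained argument, I would use part~(ii) of the lemma to see that $1_A \circ a = a$ is an \ffk-morphism (or simply note that $a$ is fully faithful). Part~(i) of the lemma then shows that $b \circ a$ is an \ffk-morphism.

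There is no real obstacle here; the only point to make explicit is that the identity is an \ffk-morphism.

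If one wants to bypass the lemma entirely, the argument can also be checked representably in $\Cat$ using \cref{thm:ffk-in-cat}. The composite $b\circ a$ is faithful, since $a$ is faithful and $b$ is injective on hom-sets. It is full on identities: if $b a x = b a x'$, then injectivity of $b$ gives $a x = a x'$. Fullness of $a$ then yields a map $u \co x \to x'$ with $a u = 1$, hence $b a u = 1$.
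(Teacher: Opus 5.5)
Your proposal is correct and matches the paper, which gives no separate proof: the corollary follows immediately from the preceding lemma, either by taking the middle \ffk-morphism to be an identity or by applying part~(i) to the fully faithful (hence \ffk) morphism $a$. Your direct representable check in $\Cat$ via \cref{thm:ffk-in-cat} is also sound.
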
 

Next, we give an example of
an \ffk-morphism that is neither fully faithful nor a monomorphism.

\begin{es} \label{thm:gfib-ffk}
Let $p \co E \to B$ be a Grothendieck fibration. Consider the sub-category $C$ of
$E \times B^{[1]} \times E^{[1]}$ with objects $(y, u, v)$ such that $\cod(u) = p(y)$,
and $v$ is a Cartesian lift of $u$, and morphisms satisfying an analogous condition. We then have a functor
\[
\begin{tikzcd} 
C \ar[r, >->] & E \times B^{[1]} \times E^{[1]} \ar[r, "\pi"] & E \times B^{[1]} 
\end{tikzcd}
\]
where $\pi$ is the projection on the first two coordinates. This functor is  not full
since for a morphism $(t, (v, w)) \co (y, u) \to (y', u')$ in $E \times B^{[1]}$, it is
not necessarily the case that $w = p(t)$. Instead, it is an \ffk-morphism, as it is
evidently faithful and its fiber over $(y,u) \in E \times B^{[1]}$  is contractible when $\cod(u) = y$
(in which case it consists of all the cartesian lifts of $u$) and empty when $\cod(u) \neq y$.
\end{es}

In a regular 1-category, every morphism can be factored as a regular epimorphism followed by a
monomorphism. As \cref{s.eq+mono} shows, in an isoregular 2-category, every \ffk-morphism can
be factored as a fully faithful regular epimorphism followed by a monomorphism. The factorisation
can be understood as turning a \ffk-morphism, whose fibers are subcontractible, into a monomorphism,
whose fibers are subterminal (\cf \cref{thm:hott} for additional comments on this point). The proof  is essentially the same as the one of the corresponding statement for regular 1-categories.

\begin{prop}\label{s.eq+mono}
Let $\K$ be an isoregular 2-category. Every \ffk-morphism  factors, in an essentially unique way, as a fully faithful regular epimorphism followed by a monomorphism. 
\end{prop}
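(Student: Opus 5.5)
Let $f \co A \to B$ be an \ffk-morphism with kernel pair $\pi_1, \pi_2 \co \Delta_f \to A$. We mimic the 1-categorical argument. By condition~(ii) of \cref{thm:isoregular}, the pair $(\pi_1,\pi_2)$, which is fully faithful by assumption, has a fully faithful coequaliser $e \co A \to E$. Since $f\pi_1 = f\pi_2$, the 1-dimensional universal property of the coequaliser gives a unique $m \co E \to B$ with $m e = f$. This provides the factorisation $f = me$, in which $e$ is a fully faithful regular epimorphism. It remains to prove that $m$ is a monomorphism and that the factorisation is essentially unique.

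For the monomorphism property of $m$, I will argue as follows. Let $\Delta_e$ and $\Delta_m$ be the kernel pairs of $e$ and $m$. First, I claim that $\Delta_e \cong \Delta_f$ as objects over $A \times A$. Indeed, $e\pi_1 = e\pi_2$ gives a comparison $\Delta_f \to \Delta_e$. Conversely, $me=f$ shows that $\Delta_e$ factors through $\Delta_f$. Both objects are subobjects of $A \times A$ (pullbacks of the diagonal-type monos), so these comparisons are mutually inverse.

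Next consider $e \times e \co A\times A \to E \times E$. Since fully faithful regular epimorphisms are stable under pullback (condition~(iii)), the map $e\times e$ is a composite of two pullbacks of $e$, namely $e\times 1$ and $1\times e$. Hence it is a composite of fully faithful regular epimorphisms. In particular it is an epimorphism in $\K_0$: a pullback-stable regular epi composed with another is epi.

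Now pull $\Delta_m \rightarrowtail E\times E$ back along $e\times e$. The result is $\Delta_f$, because $(a,a')$ factors through $\Delta_m$ iff $mea = mea'$. Likewise, pulling back the diagonal $E \to E\times E$ gives $\Delta_e$. Since $\Delta_e \cong \Delta_f$, the two subobjects of $E \times E$, namely the diagonal and $\Delta_m$, have the same pullback along $e \times e$. The restricted map $\Delta_f \to \Delta_m$ is itself a pullback of $e\times e$, and hence a regular epimorphism. Combining this with the fact that $\Delta_f \to E\times E$ factors through the diagonal, I deduce that $\Delta_m$ is contained in the diagonal. Therefore $\pi_1=\pi_2$ on $\Delta_m$, which makes $m$ a monomorphism in $\K_0$, and hence in $\K$ since $\K$ has finite limits. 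Pullback stability of the composite regular epi $e \times e$ is the one point needing care. Composites of pullback-stable regular epis are strong epis, and I expect strong epi to suffice, since we only need orthogonality against the mono $\Delta_m \wedge \mathrm{diag} \rightarrowtail \Delta_m$. This is the main technical step.

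For essential uniqueness, suppose $f = m'e'$ with $e'$ a fully faithful regular epimorphism and $m'$ a monomorphism. Since $m'$ is mono, the kernel pair of $e'$ equals $\Delta_f$. A regular epimorphism is the coequaliser of its kernel pair: indeed, if $e'$ coequalises some pair, that pair factors through $\Delta_{e'}$. Hence $e'$ is also a coequaliser of $(\pi_1,\pi_2)$. The universal property then gives an isomorphism $E \cong E'$ commuting with the $e$'s, and therefore also with the $m$'s, because $e$ is epi. Uniqueness is up to unique isomorphism. The 2-dimensional aspects are automatic here: both maps are determined 1-categorically, and coequalisers in $\K$ are 2-categorical, so the induced isomorphism is respected by 2-cells.
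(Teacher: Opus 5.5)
Your proposal is correct and is essentially the paper's argument. The paper likewise obtains the comparison $\ell\colon\Delta_f\to\Delta_m$ as the diagonal of a square whose sides are pullbacks of the coequaliser (your pullback of $e\times e$ along $\Delta_m\rightarrowtail E\times E$), deduces that $\ell$ is an epimorphism, and cancels it from $r\ell=s\ell$. Two small points: that map is only a composite of two fully faithful regular epimorphisms, hence an epimorphism (which is all you need), not a priori a regular one; and the detour through $\Delta_e\cong\Delta_f$ is unnecessary. For uniqueness, the paper simply cites uniqueness of (strong epi, mono) factorisations, which your kernel-pair/coequaliser argument makes explicit.
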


\begin{proof} Let $f\colon A\to B$ be an \ffk-morphism and consider the following diagram
	\begin{center}
		
		\begin{tikzpicture}[baseline=(current  bounding  box.south), scale=2, hookarrow/.style={{Hooks[right]}->}]
			
			\node (k) at (0.2,0) {$\Delta_f$};
			\node (a) at (1,0) {$A$};
			\node (b) at (2,0) {$B$};
			
			\node (l) at (0.6,-0.6) {$\Delta_m$};
			\node (c) at (1.5,-0.6) {$X$};
			
			\path[font=\scriptsize]

			([yshift=1.5pt]k.east) edge [right hook->] node [above] {$h$} ([yshift=1.5pt]a.west)
			([yshift=-1.5pt]k.east) edge [right hook->] node [below] {$k$} ([yshift=-1.5pt]a.west)
			
			(a) edge [->] node [above] {$f$} (b)
			
			([yshift=1.5pt]l.east) edge [->] node [above] {$r$} ([yshift=1.5pt]c.west)
			([yshift=-1.5pt]l.east) edge [->] node [below] {$s$} ([yshift=-1.5pt]c.west)
			
			(a) edge [right hook->>] node [left] {$q$} (c)
			(c) edge [->] node [right] {$m$} (b)
			(k) edge [->] node [left] {$\ell$} (l);
		\end{tikzpicture}
	\end{center}
	where $(h,k)$ is the kernel pair of $f$ (and they are both fully faithful since $f$ is an \ffk-morphism, $q$~is the coequaliser of $(h,k)$ and so is a fully faithful regular epimorphism by hypothesis that $\K$ is an isoregular 2-category, $m$ is induced by the universal property of the coequaliser, $(r,s)$ is the kernel pair of $m$, and $\ell$ is induced by the universal property of the kernel pair. 
	
	To show that the desired factorization exists it suffices to prove that $m$ is a monomorphism, which is equivalent to requiring that $r=s$. For this, note that $\Delta_f$ and $\Delta_m$ fit in the pullbacks below:
	\begin{center}
		\begin{tikzpicture}[baseline=(current  bounding  box.south), scale=2]
			
			\node (k) at (0,0.8) {$\Delta_f$};
			\node (r) at (0.9,0.8) {$\bullet$};
			\node (s) at (0,0) {$\bullet$};
			\node (l) at (0.9,0) {$\Delta_m$};
			\node (e0) at (0.15,0.65) {$\lrcorner$};
			
			\node (a1) at (1.8,0.8) {$A$};
			\node (c1) at (1.8,0) {$X$};
			\node (e0) at (1.05,0.65) {$\lrcorner$};
			
			\node (a2) at (0,-0.8) {$A$};
			\node (c2) at (0.9,-0.8) {$X$};
			\node (e0) at (0.15,-0.15) {$\lrcorner$};
			
			\node (b) at (1.8,-0.8) {$B\mathrlap{.}$} ;
			\node (e0) at (1.05,-0.15) {$\lrcorner$};
			
			\path[font=\scriptsize]
			
			(k) edge [right hook->>] node [above] {} (r)
			(k) edge [right hook->>] node [left] {} (s)
			(s) edge [right hook->>] node [right] {} (l)
			(r) edge [right hook->>] node [below] {} (l)
			
			(r) edge [->] node [above] {} (a1)
			(a1) edge [right hook->>] node [right] {$q$} (c1)
			(l) edge [->] node [above] {$r$} (c1)
			
			(s) edge [->] node [above] {} (a2)
			(a2) edge [right hook->>] node [below] {$q$} (c2)
			(l) edge [->] node [left] {$s$} (c2)
			
			(c2) edge [->] node [below] {$m$} (b)
			(c1) edge [->] node [right] {$m$} (b);
		\end{tikzpicture}	
	\end{center}
	By stability under pullback of fully faithful regular epimorphisms, all the arrows in the top-left square are fully faithful regular epimorphisms. Hence, the diagonal of the square, which coincides with $\ell \colon \Delta_f \to \Delta_m$, is an epimorphism. Since $r \ell=qh=qk=s \ell$, we obtain $r=s$ as desired.
	
	The factorization is unique up to isomorphism since a factorisation of a morphism as a
	strong epimorphism followed by a monomorphism is so, when it exist.
\end{proof}

\begin{prop} \label{thm:wse-ffse}
	Let $\K$ be an isoregular 2-category. Let $f \colon A\to B$ be a morphism in $\K$.	
	The following conditions are equivalent:
	\begin{enumerate}
		\item $f$ is a fully faithful regular epimorphism,
		\item $f$ is a fully faithful strong epimorphism.
	\end{enumerate}
\end{prop}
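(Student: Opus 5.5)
The direction (i)$\Rightarrow$(ii) holds in any 2-category with finite limits, since a regular epimorphism is strong. More precisely, a coequaliser $q$ of some pair $(u,v)$ is orthogonal to monomorphisms in $\K_0$. Given a square $m g = h q$ with $m$ a monomorphism, we have $m g u = h q u = h q v = m g v$, so $g u = g v$. Hence $g$ factors uniquely through $q$, and the factorisation makes the lower triangle commute because $m$ is mono and $q$ is epi. Since monomorphisms and regular epimorphisms in $\K$ are exactly those of $\K_0$ (finite limits being available), this is the required orthogonality. Whether strong epimorphism is meant in $\K_0$ or representably, orthogonality against monos in $\K_0$ is the relevant condition. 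The 2-dimensional part is automatic once one checks against the power-enhanced data, using $(-)^{[1]}$ preserving monos, but I would keep to the 1-dimensional statement, as the paper does in the uniqueness part of \cref{s.eq+mono}.

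For (ii)$\Rightarrow$(i), let $f \co A \to B$ be fully faithful and a strong epimorphism. By the Corollary following \cref{thm:ffk-in-cat}, every fully faithful morphism is an \ffk-morphism. So \cref{s.eq+mono} applies and gives a factorisation $f = m q$, with $q \co A \to X$ a fully faithful regular epimorphism (the coequaliser of the kernel pair of $f$) and $m \co X \to B$ a monomorphism. Strongness of $f$ applied to the square $m q = 1_B f$ gives a diagonal $d \co B \to X$ with $m d = 1_B$ and $d f = q$. Hence $m$ is a split epimorphism and a monomorphism, so an isomorphism. Then $f = m q$ is isomorphic to $q$ under $B$, and regular epimorphisms are closed under composition with isomorphisms. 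Thus $f$ is a regular epimorphism, and it is fully faithful by hypothesis.

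There is no genuine obstacle here; the key point is just noticing that fully faithful implies \ffk, so that \cref{s.eq+mono} is available. The only care needed is to confirm that the notion of strong epimorphism (orthogonal to monomorphisms) is the 1-categorical one in $\K_0$, which suffices for both directions. Note that full faithfulness is not actually used in (i)$\Rightarrow$(ii), and in (ii)$\Rightarrow$(i) it is used only to invoke the factorisation.
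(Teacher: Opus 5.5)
Your proof is correct and follows essentially the same route as the paper: a regular epimorphism is strong, and conversely a fully faithful strong epimorphism is an \ffk-morphism, so \cref{s.eq+mono} factors it as a fully faithful regular epimorphism followed by a monomorphism, which strongness forces to be an isomorphism. The only difference is that you write out the orthogonality and diagonal-filler arguments that the paper leaves implicit.
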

\begin{proof}
If $f$ is fully faithful regular epimorphism, then it is in particular a strong epimorphism.
 Conversely, if $f \colon A\to B$ is a fully faithful strong epimorphism then it is an \ffk-morphism by Proposition~\ref{fact-morph}. Then, by Proposition~\ref{s.eq+mono}, $f$ factors as a fully faithful regular epimorphism followed by a monomorphism. Since $f$ is a strong epimorphism, the monomorphism must be an isomorphism; thus $f$ is a fully faithful regular epimorphism.
\end{proof}

\begin{cor} Let $\K$ be an isoregular 2-category. Fully faithful regular epimorphism are stable in $\K$ under composition, finite products, and finite powers.
\end{cor}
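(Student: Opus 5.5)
The plan is to use \cref{thm:wse-ffse}, which identifies fully faithful regular epimorphisms with fully faithful strong epimorphisms in an isoregular 2-category. Strong epimorphisms have good closure properties, and fully faithful morphisms are closed under composition, products and powers because these properties can be checked representably.

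\emph{Composition.} Let $f \co A \to B$ and $g \co B \to C$ be fully faithful regular epimorphisms. Then $gf$ is fully faithful, since for each $X$ the functor $\K(X,gf) = \K(X,g)\circ\K(X,f)$ is a composite of fully faithful functors. Strong epimorphisms in the underlying category $\K_0$ are closed under composition. We have to be slightly careful that ``strong epimorphism'' here means the 2-categorical (representable) notion, but for finite-limit 2-categories this reduces to $\K_0$, as in the remark that regular epis and monos can be detected in $\K_0$. Hence $gf$ is a fully faithful strong epimorphism, and by \cref{thm:wse-ffse} it is a fully faithful regular epimorphism.

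\emph{Finite products.} Given fully faithful regular epimorphisms $f_i \co A_i \to B_i$ for $i=1,2$, factor
\[
f_1\times f_2 = (1_{B_1}\times f_2)\circ(f_1\times 1_{A_2}).
\]
The map $f_1 \times 1_{A_2}$ is the pullback of $f_1$ along the projection $B_1\times A_2 \to B_1$, and $1_{B_1}\times f_2$ is likewise a pullback of $f_2$. By condition (iii) of \cref{thm:isoregular}, both are fully faithful regular epimorphisms, and composition closure finishes the binary case. The nullary case is $1_1$, and induction gives all finite products.

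\emph{Finite powers.} This is the main obstacle, since pullback stability does not directly say anything about $f^{\bb c}$. Since $f$ is fully faithful, the representable characterisation gives $\K(X,f^{\bb c}) \cong \K(X,f)^{\bb c}$, which is fully faithful, so $f^{\bb c}$ is fully faithful. For the epi part I would exploit full faithfulness. When $f$ is fully faithful, the square
\[
\begin{tikzcd}
A^{\bb c} \ar[r, "f^{\bb c}"] \ar[d] & B^{\bb c} \ar[d] \\
A^{\mathrm{ob}\,\bb c} \ar[r, "f^{\mathrm{ob}\,\bb c}"'] & B^{\mathrm{ob}\,\bb c}
\end{tikzcd}
\]
is a pullback, where the vertical maps restrict along the inclusion of the discrete category of objects. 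This can be checked representably in $\Cat$: a diagram in $\K(X,A)$ is the same as objects in $\K(X,A)$ together with morphisms in $\K(X,B)$ between their images, because $\K(X,f)$ is fully faithful. The bottom map $f^{\mathrm{ob}\,\bb c}$ is a finite product of copies of $f$, since $\bb c$ is finitely presentable and so has finitely many objects. It is therefore a fully faithful regular epimorphism by the product case. Pullback stability (iii) then makes $f^{\bb c}$ a fully faithful regular epimorphism. The only checks needed are that this square is a 2-pullback in $\K$, which holds representably, and that $\mathrm{ob}\,\bb c$ is finite.
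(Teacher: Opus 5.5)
Your proposal is correct and follows the paper's proof step by step. For composition you use \cref{thm:wse-ffse} together with closure of strong epimorphisms and of fully faithful maps under composition. For products you factor $f_1\times f_2=(1_{B_1}\times f_2)\circ(f_1\times 1_{A_2})$ into pullbacks of the $f_i$. For powers you use the pullback square over $f^{\mathrm{ob}\,\bb c}$ along the identity-on-objects inclusion. Your remark that the 2-categorical notion of strong epimorphism reduces to the one in $\K_0$ is a point the paper leaves implicit.
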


\begin{proof} 
	The fact that fully faithful regular epimorphisms are closed under composition  follows from \cref{thm:wse-ffse} since strong epimorphisms and fully faithful epimorphisms are. For stability under finite products, consider two fully faithful regular epimorphisms $f_i\colon A_i\to B_i$, for $i=1,2$. Then 
	\[
	f_1\times f_2=(B_1\times f_2)\circ (f_1\times A_2)
	\]
and both components are fully faithful regular epimorphisms since they are obtained pulling back $f_1$ and $f_2$ along the product projections. Thus $f_1\times f_2$ is a fully faithful regular epimorphism by stability under composition.
	
	Finally, we need to show stability under finite powers. Let $\bb b$ be a finitely presentable category and let $S$ be its (finite) set of objects seen as a discrete category; thus we have a identity-on-objects inclusion $\iota^{\bb b}\colon S\to {\bb b}$. For any fully faithful morphism $f\colon A\to B$ the following square
	\begin{center}
		\begin{tikzpicture}[baseline=(current  bounding  box.south), scale=2]
			
			\node (k) at (0,0.8) {$A^{\bb b}$};
			\node (r) at (0.9,0.8) {$B^{\bb b}$};
			\node (s) at (0,0) {$A^S$};
			\node (l) at (0.9,0) {$B^S$};
			\node (e0) at (0.15,0.65) {$\lrcorner$};
			
			\path[font=\scriptsize]
			
			(k) edge [right hook->] node [above] {$f^{\bb b}$} (r)
			(k) edge [->] node [left] {$A^{\iota^{\bb b}}$} (s)
			(s) edge [right hook->] node [below] {$f^S$} (l)
			(r) edge [->] node [right] {$B^{\iota^{\bb b}}$} (l);
		\end{tikzpicture}	
	\end{center}
	is a pullback: this is true in $\Cat$  and pullbacks and fully-faithfulness can be checked representably. Thus, if $f$ is a fully faithful regular epimorphism, then so is $f^S$ (being a finite product of copies of $f$), and hence so is $f^{\bb b}$ (by stability under pullbacks).
\end{proof}

\begin{rmk} \label{thm:hott} We can relate our notion of an \ffk-morphisms with ideas in Homotopy Type Theory~\cite{HoTT-Book}, Univalent Foundations~\cite{VoevodskyV:explfm}, and Higher Topos Theory~\cite{Lur09:libro}. Let $\Gpd$ be the 2-category of small groupoids, functors, and natural transformations. Let $p \co B \to A$
be an isofibration. For $a \in A$, the \emph{homotopy fiber} of $p$ over $a$ is the subgroupoid of $B \times A^{[1]}$ with objects pairs $(b, \alpha)$ where $\alpha \co p(b) \to a$ and 
maps satisfying an evident commutativity condition. When all the homotopy fibers of $p$ are either empty or contractible, one says that $p$ is \emph{$(-2)$-truncated}. While $(-2)$-truncated isofibrations are a suitable semantical counterpart of `homotopy propositions', one may consider also `strict propositions', \ie isofibrations that are actually monomorphisms, as traditionally considered in categorical logic. These two classes of maps are related by a reflection, whose left adjoint sends a $(-2)$-truncated isofibration  $p \co B \to A$ to the monomorphism obtained by the factorisation 
\[
\begin{tikzcd}
B \ar[r, hook, ->>, "q"] & \lvert  B \rvert  \ar[r, >->, "m"] & A 
\end{tikzcd}
\]
of $p$ as a retract equivalence followed by a monomorphism. While the fibers of $p$ are either empty or contractible, the fibers of $m$ are either empty or singletons. The factorisation of \cref{s.eq+mono} achieves a similar reflection in our context.
\end{rmk}

Let $\K$ and $\L$ be isoregular 2-categories.
A 2-functor $F \co \K \to \L$  is said to be \emph{isoregular}
if it preserves finite 2-limits and coequalisers of fully faithful kernel pairs; equivalently, if it preserves finite limits and fully faithful regular epimorphisms.
We write $\IsoReg(\C,\K)$ for the full sub-2-category of the functor 2-category
$[\K, \L]$ spanned by  isoregular 2-functors.

\section{Isoregular theories: syntax and semantics} 
\label{sec:isoregular-theories}

\subsection{Languages}  We follow the approach of~\cite{RT23EUA,RT25ERegular}. Languages are multi-sorted with arities being objects in the full sub-2-category of $\Cat$ spanned by the finitely presentable categories, which we write $\FinCat$.

\begin{defi}
A finitary language $\LL$ is given by:
\begin{itemize}
	\item a set of \emph{basic sorts} $\S$, written $S, T, \ldots$;
	\item a set of \emph{function symbols} with sorts, written $f\colon S_1^{\bb c_1},\ldots, S_n^{\bb c_n} \to S^{\bb c}$, where $S_1, \ldots, S_n, S$ are basic sorts and $\bb c_1, \ldots, \bb c_n, \bb c$ are arities;
	\item a set of \emph{relation symbols} $R\rightarrowtail S_1^{\bb c_1},\ldots ,S_n^{\bb c_n}$, where $S_1, \ldots, S_n$ are basic sorts and and $\bb c_1, \ldots \bb c_n$ are arities.
\end{itemize}
\end{defi}

Let $\LL$ be a finitary language as above. A \emph{sort} is an expression of the form $S^{\bb c}$ where $S$ is a basic sort and $\bb c$ is an arity. When ${\bb c } = [0]$, we write $S$ instead of $S^{[0]}$.
If $X = S^{\bb c}$ is a sort and $\bb d$ is an arity, we define
\[
X^{\bb d} \defeq S^{\bb d \times \bb c} \mathrlap{.}
\]

A \emph{context} is a sequence of variable declarations of the form $(\bar{x} \co \bar{X}) = (x_1 \co X_1, \ldots, x_n \co X_n)$. For such a context, we define
\[
(\bar{x} \co \bar{X})^{\bb c} \defeq (u_1 \co X_1^{\bb c}, \ldots, u_n \co X_n^{\bb c}) \mathrlap{.} 
\]
Here, the relabelling of variables has been made to improve readability, rather than for any essential reason. The deductive calculus for isoregular theories has three forms of judgement:
\begin{gather} 
(\bar{x} \co \bar{X}) \quad  s \co X  \mathrlap{,}  \label{equ:term-in-context}  \\
(\bar{x} \co \bar{X}) \quad A \co \mathsf{prop} \mathrlap{,}  \label{equ:prop-in-context} \\
(\bar{x} \co \bar{X}) \quad A_1, \ldots, A_m \vdash A \mathrlap{.} \label{equ:entailment}
\end{gather}
The judgements in~\eqref{equ:term-in-context} and~\eqref{equ:prop-in-context} 
express that, relative to the variable declarations in the context, the expression $s$ is a term of sort~$X$ and the expression $A$ is a proposition, respectively.
The judgement in~\eqref{equ:entailment} expresses that,  relative to the variable declarations in the context and under the assumptions of the propositions $A_1, \ldots, A_n$, the proposition $A$ is true. 

The notation introduced above will be simplified in many cases. When $m = 0$, we write the
judgement in~\eqref{equ:entailment}  as 
\[
(\bar{x} \co \bar{X}) \  A  
\]
and we do not mention the context when it is empty.  The deduction rules of our calculus have the form
\[
\begin{prooftree}
\mathcal{J}_1 \qquad \ldots \qquad \mathcal{J}_n
\justifies
\mathcal{J} \mathrlap{,}
\end{prooftree}
\]
where $\mathcal{J}_1$, \ldots $\mathcal{J}_m$, $\mathcal{J}$ are judgements of one of the forms above. When $n = 0$, we do not include the horizontal line. When stating rules, we do not mention a context 
 that is common to all premisses and conclusions unless this causes confusion.

The deduction rules regarding terms are presented in \cref{tab:terms}, making use of the conventions just introduced, and follow essentially~\cite{RT23EUA}. Here, substitution is  a primitive operation, rather than being defined by induction on terms, as this aligns well with our semantics of terms (\cf \cref{sec:semantics}). As we shall see, a judgement of the form $(\bar{x} \co \bar{X}) \ t \co Y$ will be interpreted in an isoregular 2-category as a morphism $\br{t} \co \br{\bar{X}} \to \br{Y}$.

%
%\begin{nota}
%	We often write $X,Y,Z,\dots$ to indicate things of the form $S^{\bb c}$ where $S$ is a  sort and $\bb c$ an arity. And write 
%	$$  X\defeq (X_1,\cdots,X_n)= (S_1^{\bb c_1},\cdots ,S_n^{\bb c_n})$$
%	Thus, function and relation symbols can be rewritten as $f\colon   X\to Y$, and $R\rightarrowtail   X$.
%	Moreover, if $X=S^{\bb c}$ and $\bb d$ is an arity, $X^{\bb d}\defeqS^{\bb d\times \bb c}$, and $  X^{\bb d}\defeq(X_1^{\bb d},\cdots, X_n^{\bb d})$. 
%\end{nota}

\begin{table}[htb]
\begin{myframedbox}{
\begin{trule}[Variable]\label{rule:variable} For $1 \leq i \leq n$,  
\[
	(x_1 \co X_1, \ldots, x_n \co X_n) \quad \var_i(x_1,\ldots, x_n) \co X_i   \mathrlap{.} 
\]
% If $X = (x_1 \co X_1, \ldots, x_n \co X_n)$ is a context, then $x_i \co X_i$ for $1 \leq n$.
\end{trule}
\begin{trule}[Function application] For each function symbol $f \co Y_1, \ldots, Y_n \to Y$ of $\LL$,
\[
	\begin{prooftree} 
	(\bar{x} \co \bar{X}) \ s_1 \co Y_1 \quad \ldots \quad (\bar{x} \co \bar{X}) \  s_n \co Y_n
	\justifies
	(\bar{x} \co \bar{X}) \  f(s_1, \ldots, s_n)  \co Y \mathrlap{.} 
	\end{prooftree}  \bigskip
	\]
	\end{trule}
\begin{trule}[Restriction]\label{rule:restriction-def} For each functor $f \co \bb d \to \bb c$ between f.-p. categories,
\[
	\begin{prooftree} 
	(\bar{x} \co \bar{X}) \ s \co S^{\bb c}
	\justifies
	(\bar{x} \co \bar{X}) \ s \cdot f \co S^{\bb d} \mathrlap{.} 
	\end{prooftree}
\]
\end{trule}
\begin{trule}[Powers] 
\[
	\begin{prooftree} 
	(\bar{x} \co \bar{X}) \quad s  \co X
	\justifies
	(\bar{x} \co \bar{X})^{[1]} \quad s^{[1]}  \co X^{[1]}
	\end{prooftree} \bigskip
\]
\end{trule}
\begin{trule}[Substitution] For a context $(\bar{x} \co \bar{X})$ 
 and in $(y_1 \co Y_1, \ldots, y_n \co Y_n)$ 
are disjoint, 
\[
	\begin{prooftree} 
	(\bar{x} \co \bar{X}) \quad s_1  \co Y_1 \quad \ldots \qquad
	(\bar{x} \co \bar{X}) \quad s_n  \co Y_n \qquad
	(y_1 \co Y_1, \ldots, y_n \co Y_n) \quad t \co Y
	\justifies
	(\bar{x} \co \bar{X}) \quad t[s_1/y_1, \ldots, s_n/y_n] \co Y
	\end{prooftree} \bigskip
\]
\end{trule}}
\end{myframedbox}
\caption{Formation rules for terms} 
\label{tab:terms}
\end{table}

\begin{rmk}
	In \cref{rule:variable} above, $\var_i(x_1,\ldots, x_n)\co X_i$ denotes the $i$-th variable projection, which is usually denoted simply by $x_i \co X_i$. We use this approach as it makes it simpler to write some deduction rules (\eg \cref{equ:comp}). In practice we will use the more traditional notation $x_i \co X_i$.
	\end{rmk}

%We provide some explanations for these rules and spell our their side
%conditions. Rule~\eqref{equ:variable-rule}, called the variable projection rule, has the side condition that $1 \leq i \leq n$. Rule~\eqref{equ:function}, called the function application rule, has side condition that
%$f$ is a function symbol $f \co X_1, \ldots, X_n \to X$. Rule~\eqref{equ:restriction}, called the restriction rule, has the side condition that $f \co \bb d \to \bb c$ is a functor. In rule~\eqref{equ:power}, called the
%power rule, there are no side conditions, but we make use of the convention for power contexts set above. Finally, for the rule in~\eqref{equ:subst}, called the subsitution rule, there is the side condition that 
%the variables in the context $\bar{X}$ and $\Delta$ are disjoint. 
%

%Given terms $ (  x_i\colon   X^i)\ t_i(  x_i)\colon Y_i $, for $1\leq 1\leq m$, and a term $ (  y\colon Y)\ s(  y)\colon S^{\bb c} $, then
%		$$ (  x_1\colon   X_1,\cdots,  x_m\colon   X_m)\ s(t_1(  x_1),\cdots, t_m(  x_m))\colon S^{\bb c} $$
%		is a term.

In the example below we will apply the restriction rule, taken along a functor $f\co \bb d\to\bb c$, to terms of the form $(\bar{x} \co \bar{X}) \ s \co X^{\bb c}$ where $X$ may not be a basic sort as in \cref{rule:restriction-def}. This should be understood as taking the restriction along $f\times 1_{\bb b}$ where $\bb b\in\FinCat$ is such that $X=S^{\bb b}$.

\begin{ese} \label{thm:first-terms} We provide some simple consequences of our deduction rules for terms, introducing some terms that will be useful.
\begin{enumerate}
\item For $s \co X^{[1]}$, we have terms $\dom(s) \co X$ and $\cod(s) \co X$, 
called the \emph{domain} and \emph{codomain} of $s$, respectively. These are obtained by applying the restriction rule to the two inclusion functors $\sigma_0,\sigma_1\colon [0] \to [1]$, respectively.
\item For $s \co X$, we have a term $\id_s \co X^{[1]}$, 
called the \emph{identity} at $s$, induced by unique functor $[1]\to [0]$.
%\item For $s \colon S^{[n]}$, we can define a term $\pr_{k,m}(s)\colon S^{[m]}$,
%induced by the inclusion $j_{k,m}\colon [m]\to [n]$ picking the string of $m$ composable morphisms starting from the $k$-th morphism, for $1\leq k+m\leq n+1$.
%\item For $s \colon S^{[2]}$, we can define terms $\dom(s) \co S$ and $\cod(s)\colon S$, also
%called the \emph{domain} and \emph{codomain}, 
%induced by the morphisms $\sigma_0 \mathrlap{,} \sigma_2 \colon [0] \to[2]$, picking the initial and terminal objects respectively.
\item For $s \colon X^{[2]}$, we have terms $\fst(s)\colon X^{[1]}$, $\snd(s)\colon X^{[1]}$ and $\comp(s)\colon X^{[1]}$, called the 
\emph{first component}, \emph{second component}, and \emph{composite} of $s$, induced by the morphisms $[1]\to[2]$, picking the respective arrows in the free-living composable pair $[2]$.
\item Let $1\leq k\leq n$. For $s \colon X^{[n]}$, to be thought of as a chain of $n$ composable morphisms, we can have a term $\pr_k(s)\colon X^{[1]}$, called the \emph{$k$-th component} of~$s$, induced by the functor $j_k\colon [1]\to [n]$ picking the $k$-th morphism of the chain.
\item  The category $[1]\times [1]$  consists of a commutative square,
\[
\begin{tikzcd}
(0,1) \ar[r, "u"] \ar[d, "\ell"'] & (1,1) \ar[d, "r"]  \\
(0,0) \ar[r, "d"']  & (1, 0) \mathrlap{.}
\end{tikzcd}
\]
For $s\colon X^{[1]\times [1]}$, we have terms
$\pru(s) \co X^{[1]}$,
$\prd(s) \co X^{[1]}$,
$\prl(s) \co X^{[1]}$,
$\prr(s) \co X^{[1]}$, 
induced by the four inclusions $\tau_u, \tau_d, \tau_l, \tau_r\colon [1]\to [1]\times [1]$ selecting the four edges of the square.
%\item For $s\colon X^{[1]\times [1]}$, we have terms
%$\pr_{u,r}(s) \co X^{[2]}$ and $\pr_{\ell,d}(s)\colon X^{[2]}$, 
%induced by the functors $j_{u,r},j_{\ell,d}\colon [2]\to [1]\times [1]$ picking the two composable pairs of the square.
\end{enumerate}
\end{ese}

\begin{running} \label{running:language}
 The language $\LL_{\tx{Gfib}}$ for the theory of Grothendieck fibrations consists of two basic sorts $E,B$, a function symbol $p\co E\to B$, and a relation symbol $\tx{Cart}\rightarrowtail E^{[1]}$ that will collect all cartesian arrows in $E$. 
When it will come to state the axioms for the the isoregular theory of Grothendieck fibrations, we will make use of the finite categories
\begin{center}
	\begin{tikzpicture}[baseline=(current  bounding  box.south), scale=2, hookarrow/.style={{Hooks[right]}->}]
		
		\node (a') at (-1,0) {$\bullet$};
		\node (b') at (-1,-0.7) {$\bullet$};
		\node (e') at (-0.2,-0.7) {$\bullet$};
		\node (f') at (-1.25,-0.35) {$\bb e=$};

		\node (a) at (1,0) {$\bullet$};
		\node (b) at (1,-0.7) {$\bullet$};
		\node (e) at (1.8,-0.7) {$\bullet$};
		\node (f) at (0.7,-0.35) {$[2]=$};
		
		\node (a'') at (3,0) {$\bullet$};
		\node (b'') at (3,-0.7) {$\bullet$};
		\node (e'') at (3.8,-0.7) {$\bullet$};
		\node (f'') at (2.7,-0.35) {$\bb f=$};
		
		\path[font=\scriptsize]
		
		(a') edge [->] node [right] {comp} (e')
		(b') edge [->] node [below] {snd} (e')
		
		(a) edge [->] node [right] {fst} (b)
		(a) edge [->] node [right] {comp} (e)
		(b) edge [->] node [below] {snd} (e)
		
		(a'') edge [->] node [right] {$\cong$} (b'')
		(a'') edge [->] node [right] {comp} (e'')
		(b'') edge [->] node [below] {snd} (e'');
	\end{tikzpicture}
\end{center}
We write $\iota_{\bb e}\co \bb e \to [2]$ for the inclusion. With this notation, we have well-formed sorts $E^{\bb e}$,
$E^{\bb f}$, $E^{[2]}$ and, for example, propositions
\begin{align*} 
(z \co E^{[2]}) & \quad \tx{Cart}(\tx{snd}(z)) \co \mathsf{prop}  \mathrlap{,} \\
(u \co E^{[1] \times [1]}) &  \quad \tx{Cart}^{[1]}(u) \co \mathsf{prop}\mathrlap{,} 
\end{align*} 
asserting that the `second' map in a diagram of shape $[2]$ is cartesian and that a certain square is morphism of
cartesian maps, respectively.
\end{running}

%\begin{proof} \leavevmode
%\begin{enumerate}
%\item The terms induced by the two inclusions $\sigma_0,\sigma_1\colon [0] \to [1]$
%\item The term 
%\item Given the category $[n]$ with $n$ composable morphism, let 
%\item Given $[n]$ as above, the term corresponding to the inclusion $j_{k,m}\colon [m]\to [n]$ picking the string of $m$ composable morphisms starting from the $k$-th morphism, for $1\leq k+m\leq n+1$, is denoted by
%\end{enumerate}
%\end{proof}

\subsection{Isoregular theories}   Let $\LL$ be a fixed finitary language. The propositions of the calculus have the following six forms
\[
\top \mathrlap{,} \qquad 
R(s_1, \ldots, s_n) \mathrlap{,} \qquad
s = t \mathrlap{,} \qquad
A \land B \mathrlap{,} \qquad
(\exists x \co X) A \mathrlap{,}  \qquad
A^{[1]} \mathrlap{.} 
\]
The first five forms of propositions are familiar from first-order logic, while the sixth is specific to our  setting. For a proposition $A$, we refer to $A^{[1]}$ as the \emph{power} of $A$. Informally speaking, if a proposition
$A$ expresses properties of the objects of a category, the proposition $A^{[1]}$ expresses properties of its
morphisms. The deduction rules for \emph{isoregular logic} are presented in three groups:
\begin{itemize} 
\item rules for the formation of terms, in~\cref{tab:terms},
\item rules for the formation of propositions, in~\cref{tab:props},
\item rules for logical entailment, in  \cref{sec:deduction-rules}. 
\end{itemize}

One distinguishing aspect of this set of rules is that the deduction rules for forming new propositions depend on the deduction rules regarding logical entailment and viceversa.  This is in contrast with first-order logic, where one first defines the
set of well-formed formulas and then, separately, defines the set of theorems of a theory
by giving the deduction rules for logical entailment, but is quite a natural adaptation
of ideas from dependent type theory, where formation rules for types (which often can be seen as 
propositions, according to the propositions-as-types idea) can have premisses
involving inhabitation of other types (which can be seen as derivability of propositions).

Let us explain the rules for formation of existentially quantified propositions,  which allow us to form existential quantifiers only on propositions (provably) of a suitable kind. There are two such rules, 
given in \cref{equ:exists-formation-mono} and  \cref{equ:exists-formation-eqfib}, corresponding to
`there exists a unique' and `there exists a unique up to unique isomorphism', respectively.
The premiss $(x \co X, y \co Y) \ B(x,y) \co \mathsf{prop}$ of these rules can be thought informally as giving rise to a morphism
\begin{equation}
\label{equ:eqfib-informal} 
\{ x \co X, y \co Y \ | \ B(x,y) \} \rightarrowtail X \times Y \xrightarrow{ \pi_1} X \mathrlap{.}
\end{equation} 
In~\cref{equ:exists-formation-mono}, the judgement $\mathcal{J}_{\textsf{mono}}(B)$ expresses that, for $x \co X$, there exists at most one $y \co Y$ such that $B(x,y)$ holds, \ie asserting that the morphism in \eqref{equ:eqfib-informal} is a monomorphism. 
In \cref{equ:exists-formation-eqfib}, the judgements $\mathcal{J}_{\textsf{faithful}}(B)$ and 
$\mathcal{J}_{\textsf{full\text{-}id}}(B)$  express that, for $x \co X$, there exists at most one up to unique isomorphism $y \co Y$ such that~$B(x,y)$ holds, by asserting that the morphism in \eqref{equ:eqfib-informal} is faithful and
full on identities, and therefore an \ffk-morphism by \cref{thm:ff-kernel-char}.
Note that, using the rule in~\eqref{equ:exists-formation-mono}, the judgement $\mathcal{J}_{\textsf{faithful}}(B)$ implies that the existential quantifier in $\mathcal{J}_{\textsf{full\text{-}id}}(B)$ can actually be formed.  

\begin{table}[htb]
\begin{myframedbox}{
\begin{frule} 
\[
\begin{prooftree}
s \co X \qquad
t \co X 
\justifies
s = t \co \prp
\end{prooftree}
\]
\end{frule}
\medskip

\begin{frule}
\[
\begin{prooftree}
s_1 \co X_1 \quad
\ldots \quad
s_n \co X_n 
\justifies
R(s_1, \ldots, s_n) \co \prp
\end{prooftree}
\]
\end{frule}

\medskip
\begin{frule}
\[
\begin{prooftree}
A \co \prp \quad
B \co \prp 
\justifies
A \land B \co \prp
\end{prooftree}
\]
\end{frule}

\medskip
\begin{frule}
\[
\begin{prooftree}
A \co \prp 
\justifies
A^{[1]} \co \prp
\end{prooftree}
\]
\end{frule}

\begin{frule}
\label{equ:exists-formation-mono}
\[
\begin{prooftree}
(x \co X, y \co Y) \ B(x, y) \co \mathsf{prop} \qquad 
\mathcal{J}_{\mathsf{mono}}(B)
\justifies
(x \co X) \quad (\exists y \co Y) B(x,y) \co  \mathsf{prop}  \mathrlap{,} 
\end{prooftree} 
\]
where 
\[
\mathcal{J}_{\mathsf{mono}}(B) \defeq  (x \co X, y, y' \co Y) \ B(x, y), B(x, y') \vdash y = y' \mathrlap{.}
 \]
\end{frule}

\begin{frule}
\label{equ:exists-formation-eqfib}
\[
\begin{prooftree}
(x \co X, y \co Y) \ B(x, y) \co \prp \qquad 
\mathcal{J}_{\mathsf{faithful}}(B) \qquad
\mathcal{J}_{\mathsf{full\text{-}id}}(B) 
\justifies
(x \co X) \quad (\exists y \co Y) B(y) \co  \mathsf{prop}  \mathrlap{,} 
\end{prooftree}
\]
where  
\begin{align*} 
 & \mathcal{J}_{\mathsf{faithful}}(B) \defeq \\
 & \qquad \big( \bar{u} \co X^{[1]}, v, v' \co Y^{[1]}  \big) \quad
B^{[1]}(  \bar{u},  v),  \ B^{[1]}(  \bar{u},  v'),  \ \dom(  v)=\dom(  v'), \ \cod(  v)=\cod(  v')  \vdash   v=  v' \mathrlap{,} \\
& \mathcal{J}_{\mathsf{full\text{-}id}} (B) \defeq  \\
& \qquad 
\big( x \co X,  y, y' \co Y \big) \quad
B(  \bar{x},  y),  \ B(  \bar{x},  y')  \vdash (\exists   u\colon  Y^{[1]})  (B^{[1]}(\id_{\bar{x}},  u)\land \dom(  u)=  y\wedge\cod(  u)=  y' ) \mathrlap{.}
\end{align*} 
\end{frule}}
\end{myframedbox}
\caption{Formation rules for  propositions.}
\label{tab:props}
\end{table}

The deduction rule logical entailment for conjunction and existential quantification are the standard introduction and elimination rules, except that we need to include the so-called Frobenius rule (\cref{equ:frobenius}), which ensures a good behaviour of the existential quantifier in contexts where implication is not part of the logical calculus. Note that, whenever an existential statement occurs in the conclusion of a
deduction rule, the premisses ensure that, by the rules in~\eqref{equ:exists-formation-mono} and~\eqref{equ:exists-formation-eqfib} the existential quantifier can be formed. 
The other deduction rules for logical entailment express important properties of the 
intended models. For example, for a coequaliser of finite categories, 
\[
\begin{tikzcd}
\bb a \ar[r, shift left = 1, "f"] \ar[r, shift right = 1, "g"'] & \bb c \ar[r, "q"] & \bb c  \mathrlap{,}
\end{tikzcd}
\]
we have the deduction rule
\[
	\begin{prooftree}
		s \co S^{\bb b} \quad
		s \cdot f = s \cdot g 
		\justifies
		(\exists x \co S^{\bb c}) \big( x \cdot q = s \big) \mathrlap{.}
	\end{prooftree}
\]
Here, the existential quantifier can be formed according to our rules above since  the judgement
\[
(x, x' \co S^{\bb c}) \quad x \cdot q = s, x' \cdot q = s \vdash x = x' 
\]
is provable using \cref{equ:epimorphic-family}, as $q$ is an epimorphism. The rules for powers of terms and propositions express functoriality of the operation of raising terms and propositions to a power and the preservation of conjunction and existential quantifiers by forming the power of a proposition.

\begin{ese} \label{thm:easyprop} We give some examples of derivable judgements, 
establishing some properties of the terms introduced in~\cref{thm:first-terms}. 
\begin{enumerate}
%\item \red Delete? \black We can express that two chains of morphisms that are composable admit a composite.
%For example, the rule
%	\[
%	\begin{prooftree}
%	s \co X^{[2]} \qquad t \co X^{[1]} \qquad
%	\cod(s) = \dom(t) 
%	\justifies
%	(\exists u \co X^{[3]}) \pr_{1,2}(u)= s \wedge \pr_{3,1}(u)=t 
%	\end{prooftree}
%	\]
%	is derivable. Here, the conclusion expresses that the initial and terminal parts of the composite
%	are given by $s$ and $t$, respectively.
	\item \label{item:easyprop-ii} We can express that two triangles $t,t'\co X^{[2]}$ with common diagonal fit into a commutative square
	of the form
	\[
	\begin{tikzcd} 
	A \ar[r, "\fst(t')"]   \ar[d, "\fst(t)"']  & B \ar[d, "\snd(t')"] \\
	C \ar[r, "\snd(t)"'] & D
	\end{tikzcd}
	\]
	Indeed, the rule
	\[
	\begin{prooftree} 
	 t,t'\colon X^{[2]}  \qquad
	 \comp(t)=\comp(t')
	  \justifies
	 (\exists v \colon X^{[1]\times[1]}) \big( \pru(v)=\fst(t') \land \prd(v)=\snd(t) \land \prl(v)=\fst(t) \land \prr(v)=\snd(t') \big)
	 \end{prooftree}
	\]
	is derivable.
	
	\item Given a square $s\co X^{[1]\times [1]}$ we have terms $\dom(s)\co X^{[1]}$ and $\dom^{[1]}(s)\co X^{[1]}$ (similarly for $\cod$), where the first $\dom$ is relative to the context $X^{[1]}$ and the second is relative to $X$. Using the deduction rules it is easy to see that
	\[
	\begin{prooftree} 
		s\colon X^{[1]\times [1]}
		\justifies
		 \pru(s)=\dom^{[1]}(s) \land \prd(s)=\dom^{[1]}(t) \land \prl(s)=\dom(s) \land \prr(s)=\cod(s) 
	\end{prooftree}
	\]
	is derivable.

\end{enumerate}
\end{ese}

%
%\begin{lemma}
%	Let $L$ be an $\LL$-structure in $\K$. 
%\end{lemma}

%\begin{lemma}
%	Let $L$ be an $\LL$-structure in $\K$. If 
%	$A$ is interpretable over $L$ and $\mathsf{eqfib}(A)$ is valid in $L$, then 
%	the formula $(\exists x \colon X) A$ is interpretable over $L$.
%\end{lemma}
%

\begin{nota}[Substitution]
	Given a term $(\bar x\co \bar X)\  t(\bar x)\co Y$ and a formula in context $(y\co Y)\ A(y)$, we define
	$$(\bar x\co \bar X)\ A[t(\bar x)/y]\defeq (\bar x\co \bar X)\ (\exists y\co Y)\ A(y)\land t(\bar x)=y $$ 
	to denote the substitution of $y$ by $t(\bar x)$ in $A$. We prefer this approach to the classical recursive definition to avoid annoying technicalities generated by the power rule.
	Note that if $(y\co Y)\ A(y)\co \prp$ holds then so does $(\bar x\co \bar X)\ A[t(\bar x)/y]\co\prp$ since the uniqueness of the existential quantification is easily derivable.
\end{nota}

\begin{defi} \leavevmode
\begin{itemize}
\item  We define the class of \emph{isoregular theories} inductively as follows:
\begin{itemize}
\item the empty theory is isoregular;
\item if $\TT$ is an isoregular theory, $I$ is a set, and, for $i \in I$, 
\[
\mathcal{J}_i \defeq  (\bar{x}_i \co \bar{X}_i) \  A_{i,1}, \ldots, A_{i,n_i} \vdash A_{i} 
\]
is a judgement such that $(\bar{x}_i \co \bar{X}_i) \ A_{i, j} \co \tx{prop}$, for $1 \leq j \leq n_i$,
and $(\bar{x}_i \co \bar{X}_i) \ A_{i} \co \tx{prop}$ are derivable in $\TT$, then
$\TT \cup \{ \mathcal{J}_i  \ | \ i \in I \}$ is an isoregular theory.
\end{itemize} 
\item The \emph{derivable judgements} (or \emph{theorems}) of an isoregular theory are the judgements that are derivable from the axioms of the theory using the isoregular deduction rules. 
\end{itemize}
\end{defi} 

\begin{running} \label{running:theory} Building on our running example from \cref{running:language}, 
we define the theory of Grothendieck fibrations $\TT_{\tx{Gfib}}$ to consist of the following axioms:
\begin{enumerate}[itemsep=0.1cm]
	\item \hspace{16pt} $(z,w\co E^{[2]})\quad \tx{Cart}(\tx{snd}(z)),\ z\cdot \iota_{\bb e}=w\cdot \iota_{\bb e},\ p^{[1]}(\tx{fst}(z))=p^{[1]}(\tx{fst}(w))\vdash z=w$; 
	\item $(x\co E^{\bb e}, z\co B^{[2]})\quad \tx{Cart}(\tx{snd}(x)),\ p^{[1]}(\tx{snd}(x))=\tx{snd}(z),\ p^{[1]}(\tx{comp}(x))=\tx{comp}(z)\quad$ \\
	 \hspace*{75pt} \quad $\vdash  (\exists w\co E^{[2]})\quad w\cdot\iota_{\bb e}= x\wedge p^{[1]}(\tx{fst}(w))=\tx{fst}(z)\wedge \tx{Cart}(\tx{snd}(w))$;
	\item \hspace{31pt} $ (y\co E^{\bb f})\quad \tx{Cart}(\tx{snd}(y)) \vdash \tx{Cart}(\tx{comp}(y)) $;
	\item \hspace{12pt} $ (u\co E^{[1]\times [1]})\quad \tx{Cart}(\dom(u)), \tx{Cart}(\cod(u))\vdash \tx{Cart}^{[1]}(u) $;
	\item \hspace{3pt} $ (y\co E, u\co B^{[1]})\quad \cod(u)=p(y) \vdash (\exists v\co E^{[1]})\ \tx{Cart}(v)\wedge \cod(v)=y \wedge p^{[1]}(v)=u$.
\end{enumerate}
The first two axioms say that every map in $\tx{Cart}$ satisfies the unique lifting property, \ie is cartesian; axioms (iii) and (iv) say that $\tx{Cart}$ is closed under isomorphisms and is a full subcategory of $E^{[1]}$. Then, axiom (v) requires that every morphism in $B$ with codomain $p(y)$ has a cartesian lift.
\end{running}

\begin{rmk}
	If we remove \cref{equ:exists-formation-eqfib} from our deductive system we obtain a strict 2-dimensional version of cartesian theories, which only allows unique existential quantification. In this setting, it is still possible to prove that the syntactic 2-category $\Syn(\TT)$ of \cref{section:SynT} is finitely complete (rather than isoregular), and that models in a finitely complete 2-category $\K$ correspond to finite-limit-preserving 2-functors $\Syn(\TT)\to\K$. However, almost none of the examples of \cref{sec:applications} fit this framework. 
\end{rmk}

\subsection{Semantics}  \label{sec:semantics}

We shall be interested in interpreting formulas of these forms in  an isoregular 2-category with
a structure for $\LL$. Importantly, not every raw proposition will admit an interpretation. Instead, we define simultaneously when a proposition is interpretable and, in that case, what its interpretation is. More precisely, if $A$ is a proposition with free variables in the context  $(x_1 \co S_1^{\bb c_1},\ldots, x_n \co S_n^{\bb c_n})$, we define when $A$ is interpretable over $L$ and, in that case, its interpretation as a subobject
\[
\br{A}\rightarrowtail \br{S_1}^{\bb c_1}\times\cdots\times  \br{S_n}^{\bb c_n} 
\]
by recursion on the structure of the formula.

\begin{defi} \label{thm:structure} Let $\LL$ be a finitary language. Let $\K$ be a 2-category
with finite 2-limits. An $\LL$-structure $\M$ in $\K$  consists of:
\begin{enumerate}
	\item an object $\br{S}_{\M} \in \K$, for every basic sort $S\in\S$;
	\item a morphism 
	 $\br{f}_{\M} \colon \br{S_1}^{\bb c_1}_{\M} \times\cdots\times  \br{S_n}^{\bb c_n}_{\M} \to \br{S}^{\bb c}_{\M}$, 
	for every function symbol $f\colon S_1^{\bb c_1},\cdots, S_n^{\bb c_n}\to S^{\bb c}$ in $\LL$;
	\item a monomorphism
	$\br{R}_{\M} \rightarrowtail \br{S_1}^{\bb c_1}_{\M} \times\cdots\times  \br{S_n}^{\bb c_n}_{\M}$
	 for every relation symbol $R  \rightarrowtail S_1^{\bb c_1},\cdots ,S_n^{\bb c_n}$ in $\LL$.
\end{enumerate}  
\end{defi} 

Below, we shall drop the subscript in the interpretation of sorts, function symbols and relation symbols
whenever this does not cause confusion. 

\begin{rmk}\label{rmk:RT25}
	When $\K=\Cat$, this coincides with the notion of $\LL$-structure considered in~\cite[Definition~3.4]{RT25ERegular} specific to the case of 2-categories with chosen factorisation the (strong epi, mono). In fact, our isoregular theories and their models (in $\Cat$) can be seen as particular instances of the regular theories developed in~\cite{RT25ERegular}, where there is no restriction on the use existential quantification. Because of this, their 2-categories of models may in general not have flexible limits.
\end{rmk}

Assuming to have an $\LL$-structure $\M$ in $\K$ as in \cref{thm:structure}, we define the interpretation of
sorts and contexts. For a sort $X = S^{\bb c}$, we define
\[
\br{ S^{\bb c}} \defeq \br{S}^{\bb c } \mathrlap{.}
\]
For a context $(\bar{x} \co \bar{X})  = (x_1 \co X_1, \ldots, x_n \co X_n)$, we define
\[
\br{ \bar{X} } \defeq \br{ X_1} \times \ldots \times \br{X_n} \mathrlap{.}
\]
We then define the interpretation of the derivable judgements of the form~\eqref{equ:term-in-context},
so that a term $s \co Y$ in context $\bar{x} \co \bar{X}$ is interpreted as a morphism
\[
\br{s} \co \br{\bar{X}} \to \br{Y} \mathrlap{.}
\]
We do so by recursion on the derivation of the judgement, following the rules in \cref{tab:terms}.
First, for the variable rule, the interpretation of $(x_1 \co X_1, \ldots, x_n \co X_n)  \ \var_i(\bar x) \co X_i$, where
 $1 \leq i \leq n$, 
is defined to be the $i$-th projection: 
\[
\begin{tikzcd} 
\br{ X_1} \times \ldots \times \br{X_n} \ar[r, "\pr_i"] & \br{X_i}  \mathrlap{.} 
\end{tikzcd}
\]
For the function application rule, the interpretation of $f(s_1, \ldots, s_n) \co Y$ in context $\bar{x} \co \bar{X}$ is defined as the composite
\[
\begin{tikzcd}[column sep = huge]
\br{\bar{X}} \ar[r, "{(\br{s_1}, \ldots, \br{s_n})}"] & 
\br{Y_1} \times \ldots \times \br{Y_n} 
\ar[r, "\br{f}"] & 
\br{Y}  \mathrlap{,} 
\end{tikzcd}
\]
where, by the recursive hypothesis, we assumed that the interpretation of the judgements
$(\bar{x} \co \bar{X}) \ s_i \co Y_i$ is defined. For the restriction rule, we define the interpretation of $s \cdot f \co S^{\bb d}$ to be the composite
\[
\begin{tikzcd}
\br{\bar{X}} \ar[r, "\br{s}"] &
\br{S}^{\bb c} \ar[r, "\br{S}^f"] &
\br{S}^{\bb d} \mathrlap{.} 
\end{tikzcd}
\]
For the power rule, assuming we have defined $\br{s} \co \br{\bar{X}} \to \br{Y}$, we define the
interpretation of $s^{[1]} \co X^{[1]}$ in context $\bar{X}^{[1]}$  using the functoriality of powers, to be the
composite
\[
\begin{tikzcd} 
 \br{ \bar{X}^{[1]} } \ar[r, "\cong"] &  
\br{\bar{X}}^{[1]} \ar[r, "\br{s}^{[1]}"]  & 
 \br{Y}^{[1]} \ar[r, "\cong"] &
 \br{Y^{[1]}}   \mathrlap{,}  
 \end{tikzcd} 
\]
where, again, we assumed that the interpretation of $(\bar{x} \co \bar{X}) \ s \co Y$ is defined.
Finally, for the substitution rule, 
the interpretation of $t[s_1/y_i \ldots, s_n/y_n] \co Y$ in context $\bar{x} \co \bar{X}$ is
defined as the composite
\[
\begin{tikzcd}[column sep = huge]
\br{\bar{X}} \ar[r, "{(\br{s_1}, \ldots, \br{s_n})}"] & 
\br{Y_1} \times \ldots \times \br{Y_n} 
\ar[r, "\br{t}"] & 
\br{Y}  \mathrlap{.} 
\end{tikzcd}
\]

\begin{running} \label{running:structure} Let ${\bb L}_{\tx{GFib}}$ be the language for the theory of Grothendieck fibrations of
\cref{running:language}. A structure for it in the 2-category $\Cat$ consists of two
categories $\br{E}$ and $\br{B}$ together with a functor $\br{p} \co \br{E} \to \br{B}$ and
a subcategory $\br{\tx{Cart}} \subseteq \br{E}^{[1]}$. 
\end{running}

Let ${\M}$ be an $\LL$-structure in an isoregular 2-category $\K$. In order to extend the interpretation to the other forms of judgement,
some additional care is required, since the derivability of judgements of one form is related to the
derivability of the judgements of the other form, as familiar from dependent type theory. In particular, it is not possible to define the interpretation
of propositions first and define what it means for a logical entailment to be valid separately. Instead,
we start by defining a partial interpretation function mapping a judgement of the 
form $(\bar{x} \co \bar{X}) \ A \co \mathsf{prop}$ to a subobject $ \br{A} \rightarrowtail \br{\bar{X}} $
in $\K$ by recursion on the structure of the expression~$A$. This partial function will be proved to be total on the derivable judgements. To this end, we record when each clause is well-defined.

{\setlength{\leftmargini}{1.6em}
		\begin{enumerate}
		\item The interpretation of the judgement $(\bar{x} \co \bar{X}) \ R(s_1, \ldots, s_n) \co \mathsf{prop}$ is  the pullback
		\begin{center}
				\begin{tikzpicture}[baseline=(current  bounding  box.south), scale=2]
					\node (a0) at (0,0.8) {$\br{R(s_1, \ldots, s_n)}$};
					\node (a0') at (0.2,0.6) {$\lrcorner$};
					\node (b0) at (1.1,0.8) {$\br{\bar{X}}$};
					\node (c0) at (0,0) {$\br{R}$};
					\node (d0) at (1.1,0) {$\br{\bar{Y}}$};
					
					\path[font=\scriptsize]
					
					(a0) edge [>->] node [above] {} (b0)
					(a0) edge [->] node [left] {} (c0)
					(b0) edge [->] node [right] {$(\br{s_1}, \ldots, \br{s_n})$} (d0)
					(c0) edge [>->] node [below] {$m_R$} (d0);
				\end{tikzpicture}	
			\end{center}
			This is well-defined if we have $m_R \co \br{R} \rightarrowtail \br{\bar{Y}}$ and 
			$\br{s_i} \co \br{\bar{X}} \to \br{Y_i}$, for all $1 \leq i \leq n$.
			\item  The interpretation of the judgement $(\bar{x} \co \bar{X}) \ s=t \co \mathsf{prop}$ is the equaliser of $\br{s}$ and $\br{t}$:
			\[
			\begin{tikzcd}
			\br{s = t} \ar[r, >->] & 
			\br{\bar{X}} \ar[r, "\br{s}", shift left = 1] \ar[r, "\br{t}"', shift right = 1] &
			\br{Y} 
			\end{tikzcd}
			\]	
%			\begin{center}
%				\begin{tikzpicture}[baseline=(current  bounding  box.south), scale=2]
%					
%					\node (a0) at (0,0) {$\br{ s = t }$};
%					\node (c0) at (1.8,0) {$ \br{\bar{X}} $};
%					
%					\path[font=\scriptsize]
%					
%					(a0) edge [>->] node [below] {$\tx{eq}(\br{s},\br{t})$} (c0);
%				\end{tikzpicture}	
%			\end{center}
			This is well-defined if we have $\br{s} \co 
			\br{\bar{X}} \to \br{Y}$ and $\br{t} \co \br{\bar{X}} \to \br{Y}$.
%			\item The formula $A[s_1/y_1, \ldots s_n/y_n]$  is interpretable if $A$ is and its interpretation is the pullback:
%			\begin{center}
%				\begin{tikzpicture}[baseline=(current  bounding  box.south), scale=2]
%					
%					\node (a0) at (0,0.8) {$\br{A[s_1/y_1, \ldots s_n/y_n]}$};
%					\node (a0') at (0.2,0.6) {$\lrcorner$};
%					\node (b0) at (1,0.8) {$\br{X}$};
%					\node (c0) at (0,0) {$\br{A}$};
%					\node (d0) at (1,0) {$\br{Y}$};
%					
%					\path[font=\scriptsize]
%					
%					(a0) edge [>->] node [above] {} (b0)
%					(a0) edge [->] node [left] {} (c0)
%					(b0) edge [->] node [right] {$(\br{s_1}, \ldots, \br{s_n})$} (d0)
%					(c0) edge [>->] node [below] {$m_A$} (d0);
%				\end{tikzpicture}	
%			\end{center}
			\item The interpretation of the judgement $(\bar{x} \co \bar{X}) \ A \wedge B \co \mathsf{prop}$ is  the pullback
			\begin{center}
				\begin{tikzpicture}[baseline=(current  bounding  box.south), scale=2]
					
					\node (a0) at (0,0.8) {$\br{A \wedge B}$};
					\node (a0') at (0.2,0.6) {$\lrcorner$};
					\node (b0) at (1,0.8) {$\br{B}$};
					\node (c0) at (0,0) {$\br{A}$};
					\node (d0) at (1,0) {$\br{\bar{X}} \mathrlap{.}$};
					
					\path[font=\scriptsize]
					
					(a0) edge [>->] node [above] {} (b0)
					(a0) edge [>->] node [left] {} (c0)
					(b0) edge [>->] node [right] {$m_B$} (d0)
					(c0) edge [>->] node [below] {$m_A$} (d0);
				\end{tikzpicture}
			\end{center}
			This is well-defined if the interpretations of the judgements 
			$(\bar{x} \co \bar{X}) \ A \co \mathsf{prop}$ and $(\bar{x} \co \bar{X}) \ B \co \mathsf{prop}$
			are so.
			\item The interpretation of the judgement $(\bar{x} \co \bar{X}^{[1]}) \vdash A^{[1]} \co \mathsf{prop}$ is the pullback:
			\begin{center}
				\begin{tikzpicture}[baseline=(current  bounding  box.south), scale=2]
					
					\node (a0) at (0,0.8) {$\br{ A^{[1]} }$};
					\node (a0') at (0.2,0.6) {$\lrcorner$};
					\node (b0) at (1.1,0.8) {$\br{\bar{X}^{[1]}}$};
					\node (c0) at (0,0) {$\br{A}^{[1]}$};
					\node (d0) at (1.1,0) {$\br{\bar{X}}^{[1]}$};
					
					\path[font=\scriptsize]
					
					(a0) edge [>->] node [above] {} (b0)
					(a0) edge [>->] node [left] {$\cong$} (c0)
					(b0) edge [>->] node [right] {$\cong$} (d0)
					(c0) edge [>->] node [below] {$m_A^{[1]}$} (d0);
				\end{tikzpicture}
			\end{center}
			This is well-defined if the interpretation of the judgement
			$(\bar{x} \co \bar{X}) \ A \co \mathsf{prop}$ is so. 
			\item The interpretation of the judgement $(\bar{x} \co \bar{X}) \ (\exists y \colon Y) B(\bar{x}, y) \co \mathsf{prop}$  is the (fully faithful regular epimorphism, monomorphism)-factorisation 
			\begin{center}
				\begin{tikzpicture}[baseline=(current  bounding  box.south), scale=2]
					
					\node (a0) at (-0.2,0.8) {$\br{B}$};
					\node (c0) at (1,0.8) {$ \br{\bar{X}}\times\br{Y} $};
					\node (c1) at (2.2,0.8) {$ \br{\bar{X}}$};
					\node (d0) at (1,1.3) {$\br{ (\exists y \co Y) B(\bar{x}, y)}$};
					
					\path[font=\scriptsize]
					
					(a0) edge [>->] node [below] {$m_B$} (c0)
					(c0) edge [->] node [below] {$p_1$} (c1)
					(a0) edge [->>] node [above] {$\ \ \ $} (d0)
					(d0) edge [>->] node [above] {$\ \ \ $} (c1);
				\end{tikzpicture}	
			\end{center}	
			where $p_1$ is the projection on the first factor. This is well-defined
			if the interpretation of the judgement $(\bar{x} \co \bar{X}, y \co Y) \ B(\bar{x}, y) \co \mathsf{prop}$
			is well-defined and $m_B \circ p_1$  is an \ffk-morphism in $\K$. 
	\end{enumerate}}

\begin{defi} Let $\LL$ be a finitary language. Let $\K$ be an isoregular 2-category and $\M$ be an $\LL$-structure in $\K$.
\begin{enumerate}
\item The judgement $(\bar{x} \co \bar{X}) \ s \co Y$ is valid if $\br{s} \co \br{\bar{X}} \to \br{Y}$.
\item The judgement $(\bar{x} \co \bar{X}) \ A \co \mathsf{prop}$ is valid if its interpretation is well-defined.
\item The judgement $(\bar{x} \co \bar{X}) \ A_1, \ldots, A_n \vdash A$ is valid if
$\br{A_1} \cap \ldots \cap \br{A_n} \leq \br{A}$ as subobjects of $\br{\bar{X}}$.
\item A deduction rule
\[
\begin{prooftree}
\mathcal{J}_1 \quad \ldots \quad \mathcal{J}_n 
\justifies
\mathcal{J}
\end{prooftree}
\]
is valid if, whenever $\mathcal{J}_1 \ldots \mathcal{J}_n$ are valid, so is $\mathcal{J}$.
\end{enumerate}
\end{defi}

The next lemma can be understood as a form of definability. 

\begin{lemma} \label{thm:semantics-eqfib}
Let $(\bar{x} \co \bar{X}, y \co Y) \ B(\bar{x}, y) \co \mathsf{prop}$ be a derivable judgement and consider the morphism
\[
\begin{tikzcd}
\br{B} \ar[r, tail, "m_B"] & \br{X} \times \br{Y} \ar[r, "\pr_1"] & \br{X}
\end{tikzcd}
\]
\begin{enumerate}
\item The judgement $\mathcal{J}_{\mathsf{mono}}(B)$ is valid if and only if the morphism $\pr_1 \circ m_B$
is a monomorphism.
\item The judgements $\mathcal{J}_{\mathsf{faithful}}(B)$ and $\mathcal{J}_{\mathsf{full\text{-}id}}(B)$ are valid if and only if the morphism $\pr_1 \circ m_B$
is an \ffk-morphism.
\end{enumerate}
\end{lemma}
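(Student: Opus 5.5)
Write $g \defeq \pr_1 \circ m_B \co \br{B} \to \br{\bar X}$. The plan is to translate each judgement into a statement about subobjects and then read it representably, using part~(iv) of \cref{thm:ff-kernel-char}: an \ffk-morphism is exactly one that is faithful and full on identities.

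For part~(i), the context $(\bar x \co \bar X, y, y' \co Y)$ is interpreted as $\br{\bar X}\times\br Y\times\br Y$. The intersection $\br{B(\bar x,y)}\cap\br{B(\bar x,y')}$ is the pullback of $m_B$ along the two projections to $\br{\bar X}\times\br Y$, so it is the kernel pair $\Delta_g$ of $g$, embedded in $\br{\bar X}\times\br Y\times \br Y$. The subobject $\br{y=y'}$ is the equaliser of the two projections to $\br Y$. Validity of $\mathcal{J}_{\mathsf{mono}}(B)$ therefore says that the two maps $\Delta_g\to\br Y$ agree. Since $m_B$ is monic, this is equivalent to the two kernel-pair projections $\Delta_g \to \br B$ agreeing, and that is the standard characterisation of $g$ being a monomorphism.

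For part~(ii), I first unwind the interpretation of $B^{[1]}$. By definition, $\br{B^{[1]}}$ is $m_B^{[1]}\co\br B^{[1]}\rightarrowtail(\br{\bar X}\times \br Y)^{[1]}$, using the canonical isomorphisms with products of powers.

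\emph{Faithfulness.} The antecedent of $\mathcal{J}_{\mathsf{faithful}}(B)$ is, representably at a stage $Z$, the set of pairs of 2-cells $(\bar u, v)$ and $(\bar u,v')$ in $\br B$ with the same $\bar X$-component and the same endpoints. Validity says $v=v'$. Because $m_B$ is a monomorphism, hence faithful, this is exactly faithfulness of $\K(Z,g)$: two 2-cells in $\K(Z,\br B)$ with equal domain and codomain and equal image under $g$ are equal.

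\emph{Fullness on identities.} This step should use \cref{thm:semantics-eqfib}(ii) only in the formation direction. The existential in $\mathcal{J}_{\mathsf{full\text{-}id}}(B)$ is formed over a proposition whose associated projection is a monomorphism, by the mono case, given faithfulness. Its interpretation is therefore the image of $\Delta' \rightarrowtail \br{\bar X}\times\br Y\times\br Y\times\br Y^{[1]}\to \br{\bar X}\times\br Y\times\br Y$. Here $\Delta'$ is the object of triples $(b,b',\beta)$, with $b,b'$ generalised elements of $\br B$ over the same $x$ and $\beta\co b\Rightarrow b'$ a 2-cell lying over $\id_x$, and the projection is monic by faithfulness. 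Validity then says that this monomorphism contains $\Delta_g$.

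Since a monomorphism $\Delta'\rightarrowtail \br{\bar X}\times\br Y\times\br Y$ is its own image, validity is equivalent to the inclusion $\Delta'\to\Delta_g$ being an isomorphism. Representably, this says that for all $b,b'\co Z\to\br B$ with $gb=gb'$ there is $\beta\co b\Rightarrow b'$ with $g\beta=1$, which is fullness on identities. For the converse direction, given that $g$ is \ffk, I would use the same identification: the existential is formable, and the inclusion is surjective on generalised elements, hence an isomorphism.

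The main obstacle is the bookkeeping in the last step. I have to check that the interpretation of $B^{[1]}(\id_{\bar x},u)\wedge\dom(u)=y\wedge\cod(u)=y'$ really is the object of 2-cells over identities, and this needs \cref{thm:enhance-conical-with-powers}-style care with the isomorphisms $\br{\bar X^{[1]}}\cong\br{\bar X}^{[1]}$. I also have to check that the image is genuinely trivial because the relevant map is monic, rather than merely appealing to the factorisation.
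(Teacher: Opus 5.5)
Your proposal is correct and follows the paper's proof essentially step by step. For part~(i) and for faithfulness you identify $\br{B(\bar x,y)}\cap\br{B(\bar x,y')}$ with the kernel pair of $\pr_1\circ m_B$, and for fullness on identities you use faithfulness to make the inner proposition of $\mathcal{J}_{\mathsf{full\text{-}id}}(B)$ project monomorphically, so that the existential is interpreted by that subobject itself and validity becomes the inclusion of the kernel pair into it. You also sketch the `if' directions, which the paper omits. One small correction: your appeal to \cref{thm:semantics-eqfib}(ii) ``in the formation direction'' should refer to part~(i) applied to the auxiliary proposition, which is what your next sentence actually uses.
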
 

\begin{proof} We only prove the `only if' statements. For part~(i), the first step is to note that the validity of the judgement
\[
(x \co X, y \co Y, y' \co Y) \ B(x,y) \land B(x,y') \vdash y = y' \mathsf{prop}
\]
expresses that we have a factorisation of the form
\[
\begin{tikzcd}
\br{B(x,y)} \cap \br{B(x,y')} \ar[rr, dotted] \ar[dr, >->] & & \br{X} \times \br{ y = y' } \ar[dl, >->] \\
 & \br{X} \times \br{Y} \times \br{Y} & 
 \end{tikzcd}
 \] 
 where the right-hand morphism is obtained by pulling back the diagonal of $\br{Y}$. Since the domain of the dotted arrow coincides with the kernel pair of  $\pr_1 \circ m_B$, this is easily seen to imply that $\pr_1 \circ m_B$ is a monomorphism.
 
 For part~(ii), the claim on faithfulness can be proved similarly to part~(i). For the claim on fullness on identities,
 for $x \co X, y, y' \co Y, u \co Y^{[1]}$, let 
 \[
 C(x,y,y',u) \defeq B(x,y) \land B(x,y') \land B^{[1]}(\id_x, y) \land \dom(u) = y \land \cod(u) = y'
 \]
 By assumption on faithfulness, the composite
 \[
 \br{ C(x,y,y',u) } \rightarrowtail \br{X} \times \br{Y} \times \br{Y} \times \br{Y}^{[1]} \rightarrow 
\br{X} \times \br{Y} \times \br{Y} 
  \]
  is a monomorphism. Therefore, the assumption means that we have an inclusion
  \[
  \br{B(x,y)} \cap \br{B(x,y')} \leq \br{C(x,y,y',u)} 
\]
which can easily be shown to imply the required fullness on identitites by unfolding the definition of the
intepretation of $C$. 
\end{proof}

\begin{defi}
	Let $\TT$ be an isoregular theory and $\K$ an isoregular 2-category. An $\LL$-structure $\M$ in $\K$ is called a {\em model} of $\TT$ if every axiom of $\TT$ is valid in $\K$.
\end{defi}

We establish the soundness of the deduction rules of isoregular theories.

\begin{theo} Let $\TT$ be an isoregular theory,  $\K$ an isoregular 2-category, and $\M$ a model
of $\TT$ in $\K$. Every theorem of $\TT$ is $\K$ is valid. 
\end{theo}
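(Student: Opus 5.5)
We argue by simultaneous induction on derivations, proving three statements at once for an arbitrary context $(\bar x\co\bar X)$: (a) if $s\co Y$ is derivable then $\br{s}\co\br{\bar X}\to\br{Y}$ is defined; (b) if $A\co\prp$ is derivable then the partial interpretation of $A$ is defined, so $\br{A}\rightarrowtail\br{\bar X}$ exists; (c) if $A_1,\ldots,A_n\vdash A$ is derivable then $\br{A_1}\cap\cdots\cap\br{A_n}\le\br{A}$. The induction must be simultaneous because formation of existential propositions has entailment premisses, and entailment rules have formation premisses. Part (a) is immediate from the recursive clauses in the definition of the interpretation of terms. For (b), every formation rule in \cref{tab:props} except the two existential ones matches a clause of the partial interpretation whose well-definedness needs only the interpretations of the premisses. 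For \cref{equ:exists-formation-mono} and \cref{equ:exists-formation-eqfib}, the inductive hypothesis (c) makes $\mathcal{J}_{\mathsf{mono}}(B)$, respectively $\mathcal{J}_{\mathsf{faithful}}(B)$ and $\mathcal{J}_{\mathsf{full\text{-}id}}(B)$, valid. Then \cref{thm:semantics-eqfib} shows that $\pr_1\circ m_B$ is a monomorphism, which is an \ffk-morphism, respectively directly an \ffk-morphism. Hence \cref{s.eq+mono} supplies the factorisation defining $\br{(\exists y\co Y)B}$.

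For (c), axioms of $\TT$ are valid because $\M$ is a model, so it remains to check that each entailment rule preserves validity. Before doing so, we record semantic counterparts of the syntactic operations:
\begin{enumerate}
\item weakening of context corresponds to pullback along a product projection;
\item substitution of terms corresponds to pullback along $(\br{s_1},\ldots,\br{s_n})$;
\item the power of a proposition $A^{[1]}$ is interpreted as $m_A^{[1]}$, and $(-)^{[1]}$ preserves finite limits and monomorphisms.
\end{enumerate}

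With these in place, the standard rules follow as in regular categories. The structural rules, the rules for $\top$ and $\land$, and the equality rules hold because $\br{s=t}$ is an equaliser and pullback preserves meets. The $\exists$-introduction and elimination rules follow from the orthogonality of strong epimorphisms against monomorphisms, using \cref{thm:wse-ffse} to see that the left part of the factorisation is a strong epi. The Frobenius rule and the compatibility of $\exists$ with substitution both reduce to the pullback-stability of fully faithful regular epimorphisms, axiom (iii) of \cref{thm:isoregular}. Concretely, we show that for $m\rightarrowtail\br{\bar X}$ the pullback of the factorisation of $\pr_1\circ m_B$ along $m$ is again an (ff regular epi, mono)-factorisation, namely that of $\pr_1\circ m_{B\land A}$; essential uniqueness in \cref{s.eq+mono} then identifies the two subobjects.

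We expect the main obstacle to be the rules for powers, specifically that $(-)^{[1]}$ commutes with existential quantification: $\br{((\exists y)B)^{[1]}}=\br{(\exists v\co Y^{[1]})B^{[1]}}$. For this we need that $(-)^{[1]}$ preserves fully faithful regular epimorphisms, which the corollary after \cref{thm:wse-ffse} gives (stability under finite powers); monos and finite limits are preserved, so essential uniqueness of factorisations concludes. The rules for powers of terms, conjunctions and equalities follow because $(-)^{[1]}$ is a 2-functor preserving finite limits.

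Finally, the rules expressing properties of arities, such as the coequaliser rule and \cref{equ:epimorphic-family}, are verified representably. They hold because $\br{S}^{(-)}\co\FinCat\op\to\K$ sends finite colimits of arities to limits, and fully faithful epis of arities to monos, since $\K(Z,\br{S}^{\bb c})\cong\K(Z,\br{S})^{\bb c}$. Checking these holds in $\Cat$ and transfers.
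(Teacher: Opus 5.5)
Your proposal is correct and takes the same route as the paper. Both argue by induction on derivations and check each rule. Both use \cref{thm:semantics-eqfib} together with \cref{s.eq+mono} for the existential formation rules, and preservation of finite limits, monomorphisms and fully faithful regular epimorphisms by $(-)^{[1]}$ for the power rules. For the restriction rules, your observation that $\br{S}^{(-)}$ turns finite colimits of arities into limits is exactly the paper's equaliser and joint-monomorphicity argument.

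There is one small slip in your last paragraph. \cref{equ:epimorphic-family} and \cref{equ:coeq} need $\br{S}^{(-)}$ to send arbitrary (jointly) epimorphic families of functors to jointly monomorphic families, not just fully faithful epis. Your representable argument via $\K(Z,\br{S}^{\bb c})\cong\K(Z,\br{S})^{\bb c}$ already proves this in that generality, so only the statement needs correcting.
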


\begin{proof}  The proof proceeds by induction on the derivations, checking that all the deduction rules
of the isoregular deductive calculus in \cref{sec:deduction-rules} are valid. 

The deduction rules for atomic formulas, conjunction, and existential quantification are  treated essentially as in the 1-categorical case, but one should use \cref{thm:semantics-eqfib} to prove the validity of the formation rules for the existential quantifier.

The rule for the terminal object is valid since $\br{S^0}$ is terminal. The rules for restriction with the axioms for an action are clearly valid. For the one on jointly epimorphic family, observe that we have a diagram
\[
\begin{tikzcd}
\br{\bar{X}} 
	\ar[r, shift left = 1, "\br{s}"] 
	\ar[r, shift right = 1, "\br{t}"'] &
\br{S}^{\bb c} \ar[r, "\br{S}^{f_i}"] &
\br{S}^{\bb d}  \mathrlap{,}
\end{tikzcd}
\]
where the family $\big( \br{S}^{f_i} )_{1 \leq i \leq n}$ is jointly monomorphic. Similarly, for the rule on
coequalisers, the premisses give us a diagram of solid arrows
\[
\begin{tikzcd}
\br{\bar{X}} \ar[d, dotted] \ar[dr, "\br{s}"]  & & \\
\br{S}^{\bb c } \ar[r, "\br{S}^q"'] & 
\br{S}^{\bb d} \ar[r, shift left = 1, "\br{S}^f"] 
	\ar[r, shift right = 1, "\br{S}^g"'] & 
\br{S}^{\bb a}
\end{tikzcd}
\]
where $\br{S}^q$ is the equaliser of $\br{S}^f$ and $\br{S}^g$. The universal property of
equalisers provides the desired conclusion.  Finally, the rules about powers are valid 
either using that the 2-functor $(-)^{[1]} \co \K \to \K$ preserves all the structure
or via direct calculations. For example, the validity of the rule
\[
\begin{prooftree}
x \co S^{\bb c } \vdash A(x) \co \mathsf{prop} \qquad 
s \co (S^{\bb c })^{[1]} 
\justifies
A^{[1]}(s)  \vdash A( \dom(s) ) 
\end{prooftree}
\]
is obtained by pulling back the diagram
\[
\begin{tikzcd}
\br{A}^{[1]} \ar[d, dotted] \ar[r, tail] & \br{ S^{\bb c} }^{[1]} \ar[d, "\dom"] \\
\br{A} \ar[r, tail] & \br{S^{\bb c}}
\end{tikzcd}
\]
along the morphism $\br{s} \co \br{\bar{X}} \to \br{S^{\bb c}}$. 
\end{proof} 

We are now ready to define the 2-categories of structures over a language, and of models over an isoregular theory.

\begin{defi} Let $\TT$ be an isoregular theory,  $\K$ an isoregular theory, $\M$ and $\N$ be structures for $\TT$ in $\K$.
\begin{itemize}
\item A \emph{morphism of structures} $p \co \M \to \N$ consists of a morphism $p_S \co \br{S}_{\M} \to \br{S}_{\N}$, for every basic sort $S$, such that, for every function symbol $f$,
the following diagram commutes
\[
\begin{tikzcd} 
\br{\bar{X}}_{\M} \ar[d, "\br{f}_{\M}"'] \ar[r, "p_{\bar{X}}"] &  \br{\bar{X}}_{\N} \ar[d, "\br{f}_{\N}"] \\
\br{X}_{\M} \ar[r, "p_X"'] & \br{X}_{\N}  \mathrlap{,} 
\end{tikzcd} 
\]
 where $p_{\bar{X}}$ and $p_X$ are the evident morphisms, and for every  relation symbol $R$,
 there exists a (necessarily unique) dotted map making the diagram 
\[
\begin{tikzcd}[column sep = huge]
\br{R}_{\M} \ar[d, tail] \ar[r, dotted] &  \br{R}_{\N} \ar[d, tail] \\
\br{\bar{X}}_{\M} \ar[r, "p_{\bar{X}}"'] & \br{\bar{X}}_{\N} 
\end{tikzcd}
\]
commute.
\item A \emph{structure morphism 2-cell}  $\phi \co p \Rightarrow q$ of morphisms of structures consists of  a 2-cell $\phi_S \co p_S \Rightarrow q_S$, for every basic sort $S$, such that, for any function symbol $f$, the following commutes
\begin{center}
	\begin{tikzpicture}[baseline=(current  bounding  box.south), scale=2]
		
		\node (a0) at (0,0.9) {$\br{\bar{X}}_{\M}$};
		\node (ab0) at (0.8,0.9) {${\scriptsize \Downarrow {\phi_{\bar X}}}$};
		\node (b0) at (1.5,0.9) {$\br{\bar{X}}_{\N}$};
		\node (c0) at (0,0) {$\br{X}_{\M}$};
		\node (cd0) at (0.8,0) {$\Downarrow \phi_{X}$};
		\node (d0) at (1.5,0) {$\br{X}_{\N},$};
		
		\path[font=\scriptsize]
		
		(a0) edge [bend left=20, ->] node [above] {$p_{\bar X}$} (b0)
		(a0) edge [bend right=20, ->] node [below] {$q_{\bar X}$} (b0)
		(a0) edge [->] node [left] {$\br{f}_{\M}$} (c0)
		(b0) edge [->] node [right] {$\br{f}_{\N}$} (d0)
		(c0) edge [bend left=20, ->] node [above] {$p_X$} (d0)
		(c0) edge [bend right=20, ->] node [below] {$q_X$} (d0);
	\end{tikzpicture}
\end{center}
where $\phi_{\bar{X}}$ and $\phi_X$ are the evident morphisms induced by $\phi$, and for every  relation symbol $R$,
there exists a (necessarily unique) dotted 2-cell making the diagram 
\begin{center}
	\begin{tikzpicture}[baseline=(current  bounding  box.south), scale=2]
		
		\node (a0) at (0,0.9) {$\br{R}_{\M}$};
		\node (ab0) at (0.75,0.9) {$\Downarrow $};
		\node (b0) at (1.5,0.9) {$\br{\R}_{\N}$};
		\node (c0) at (0,0) {$\br{\bar X}_{\M}$};
		\node (cd0) at (0.8,0) {$\Downarrow \phi_{\bar X}$};
		\node (d0) at (1.5,0) {$\br{\bar X}_{\N},$};
		
		\path[font=\scriptsize]
		
		(a0) edge [dotted, bend left=20, ->] node [above] {} (b0)
		(a0) edge [dotted, bend right=20, ->] node [below] {} (b0)
		(a0) edge [>->] node [left] {} (c0)
		(b0) edge [>->] node [right] {} (d0)
		(c0) edge [bend left=20, ->] node [above] {$p_{\bar X}$} (d0)
		(c0) edge [bend right=20, ->] node [below] {$q_{\bar X}$} (d0);
	\end{tikzpicture}
\end{center}
commute.
\end{itemize}
\end{defi}

Structures, morphisms of structures, and structure morphism 2-cells form an evident 2-category that we denote
as $\Str(\LL,\K)$. We then define the \emph{$2$-category of models} of $\TT$ in $\K$, written $\Mod(\TT,\K)$,
as the full subcategory of $\Str(\LL,\K)$ spanned by the models of $\TT$.

\begin{running} \label{running:model} As we shall discuss in more detail in \cref{sec:applications}, a model of the
theory ${\bb T}_{\tx{GFib}}$ is a Grothendieck fibration and its 2-category of models 
$\Mod({\bb T}_{\tx{GFib}},\Cat)$ is isomorphic to the
2-category of Grothendieck fibrations and cartesian functors between them.
\end{running}

\begin{rmk}
	Following \cref{rmk:RT25}, $\Str(\LL,\Cat)$ coincides with the 2-category constructed in \cite[Definition~3.8]{RT25ERegular} which is locally presentable as a 2-category by \cite[Theorem~3.9]{RT25ERegular}. Adapting the same proof it is easy to see that $\Str(\LL,\K)$ is locally finitely presentable whenever $\K$ is so, and that the forgetful 2-functor
	$$ \Str(\LL,\K)\longrightarrow \prod_{S\in\S}\K\mathrlap{,} $$
	obtained by evaluating on sorts, is continuous and preserves filtered colimits. We will study the accessibility of $\Mod(\TT,\K)$ in \cref{sec:accessibility}.
\end{rmk}

We conclude this section with two lemmas on the 2-categories of models which will be useful later.

\begin{lemma}\label{lemma:morphisms.respect.formulas}
	Let $\TT$ be an isoregular theory, $\K$ be an isoregular 2-category, and $p \colon \M\to \N$ a morphism in $\Mod(\TT,\K)$. Then for any judgement $(\bar{x} \co \bar{X})\ A \co \mathsf{prop}$, there exists a unique morphism $p_A$ making the diagram
		\begin{center}
		\begin{tikzpicture}[baseline=(current  bounding  box.south), scale=1.5]
			
			\node (a0) at (0,0.8) {$\br{A}_{\M}$};
			\node (b0) at (1.5,0.8) {$\br{A}_{\N}$ };
			\node (c0) at (0,0) {$\br{\bar{X}}_{\M}$};
			\node (d0) at (1.5,0) {$\br{\bar{X}}_{\N}$};
			
			\path[font=\scriptsize]
			
			(a0) edge [dotted, ->] node [above] {$p_{A}$} (b0)
			(a0) edge [>->] node [left] {} (c0)
			(b0) edge [>->] node [right] {} (d0)
			(c0) edge [->] node [below] {$p_{\bar{X}}$} (d0);
		\end{tikzpicture}	
	\end{center}
	commute. 
\end{lemma}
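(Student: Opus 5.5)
Uniqueness of $p_A$ is immediate: the right-hand vertical map $\br{A}_{\N}\rightarrowtail\br{\bar X}_{\N}$ is a monomorphism. So the real task is existence. I would prove it by induction on the structure of the proposition $A$, or more precisely on the derivation of $(\bar x\co\bar X)\ A\co\prp$, following the recursive clauses of the interpretation in \cref{sec:semantics}. Throughout I use that $p_{\bar X}$ commutes with the interpretation of every term: $\br{s}_{\N}\circ p_{\bar X}=p_Y\circ\br{s}_{\M}$. This is proved by an easy preliminary induction on terms, using the defining squares for function symbols, the fact that products are preserved by pairing the components of $p$, and 2-naturality of the powers $(-)^{\bb c}$ for the restriction and power rules.

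\emph{Atomic and conjunction cases.}
\begin{itemize}
\item For $\top$ there is nothing to prove.
\item For $R(s_1,\ldots,s_n)$, compose the induced map $\br{R(\bar s)}_{\M}\to\br{R}_{\M}$ with the given map $\br{R}_{\M}\to\br{R}_{\N}$. Compatibility of terms shows that this and $p_{\bar X}$ form a cone over the pullback defining $\br{R(\bar s)}_{\N}$, which gives $p_A$.
\item For $s=t$, compatibility of terms shows that $p_{\bar X}$ composed with the equaliser inclusion equalises $\br{s}_{\N}$ and $\br{t}_{\N}$, so it factors through $\br{s=t}_{\N}$.
\item For $A\wedge B$, use the induction hypotheses $p_A$ and $p_B$ together with the universal property of the intersection pullback.
\end{itemize}

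\emph{Powers.} For $A^{[1]}$, apply the 2-functor $(-)^{[1]}$ to the square for $A$. This gives a map $\br{A}_{\M}^{[1]}\to\br{A}_{\N}^{[1]}$ over $p_{\bar X}^{[1]}$. Composing with the canonical isomorphisms $\br{\bar X^{[1]}}\cong\br{\bar X}^{[1]}$ yields the square. One has to check that these isomorphisms are compatible with $p$, namely that $p_{\bar X^{[1]}}$ corresponds to $p_{\bar X}^{[1]}$; this is clear, since both are built from the components $p_S^{\bb c}$.

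\emph{Existential quantifier.} This is the main step. Let $A=(\exists y\co Y)B(\bar x,y)$. The induction hypothesis gives $p_B\co\br{B}_{\M}\to\br{B}_{\N}$ lying over $p_{\bar X}\times p_Y$. Consider the square with top edge $\br{B}_{\M}\twoheadrightarrow\br{A}_{\M}\to\br{\bar X}_{\M}\to\br{\bar X}_{\N}$ composed the other way. Concretely:
\begin{itemize}
\item the left edge is the fully faithful regular epimorphism $e_{\M}\co\br{B}_{\M}\to\br{A}_{\M}$;
\item the right edge is the monomorphism $m_{\N}\co\br{A}_{\N}\rightarrowtail\br{\bar X}_{\N}$;
\item the top edge is $e_{\N}\circ p_B$;
\item the bottom edge is $p_{\bar X}\circ m_{\M}$.
\end{itemize}
The square commutes because both composites equal $p_{\bar X}\circ\pr_1\circ m_B$, computed in $\M$, by naturality of projections. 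A regular epimorphism is a strong epimorphism, so it lifts orthogonally against monomorphisms. This produces the diagonal $p_A\co\br{A}_{\M}\to\br{A}_{\N}$ with $m_{\N}p_A=p_{\bar X}m_{\M}$, as required.

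The only subtlety is that this orthogonality is needed only in the underlying category $\K_0$. Regular epimorphisms and monomorphisms in $\K$ coincide with those in $\K_0$ (as recorded in the preliminaries), so the lift exists there. No 2-dimensional argument is needed, because the statement only asserts a 1-cell.
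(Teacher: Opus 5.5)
Your proposal is correct and follows the same route as the paper. The paper's proof is a single line, ``induction on the complexity of the formula,'' and your case analysis supplies exactly the detail that line leaves implicit: uniqueness from the monomorphism $\br{A}_{\N}\rightarrowtail\br{\bar X}_{\N}$, compatibility of $p$ with terms, the pullback and equaliser universal properties for the atomic, conjunction and power cases, and, in the existential case, the diagonal fill-in of the regular epimorphism $e_{\M}$ against the monomorphism $m_{\N}$ in $\K_0$.
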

\begin{proof}
	This is easily proved by induction on the complexity of the formula.
\end{proof}

\begin{lemma}\label{lemma:powers of models}
	Let $\TT$ be an isoregular theory and $\K$ an isoregular 2-category.  Then the 2-category $\Mod(\TT,\K)$ is closed in $\Str(\LL,\K)$ under powers by $[1]$.
\end{lemma}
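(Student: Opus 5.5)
Given a model $\M$ of $\TT$ in $\K$, I take its power $\M^{[1]}$ in $\Str(\LL,\K)$ to be computed sortwise. On basic sorts it is $\br{S}_{\M^{[1]}} \defeq \br{S}_{\M}^{[1]}$. Function symbols are interpreted by $\br{f}_{\M}^{[1]}$, composed with the canonical isomorphisms $(\br{S}^{\bb c})^{[1]}\cong(\br{S}^{[1]})^{\bb c}$ and with the fact that $(-)^{[1]}$ preserves finite products. Relation symbols are interpreted by $m_R^{[1]}$, which is again a monomorphism since $(-)^{[1]}$ is a right adjoint representably. One checks directly from the definition of morphisms and 2-cells of structures that this, together with the evaluation maps, is the power by $[1]$ in $\Str(\LL,\K)$. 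The task is then to show that $\M^{[1]}$ satisfies every axiom of $\TT$.

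The key step is to prove, by induction on the derivation of $(\bar x\co\bar X)\ A\co\prp$ for $\TT$, that $A$ is interpretable in $\M^{[1]}$ and that $\br{A}_{\M^{[1]}} \cong \br{A}_{\M}^{[1]}$ as subobjects of $\br{\bar X}_{\M^{[1]}}\cong\br{\bar X}_{\M}^{[1]}$. The same statement for terms, $\br{s}_{\M^{[1]}}\cong\br{s}_{\M}^{[1]}$, comes first and follows by a straightforward induction over the term rules, using the functoriality of $(-)^{[1]}$ and its commutation with the restriction maps $\br{S}^f$. For propositions, the cases of $=$, $R$, $\land$ and $A^{[1]}$ hold because the 2-functor $(-)^{[1]}\co\K\to\K$ preserves finite 2-limits (equalisers and pullbacks), and because $(X^{[1]})^{[1]}\cong (X^{[1]})^{[1]}$ holds up to the symmetry of $[1]\times[1]$.

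The existential case is the main obstacle. There I need two things. First, $(-)^{[1]}$ must preserve \ffk-morphisms, which I get from characterisation~(iv) of \cref{thm:ff-kernel-char} (faithful and full on identities), since these conditions transfer representably through $\K(X,A^{[1]})\cong\K(X,A)^{[1]}$. Equivalently, $(-)^{[1]}$ preserves kernel pairs and fully faithful maps. This ensures that the formation side condition holds in $\M^{[1]}$. Second, $(-)^{[1]}$ must preserve fully faithful regular epimorphisms. For this I use the Corollary above, which shows that fully faithful regular epimorphisms are stable under finite powers. Monomorphisms are preserved as well, so the factorisation of \cref{s.eq+mono} is carried to a factorisation, and by its essential uniqueness $\br{(\exists y)B}_{\M^{[1]}}\cong\br{(\exists y)B}_{\M}^{[1]}$.

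Finally, an axiom $A_1,\dots,A_n\vdash A$ valid in $\M$ means $\bigcap\br{A_i}_\M\le\br{A}_\M$. Applying $(-)^{[1]}$, which preserves intersections (pullbacks) and the order of subobjects, gives $\bigcap\br{A_i}_{\M^{[1]}}\le\br{A}_{\M^{[1]}}$. Hence $\M^{[1]}$ is a model, and $\Mod(\TT,\K)$, being full, is closed under this power.
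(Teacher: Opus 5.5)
Your proposal is correct and follows the paper's proof. That proof rests on exactly the two facts you use: powers by $[1]$ in $\Str(\LL,\K)$ are computed sortwise, and $\br{A}_{\M^{[1]}}\cong\br{A}_{\M}^{[1]}$ for every derivable proposition; you spell out the induction the paper leaves implicit, correctly handling the existential case via preservation of \ffk-morphisms, monomorphisms and fully faithful regular epimorphisms by $(-)^{[1]}$ (with soundness guaranteeing that the interpretation in $\M$ is defined). The only blemish is the garbled isomorphism in your $A^{[1]}$ case: what you mean is that the two identifications inside $\br{\bar X}_{\M}^{[1]\times[1]}$ differ by the swap of $[1]\times[1]$, and this swap fixes the subobject $\br{A}_{\M}^{[1]\times[1]}$.
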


\begin{proof} The claim follows since powers by $[1]$ in $\Str(\LL,\K)$ are computed componentwise, and for any model $\M$ and judgment $(\bar{x} \co \bar{X})\ A \co \mathsf{prop}$ one has $\br{A}_{\M^{[1]}}\cong \br{A}_\M^{[1]}$.
\end{proof}

\section{Functorial semantics for isoregular theories}
\label{sec:functorial-semantics}

\subsection{The syntactic 2-category of an isoregular theory}\label{section:SynT}

We construct the syntactic 2-category  of an isoregular theory. For this,
we introduce some definitions.

\begin{defi} Let $\TT$ be a isoregular theory. 
\begin{itemize}
\item A  \emph{formula-in-context}, written $\{ \bar{x} \co \bar{X}  \  | \ A(\bar{x}) \}$,  is a pair consisting of a context $\bar{X}$ and a raw formula $A$ such that the following judgement is derivable in $\TT$:
\[
(\bar{x} \co \bar{X}) \quad A \co \prp \mathrlap{.}
\]
\item  A \emph{morphism} $[f]\colon \{   \bar{x} \co \bar{X} | \ A( \bar{x})\}\to  \{  \bar{y} \co \bar{Y}  |\  B(\bar{y})\}$ between
formulas-in-context is an equivalence class of formulas-in-context  
		$\{  \bar{x} \co \bar{X}, \bar{y} \co \bar{Y}   |\ f( \bar{x}, \bar{y})\}$
		such that the following judgements are derivable in $\TT$:
		\begin{gather*} 
		 f(\bar{x},\bar{y})\vdash A(\bar{x})\wedge B(\bar{y}) \mathrlap{,} \\
		f(\bar{x},\bar{y}) \mathrlap{,} \  f(\bar{x},\bar{y}')\vdash \bar{y}=\bar{y}' \mathrlap{,} \\
		A(\bar{x})\vdash (\exists \bar{y} \co \bar{Y}) f(\bar{x},\bar{y}) \mathrlap{.} 
		\end{gather*}
		Two such formulas-in-context are equivalent if each is provable from the other in $\TT$.
\item  A \emph{$2$-cell} $[\alpha]\colon [f]\Rightarrow [f']$ between morphisms with domain 
$\{   \bar{x} \co \bar{X}   \  | \ A(\bar{x}) \}$ and codomain $\{ \bar{y} \co \bar{Y}   \  | \ B(\bar{y}) \}$ is an equivalence class of formulas-in-context  
		$\{ \bar{x} \co \bar{X}, \bar{v} \co \bar{Y}^{[1]}   \ | \ \alpha( \bar{x},  \bar{v}) \}$
		such that the following judgements are provable in $\TT$:
		\begin{gather*} 
			\alpha(\bar{x},\bar{v})\vdash A(\bar{x})\wedge B^{[1]}(\bar{v}) \mathrlap{,} \\
			\alpha(\bar{x},\bar{v})\wedge\alpha(\bar{x},\bar{v}')\vdash \bar{v}=\bar{v}' \mathrlap{,} \\
			A(\bar{x})\vdash (\exists \bar{v} \co \bar{Y}^{[1]}) \alpha (\bar{x},\bar{v}) \mathrlap{,}   \\
			\alpha(\bar{x},\bar{v})\vdash f(\bar{x},\mathsf{dom}(\bar{v}))\wedge f'(\bar{x},\mathsf{cod}(\bar{v})) \mathrlap{.}
			\end{gather*}
		 Two such formulas-in-context are equivalent if each is mutually derivable in $\TT$.
\end{itemize}
\end{defi}

The next results introduce the syntactic 2-category of $\TT$ (\cref{thm:synT-def}) and establish
that it is isoregular (\cref{thm:synTisoregular}), by first showing that it  it
has finite 2-limits (\cref{prop:limits-in-SynT}). For this, we put to work the deduction rules
for isoregular theories. In order to simplify notation and make our arguments more readable,
we  work only with contexts of the form  $(x \co X)$, $(y \co Y)$, $(x \co X, y \co Y)$. Of course, all arguments carry over to more general contexts.

\begin{prop} \label{thm:synT-def}
There is a 2-category $\Syn(\TT)$, called the syntactic 2-category of $\TT$, with propositions-in-context as objects, definable morphisms as morphisms,
and definable 2-cells as 2-cells.
\end{prop}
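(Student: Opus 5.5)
We have to exhibit identities and composition at the level of morphisms and of 2-cells (vertical and horizontal composition, or equivalently whiskering plus vertical composition), check that they are well defined on equivalence classes, and verify the 2-category axioms. Since every operation is defined by a formula, each axiom reduces to the mutual derivability of two formulas-in-context, which is what equality of equivalence classes means. Following the convention just stated, we work with contexts $(x \co X)$, $(y \co Y)$, $(z \co Z)$.

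For the underlying 1-category we mimic the regular 1-categorical construction, as in Johnstone's D1.4. The identity on $\{x \co X \mid A\}$ is $\{x \co X, x' \co X \mid A(x) \land x = x'\}$. The composite of $[f]$ and $[g]$ is
\[
\{x \co X, z \co Z \mid (\exists y \co Y)(f(x,y) \land g(y,z))\}.
\]
The first point to check is that this existential can be formed. The functionality judgement for $f$ gives $\mathcal{J}_{\mathsf{mono}}$ for the formula $f(x,y) \land g(y,z)$ in the variable $y$, so Rule~\ref{equ:exists-formation-mono} applies. The three defining judgements of a morphism then follow from the usual introduction, elimination and Frobenius rules. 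Associativity and unit laws are mutual derivabilities proved exactly as for regular logic. Well-definedness on equivalence classes holds because every construction is monotone in the formulas involved.

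For 2-cells, the identity 2-cell on $[f]$ is $\{x, v \co Y^{[1]} \mid (\exists y)(f(x,y) \land v = \id_y)\}$, again formed by Rule~\ref{equ:exists-formation-mono}. Vertical composition of $\alpha \co f \Rightarrow f'$ and $\beta \co f' \Rightarrow f''$ is given by
\[
(\exists w \co Y^{[2]})\big(\alpha(x,\fst(w)) \land \beta(x,\snd(w)) \land v = \comp(w)\big).
\]
To form this existential we need uniqueness of $w$. We get it from uniqueness of $\alpha$ and $\beta$, combined with the jointly-epimorphic/monic rule for the restrictions $\fst,\snd$ (the rule used after the coequaliser example), which says that $w$ is determined by $\fst(w)$ and $\snd(w)$. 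Totality follows from the coequaliser rule for $[2] = [1] \sqcup_{[0]} [1]$, which applies because $\cod(\fst)$ and $\dom(\snd)$ agree by the fourth condition on 2-cells.

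Whiskering by morphisms uses the power of formulas. For $[g] \co Y \to Z$ we set
\[
(g\alpha)(x,u) \defeq (\exists v \co Y^{[1]})\big(\alpha(x,v) \land g^{[1]}(v,u)\big),
\]
where $g^{[1]}$ is the power of the formula $g$. Here uniqueness and totality of $g^{[1]}$ come from the rules stating that powers preserve conjunction, existentials and equality, applied to the three judgements defining $[g]$. Whiskering on the other side is $(\exists y)(h(x',x) \land \alpha(x,v))$. Horizontal composition is then defined from whiskering and vertical composition.

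The main obstacle is the remaining axioms: interchange, associativity and unitality of vertical composition, and compatibility of whiskering with composition. These amount to identities between restrictions of terms in $Y^{[n]}$ and $Y^{[1]\times[1]}$. For example, interchange requires showing that $g^{[1]}$ applied to a composite $\comp(w)$ is the composite of the images, and that the naturality square exists, which is the content of \cref{thm:easyprop}\ref{item:easyprop-ii}. I would first isolate two lemmas: functoriality of $g^{[1]}$ with respect to restriction along $[1] \to [2]$ and $[1] \to [0]$, derived from the power rules and the rules for actions of restriction; and uniqueness of elements of $Y^{[2]}$, $Y^{[3]}$ and $Y^{[1]\times[1]}$ given their edges, derived from the jointly-monic rule. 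With these lemmas, each axiom reduces to a routine chain of entailments.
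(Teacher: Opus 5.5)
Your constructions are the paper's: the same identity and composite morphisms, the same identity 2-cell, vertical composition through $Y^{[2]}$, and the same two whiskerings, with the existentials formed via functionality and Rule~\ref{equ:epimorphic-family}. However, the step you give for interchange does not work.

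\cref{thm:easyprop}\ref{item:easyprop-ii} takes $\comp(t)=\comp(t')$ as a premiss and only then produces a square. Interchange needs an equation of exactly that form. For $v$ with $\alpha(x,v)$, the composite of the $u$ with $g^{[1]}(v,u)$ followed by the $w_1$ with $\beta(\cod(v),w_1)$ must equal the composite of the $w_0$ with $\beta(\dom(v),w_0)$ followed by the $u'$ with $g'^{[1]}(v,u')$. So invoking (ii) to obtain the naturality square is circular. Neither of your two auxiliary lemmas (functoriality of $g^{[1]}$, uniqueness of diagrams given their edges) yields this equation either.

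What is missing is the naturality of a 2-cell. The paper's proof uses it, and it has to be obtained by powering the formula $\beta$ itself:
\begin{itemize}
\item From totality $B(y)\vdash(\exists w\co Z^{[1]})\,\beta(y,w)$, the rule for powers of existentials gives $B^{[1]}(v)\vdash(\exists s\co Z^{[1]\times[1]})\,\beta^{[1]}(v,s)$.
\item Rule~\ref{equ:power-dom-cod-prop} gives $\beta(\dom(v),\dom(s))$ and $\beta(\cod(v),\cod(s))$.
\item Rule~\ref{equ:power-functoriality-formulas} applied to $\beta(y,w)\vdash g(y,\dom(w))\land g'(y,\cod(w))$, after commuting the power with the substitution of $\dom$ and $\cod$, gives $g^{[1]}(v,\dom^{[1]}(s))$ and $g'^{[1]}(v,\cod^{[1]}(s))$.
\end{itemize}
Hence $s$ is the naturality square. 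Its two composites agree by Rule~\ref{equ:action}, since both are its restriction along the diagonal $[1]\to[1]\times[1]$. With this as a third lemma, interchange goes through as you outline.

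Smaller points:
\begin{itemize}
\item Totality of the vertical composite uses the pushout Rule~\ref{equ:union}, not the coequaliser rule.
\item You also need Rule~\ref{equ:power-functoriality-terms} to see that the identity and vertical-composite 2-cells land in $B^{[1]}$.
\item The bound variable in your second whiskering should be $x$, not $y$.
\end{itemize}
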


\begin{proof} Let $\{x \co   X \ |\ A(  x)\}$ and $\{y \co Y \ | \ B( y)\}$ be formulas-in-context. The category
\[
\Syn(\TT)\big( \{x \co X \ |\ A(  x)\}, \{y \co  Y \ |\ B(  y)\} \big)
\]
has morphisms from $ \{x \co X \ |\ A(  x)\}$ to $\{y \co  Y \ |\ B(  y)\}$ as objects and 2-cells between them
as maps. For 2-cells $[\alpha]\colon [f]\Rightarrow [f']$ and $[\beta]\colon [f']\Rightarrow [f'']$, their vertical composite $[\beta]\cdot [\alpha] \co [f] \Rightarrow [f''] $ is represented by the formula
\[
\{  x \co X, u \co Y^{[1]} \ |\ (\exists h \co Y^{[2]})\ \alpha(x,\fst(h))\wedge \beta(x,\snd(h))\wedge \tx{comp}(h)=u  \} \mathrlap{.}
\]
For a morphism $[f]\colon \{x \co  X|\ A(  x)\}\to  \{y \co  Y \ |\ B(  y)\}$, the identity 2-cell on it is represented by the formula-in-context
\[
\{  x \co X,  u: Y^{[1]} |\ (\exists y \co Y)  f(x,y)\wedge u=\id_y \} \mathrlap{.}
\]
Next, we consider the composition functors. Let $\{x \co   X  \ | \ A(  x)\}$ and $\{y \co  Y \ | \ B(y)\}$ and
$\{ z \co Z \ | \ C(z) \}$ be formulas-in-context. We define a functor
\begin{multline*} 
\Syn(\TT)\big( \{y \co Y \ |\ B(y)\}, \{z \co  Z \ |\ C(z)\} \big) \times 
\Syn(\TT)\big( \{x \co X \ |\ A(  x)\}, \{y \co  Y|\ B(  y)\} \big)   \\
\xrightarrow{(-) \circ (-)} \Syn(\TT)\big( \{x \co X \ |\ A(  x)\}, \{z \co  Z \ |\ C(z)\} \big)
\end{multline*}
as follows. For $[f]\colon \{x \co   X \ | \ A(x)\} \to   \{y \co   Y \ |\ B(  y)\}$ and $[g]\colon  \{y:   Y|\ B(  y)\}\to   \{z:   Z|\ C(  z)\}$, their composite $[g]\circ[f]$ is represented by the formula
\[
\{  x \co X, z \co  Z\ |\  (\exists y \co Y) f(x,y)\wedge g(y,z) \} \mathrlap{.}
\]
For a 2-cell $[\alpha]\colon [f]\Rightarrow [f']$ and a morphism $[g]\colon \{y:   Y\ |\ B(  y)\}\to  \{z:   Z \ |\ C(  z)\}$, their horizontal composite $[g]\circ [\alpha] \co [ g \circ f] \Rightarrow [g \circ f']$ is represented by the formula
\[
\{ x \co X, u \co Z^{[1]} \ |\ (\exists v:  Y^{[1]}) \alpha(x,v)\wedge  g^{[1]}(v,u)  \} \mathrlap{.}
\]
 Given a morphism $[f]\colon \{x:   X\ |\ A(  x)\}\to  \{y:   Y \ |\ B(  y)\}$ and a 2-cell $[\beta]\colon [g]\Rightarrow [g']$, their horizontal composite $[\beta]\circ [f] \co [g \circ f] \Rightarrow [g' \circ f]$ is represented by the formula
\[
\{ x \co X, w \co Z^{[1]}  \ |\ (\exists y \co Y) f(x,y)\wedge \beta(y,w) \} \mathrlap{.}
\]
Finally, for an object $ \{x:   X|\ A(  x)\}$, the identity morphism on it is represented by the formula-in-context
\[
\{ x \co X, x' \co  X |\  x= x'\} \mathrlap{.}
\]

We need to ensure that everything is well-defined, and that this is indeed a 2-category.
First, one checks the axioms hom-categories. To show that the composite of 2-cells is
a 2-cell and that the identity 2-cell is a 2-cell, one uses \cref{equ:epimorphic-family} for the required uniqueness property and \cref{equ:power-functoriality-terms}, respectively.
Here, associativity of composition can be shown
using the rules in \cref{thm:easyprop}, terms expressing that composition is associative in $\Cat$, and 
\cref{equ:action}.
Secondly, to prove that the horizontal composites of 2-cells with 1-cells are 2-cells, one uses \cref{equ:power-functoriality-formulas} and \cref{equ:power-dom-cod-prop}. For the interchange law, we need to consider
%\[
%\begin{tikzcd}
%\{ x \co X \ | \ A(x) \} \ar[r, bend left = 20, "{[f]}"]  \ar[r, bend right = 20, "{[f']}"']  &
%\{ y \co Y \ | \ B(y) \} \ar[r, bend left = 20, "{[g]}"]  \ar[r, bend right = 20, "{[g']}"']  &
%\{ z \co Z \ | \ C(z) \} 
%\end{tikzcd}
%\]
\begin{center}
	\begin{tikzpicture}[baseline=(current  bounding  box.south), scale=2]
		
		\node (a0) at (0,0.9) {$\{ x \co X \ | \ A(x) \}$};
		\node (ab0) at (1,0.9) {${\scriptsize \Downarrow {[\alpha]}}$};
		\node (b0) at (2,0.9) {$\{ y \co Y \ | \ B(y) \} $};
		\node (cd0) at (3,0.9) {${\scriptsize \Downarrow {[\beta]}}$};
		\node (d0) at (4,0.9) {$\{ z \co Z \ | \ C(z) \}$};
		
		\path[font=\scriptsize]
		
		(a0) edge [bend left=22, ->] node [above] {$[f]$} (b0)
		(a0) edge [bend right=22, ->] node [below] {$[f']$} (b0)
		(b0) edge [bend left=22, ->] node [above] {$[g]$} (d0)
		(b0) edge [bend right=22, ->] node [below] {$[g']$} (d0);
	\end{tikzpicture}
\end{center}
and show that $\big( [\beta] \circ [f'] \big) \circ \big( [g] \circ [\alpha] \big) = \big( [g'] \circ [\alpha] \big) 
\circ \big( [\beta] \circ [f] \big)$. This is a direct calculation, using the naturality of $\beta$, 
thinking about a situation of the form (but keeping in mind that this is not an actual diagram in
$\Syn(\TT)$):
\[
\begin{tikzcd}[column sep = large] 
 g f x  \ar[r, "g ( \alpha_x )"] \ar[d, "\beta_{f x}"'] & g f' x \ar[d, "\beta_{f'x}"] \\
g' f x  \ar[r, "g' ( \alpha_x )"'] &  g' f' x  \mathrlap{.} 
\end{tikzcd} 
\]
Finally, to see that the composition of morphism and the identity morphism are well-defined,  associative and unital, the proof is as in the 1-dimensional case. 
	\end{proof} 

% Alternative notation
%\item Given two 2-cells $[\alpha]\colon [\theta]\Rightarrow [\theta']\colon [\underline x:\underline A^{\underline X}|\ \varphi(\underline x)]\to  [\underline y:\underline B^{\underline Y}|\ \psi(\underline y)]$ and $[\beta]\colon [\rho]\Rightarrow [\rho']\colon [\underline y:\underline B^{\underline Y}|\ \psi(\underline y)]\to  [\underline z:\underline C^{\underline Z}|\ \chi(\underline z)]$, their horizontal composite is represented by the formula

\begin{lemma}\label{prop:limits-in-SynT}
	The $2$-category $\Syn(\TT)$ has all finite $2$-limits. 
\end{lemma}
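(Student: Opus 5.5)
Since a 2-category has all finite 2-limits as soon as it has a terminal object, pullbacks (hence finite products and equalisers), and powers by $[1]$, I will construct these three kinds of limits in $\Syn(\TT)$ directly. I will then verify their 2-dimensional universal property either by hand or via \cref{thm:enhance-conical-with-powers}. Every finite weight can be built from these: finite conical limits together with powers by finitely presentable categories, and the latter are obtained from powers by $[1]$ by finite conical limits in the usual way. The terminal object is $\{ \ \mid \top\}$ in the empty context. The unique morphism into it from $\{x\co X\mid A(x)\}$ is represented by $A(x)$ itself, and the unique 2-cell between two such morphisms is represented by the identity formula. Uniqueness holds because, in the empty context, two formulas-in-context representing morphisms are provably equivalent by the defining judgements.

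For pullbacks of $[f]\co\{x\co X\mid A\}\to\{z\co Z\mid C\}$ and $[g]\co\{y\co Y\mid B\}\to\{z\co Z\mid C\}$, I take
\[
\{x\co X,y\co Y\mid (\exists z\co Z)\, f(x,z)\wedge g(y,z)\}
\]
with the projections represented by $\{x,y,x'\mid (\exists z)f(x,z)\wedge g(y,z)\wedge x=x'\}$ and similarly for $y$. The existential is formable by \cref{equ:exists-formation-mono}, because functionality of $f$ gives $\mathcal{J}_{\mathsf{mono}}$.

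The power of $\{x\co X\mid A(x)\}$ by $[1]$ is $\{u\co X^{[1]}\mid A^{[1]}(u)\}$, with evaluation 1-cells represented by $\dom(u)=x$ and $\cod(u)=x$. The generating 2-cell $\dom\Rightarrow\cod$ is represented by $\{u, v\co X^{[1]}\mid A^{[1]}(u)\wedge v=u\}$.

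The 1-dimensional universal properties are routine and follow the 1-categorical syntactic-category argument. Given a cone, I form the induced morphism by conjunction and existential quantification, for instance $\{w,x,y\mid h(w,x)\wedge k(w,y)\}$. I then prove functionality and totality using the entailment rules for equality, conjunction, and the existential, including Frobenius.

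The 2-dimensional part is handled through \cref{thm:enhance-conical-with-powers}. Once I know that $\Syn(\TT)$ has powers by $[1]$ in the 2-categorical sense, I only need that $(-)^{[1]}$ preserves the conical limits just built. That is, I need
\[
\{u,v\mid \exists w\,(f^{[1]}(u,w)\wedge g^{[1]}(v,w))\}
\]
to be isomorphic to $\{(x,y)\mid\exists z(\dots)\}^{[1]}$. This should follow from the power rules of \cref{sec:deduction-rules}, which say that $(-)^{[1]}$ on propositions commutes with $\wedge$, $=$, and $\exists$.

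The main obstacle is showing that $\{u\co X^{[1]}\mid A^{[1]}(u)\}$ really is a 2-categorical power. I must show that morphisms $\{w\co W\mid D\}\to\{u\mid A^{[1]}(u)\}$ correspond bijectively, and functorially in 2-cells, to 2-cells between morphisms into $\{x\mid A\}$. In one direction this is nearly the definition of a 2-cell: $\alpha(w,v)$ with $v\co X^{[1]}$ is literally a functional relation into $X^{[1]}$ satisfying $A^{[1]}$. The converse requires checking that $\dom$ and $\cod$ recover $f,f'$, via \cref{equ:power-dom-cod-prop}. The harder half is the action on 2-cells. A 2-cell between morphisms into the power is a formula in $(X^{[1]})^{[1]}=X^{[1]\times[1]}$, and I must match it with pairs of 2-cells satisfying a commuting-square condition. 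For this I would use \cref{thm:easyprop}, the action axioms \cref{equ:action}, and the jointly epimorphic family rule \cref{equ:epimorphic-family} to identify squares with their four edges and to derive the required uniqueness.
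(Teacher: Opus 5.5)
Your proposal is correct and follows essentially the same route as the paper: both reduce to a terminal object, finite conical limits and powers by $[1]$, where the paper uses binary products and equalisers and you use pullbacks, which is immaterial. Both then build the power as $\{u\co X^{[1]}\mid A^{[1]}(u)\}$ with 2-cell $A^{[1]}(u)\land u=v$, check the comparison on 2-cells by identifying squares with their edges $\pru,\prd,\prl,\prr$, and upgrade the conical limits via \cref{thm:enhance-conical-with-powers} and the power rules. Two small fixes for the write-up: the evaluation 1-cells should be represented by $A^{[1]}(u)\land\dom(u)=x$ (resp.\ $\cod$) so that the first defining judgement of a morphism holds; and in the fullness step, the square glued from two 2-cells via \cref{equ:union} lies in $(A^{[1]})^{[1]}$ only by \cref{equ:power-squares}, a rule missing from your list of tools.
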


\begin{proof}  It suffices to show that $\Syn(\TT)$ has a terminal object, binary products, equalisers, in the 2-categorical sense, and powers by $[1]$. In order to do this, we apply~\cref{thm:enhance-conical-with-powers}. When 
checking the required hypotheses below, we again restrict to formulas-in-context where the context has a single variable in order to improve readability.

Let us begin by constructing powers. Let $\{  x\colon   X|\ A(  x)\}$ be a formula-in-context and consider
%\[
%\begin{tikzcd}
% \{  u\colon   X^{[1]}|\ A^{[1]}(  u)\} \ar[r, bend left = 20, "\dom"] \ar[r, bend right = 20, "\cod"'] & 
% 	\{ x \co X \ |  \ A(x) \} 
%	\end{tikzcd}
% \]
 \begin{center}
 	\begin{tikzpicture}[baseline=(current  bounding  box.south), scale=2]
 		
 		\node (a0) at (0,0.9) {$\{  u\colon   X^{[1]}|\ A^{[1]}(  u)\}$};
 		\node (ab0) at (1,0.9) {${\scriptsize \Downarrow {[\iota]}}$};
 		\node (b0) at (2,0.9) {$\{ x \co X \ |  \ A(x) \}  $};
 		
 		\path[font=\scriptsize]
 		
 		(a0) edge [bend left=22, ->] node [above] {$[\dom]$} (b0)
 		(a0) edge [bend right=22, ->] node [below] {$[\cod]$} (b0);
 	\end{tikzpicture}
 \end{center}
where $\dom(u,x) \defeq A^{[1]}(u)\land\dom(u) = x$, $\cod(u,x) \defeq A^{[1]}(u)\land\cod(u) = x$ and 
\[
\iota(u,v) \defeq A^{[1]}(u)\land u = v \mathrlap{.}
\] 
It is immediate to check that these are morphisms and a 2-cell in $\Syn(\TT)$. In order show that we have a power, we need to show that the functor
\[
\Syn(\TT)\big( \{ y \co Y \ |  B(y) \}, \{ u \co X^{[1]} \ | \ A^{[1]}(u) \} \big) \xrightarrow{[\iota] \circ (-)} 
\Syn(\TT)\big( \{ y \co Y \ |  B(y) \}, \{ x \co X  \ | \ A(x) \}\big) ^{[1]} 
\]
is an isomorphism. Let us describe the domain category more explicitly. An object  is a morphism $[f] \co \{ y \co Y \ | \ B(y) \} \to \{ u \co X^{[1]} \ | \ 
B^{[1]}(u) \}$ and its image is the 2-cell $[\iota]\circ [f]$ given by
%\[
%\begin{tikzcd}
% \{  y \co Y \} | \ B(y) \} \ar[r, bend left = 10, "\dom \circ f"] \ar[r, bend right = 10, "\cod \circ f"'] & 
% 	\{ x \co X \ |  \ A(x) \} 
%	\end{tikzcd}
% \]
 \begin{center}
 	\begin{tikzpicture}[baseline=(current  bounding  box.south), scale=2]
 		
 		\node (a0) at (0,0.9) {$\{  y \co Y  | \ B(y) \}$};
 		\node (ab0) at (1,0.9) {${\scriptsize \Downarrow {[\iota\circ f]}}$};
 		\node (b0) at (2,0.9) {$\{ x \co X \ |  \ A(x) \}  $};
 		
 		\path[font=\scriptsize]
 		
 		(a0) edge [bend left=22, ->] node [above] {$[\dom\circ f]$} (b0)
 		(a0) edge [bend right=22, ->] node [below] {$[\cod\circ f]$} (b0);
 	\end{tikzpicture}
 \end{center}
 with $(\dom \circ f)(y,x) = (\exists u) f(y,u) \land \dom(u,x)$, $(\cod \circ f)(y,x) = (\exists u) f(y,u) \land \cod(u,x)$ and 
 \[
 (\iota \circ f)(y,u) \equiv f(y,u) \mathrlap{.} 
 \]
 A morphism in the domain category is a 2-cell $[\alpha] \co [f] \Rightarrow [g]$
 and $[\iota] \circ [\alpha] \co [\iota] \circ [f] \to [\iota] \circ [g]$ is the commutative square of 2-cells below.
 \begin{center}
 	\begin{tikzpicture}[baseline=(current  bounding  box.south), scale=2]
 		
 		\node (a0) at (0,0.8) {$[\dom \circ f]$};
 		\node (b0) at (1.4,0.8) {$[\dom \circ g]$};
 		\node (c0) at (0,0) {$[\cod \circ f]$};
 		\node (d0) at (1.4,0) {$[\cod \circ g ] \mathrlap{.}$};
 		
 		\path[font=\scriptsize]
 		
 		(a0) edge [->] node [above] {$[\dom \circ \alpha]$} (b0)
 		(a0) edge [->] node [left] {$[\iota \circ f]$} (c0)
 		(b0) edge [->] node [right] {$[\iota \circ g]$} (d0)
 		(c0) edge [->] node [below] {$[\cod \circ \alpha]$} (d0);
 	\end{tikzpicture}	
 \end{center}
 Unfolding the definitions and using the functionality of the various morphisms, one obtains that the four
 components of $\iota \circ \alpha$ are of the form
 \[
 (y \co Y, u \co X^{[1]} ) \quad (\exists v\co X^{[1]\times [1]} ) \alpha(y,v) \land \pr_{\delta}(v) = u
 \]
 where $\delta \in \{ u, d, \ell, r \}$. 
 
 With these definitions in place, it is not hard to prove that $\iota \circ (-)$ is an isomorphism. We leave the verification that it is bijective on objects to the readers and instead check that it is full and faithful.
 For faithfulness, fix $[f]$ and $[g]$ and let $[\alpha] \co [f] \Rightarrow[ g]$ and $[\beta] \co [f] \Rightarrow [g]$ be
 such that $[\iota \circ f ]= [\iota \circ g]$. Unfolding the definitions and using what we just observed above, we obtain 
 \[
 (\exists v, v'\co X^{[1]\times[1]}) \alpha(y,v) \land \beta(y,v') \land \bigwedge_{\delta} \pr_{\delta}(v) = \pr_{\delta}(v')
 \]
 which implies $(\exists v\co X^{[1]\times[1]}) \alpha(y,v) \land \beta(y,v)$, giving  $[\alpha] = [\beta]$ as required.
 
 For fullness, given a morphism $([\alpha_1], [\alpha_2])\co [\iota \circ f]\to [\iota \circ f]$ in the arrow category, as depicted in
 \begin{center}
 	\begin{tikzpicture}[baseline=(current  bounding  box.south), scale=2]
 		
 		\node (a0) at (0,0.8) {$[\dom \circ f]$};
 		\node (b0) at (1.4,0.8) {$[\dom \circ g]$};
 		\node (c0) at (0,0) {$[\cod \circ f]$};
 		\node (d0) at (1.4,0) {$[\cod \circ g ]$};
 		
 		\path[font=\scriptsize]
 		
 		(a0) edge [->] node [above] {$[\alpha_1]$} (b0)
 		(a0) edge [->] node [left] {$[\iota \circ f]$} (c0)
 		(b0) edge [->] node [right] {$[\iota \circ g]$} (d0)
 		(c0) edge [->] node [below] {$[\alpha_2]$} (d0);
 	\end{tikzpicture}	
 \end{center}
 we define $[\alpha] \co [f] \Rightarrow[ g]$ by letting
 \[
 \alpha(y,v) \defeq f(y, \prl(v)) \land g(y, \prr(v)) \land \alpha_1(y, \pru(v)) \land \alpha_2(y, \prd(v))
 \]
It is clear that $[\iota \circ \alpha] = [\alpha_1, \alpha_2]$, so it remains to show that $\alpha$ is a 2-cell
in $\Syn(\TT)$. These are straightforward calculations, using the definitions, \cref{equ:union}, and \cref{equ:power-squares}.
 
Next, we show that the underlying category $\Syn(\TT)_0$ has a
terminal object, binary products, equalisers in the 1-categorical sense.
The terminal object is  $\{x\colon S^{0} \ |\ \top \}$.  For binary products, the product of $\{x\colon   X|\ A(  x)\}$ and $\{y\colon   Y|\ B(  y)\}$ is
\[
\{x\colon   X,y\colon Y |\ A(  x)\wedge B(  y)\} \mathrlap{.} 
\]
For morphisms $[f],[g]\colon \{  x\colon   X|\ A(  x)\}\to  \{   y\colon   Y|\ B(  y)\}$, their equaliser
is 
\[
 \{  x\colon   X|\ (\exists y \co Y) f(x,y)\wedge g(x,y)\} \mathrlap{.} 
 \]
The proof that these have the required 1-categorical universal properties proceeds as in the case of finite limit theories (see \eg \cite[D1.4.2]{Joh02:libro}) and hence it is omitted. 

To prove that these also satisfy the 2-categorical universal property, it is enough to show that such 1-dimensional limits are preserved by powers by $[1]$, but this is a direct consequence of the rules in \cref{sec:deduction-powers} asserting stability of conjunctions and existential quantification under powers.
\end{proof}

\begin{rmk}  \label{thm:pullback-synT}
In view of~\cref{thm:synTisoregular}, it is useful to have an explicit definition of the pullbacks in $\Syn(\TT)$.
Given $[f] \co \{ x \co X \ | \ A(x) \} \to \{ z \co Z \ | \ C(z) \}$ and $[g] \co \{ y \co Y \ | \ B(y) \} \to \{ z \co Z \ | \ C(z) \}$, the pullback
\[
\begin{tikzcd}
P \ar[r, "{[p]}"] \ar[d, "{[q]}"'] & \{ x \co X \ | \ A(x) \} \ar[d, " {[f]}"] \\ 
 \{ y \co Y \ | \ B(y) \} \ar[r,  " {[g]}"']  & \{ z \co Z \ | \ C(z) \} 
 \end{tikzcd}
\]
is given by letting
\[
P \defeq \{ x \co X, y \co Y \ | \ A(x) \land B(y) \land (\exists z \co Z) f(x,y) \land g(y,z) \}
\]
with $p(x,y,x')$ being the proposition $P(x,y)\land x = x'$, and $q(x,y,y')$ the proposition $P(x,y)\land y = y'$.
\end{rmk} 

The following lemma collects the stability properties of all our logic constructs, under powers by $[1]$; these will be essential in the proof of \cref{theorem:models=functors}.

\begin{lemma}\label{lemma:powers-in-Syn(T)}
	There are the following isomorphisms in $\Syn(\TT)$:
	\begin{enumerate}[itemsep=0.07cm]
		\item $\{ x \co S \  | \ \top \}^{\bb c}\cong \{ x \co S^{\bb c} \  | \ \top \}$ for any $\bb c\in\FinCat$,
		\item $\{ x \co S \  | \ \top \}^{f}\cong [y\cdot f=x]\colon\{ y \co S^{\bb d} \  | \ \top \}\to \{ x \co S^{\bb c} \  | \ \top \} $ for any $f\co \bb c\to \bb d\in\FinCat$,
		\item $\{ \bar{x} \co \bar{X} \  | \ s(x) = t(x) \}^{[1]}\cong \{ \bar{u} \co \bar{X}^{[1]} \  | \  s^{[1]}(\bar{u} ) =t^{[1]}(\bar{u} )  \}$,
		\item $\{ \bar{x} \co \bar{X}  \  | \ R(s_1, \ldots, s_n) \}^{[1]}\cong \{ \bar{u} \co \bar{X}^{[1]}  \  | \ R^{[1]} \big( s_1^{[1](\bar{u} )}, \ldots, s^{[1]}_n \big) (\bar{u})\}$;
		\item $\{ \bar{x} \co \bar{X}   \  | \ A(\bar{x}) \wedge B(\bar{x} ) \}^{[1]}\cong \{ \bar{u} \co \bar{X}^{[1]}   \  | \ A^{[1]}(\bar{u} ) \wedge B^{[1]}(\bar{u} ) \}$;
		\item $\{ \bar{x} \co \bar{X}  \  | \ (\exists y \co Y) B(\bar{x},y) \}^{[1]}\cong \{ \bar{u} \co \bar{X}^{[1]}  \  | \ (\exists v \co Y^{[1]}) B^{[1]}(\bar{u} , v) \}$;
		\item $\{ \bar{x} \co \bar{X} \  | \ A(\bar{x}) \wedge(\exists y \co Y) B(\bar{x},y) \}\cong \{  \bar{x} \co \bar{X}   | \ (\exists y \co Y) A(\bar{x}) \wedge B(\bar{x}, y) \}$.
	\end{enumerate}
\end{lemma}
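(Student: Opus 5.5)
The plan is to build each isomorphism explicitly as a pair of mutually inverse morphisms in $\Syn(\TT)$, namely graphs of the form $\{\bar u \co \bar X^{[1]}, \bar w \co \bar X^{[1]} \mid \Phi(\bar u)\land \bar u=\bar w\}$. For this we must first name the canonical power. By the proof of \cref{prop:limits-in-SynT}, the power by $[1]$ of $\{\bar x\co\bar X\mid A\}$ is $\{\bar u\co \bar X^{[1]}\mid A^{[1]}(\bar u)\}$, with evaluation data $[\dom],[\cod],[\iota]$. Parts (iii)--(vi) then reduce to exhibiting mutual derivability in $\TT$ of the two propositions in the context $\bar u\co\bar X^{[1]}$.

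\textbf{Step 1.} For (iii), (iv), (v) and (vi) we need, respectively,
\[
(s=t)^{[1]}\dashv\vdash s^{[1]}=t^{[1]},\qquad
R(\bar s)^{[1]}\dashv\vdash R^{[1]}(\bar s^{[1]}),
\]
\[
(A\land B)^{[1]}\dashv\vdash A^{[1]}\land B^{[1]},\qquad
\big((\exists y)B\big)^{[1]}\dashv\vdash(\exists v\co Y^{[1]})B^{[1]}(\bar u,v).
\]
These are exactly what the power rules of \cref{sec:deduction-powers} assert: stability of equality, relation symbols, conjunction and existential quantification under powers, together with the functoriality rules for powers of terms. So each of these parts is a direct citation.

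\textbf{Step 2.} For (vi), we must also check that the right-hand existential can be formed, i.e.\ that $\mathcal J_{\mathsf{faithful}}(B^{[1]})$ and $\mathcal J_{\mathsf{full\text{-}id}}(B^{[1]})$ (or $\mathcal J_{\mathsf{mono}}$) are derivable from the corresponding judgements for $B$. I would obtain these by applying the power rule to the judgements for $B$. The faithfulness judgement needs care, because it already lives at level $[1]$: raising it again lands in sorts $X^{[1]\times[1]}$. Those are the same sorts that appear in $\mathcal J_{\mathsf{faithful}}(B^{[1]})$ once we use $X^{\bb d\times\bb c}$ and the symmetry of $[1]\times[1]$, which we realise as a restriction along the swap functor using the rules for the action.

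\textbf{Step 3.} Part (vii) is the Frobenius rule \cref{equ:frobenius} together with the $\exists$-introduction and elimination rules. One also checks that the quantifier on the right can be formed; the conjunct $A(\bar x)$ does not change the fibres, since adding a monomorphic conjunct preserves ffk-ness by the lemma on composing with fully faithful morphisms and monomorphisms.

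\textbf{Step 4.} For (i) and (ii) the power is by an arbitrary finitely presentable $\bb c$. I would use the fact that $\bb c$ is built from $[0]$ and $[1]$ by finite limits and coequalisers, i.e.\ a finite colimit of copies of $[1]$. Accordingly $\{x\co S\mid\top\}^{\bb c}$ is computed in $\Syn(\TT)$ as a finite limit (equaliser of products) of copies of $\{u\co S^{[1]}\mid\top\}$, using \cref{prop:limits-in-SynT}. One then shows this limit is isomorphic to $\{x\co S^{\bb c}\mid\top\}$. This uses the jointly-epimorphic-family rule \cref{equ:epimorphic-family}, for uniqueness, and the coequaliser-existence rule, to glue components into a term of sort $S^{\bb c}$. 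Part (ii) then follows by naturality, since restriction along $f$ is compatible with these identifications by the action axioms.

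I expect the main obstacle to be (i): decomposing a general f.p.\ category and verifying the 2-dimensional universal property rather than only the 1-dimensional one. \cref{thm:enhance-conical-with-powers} reduces this to stability of the construction under $(-)^{[1]}$, which in turn follows from $(S^{\bb c})^{[1]}=S^{[1]\times\bb c}$.
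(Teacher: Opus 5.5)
Your overall route is the paper's: (iii)--(vi) from the explicit power $\{\bar u\co\bar X^{[1]}\mid A^{[1]}(\bar u)\}$ of \cref{prop:limits-in-SynT} plus the power rules, (vii) from \cref{equ:frobenius}, and (i)--(ii) by writing $\bb c$ as a finite colimit of copies of $[1]$ and matching it with the explicit equalisers and products of $\Syn(\TT)$. Two of your justifications need repair, though.

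First, Step 4 is missing an ingredient. The rules you cite, \cref{equ:coeq} for existence and \cref{equ:epimorphic-family} for uniqueness, only handle the coequaliser part of the decomposition. You also need that $\{x\co S^{\bb a+\bb b}\mid\top\}$, with the restrictions along the two coproduct injections, is the product $\{x\co S^{\bb a}\mid\top\}\times\{x\co S^{\bb b}\mid\top\}$. For that you must construct the inverse comparison map, i.e.\ prove totality of the relation $x\cdot\iota_{\bb a}=s_1\wedge x\cdot\iota_{\bb b}=s_2$. No term former pairs terms, and the coequaliser rule needs a term of the coproduct sort as input. So this step must come from the pushout rule \cref{equ:union} with empty base, which is exactly what the paper uses alongside \cref{equ:coeq}.

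Second, the well-formedness argument in Step 3 misapplies the closure lemma for \ffk-morphisms. That lemma allows precomposition with a \emph{fully faithful} morphism and postcomposition with a monomorphism. Here, however, $\{\bar x,y\mid A\wedge B\}\rightarrowtail\{\bar x,y\mid B\}$ is a monomorphism being \emph{precomposed}, and it is not fully faithful in general (take $A$ a relation symbol). Precomposing with a monomorphism does not preserve \ffk-morphisms: the paper's example $\{0\}+\{1\}\to J\to\{\ast\}$ is exactly such a composite.

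The conclusion is nevertheless true, for a different reason. The map $\{\bar x,y\mid A\wedge B\}\to\{\bar x\mid A\}$ is a pullback of the \ffk-morphism $\{\bar x,y\mid B\}\to\{\bar x\mid\top\}$, since faithfulness and fullness on identities are pullback-stable. It is then followed by the monomorphism $\{\bar x\mid A\}\rightarrowtail\{\bar x\mid\top\}$. Syntactically, $\mathcal J_{\mathsf{full\text{-}id}}(A\wedge B)$ follows from $\mathcal J_{\mathsf{full\text{-}id}}(B)$ together with $A(\bar x)\vdash A^{[1]}(\id_{\bar x})$ (\cref{equ:power-functoriality-terms}).

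A similar slip occurs in Step 2. The action rules \cref{equ:action} alone do not give invariance of $(B^{[1]})^{[1]}$ under the swap of $[1]\times[1]$. That invariance comes from \cref{equ:power-squares} together with its converse, which is obtained from \cref{equ:power-dom-cod-prop} and \cref{equ:power-functoriality-formulas}: membership is decided edgewise, and the swap permutes the edges. The paper leaves these side checks implicit, so once the justifications are corrected your argument goes through.
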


\begin{proof} Parts (iii)-(vii) follow easily by how we constructed the powers explicitly and from the deduction rules of \cref{sec:deduction-powers}; thus they are left to the readers. We shall focus on (i) and (ii) instead. By \cref{prop:limits-in-SynT} we already know that (i) holds for $\bb c=[1]$, for $\bb c=0$ the empty category, and (trivially) for $\bb c=[0]$ the terminal category. Similarly, by how powers by $[1]$ are constructed, (ii) holds for the two functors $\sigma_0,\sigma_1\co [0]\to[1]$ inducing the terms $\dom$ and $\cod$. Since the closure of $[1]$ under finite colimits in $\FinCat$ is the whole category, it is enough to prove that for any coequaliser $q\co\bb b\to \bb c$ of a pair $f,g\co \bb a\to \bb b$ in $\FinCat$ the following 
\begin{center}
	
	\begin{tikzpicture}[baseline=(current  bounding  box.south), scale=2, hookarrow/.style={{Hooks[right]}->}]
		
		\node (b) at (-1.8,0) {$\{ x \co S^{\bb c} \  | \ \top \}$};
		\node (k) at (0,0) {$\{ y \co S^{\bb b} \  | \ \top \} $};
		\node (a) at (1.8,0) {$\{ z \co S^{\bb a} \  | \ \top \}$};

		\path[font=\scriptsize]

		([yshift=1.7pt]k.east) edge [->] node [above] {$[y\cdot f=z]$} ([yshift=1.7pt]a.west)
		([yshift=-1.7pt]k.east) edge [->] node [below] {$[y\cdot g=z]$} ([yshift=-1.7pt]a.west)
		
		(b) edge [->] node [above] {$[x\cdot q=y]$} (k);
	\end{tikzpicture}
\end{center}	
is an equaliser in $\Syn(\TT)$, and that for any $\bb a,\bb b\in\FinCat$ we have
$$ \{ x \co S^{\bb a+\bb b} \  | \ \top \}\cong \{ x \co S^{\bb a} \  | \ \top \}\times \{ x \co S^{\bb b} \  | \ \top \} $$ 
with projections induced by restricting along the inclusions $\iota_{\bb a}\co\bb a \to \bb a+\bb b$ and $\iota_{\bb b}\co\bb b\to \bb a+\bb b$. This now follows easily from the explicit construction of equalisers and products in $\Syn(\TT)$ and, respectively, from Rule~\ref{equ:coeq} and Rule~\ref{equ:union} (applied to the case where $\bb b=0$).
\end{proof}

In order to state \cref{thm:faithful-definable,thm:ff-kernel-definable,prop:swe-synT} below, for a morphism $[f]\colon \{   x\colon   X|\ A(  x)\}\to  \{   y\colon   Y|\ B(  y)\}$  in $\Syn(\TT)$ we define the following judgements:
\begin{align*} 
& \mathsf{is\text{-}mono}(f) \defeq  \\
 & \qquad (x, x' \co X, y \co Y)\  f(x,y), f(x', y) \vdash x' = x''  \mathrlap{,}   \\
& \mathsf{is\text{-}faithful}(f)  \defeq  \\
& \qquad (u, u' \co X^{[1]}, v \co Y^{[1]})  \ \dom(u) = \dom(u'), \cod(u) = \cod(u'), f^{[1]}(u,v), f^{[1]}(u',v)  \vdash u = u'  \mathrlap{,}  \\
& \mathsf{is\text{-}full\text{-}identities}(f) \defeq   \\
& \qquad (x', x'' \co X, y \co Y)   \ f(x',y), f(x'', y)  \vdash (\exists u \co X^{[1]}) f^{[1]}(u,\id_y) \land \dom(u) = x' \land \cod(u) = x'' \mathrlap{.} 
\end{align*}
For a proposition $(x \co X, y \co Y) B(x,y) \co \mathsf{prop}$, we have a composite morphism 
\[
m_B \co \{x \co X, y \co Y \ | \ B(x,y) \} \rightarrowtail \{x \co X \ |  \ \top\}  \times \{y  \co Y  \ | \  \top\}  \to  \{y  \co Y  \ | \  \top\}
\] 
in $\Syn(\TT)$. Then, the judgement $\mathcal{J}_{\mathsf{mono}}(B)$ of \cref{equ:exists-formation-mono} is equivalent to
the judgement $\mathsf{is\text{-}mono}(m_B)$. A similar equivalence holds for the judgements in~\cref{equ:exists-formation-eqfib}.

\begin{lemma} \label{thm:mono-definable}
Let $[f]\colon \{   x\colon   X|\ A(  x)\}\to  \{   y\colon   Y|\ B(  y)\}$ be a morphism  in $\Syn(\TT)$. Then 
	the following conditions are equivalent:
\begin{enumerate}
		\item $[f]$ is a monomorphism,
		\item the judgement $\mathsf{is\text{-}mono}(f)$ is derivable.
		\end{enumerate}
\end{lemma}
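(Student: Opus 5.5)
The plan is to use the explicit description of kernel pairs in $\Syn(\TT)$ from \cref{thm:pullback-synT}, together with the fact that in a 2-category with finite 2-limits a morphism is a monomorphism if and only if it is one in $\Syn(\TT)_0$. In any category with pullbacks, the latter holds if and only if the two projections of its kernel pair coincide, or equivalently if the diagonal into the kernel pair is an isomorphism. So the task reduces to comparing the two projections of the kernel pair of $[f]$ syntactically.

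By \cref{thm:pullback-synT}, the kernel pair of $[f]$ is
\[
\Delta_f = \{ x \co X, x' \co X \ | \ A(x) \land A(x') \land (\exists y \co Y) f(x,y) \land f(x',y) \} \mathrlap{.}
\]
The projections $[p],[q]$ are represented by $\Delta_f(x,x')\land x = x''$ and $\Delta_f(x,x') \land x' = x''$, respectively. Two morphisms of $\Syn(\TT)$ are equal exactly when their representing formulas are provably equivalent. Hence $[p]=[q]$ holds if and only if
\[
(x,x'\co X) \ (\exists y \co Y) f(x,y)\land f(x',y) \vdash x = x'
\]
is derivable. To pass from $x = x''$ to $x' = x''$ we use the equality rules, and the conjuncts $A(x), A(x')$ are redundant because $f(x,y)\vdash A(x)$.

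For (ii)$\Rightarrow$(i): assume $\mathsf{is\text{-}mono}(f)$. Existential elimination (with Frobenius) turns it into the judgement above, so $[p]=[q]$ and $[f]$ is a monomorphism.

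For (i)$\Rightarrow$(ii): if $[f]$ is monic then $[p]=[q]$, which yields the displayed judgement. Existential introduction, $f(x,y)\land f(x',y)\vdash(\exists y)f(x,y)\land f(x',y)$, then gives $\mathsf{is\text{-}mono}(f)$.

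The main obstacle is the bookkeeping step: checking that equality of the morphisms $[p]$ and $[q]$, as equivalence classes of formulas, is exactly interderivability of the representing formulas. One must also check that this reduces cleanly to the judgement above using the equality and substitution rules. In particular, from $\Delta_f(x,x')\land x=x''\dashv\vdash \Delta_f(x,x')\land x'=x''$ we specialise $x''$ to $x$, by substituting into the equality, to obtain $\Delta_f(x,x')\vdash x=x'$. I expect this to be routine once the rules in \cref{sec:deduction-rules} for equality and substitution are invoked.
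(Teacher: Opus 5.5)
Your proof is correct and follows essentially the same route as the paper: you describe the kernel pair of $[f]$ via \cref{thm:pullback-synT}, use that $[f]$ is monic iff the two projections coincide, and observe that their interderivability amounts to $f(x,y), f(x',y) \vdash x = x'$. You are more explicit than the paper, which just says this is ``easy to see'': you spell out the existential introduction and elimination steps and the substitution of $x''$ by $x$.
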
 

\begin{proof} 
	The map $[f]$ is a monomorphism if an only if in the two projections in the pullback of $[f]$ along itself are equal. By \cref{thm:pullback-synT} these two projections are  represented by
	$$ p(x',x'',x)\defeq A(x') \land A(x'') \land x'=x \land (\exists y \co Z) f(x',y) \land f(x'',y) $$
	$q(x',x'',x)$ defined as above but with $x''=x$ instead of $x'=x$. It is easy to see that these two maps are the same in $\Syn(\TT)$ if and only if $f(x',y), f(x'',y)$ entails $x'=x''$, giving the desired equivalence.
%For the implication (i) $\Rightarrow$ (ii), assume that $[f]$ is a monomorphism. Assume the hypotheses of $\mathsf{is\text{-}mono}(f)$, \ie let
%$x', x'' \co X, y \co Y$ and assume $f(x',y)$, $f(x'', y)$. We claim that $x' = x''$ is derivable. Consider the maps $g', g'' \co \{ \_ \co X \ | \ \top \} \to \{   x\colon   X|\ A(  x)\}$
%defined by letting them be the formulas $x = x'$ and $x = x''$. By the assumption, $f \circ g' = f \circ g''$ (since both $f(x',y)$ and $f(x'',y)$ are assumed to hold).
%Since $f$ is a monomorphism, $g' = g''$, which gives the required conclusion. The implication (ii) $\Rightarrow$ (i) is a simple calculation, which we leave to readers.
\end{proof}

\begin{lemma} \label{thm:faithful-definable}
Let $[f]\colon \{   x\colon   X|\ A(  x)\}\to  \{   y\colon   Y|\ B(  y)\}$ be a morphism  in $\Syn(\TT)$. Then 
	the following conditions are equivalent:
\begin{enumerate}
		\item $[f]$ is faithful,
		\item the judgement $\mathsf{is\text{-}faithful}(f)$ is derivable.
		\end{enumerate}
\end{lemma}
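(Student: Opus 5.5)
The plan is to reduce faithfulness to a monomorphism condition and then apply \cref{thm:mono-definable}. Since $\Syn(\TT)$ has powers by $[1]$ (\cref{prop:limits-in-SynT}), a morphism $[f]\colon P\to Q$, with $P=\{x\colon X\mid A(x)\}$ and $Q=\{y\colon Y\mid B(y)\}$, is faithful if and only if the comparison morphism
\[
\langle \pi_P, [f]^{[1]}\rangle \co P^{[1]} \to (P\times P)\times_{Q\times Q} Q^{[1]}
\]
is a monomorphism. Indeed, representably this says that for every $Z$, the functor $\Syn(\TT)(Z,f)$ sends parallel 2-cells with the same domain and codomain to equal 2-cells only if they are equal, which is faithfulness of the hom-functor. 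Dually to the fully faithful pullback characterisation recalled in the preliminaries, this condition can be checked representably, using that $\Syn(\TT)(Z,P^{[1]})\cong \Syn(\TT)(Z,P)^{[1]}$.

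Next I would make the comparison map explicit in the syntax. By \cref{lemma:powers-in-Syn(T)} and the explicit construction of powers, $P^{[1]}\cong\{u\colon X^{[1]}\mid A^{[1]}(u)\}$, and $[f]^{[1]}$ is represented by $f^{[1]}(u,v)$. Using the explicit pullbacks of \cref{thm:pullback-synT} and products, the codomain is $\{x,x'\colon X, v\colon Y^{[1]}\mid A(x)\wedge A(x')\wedge B^{[1]}(v)\wedge f(x,\dom(v))\wedge f(x',\cod(v))\}$. The comparison map sends $u$ to $(\dom(u),\cod(u),v)$ where $f^{[1]}(u,v)$. More precisely, it is represented by the formula $A^{[1]}(u)\wedge \dom(u)=x\wedge\cod(u)=x'\wedge f^{[1]}(u,v)$. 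Checking this requires the power rules relating $f^{[1]}$ with $\dom$ and $\cod$ (\cref{equ:power-dom-cod-prop}), namely that $f^{[1]}(u,v)$ entails $f(\dom u,\dom v)$ and $f(\cod u,\cod v)$.

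Finally, I would apply \cref{thm:mono-definable} to this representing formula. The resulting condition is that from two instances with the same image $(x,x',v)$ one can derive $u=u'$. Rewritten, this says: under $\dom(u)=\dom(u')$, $\cod(u)=\cod(u')$, $f^{[1]}(u,v)$ and $f^{[1]}(u',v)$, one derives $u=u'$. Up to eliminating the redundant variables $x,x'$ via the equality rules, this is exactly $\mathsf{is\text{-}faithful}(f)$, and it works in both directions.

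The main obstacle is the first step, together with identifying the comparison map. Establishing the representable characterisation of faithfulness via a monomorphism requires care with the 2-dimensional universal property of $P^{[1]}$. Matching the representing formulas up to provable equivalence requires the power rules, since $A^{[1]}(u)$ must be shown to be implied by $f^{[1]}(u,v)$ in order to drop it.
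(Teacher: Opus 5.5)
Your proposal is correct and follows essentially the paper's argument. Faithfulness of $[f]$ is reduced to the comparison map $u \mapsto (\dom(u), f^{[1]}(u), \cod(u))$ being a monomorphism, \cref{thm:mono-definable} is applied, and $f^{[1]}(u,v)\vdash A^{[1]}(u)$ is used to drop the redundant hypotheses. The only difference is that the paper takes the codomain to be the plain product $\{x\colon X\mid\top\}\times\{v\colon Y^{[1]}\mid B^{[1]}(v)\}\times\{x'\colon X\mid\top\}$ instead of your pullback, which spares you computing the pullback object; since the pullback is a subobject of that product, the two monomorphism conditions coincide.
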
 

\begin{proof} We know, as a general 2-categorical fact, that $[f]$ is faithful if and only if the morphism 
	$$(\dom, f^{[1]}, \cod) \co \{   u\colon   X|\ A^{[1]}(  u)\} \longrightarrow \{   x\colon   X|\ \top\} \times \{   v\colon   Y^{[1]}|\ B^{[1]}(  u)\} \times \{   x'\colon   X|\ \top\}$$
	 is a monomorphism. The claim now follows easily from \cref{thm:mono-definable}.
\end{proof}

\begin{lemma} \label{thm:ff-kernel-definable}
	Let $[f]\colon \{   x\colon   X|\ A(  x)\}\to  \{   y\colon   Y|\ B(  y)\}$ be a morphism  in $\Syn(\TT)$. Then 
	the following conditions are equivalent:
	\begin{enumerate}
		\item $[f]$ is an \ffk-morphism,
		\item the judgements $\mathsf{is\text{-}faithful}(f)$ and $\mathsf{is\text{-}full\text{-}identities}(f)$ 
		are derivable.
	\end{enumerate}
\end{lemma}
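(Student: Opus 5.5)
By \cref{thm:ff-kernel-char}, $[f]$ is an \ffk-morphism if and only if it is faithful and full on identities. \cref{thm:faithful-definable} already identifies faithfulness of $[f]$ with derivability of $\mathsf{is\text{-}faithful}(f)$. So it remains to show that, for a faithful $[f]$, being full on identities is equivalent to derivability of $\mathsf{is\text{-}full\text{-}identities}(f)$. I would not check fullness on identities against arbitrary test objects. Instead I would use the equivalent representable formulation from \cref{thm:ff-kernel-char}(v): the comparison map $r_f \colon \{x \colon X \mid A(x)\} \to \Delta_f$ is an equivalence. Here $\Delta_f$ is computed explicitly by \cref{thm:pullback-synT} as
\[
\Delta_f = \{ x', x'' \colon X \mid A(x') \land A(x'') \land (\exists y \colon Y)\, f(x',y) \land f(x'',y) \}.
\]

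For the direction (ii) $\Rightarrow$ (i), I would use the derivable judgement $\mathsf{is\text{-}full\text{-}identities}(f)$ to define a 2-cell in $\Syn(\TT)$ between the two projections $[\pi_1], [\pi_2] \colon \Delta_f \to \{x \colon X \mid A(x)\}$, represented by
\[
\theta(x',x'',u) \defeq (\exists y \colon Y)\, f(x',y) \land f(x'',y) \land f^{[1]}(u,\id_y) \land \dom(u) = x' \land \cod(u) = x''.
\]
I then check the four conditions for a 2-cell.
\begin{itemize}
\item Typing of $\theta$ follows from the typing of $f$ and the power rules.
\item Existence is exactly $\mathsf{is\text{-}full\text{-}identities}(f)$.
\item Uniqueness of $u$ follows from $\mathsf{is\text{-}faithful}(f)$, since two such $u$ share domain, codomain and image $\id_y$.
\item The domain and codomain conditions are built into $\theta$.
\end{itemize}
By construction $[f]\circ[\theta]$ is the identity 2-cell, and $\theta$ is invertible by the same argument with $x'$ and $x''$ swapped. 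Hence $\pi_1 \cong \pi_2$ over $f$, so $r_f \pi_1 \cong 1_{\Delta_f}$: both maps into the pullback $\Delta_f$ have isomorphic components, and pullbacks in $\Syn(\TT)$ are 2-dimensional by \cref{prop:limits-in-SynT}. Together with $\pi_1 r_f = 1$, this shows that $r_f$ is an equivalence.

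For the direction (i) $\Rightarrow$ (ii), I would apply fullness on identities to the generic pair, namely the two projections $\pi_1, \pi_2 \colon \Delta_f \to \{x \colon X \mid A(x)\}$, which satisfy $[f]\pi_1 = [f]\pi_2$. This gives a 2-cell $[\alpha] \colon [\pi_1] \Rightarrow [\pi_2]$ with $[f]\circ[\alpha] = 1$. Unfolding the definition of 2-cells yields a formula $\alpha(x',x'',u)$ with the following derivable judgements:
\begin{itemize}
\item from the existence clause, $A(x') \land A(x'') \land (\exists y)\,f(x',y)\land f(x'',y) \vdash (\exists u)\, \alpha(x',x'',u)$;
\item from the domain/codomain clause, $\alpha \vdash \dom(u) = x' \land \cod(u) = x''$;
\item from $[f]\circ[\alpha] = 1$, read through the description of horizontal composition, $\alpha(x',x'',u) \land f(x',y) \vdash f^{[1]}(u,\id_y)$.
\end{itemize}
Combining these with the existential rules and Frobenius (\cref{equ:frobenius}) gives $\mathsf{is\text{-}full\text{-}identities}(f)$. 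Here one uses $f(x',y) \vdash A(x')$ and, since $f$ is functional, $f(x',y) \land f(x'',y')$ with $y = y'$.

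The main obstacle is the last translation, from the equation $[f]\circ[\alpha] = [\id]$ in $\Syn(\TT)$ to the entailment $\alpha \land f(x',y) \vdash f^{[1]}(u,\id_y)$. The two sides are equal as equivalence classes of formulas $\{x',x'',w \mid \ldots\}$, so one must peel off the existential quantifier over $Y^{[1]}$ in the horizontal composite. I would do this using functionality of $f^{[1]}$, which follows from that of $f$ via the power rules and \cref{lemma:powers-in-Syn(T)}. I would also use that the identity 2-cell is given by $(\exists y)\, f(x',y)\land w = \id_y$. The rest is bookkeeping analogous to the proofs of \cref{thm:mono-definable} and \cref{thm:faithful-definable}.
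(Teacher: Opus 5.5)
Your proof is correct. The direction (i)$\Rightarrow$(ii) is the same as the paper's: apply fullness on identities to the kernel-pair projections and unfold the resulting 2-cell.

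The step you flag as the main obstacle is easier than you expect. Instantiate the right-to-left half of $[f]\circ[\alpha]=1$ at $w=\id_y$. This turns $f(x',y),f(x'',y)$ into $(\exists v)\,\alpha(x',x'',v)\land f^{[1]}(v,\id_y)$. The domain/codomain clause of $\alpha$ then yields $\mathsf{is\text{-}full\text{-}identities}(f)$ directly. The left-to-right ``peeling'' you sketch would need totality of $f^{[1]}$ or the uniqueness clause of $\alpha$, not just single-valuedness of $f^{[1]}$.

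For (ii)$\Rightarrow$(i) your route genuinely differs from the paper's.

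\begin{itemize}
\item The paper tests fullness on identities against an arbitrary pair $g',g''\colon\{z\co Z\mid C(z)\}\to\{x\co X\mid A(x)\}$ with $fg'=fg''$. It writes an explicit $\alpha(z,u)$ and checks the 2-cell axioms and $f\circ\alpha=\id_{fg'}$ for it.
\item You do the syntactic work only once, for the generic pair $\pi_1,\pi_2$ on $\Delta_f$, and then argue 2-categorically. By the 2-dimensional universal property of $\Delta_f$ (\cref{prop:limits-in-SynT}), $(1_{\pi_1},\theta)$ gives an invertible 2-cell $r_f\pi_1\cong 1_{\Delta_f}$. So $\pi_1$ is an equivalence and hence fully faithful, which is literally the definition of an \ffk-morphism.
\end{itemize}

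The paper's $\alpha$ is, up to provable equivalence, just $[\theta]$ whiskered along $\langle g',g''\rangle$. Your version therefore explains where that formula comes from and avoids handling composites such as $f\circ g'$ inside formulas. The price is the appeal to the 2-dimensional pullback property and the need for $\theta$ to be invertible.

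On invertibility: ``the same argument with $x'$ and $x''$ swapped'' only produces some $\theta'\colon\pi_2\Rightarrow\pi_1$ with $f\theta'=1$. That $\theta'$ is inverse to $\theta$ follows from faithfulness of $[f]$ (\cref{thm:faithful-definable}), since $f(\theta'\theta)=1=f(1_{\pi_1})$. Alternatively, whiskering $\theta$ along $\langle g',g''\rangle$ gives fullness on identities outright. You can then conclude via (iv) of \cref{thm:ff-kernel-char} with no invertibility check at all.
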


\begin{proof} The equivalence between $[f]$ being faithful and the judgement expressing 
faithfulness of $f$ in $\TT$ is \cref{thm:faithful-definable}, so we need to prove the equivalence
between fullness on identities for $[f]$ and the corresponding judgement in $\TT$.

For one implication, assume that
\begin{equation}
	\label{equ:variant-id-lift}
	(x', x'' \co X, y \co Y) \ f(x',y), f(x'', y) \vdash (\exists u \co X^{[1]}) f^{[1]}(u,\id_y) \land
	\dom(u) = x' \land \cod(u) = x'' 
\end{equation} 
 is derivable. Let $g', g'' \co \{ z \co Z \ | \ C(z) \} \to \{ x \co X \ | \ A(x) \}$ be such that
 $fg' = fg''$. We claim that there is a 2-cell $\alpha \co g' \Rightarrow g''$ such that
 $f \circ \alpha = \id_{fg'}$. For $z \co Z, u \co X^{[1]}$, we define
 \[
 \alpha(z,u) \defeq 
 g'(z, \dom(u)) \land g''(z, \cod(u)) \land 
 (\exists y \co Y) \big( (f \circ g')(z,y) \land f^{[1]}(u, \id_y) \big) \mathrlap{.}
  \]
  The existential quantifier can be formed since $f \circ g'$ is functional. The verification that
  $\alpha$ is a 2-cell is immediate. It remains to check that $f \circ \alpha = \id_{fg'}$. Thus,
  we need to show that, for $z \co Z$ and $v \co Y^{[1]}$, the propositions
  \[
  (f \circ \alpha)(z,v) \defeq (\exists u \co X^{[1]}) \alpha(z,u) \land f^{[1]}(u,v) 
  \]
  and
  \[
  (\id_{f \circ g'})(z,v) \defeq (\exists y \co Y) (f \circ g')(z,y) \land v = \id_y
  \]
  are equivalent. Both implications are easy, noting that, for 
  $u \co X^{[1]}$, $f^{[1]}(u,v)$ and $f^{[1]}(u, \id_y)$
  imply $v = \id_y$. 
  
  For the converse, we assume that $[f]$ is full on identities and show
  that the judgement~\eqref{equ:variant-id-lift} is derivable. Let $[p(x',x'',x)]$ and $[q(x',x'',x)]$ the two projections in the kernel pair of $f$ (see \eg the proof of \cref{thm:mono-definable}), then by definition $[f\circ p]=[f\circ q]$ and so, by fullness on identities there is $[\alpha]\co [p]\Rightarrow[q]$ such that $[f\circ \alpha]=\id_{[f\circ p]}$. It is now easy to see that the functionality of $\alpha$ says exactly that~\eqref{equ:variant-id-lift} holds.
%  For the converse, we assume that $[f]$ has the identity-lifting property and show
%  that the judgement
%  \begin{equation}
%  \label{equ:variant-id-lift}
%(x', x'' \co X, y \co Y) \ f(x',y'), f(x'', y''), y' = y'' \vdash (\exists u \co X^{[1]}) f^{[1]}(u,\id(y')) \land
% \dom(u) = x' \land \dom(u) = x'' 
% \end{equation} 
%is derivable. Let $x', x'' \co X$, $y', y'' \co Y$ and assume the hypotheses of the judgement.
%Consider the terminal object $1 \defeq \{ z \co 1 \ | \ \top \}$ of $\Syn(\TT)$ and the 
%morphisms $g', g'' \co 1 \to \{ x \co X \ | \ A(x) \}$ defined by
%\[
%(z \co 1, x \co X) \ g'(z, x) \defeq x = x' \mathrlap{,} \qquad
%(z \co 1, x \co X) \ g''(z, x) \defeq x = x'' \mathrlap{.} 
%\]
%We claim that $f \circ g' = f \circ g''$, \ie that, for $z \co 1$ and $y \co Y$,
%$(f \circ g')(z,y) \Leftrightarrow (f \circ g'')(z,y)$. By the definitions above,
%this amounts to showing that $f(x',y) \Leftrightarrow f(x'',y)$, which follows easily
%from the hypotheses of~\eqref{equ:variant-id-lift}. By the assumption that $f$ has the identity-lifting property, we obtain that there
%exists a 2-cell $\alpha \co g' \Rightarrow g''$ such that $f \circ \alpha = \id_{f \circ g'}$.
%This gives the required conclusion in~\eqref{equ:variant-id-lift}.
\end{proof}

\begin{lemma}\label{prop:swe-synT}
	Let $\TT$ be a isoregular theory. Let $[f]\colon \{   x\colon   X|\ A(  x)\}\to  \{   y\colon   Y|\ B(  y)\}$ be a morphism  in $\Syn(T)$.
	Then the following conditions are equivalent:
	\begin{enumerate}
		\item the morphism $[f]$ is a fully faithful regular epimorphism,
		\item the following judgements are derivable:
			\begin{gather*} 
			\mathsf{is\text{-}faithful}(f) \mathrlap{,}  \\
			 \mathsf{is\text{-}full\text{-}identities}(f) \mathrlap{,}  \\
			(y \co Y) \ B(  y) \vdash (\exists x \co X) f(x,y)\mathrlap{,} 
		\end{gather*}
		\item the following  judgements are derivable:
			\begin{gather*} 
	 		\mathsf{is\text{-}faithful}(f)   \mathrlap{,}  \\
			(y \co Y) \ B(  y) \vdash (\exists x \co X) f(  x,  y) \mathrlap{.}
		\end{gather*}
	\end{enumerate}
	In this case, there exists an isomorphism 
	\[
	\{   y\colon   Y|\ B(  y)\}\cong \{   y\colon   Y|\ (\exists x \co X)  f(  x,  y)\} \mathrlap{.}
	\]
\end{lemma}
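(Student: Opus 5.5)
The plan is to prove (i)$\Rightarrow$(ii)$\Rightarrow$(iii)$\Rightarrow$(ii)$\Rightarrow$(i), with the logical characterisations of \cref{thm:faithful-definable} and \cref{thm:ff-kernel-definable} carrying most of the translation between the 2-categorical and the syntactic side.

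\emph{(ii)$\Leftrightarrow$(iii).} The direction (ii)$\Rightarrow$(iii) is trivial. For the converse I would use the fact that, in our calculus, formation of propositions depends on entailment. The conclusion $(y\co Y)\ B(y)\vdash(\exists x\co X) f(x,y)$ can only be derived if $(y\co Y)\ (\exists x \co X) f(x,y)\co\prp$ is derivable. That proposition must have been formed by \cref{equ:exists-formation-mono} or \cref{equ:exists-formation-eqfib}, applied to $f$ regarded as a proposition in context $(y\co Y, x\co X)$. Either way this yields $\mathcal{J}_{\mathsf{full\text{-}id}}$ for $f$ in the variable $x$ over $y$, which is exactly $\mathsf{is\text{-}full\text{-}identities}(f)$. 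In the mono case, $\mathcal J_{\mathsf{mono}}$ even gives $x=x'$, and identities provide the witness. Some care is needed to check that the bookkeeping of variables matches: $\mathcal{J}_{\mathsf{full\text{-}id}}$ is phrased with $B^{[1]}(\id_{x},u)$, which here becomes $f^{[1]}(u,\id_y)$.

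\emph{(i)$\Rightarrow$(ii).} A fully faithful morphism is an \ffk-morphism, so \cref{thm:ff-kernel-definable} gives the first two judgements. Because of them, the proposition $(\exists x\co X)f(x,y)$ is well formed. Its subobject $m\co\{y\co Y\mid (\exists x\co X) f(x,y)\}\rightarrowtail\{y\co Y\mid B(y)\}$ is a monomorphism by \cref{thm:mono-definable}. Moreover $[f]$ factors through $m$ via the morphism with graph $f(x,y)$, which is functional and total. A regular epimorphism is strong, so $m$ is an isomorphism. Reading off the inverse gives $B(y)\vdash(\exists x\co X) f(x,y)$ and also the isomorphism claimed at the end of the statement.

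\emph{(ii)$\Rightarrow$(i).} This is the main obstacle, since we cannot yet invoke isoregularity of $\Syn(\TT)$. First, I would show that $f^{[1]}$ satisfies the same three judgements. Faithfulness and full-on-identities transfer via the power rules of \cref{sec:deduction-powers}. Surjectivity transfers by raising the surjectivity judgement to the power $[1]$, using \cref{lemma:powers-in-Syn(T)}(vi) to commute powers with $\exists$.

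Second, I would prove full faithfulness via the pullback characterisation. The comparison map from $\{u\co X^{[1]}\mid A^{[1]}(u)\}$ to the pullback of $f\times f$ along $\pi_B$ is a monomorphism by $\mathsf{is\text{-}faithful}(f)$ and \cref{thm:mono-definable}. For surjectivity of this map, take $(x,x',v)$ with $f^{[1]}(\cdot, v)$ over $(f x,f x')$. Lift $v$ to some $u$ using surjectivity of $f^{[1]}$. Then use $\mathsf{is\text{-}full\text{-}identities}(f)$ to get isomorphisms $x\to\dom(u)$ and $\cod(u)\to x'$ lying over identities, and compose these with $u$ in $X^{[3]}$, using the composition terms of \cref{thm:first-terms} and \cref{thm:easyprop}.

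Third, I would show that $[f]$ is the coequaliser of its kernel pair in $\Syn(\TT)_0$. This follows the 1-categorical argument for syntactic categories of regular theories in \cite[D1.4]{Joh02:libro}: given $g$ equalising the kernel pair, the candidate factorisation is $\{y,z\mid(\exists x\co X) f(x,y)\wedge g(x,z)\}$. Its functionality uses $g\circ p=g\circ q$ together with the description of the kernel pair in \cref{thm:pullback-synT}, and its totality uses surjectivity of $f$. The 2-dimensional universal property should then follow as in \cref{thm:enhance-conical-with-powers}, since the powered diagram is again the kernel pair and coequaliser of $f^{[1]}$, which satisfies the same hypotheses.
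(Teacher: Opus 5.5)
Your proposal is correct, but it is organised differently from the paper, which proves (i)$\Rightarrow$(ii)$\Rightarrow$(iii)$\Rightarrow$(i).

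For (i)$\Rightarrow$(ii), the paper identifies the coequaliser of the kernel pair of $[f]$ with $\{y\co Y\mid (\exists x\co X) f(x,y)\}$. You only use that a regular epimorphism is strong, so it cannot factor through a non-invertible monomorphism. This spares you recomputing the coequaliser.

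The main difference is at the other end. The paper closes the cycle with a two-line (iii)$\Rightarrow$(i): the hypotheses make $[f]$ a faithful regular epimorphism, and $B^{[1]}(v)\vdash(\exists u\co X^{[1]}) f^{[1]}(u,v)$ is said to ``express fullness''. Your separate step (iii)$\Rightarrow$(ii) inverts the formation rules for $(\exists x\co X) f(x,y)$. It makes explicit that $\mathsf{is\text{-}full\text{-}identities}(f)$ is a presupposition of the last judgement in (iii). Your fullness argument then shows why this is indispensable: a lift of $v$ along $f^{[1]}$ generally has the wrong endpoints, and must be corrected by composing with arrows lying over identities. For example, the functor from the discrete category on two objects to the terminal category is faithful, surjective on objects and on arrows, and a regular epimorphism in $\Cat$, yet it is not full. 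So your route is longer, but it fills in exactly what the paper's proof leaves implicit.

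Two points in your (ii)$\Rightarrow$(i) need adjusting.

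First, the 2-dimensional universal property of the coequaliser does not come from \cref{thm:enhance-conical-with-powers}, which concerns limits. Knowing that $f^{[1]}$ is again a coequaliser tells you about maps out of $\{v\co Y^{[1]}\mid B^{[1]}(v)\}$, not about 2-cells out of $\{y\co Y\mid B(y)\}$. The right argument is that 2-cells into $C$ are maps into $C^{[1]}$, which exists in $\Syn(\TT)$, so the 1-dimensional property suffices; this is what the proof of \cref{thm:synTisoregular} uses. Consequently you only need to transfer surjectivity to $f^{[1]}$, not faithfulness or fullness on identities.

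Second, your candidate factorisation $\{y\co Y, z\co Z\mid(\exists x\co X)\, f(x,y)\wedge g(x,z)\}$ is a formula-in-context only once the projection to $(y,z)$ is shown to be an \ffk-morphism. Faithfulness comes from that of $f$. For fullness on identities, take $u\co x\to x'$ over $\id_y$ from $\mathsf{is\text{-}full\text{-}identities}(f)$. Then apply the power of $g\circ p=g\circ q$ to the arrow $(u,\id_{x'})$ of the kernel pair, which gives $g^{[1]}(u)=\id_z$. The paper also skips this (``as in the ordinary setting''), but it is precisely where the isoregular calculus departs from the regular one.
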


\begin{proof} For implication (i) $\Rightarrow$ (ii), assume that $[f]$ is a fully faithful regular epimorphism. Being fully faithful, $[f]$ is an \ffk-morphism and therefore the first two judgements hold by
\cref{thm:ff-kernel-definable}. Being a regular epimorphism, $[f]$ is the coequaliser of its kernel pair. Such coequaliser (arguing as in the 1-dimensional case) is given by $\{   y\colon   Y|\ (\exists x \co X)  f(  x,  y)\}$, where this is a well-defined formula-in-context since  $[f]$ is an \ffk-morphism. Thus the final isomorphism in the statement follows, and as a consequence we obtain the judgement $B(y) \vdash (\exists x \co X) f(x,y)$.
The implication (ii) $\Rightarrow$ (iii) is trivial.
For the implication (iii) $\Rightarrow$ (i), the assumptions express that $[f]$ is
a regular epimorphism and is faithful. Using the  \cref{equ:power-squares}, we obtain $B^{[1]}(v) \vdash (\exists u \co X^{[1]}) f^{[1]}(u,v)$, which expresses fullness.
%The isomorphism follows from the equivalence $B(y) \Leftrightarrow (\exists x \co X) f(x,y)$, for $y \co Y$.
\end{proof} 

\begin{prop} \label{thm:synTisoregular}
	The $2$-category $\Syn(\TT)$ is isoregular.
\end{prop}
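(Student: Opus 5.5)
We need to verify the three conditions of \cref{thm:isoregular}. Condition (i) is \cref{prop:limits-in-SynT}, so the work lies in (ii) and (iii). Throughout, ``fully faithful regular epimorphism'' in $\Syn(\TT)$ will be recognised via the syntactic characterisation of \cref{prop:swe-synT}, and \ffk-morphisms via \cref{thm:ff-kernel-definable}.

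For (ii), let $[f]\colon \{x \co X \ | \ A(x)\} \to \{y \co Y \ | \ B(y)\}$ have a fully faithful kernel pair, so that $[f]$ is an \ffk-morphism. By \cref{thm:ff-kernel-definable}, the judgements $\mathsf{is\text{-}faithful}(f)$ and $\mathsf{is\text{-}full\text{-}identities}(f)$ are derivable. These are exactly the premisses $\mathcal{J}_{\mathsf{faithful}}$ and $\mathcal{J}_{\mathsf{full\text{-}id}}$ of \cref{equ:exists-formation-eqfib} for the proposition $(y \co Y, x \co X)\ f(x,y)$, with the roles of the variables swapped. Hence $\{y \co Y \ | \ (\exists x \co X) f(x,y)\}$ is a well-formed formula-in-context.

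Define $[q]\colon \{x \co X \ | \ A(x)\} \to \{y \co Y \ | \ (\exists x' \co X) f(x',y)\}$ by the formula $f(x,y)$. Then $[q]$ is a morphism, and by \cref{prop:swe-synT}(iii) it is a fully faithful regular epimorphism. Indeed, $\mathsf{is\text{-}faithful}(q)$ is inherited from $\mathsf{is\text{-}faithful}(f)$, since $q^{[1]}$ and $f^{[1]}$ agree by \cref{lemma:powers-in-Syn(T)}(vi),(vii), and surjectivity holds by construction.

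It remains to see that $[q]$ is the coequaliser of the kernel pair. Since $[q]$ is a regular epimorphism, it is the coequaliser of its own kernel pair. Moreover, the kernel pair of $[q]$ coincides with that of $[f]$, because the inclusion $\{y \ | \ \exists x\, f\} \rightarrowtail \{y \ | \ B\}$ is a monomorphism; this uses \cref{thm:mono-definable} and the explicit pullbacks of \cref{thm:pullback-synT}. Its 2-dimensional universal property should then follow from the 1-dimensional one via powers, as in \cref{thm:enhance-conical-with-powers}. To justify this, I would argue that the kernel pair of $f^{[1]}$ is the power of the kernel pair of $f$, and that $q^{[1]}$ is again of the same form by \cref{lemma:powers-in-Syn(T)}(vi).

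For (iii), take a fully faithful regular epimorphism $[f]$ as above and a morphism $[g]\colon \{z \co Z \ | \ C(z)\} \to \{y \co Y \ | \ B(y)\}$. Form the pullback $P = \{x, z \ | \ A(x) \land C(z) \land (\exists y)\, f(x,y) \land g(z,y)\}$ as in \cref{thm:pullback-synT}, with projection $[p]\colon P \to \{z \ | \ C(z)\}$. Fully faithfulness is stable under pullback in any 2-category with finite 2-limits, being a representable condition, so $[p]$ is fully faithful and in particular faithful. By \cref{prop:swe-synT}(iii), it then suffices to derive $C(z) \vdash (\exists x)(\exists y)\, f(x,y) \land g(z,y)$. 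This follows from $C(z) \vdash (\exists y)\, g(z,y)$, the surjectivity judgement $B(y) \vdash (\exists x)\, f(x,y)$, and the Frobenius rule \cref{equ:frobenius}. The one subtlety is that the existential over $x$ must be formable, which holds because $f$ is \ffk.

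I expect the main obstacle to be the 2-dimensional part of (ii): checking that $[q]$ is a coequaliser in the 2-categorical sense and not merely in $\Syn(\TT)_0$. I would try to handle this via the power $(-)^{[1]}$, using \cref{lemma:powers-in-Syn(T)} to identify both the kernel pair and $[q]^{[1]}$ syntactically. If that route fails, the fallback is to construct the factorisation of 2-cells directly: given a 2-cell defined on the domain of $[q]$ and compatible with the kernel pair, define the induced 2-cell by an existential over $X^{[1]}$, which is formable because $f^{[1]}$ is again \ffk.
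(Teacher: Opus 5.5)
Your proposal is correct and follows the paper's proof essentially step by step. For (ii) you use the object $\{y \co Y \mid (\exists x \co X) f(x,y)\}$, formable by \cref{thm:ff-kernel-definable} and recognised as the target of a fully faithful regular epimorphism via \cref{prop:swe-synT}; for (iii) you use the explicit pullbacks of \cref{thm:pullback-synT} together with \cref{prop:swe-synT} and the Frobenius rule.

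The 2-dimensional point you flag as the main obstacle is not one, and the mechanism you propose for it runs in the wrong direction. \cref{thm:enhance-conical-with-powers} is about limits. For a coequaliser, the 2-dimensional property follows from the 1-dimensional one simply by testing against $Z^{[1]}$, so no preservation by $(-)^{[1]}$ is needed. In any case this is already contained in your own observation: $[q]$, being a 2-categorical regular epimorphism, is the coequaliser of its own kernel pair, and that kernel pair is the one of $[f]$.
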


\begin{proof} By \cref{prop:limits-in-SynT}, we already know that $\Syn(\TT)$ has finite 2-limits. 
Next, we need to show that a morphism $[f]\colon \{   x\colon   X|\ A(  x)\}\to  \{   y\colon   Y|\ B(  y)\}$ with
a fully faithful kernel pair has a fully faithful coequaliser. 
For this, define 
\[
\{ y \co Y \ \ | (\exists x \co X) f(x,y) \}
\]
This is a well-defined formula-in-context since  $[f]$ is an \ffk-morphism by \cref{fact-morph}
and therefore we can apply \cref{thm:ff-kernel-definable}. The fact that the induced map into this object is a fully faithful regular epimorphism follows from \cref{prop:swe-synT}, and the verification that this is
indeed the coequaliser of the kernel pair of $[f]$ is done as in the ordinary setting (in fact, since $\Syn(\TT)$ has powers by $[1]$ it suffices to prove the 1-dimensional universal property).

Finally, we need to show that fully faithful regular epimorphisms are stable under pullback.
This follows from~\cref{prop:swe-synT} recalling the construction of pullbacks in~$\Syn(\TT)$
in \cref{thm:pullback-synT}: pullbacks are constructed using conjunction, and existential
quantification commutes with conjunction by the Frobenius Rule~\ref{equ:frobenius}.
\end{proof}

\subsection{Functorial semantics} The next result is fundamental for our development, as it provides a counterpart of the cornerstones of functorial semantics in the 1-categorical setting by establishing the connection between 2-categories of models
of isoregular theories and 2-categories of isoregular 2-functors.

\begin{theo}\label{theorem:models=functors}
	For any isoregular 2-category $\K$ and any isoregular theory $\TT$ we have an equivalence 
	$$ \Mod(\TT,\K)\simeq \IsoReg(\Syn(\TT),\K)$$
	of $2$-categories.
\end{theo}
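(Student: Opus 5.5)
We construct 2-functors in both directions and show they are mutually inverse up to 2-natural isomorphism. From isoregular 2-functors to models, given $F \co \Syn(\TT) \to \K$ isoregular, we define an $\LL$-structure $\M_F$ as follows.
\begin{itemize}
\item Each basic sort $S$ goes to $\br{S}_{\M_F} \defeq F\{x \co S \mid \top\}$.
\item Each function symbol $f \co \bar{X} \to Y$ goes to $F$ applied to the morphism $\{\bar{x} \co \bar{X}, y \co Y \mid f(\bar{x}) = y\}$.
\item Each relation symbol $R$ goes to $F$ applied to the monomorphism $\{\bar{x} \co \bar{X} \mid R(\bar{x})\} \rightarrowtail \{\bar{x} \co \bar{X} \mid \top\}$; this is still a monomorphism since $F$ preserves pullbacks.
\end{itemize}
Here we use \cref{lemma:powers-in-Syn(T)}(i),(ii) together with preservation of finite 2-limits by $F$ to identify $F\{x \co S^{\bb c} \mid \top\}$ with $\br{S}^{\bb c}_{\M_F}$ and restrictions with $\br{S}^f$.

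The key step in this direction is then an induction on derivations showing that for every derivable $(\bar{x} \co \bar{X})\ A \co \prp$, the interpretation $\br{A}_{\M_F}$ is defined and is isomorphic, as a subobject of $\br{\bar{X}}$, to $F$ applied to $\{\bar{x} \co \bar{X} \mid A\} \rightarrowtail \{\bar{x} \co \bar{X} \mid \top\}$. The clauses are handled as follows.
\begin{itemize}
\item Atomic formulas, equalities and conjunctions go through because $F$ preserves pullbacks and equalisers.
\item The power $A^{[1]}$ uses \cref{lemma:powers-in-Syn(T)}(iii)--(vi) and preservation of powers by $[1]$.
\item The existential quantifier is the delicate case. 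By \cref{thm:ff-kernel-definable} the projection $\{x,y \mid B\} \to \{x \mid \top\}$ is an \ffk-morphism in $\Syn(\TT)$. Its factorisation in $\Syn(\TT)$ is through $\{x \mid (\exists y)B\}$, by \cref{prop:swe-synT} and the proof of \cref{thm:synTisoregular}. $F$ preserves finite limits, hence kernel pairs and \ffk-morphisms, and it preserves fully faithful regular epimorphisms. So $F$ sends this factorisation to the factorisation of \cref{s.eq+mono} in $\K$, which is what $\br{(\exists y)B}_{\M_F}$ is by definition.
\end{itemize}
Since derivable entailments give inclusions of subobjects in $\Syn(\TT)$, and $F$ preserves these, every axiom is valid and $\M_F$ is a model. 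On 1-cells and 2-cells, a 2-natural transformation $F \Rightarrow G$ restricts to components at basic sorts, and these are morphisms and 2-cells of structures.

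For the converse direction, given a model $\M$ we define $F_\M\{\bar{x} \mid A\} \defeq \br{A}_\M$, which is well defined by soundness. A morphism $[f]$ is sent to the map induced by $\br{f}_\M \rightarrowtail \br{A}_\M \times \br{B}_\M$. The three functionality judgements and soundness say that this subobject's first projection is a monomorphism and a regular epimorphism, hence invertible; composing its inverse with the second projection gives the map. A 2-cell $[\alpha]$ is sent to the map $\br{A}_\M \to \br{B^{[1]}}_\M \cong \br{B}_\M^{[1]}$ obtained the same way, and this map corresponds to a 2-cell via the power. Equivalent formulas have equal interpretations, so all of this is well defined. The 2-functoriality checks reduce to the soundness of the rules used in the proof of \cref{thm:synT-def}.

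That $F_\M$ is isoregular follows by comparing the explicit constructions of limits (\cref{prop:limits-in-SynT}, \cref{thm:pullback-synT}) and of fully faithful regular epimorphisms (\cref{prop:swe-synT}) with the clauses of the interpretation. On morphisms of models we use \cref{lemma:morphisms.respect.formulas} to get components $p_A$, and the analogous 2-cell statement for 2-cells; naturality with respect to definable morphisms follows from uniqueness of $p_A$.

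Finally, $\M_{F_\M} = \M$ on the nose, up to the canonical identifications of powers. In the other direction, $F_{\M_F} \cong F$ via the isomorphisms $\br{A}_{\M_F} \cong F\{\bar{x} \mid A\}$ from the induction above, and these are natural because definable morphisms are determined by their graphs, which are subobjects preserved compatibly on both sides. So the two constructions give an equivalence, which is not an isomorphism since isoregular functors need not preserve chosen limits strictly.

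I expect the main obstacle to be the existential step of the induction in the first direction, together with the mutual dependence between formation and entailment. The induction must be carried out simultaneously on derivations of $\prp$-judgements and of entailments, so that validity of $\mathcal{J}_{\mathsf{faithful}}$ and $\mathcal{J}_{\mathsf{full\text{-}id}}$ in $\M_F$ is available (via \cref{thm:semantics-eqfib}) exactly when the existential is formed.
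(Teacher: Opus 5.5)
Your proposal is correct and is essentially the paper's proof. Your $\M\mapsto F_{\M}$ is the paper's $\Sigma$; your ``analogous 2-cell statement'' is what the paper obtains by transposing through $\N^{[1]}$, using that models are closed under powers by $[1]$. Your construction of $\M_F$, with the induction identifying $\br{A}_{\M_F}$ with $F\{\bar{x}\mid A\}$, is exactly its essential-surjectivity step.

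The only difference is packaging. You use restriction to basic sorts as an explicit pseudo-inverse, whereas the paper proves $\Sigma$ faithful and full directly: evaluation at basic sorts is faithful, $\Sigma(p)=h$, and powers by $[1]$ handle 2-cells. Your route needs that same fact---a 2-natural transformation between isoregular 2-functors is determined by its components at the basic sorts---to make the counit $F_{\M_F}\cong F$ 2-natural in $F$, so you should state it explicitly.
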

\begin{proof} 
	We begin by constructing a 2-functor 
	$$\Sigma\colon \Mod(\TT,\K)\to \IsoReg(\Syn(\TT),\K)$$
	that we then show is an equivalence.
	Given $M\in \Mod(\TT,\K)$ we define $\Sigma M\colon \Syn(\TT)\to\K$ by:\begin{itemize}
		\item for any $\{\bar x\co\bar{X}|\ A(\bar x)\}\in \Syn(\TT)$ we set $$\Sigma M(\{\bar x\co\bar{X}|\ A(  \bar x)\})\defeq\br{A};$$
		\item for any $[f]\colon \{ \bar x\co\bar{X}|\ A(  \bar x)\}\to  \{  \bar y\co\bar Y |\ B(\bar y)\}$ in $\Syn(\TT)$ the map $\Sigma M([f])\colon\br{A}\to\br{B}$ is the composite $\pi_2\circ \pi_1^{-1}$ depicted below
			\begin{center}
			\begin{tikzpicture}[baseline=(current  bounding  box.south), scale=2]
				
				\node (00) at (0,0) {$\br{f}$};
				\node (10) at (0.9,0) {$\br{A\wedge B}$};
				\node (20) at (2,0) {$\br{\bar{X}}\!\times \br{\bar Y}$};
				
				\node (11) at (0.9,0.8) {$\br{A}$};
				\node (21) at (2,0.8) {$\br{\bar{X}}$};
				
				\node (1-1) at (0.9,-0.8) {$\br{B}$};
				\node (2-1) at (2,-0.8) {$\br{\bar Y}$};
				
				\path[font=\scriptsize]
				
				(00) edge [>->] node [above] {} (10)
				(00) edge [->] node [above] {$\pi_1$} (11)
				(00) edge [->] node [below] {$\pi_2$} (1-1)
				
				(10) edge [>->] node [below] {} (20)
				(10) edge [->] node [above] {} (11)
				(10) edge [->] node [right] {} (1-1)
				
				(11) edge [>->] node [above] {} (21)
				(1-1) edge [>->] node [above] {} (2-1)
				(20) edge [->] node [below] {} (21)
				(20) edge [->] node [left] {} (2-1);
			\end{tikzpicture}	
		\end{center}
		where $\pi_1$ is invertible since $f$ is functional.
		\item for any 2-cell $[\alpha]\colon [f]\Rightarrow [f']\colon  \{ \bar x\co \bar{X}|\ A(  \bar x)\}\to  \{  \bar y\co\bar Y|\ B(  \bar y)\}$ in $\Syn(\TT)$ consider its transpose $[\bar\alpha]\colon  \{ \bar x\co \bar{X}|\ A( \bar x)\}\to  \{ \bar u\colon   \bar Y^{[1]}|\ B^{[1]}( \bar u)\} $; then we have the following commutative diagram in $\K$.
		\begin{center}
			\begin{tikzpicture}[baseline=(current  bounding  box.south), scale=2]
				
				\node (00) at (-0.2,0) {$\br{A}$};
				\node (10) at (1.1,0) {$\br{B^{[1]}}$};
				\node (20) at (1.8,0) {$\br{B}^{[1]}$};
				
				\node (21) at (1.8,0.6) {$\br{B}$};
				
				\node (2-1) at (1.8,-0.6) {$\br{B}$};
				
				\path[font=\scriptsize]
				
				(00) edge [>->] node [above] {$\Sigma M([\bar\alpha])$} (10)
				(00) edge [bend left=25,->] node [above] {$\Sigma M([f])$} (21)
				(00) edge [bend right=25,->] node [below] {$\Sigma M([f'])$} (2-1)
				
				(10) edge [->] node [above] {$\cong$} (20)
				
				(20) edge [->] node [right] {$\dom$} (21)
				(20) edge [->] node [right] {$\cod$} (2-1);
			\end{tikzpicture}	
		\end{center}
		Commutativity of the triangles follows from $\alpha(x,u)\vdash f(x,\tx{dom}(u))\wedge f'(x,\tx{cod}(u))$. The horizontal composite is exactly the data of a 2-cell $\Sigma M([f])\Rightarrow\Sigma M([f'])$ in $\K$ which we define to be $\Sigma M([\alpha])$.
	\end{itemize}
	The fact that $\Sigma M$ is is well-defined and is a 2-functor is done as in the 1-dimensional setting, and we leave the details to the reader. To show that it is isoregular we proceed by steps. \begin{enumerate}
		\item {\em $\Sigma M$ preserves powers by $[1]$}. Fix an object $\{\bar x\colon \bar X|\ A( \bar x)\}$ of $\Syn(\TT)$; then the cylinder expressing the power of such object by $[1]$ is given by the 2-cell
		\begin{center}
			\begin{tikzpicture}[baseline=(current  bounding  box.south), scale=2]
				
				\node (a0) at (0,1) {$\{\bar u\colon   \bar X^{[1]}|\ A^{[1]}(  \bar u)\}$};
				\node (b0) at (2,1) {$\{\bar x\colon  \bar X|\ A( \bar x)\}$.};
				\node (e'0) at (1.1,1) {$\Downarrow [\iota]$};

				\path[font=\scriptsize]
				
				(a0) edge [bend left, ->] node [above] {$[\dom]$} (b0)
				(a0) edge [bend right, ->] node [below] {$[\cod]$} (b0);
			\end{tikzpicture}	
		\end{center}
		defined in the proof of \cref{prop:limits-in-SynT}. This is sent by $\Sigma M$ to
		\begin{center}
			\begin{tikzpicture}[baseline=(current  bounding  box.south), scale=2]
				
				\node (a0) at (0,1) {$\br{A^{[1]}}$};
				\node (b0) at (2,1) {$\br{A}$};
				\node (e'0) at (1.1,1) {$\Downarrow \Sigma M([\iota])$};

				\path[font=\scriptsize]
				
				(a0) edge [bend left, ->] node [above] {$\Sigma M([\dom])$} (b0)
				(a0) edge [bend right, ->] node [below] {$\Sigma M([\cod])$} (b0);
			\end{tikzpicture}	
		\end{center}
		where $\Sigma M([\iota])$ is, by definition of $\Sigma$, the transpose of the composite
		$$ \br{A^{[1]}}\xrightarrow{\Sigma M ([\bar\iota])} \br{A^{[1]}}\xrightarrow{\cong} \br{A}^{[1]}. $$
		But $[\bar\iota]=1_{\{u|\ A^{[1]}\}}$, so $\Sigma M ([\bar\iota])=1_{\br{A^{[1]}}}$ and $\Sigma M([\iota])$ is the transpose of the canonical isomorphism defining the power in $\K$; hence $\Sigma M([\iota])$ is a limiting cylinder.
		
		\item {\em $\Sigma M$ preserves binary products}. By \cref{prop:limits-in-SynT}, the product of $\{\bar x\colon \bar X|\ A(\bar x)\}$ with $\{\bar y\colon \bar Y|\ B(\bar y)\}$ in $\Syn(T)$ is $ \{\bar x\colon  \bar X,\bar y\colon \bar Y |\ A(\bar x)\wedge B(  \bar y)\} $. This is sent by $\Sigma M$ to the pullback
		\begin{center}
			\begin{tikzpicture}[baseline=(current  bounding  box.south), scale=2]
				
				\node (a0) at (0,0.8) {$\br{A\wedge B}$};
				\node (a0') at (0.2,0.6) {$\lrcorner$};
				\node (b0) at (1.5,0.8) {$\br{\bar A}\times\br{\bar Y}$};
				\node (c0) at (0,0) {$\br{X}\times \br{B}$};
				\node (d0) at (1.5,0) {$\br{\bar X}\times\br{\bar Y}$};
				
				\path[font=\scriptsize]
				
				(a0) edge [>->] node [above] {} (b0)
				(a0) edge [>->] node [left] {} (c0)
				(b0) edge [>->] node [right] {$m_A\times 1$} (d0)
				(c0) edge [>->] node [below] {$1\times m_B$} (d0);
			\end{tikzpicture}
		\end{center}
		which is isomorphic to $\br{A}\times\br{B}=\Sigma M(\{\bar X|\ A\})\times \Sigma M(\{\bar Y|\ B\})$.
		
		\item {\em $\Sigma M$ preserves equalisers}. By \cref{prop:limits-in-SynT}, the equaliser of $[f],[g]\colon \{  \bar x\colon \bar X|\ A(\bar x)\}\to  \{ \bar y\colon \bar Y|\ B(\bar y)\}$ is $ \{\bar x\colon \bar X|\ \exists \bar y\ f(\bar x,\bar y)\wedge g(\bar x,\bar y)\}$. Consider the diagram below.
		\begin{center}
			\begin{tikzpicture}[baseline=(current  bounding  box.south), scale=2, on top/.style={preaction={draw=white,-,line width=#1}}, on top/.default=6pt]
				
				\node (a0) at (0,1.2) {$\br{f\wedge g}$};
				\node (b0) at (1.3,1.2) {$\br{g}$};
				\node (c0) at (0,0) {$\br{\exists \bar y\ f\wedge g}$};
				\node (d0) at (1.3,0) {$\br{A}$};
				%\node (e0) at (0.15,0.85) {$\lrcorner$};
				
				\node (a0') at (0.7,0.6) {$\br{f}$};
				\node (c0') at (0.7,-0.6) {$\br{A}$};
				\node (d0') at (3,-0.1) {$\br{\bar X}\times\br{\bar Y}$};
				%\node (e0') at (0.9,0.35) {$\lrcorner$};
				
				\path[font=\scriptsize]

				(a0) edge [->] node [above] {} (b0)
				(a0) edge [->] node [left] {$P_1$} (c0)
				(b0) edge [->] node [right] {} (d0)
				(c0) edge [>->] node [below] {} (d0)
				
				(a0) edge [->] node [above] {} (a0')
				(b0) edge [bend left=10, >->] node [above] {} (d0')
				(c0) edge [>->] node [right] {} (c0')
				(d0) edge [bend left=5, ->] node [below] {$(m_B,\Sigma M([g]))$} (d0')
				
				(a0') edge [bend left=10, >->, on top] node [above] {} (d0')
				(a0') edge [->, on top] node [above] {} (c0')
				(c0') edge [bend right=10, ->] node [below] {$\ \ \ (m_A,\Sigma M([f]))$} (d0');
			\end{tikzpicture}	
		\end{center}
		Where the top square is a pullback and the vertical maps are all isomorphisms ($P_1$ since it is obtained by factorising a monomorphism). It follows that also the bottom face is a pullback. Therefore, by the standard argument involving pullbacks and products, $\Sigma M(\exists \bar y\ f\wedge g)=\br{\exists y\ f\wedge g}$ is isomorphic to the equaliser of $\Sigma M([f])$ and $\Sigma M([g])$.
		
		\item {\em $\Sigma M$ preserves fully faithful regular epimorphisms}. By \cref{prop:swe-synT} any fully faithful regular epimorphism in $\Syn(\TT)$ can be expressed as
		$$[f]\colon \{\bar x\colon \bar X|\ A(\bar x)\}\to  \{\bar y\colon   Y|\ \exists \bar x:X\ f(\bar x,\bar y)\}.$$
		By definition of $\Sigma M$ the image of $[f]$ is the composite $\pi_2\circ \pi_1^{-1}$ identified below.
		\begin{center}
			\begin{tikzpicture}[baseline=(current  bounding  box.south), scale=2]
				
				\node (10) at (0.9,0) {$\br{f}$};
				\node (20) at (2,0) {$\br{\bar X}\!\times \br{\bar Y}$};
				
				\node (11) at (0.9,0.8) {$\br{A}$};
				\node (21) at (2,0.8) {$\br{\bar X}$};
				
				\node (1-1) at (0.9,-0.8) {$\br{\exists x\ f(x,y)}$};
				\node (2-1) at (2,-0.8) {$\br{\bar Y}$};
				
				\path[font=\scriptsize]
				
				(10) edge [->] node [left] {$\pi_1$} (11)
				(10) edge [->] node [right] {$\cong$} (11)
				(10) edge [right hook ->>] node [left] {$\pi_2$} (1-1)
				(10) edge [>->] node [below] {} (20)
				
				(11) edge [>->] node [above] {} (21)
				(1-1) edge [>->] node [above] {} (2-1)
				(20) edge [->] node [below] {} (21)
				(20) edge [->] node [left] {} (2-1);
			\end{tikzpicture}	
		\end{center}
		Now observe that $\pi_2$ is a fully faithful regular epimorphism by how existential quantification is interpreted in $\K$. Thus $\Sigma M([f])$ is a fully faithful regular epimorphism too.
	\end{enumerate}
	This defines $\Sigma$ on objects; we need to show that this assignment extends on 1-cells and 2-cells.
	
	Consider a morphism $p\colon M\to N$ in $\Mod(\TT,\K)$, by definition this is just a morphism of $\LL$-structures and thus is determined by maps 
	$$ p_S\colon \br{S}_M\to \br{S}_N $$
	in $\K$ for any sort $S$, which preserve the interpretation of function and relation symbols. We define the components of the 2-natural transformation $\Sigma p\colon \Sigma M\Rightarrow\Sigma N$ by
	$$ (\Sigma p)_{\{\bar{X}|A\}}\defeq p_A\colon \Sigma M(\{\bar{X}|A\})\to \Sigma N(\{\bar{X}|A\}) $$
	for any $\{\bar{X}|A\}\in\Syn(\TT)$, where $p_A$ is given by \cref{lemma:morphisms.respect.formulas}. By a standard 2-dimensional argument, this is 2-natural if and only if it is natural in the 1-dimensional sense and for any object $\{\bar{X}|A\}$ the square
	\begin{center}
		\begin{tikzpicture}[baseline=(current  bounding  box.south), scale=2]
			
			\node (a0) at (0,0.8) {$\Sigma M(\{\bar{X}|A\})^{[1]}$};
			\node (b0) at (1.6,0.8) {$\Sigma M(\{\bar{X}|A\}^{[1]})$};
			\node (c0) at (0,0) {$\Sigma N(\{\bar{X}|A\})^{[1]}$};
			\node (d0) at (1.6,0) {$\Sigma N(\{\bar{X}|A\}^{[1]})$};
			
			\path[font=\scriptsize]
			
			(a0) edge [->] node [above] {$\cong$} (b0)
			(a0) edge [->] node [left] {$(\Sigma p_{\{\bar{X}|A\}})^{[1]}$} (c0)
			(b0) edge [->] node [right] {$\Sigma p_{\{\bar{X}|A\}^{[1]}}$} (d0)
			(c0) edge [->] node [below] {$\cong$} (d0);
		\end{tikzpicture}	
	\end{center}
	commutes. The commutativity of such square follows from the recursive definition of $p_{A}$ in~\cref{lemma:morphisms.respect.formulas}. As for ordinary naturality, consider a morphism $[f]\colon \{   \bar{X}| A\}\to  \{  \bar Y| B\}$ in $\Syn(\TT)$, then (using the definition of $\Sigma$) we need to prove that the front face of the diagram below commutes.
	\begin{center}
		\begin{tikzpicture}[baseline=(current  bounding  box.south), scale=2, on top/.style={preaction={draw=white,-,line width=#1}}, on top/.default=6pt]
			
			\node (a1) at (0,1) {$\br{A}_M$};
			\node (b1) at (1.5,1) {$\br{f}_M$};
			\node (c1) at (3.2,1) {$\br{B}_M$};
			\node (a0) at (0,0) {$\br{A}_N$};
			\node (b0) at (1.5,0) {$\br{f}_N$};
			\node (c0) at (3.2,0) {$\br{B}_N$};
			
			\node (a1') at (0.7,1.7) {$\br{\bar{X}}_M$};
			\node (b1') at (2.2,1.7) {$\br{\bar{X}}_M\!\times\br{\bar{Y}}_M$};
			\node (c1') at (3.9,1.7) {$\br{\bar{Y}}_M$};
			\node (a0') at (0.7,0.7) {$\br{\bar{X}}_N$};
			\node (b0') at (2.2,0.7) {$\br{\bar{X}}_N\!\times\br{\bar{Y}}_N$};
			\node (c0') at (3.9,0.7) {$\br{\bar{Y}}_N$};
			
			\path[font=\scriptsize]
			
			(a1') edge [<-] node [above] {} (b1')
			(b1') edge [->] node [above] {} (c1')
			(a0') edge [->] node [below] {} (b0')
			(b0') edge [->] node [below] {} (c0')
			(a1') edge [<-] node [left] {$p_{\bar{X}}$} (a0')
			(b1') edge [->] node [right] {$p_{\bar{X}}\!\times p_{\bar{Y}}$} (b0')
			(c1') edge [->] node [right] {$p_{\bar{Y}}$} (c0')
			
			(a1) edge [->, on top] node [above] {$\ \ \ \ \ \ \ \ \cong$} (b1)
			(b1) edge [->, on top] node [above] {} (c1)
			(a0) edge [->] node [below] {$\cong$} (b0)
			(b0) edge [->] node [below] {} (c0)
			(a1) edge [->] node [left] {$p_{A}$} (a0)
			(b1) edge [->, on top] node [left] {$p_{f}$} (b0)
			(c1) edge [->, on top] node [left] {$h_{B}$} (c0)
			
			(a1) edge [>->] node [above] {} (a1')
			(b1) edge [>->] node [above] {} (b1')
			(c1) edge [>->] node [above] {} (c1')
			(a0) edge [>->] node [above] {} (a0')
			(b0) edge [>->] node [above] {} (b0')
			(c0) edge [>->] node [above] {} (c0');
		\end{tikzpicture}	
	\end{center}
	The three vertical lateral squares commute by \cref{lemma:morphisms.respect.formulas} and the two squares in the back commute since they are obtained by taking product projections. Finally, since the slanted arrows are monomorphisms, it follows that also the two squares in the front commute. 
	
	Next we need to define the action of $\Sigma$ on 2-cells; this is done by using powers by $[1]$ which exist in $\Mod(\TT,\K)$ by \cref{lemma:powers of models}. Given any 2-cell $\eta\colon M\Rightarrow N$ in $\Mod(\TT,\K)$, consider its transpose $\eta^t\colon M\to N^{[1]}$; the image of this through $\Sigma$ defines a morphism $\Sigma(\eta^t)\colon \Sigma( M)\to \Sigma( N^{[1]} )$ in $[\Syn(\TT),\K]$. Now, since for any formula $A$ we have $\br{A}_{N^{[1]}}\cong (\br{A}_N)^{[1]}$, it is easy too see that $\Sigma( N^{[1]} )\cong \Sigma(N)^{[1]}$. Then we can define $\Sigma(\eta)$ to be the transpose of $\Sigma(\eta^t)$ composed with the isomorphism just mentioned.  
	
	This concludes the definition of $\Sigma$. In the next few steps we will prove that it is 2-functorial and an equivalence.
	
	{\em $\Sigma$ defines a faithful 2-functor}. To show this consider the triangle below
	\begin{center}
		\begin{tikzpicture}[baseline=(current  bounding  box.south), scale=2]
			
			\node (a0) at (0,0.8) {$\Mod(\TT,\K)$};
			\node (b0) at (1.8,0.8) {$\IsoReg(\Syn(\TT),\K)$};
			\node (c0) at (0.91,0) {$\prod_{S\in\S}\K$};
			
			\path[font=\scriptsize]
			
			(a0) edge [dashed, ->] node [above] {$\Sigma$} (b0)
			(a0) edge [->] node [left] {ev} (c0)
			(b0) edge [->] node [right] {ev'} (c0);
		\end{tikzpicture}	
	\end{center}
	where $\tx{ev}$ and $\tx{ev'}$ are defined by evaluating on the basic sorts. These are both easily seen to be faithful; moreover, $\Sigma$ (seen as an assignment on objects, morphisms, and 2-cells) makes the triangle commute. It now follows from the fact that $\tx{ev}$ and $\tx{ev'}$ are both 2-functorial and faithful that $\Sigma$ is 2-functorial and faithful as well. 
	
	{\em $\Sigma$ is a fully faithful 2-functor}. Consider $M,N\in\Mod(\TT,\K)$ and a 2-natural transformation $h\colon \Sigma M\to\Sigma N$ in $[\Syn(\TT),\K]$. By evaluating at the objects $\{S|\top\}$, induced by the basic sorts, we obtain a family of morphism
	$$ p_S\co \br{S}_M=\Sigma M(\{S|\top\})\xrightarrow{h_{\{S|\top\}}} \br{S}_N=\Sigma N(\{S|\top\}) $$
	in $\K$. We will show that $p\defeq(p_S)_{S\in\S}$ defines a morphism of $\LL$-structures $p\colon M\to N$; it will then follow by construction of $\Sigma$ and from \cref{lemma:morphisms.respect.formulas} that $\Sigma(p)=h$.
	
	Consider a function symbol $f\colon \bar{X} \to S^{\bb c}$; then we can consider the morphism
	$$ \{\bar x\co\bar{X}|\top\}\xrightarrow{[f(\bar x)=y]}\{y\co S^{\bb c}|\top\} $$
	in $\Syn(\TT)$. The corresponding naturality square induced by the fact that $h$ is a natural transformation shows that $p$ respects the interpretation of $f$. Similarly, for any relation symbol $R\rightarrowtail \bar{X}$, the naturality square corresponding to the morphism
	$$ \{ \bar x\co\bar{X}| R(x)\}\xrightarrow{[R(\bar x)\wedge \bar x=\bar y]}\{ \bar y\co \bar{X}|\top\} $$
	shows that $p$ respects the interpretation of $R$.
	
	This shows that $\Sigma$ is full (and faithful) on 1-morphisms, or equivalently that the ordinary functor $\Sigma_0\co \Mod(\TT,\K)_0\to \IsoReg(\Syn(\TT),\K)_0$ is fully faithful. Since $\Sigma$ preserves powers by $[1]$ (as observed before) this is enough to imply that it is fully faithful as a 2-functor: for any $M,N\in\Mod(\TT,\K)$ the action 
	$$\Sigma_{M,N}\colon \Mod(\TT,\K)(M,N)\to \IsoReg(\Syn(\TT),\K)(\Sigma M,\Sigma N)$$
	 is an isomorphism of categories if and only if $\Cat([1],\Sigma_{M,N})$ is a bijection (since $[1]$ is a strong generator in $\Cat$). But 
	$$ \Cat([1],\Sigma_{M,N})\cong\Cat([0],\Sigma_{M,N^{[1]}})\cong (\Sigma_0)_{M,N^{[1]}} $$
	and the latter is a bijection since $\Sigma_0$ is fully faithful. Thus $\Sigma$ is fully faithful as a 2-functor. 
	
	{\em $\Sigma$ is essentially surjective on objects}. Consider an isoregular 2-functor $F\colon\Syn(\TT)\to \K$ and define an $\LL$-structure $M$ as follows:\begin{itemize}
		\item for any basic sort $S\in\S$ we define $\br{S}\defeq F(\{S|\top\})$;
		\item for any function symbol $f\colon S_1^{\bb c_1},\cdots, S_n^{\bb c_n}\to S^{\bb c}$ in $\LL$, we define $\br{f}$ as the composite
		$$ \br{S_1}^{\bb c_1}\times\cdots\times  \br{S_n}^{\bb c_n}\cong F(\{S_1^{\bb c_1},\cdots,S_n^{\bb c_n}|\top\}) \xrightarrow{\ F([f(\bar x)=y])\ } F(\{S^{\bb c}|\top\})\cong\br{S}^{\bb c} $$
		where we used that $F$ preserves products and finite powers, and applied \cref{lemma:powers-in-Syn(T)}.
		\item for any relation symbol $R\rightarrowtail \bar{X}\defeq S_1^{\bb c_1},\cdots, S_n^{\bb c_n}$ in $\LL$ we define its interpretation as the subobject
		$$ \br{R}\defeq F(\{\bar{X}|R(\bar x)\} )\xrightarrow{ F([R(\bar x)\wedge \bar x=\bar y]) }  F(\{\bar{X}|\top(\bar y)\})\cong \br{S_1}^{\bb c_1}\times\cdots\times  \br{S_n}^{\bb c_n} $$
		where we used the same properties as above plus that $F$ preserves monomorphisms.
	\end{itemize}
	Now, it is easy to show by induction that for any term $(\bar x\co\bar{X})\ t(x)\co Y$ we have an isomorphism 
	\begin{center}
		\begin{tikzpicture}[baseline=(current  bounding  box.south), scale=2]
			
			\node (a0) at (0,0.8) {$\br{ \bar{X} }$};
			\node (b0) at (1.1,0.8) {$F(\{\bar{X}|\top(\bar x)\} )$};
			\node (c0) at (0,0) {$\br{Y}$};
			\node (d0) at (1.1,0) {$F(\{Y|\top(y)\} )$};
			
			\path[font=\scriptsize]
			
			(a0) edge [->] node [above] {$\cong$} (b0)
			(a0) edge [->] node [left] {$\br{t}$} (c0)
			(b0) edge [->] node [right] {$F([t(\bar x)=y])$} (d0)
			(c0) edge [->] node [below] {$\cong$} (d0);
		\end{tikzpicture}
	\end{center}
	in the arrow category $\K^{[1]}$. Similarly, if we have an isoregular formula $(\bar x\co\bar{X})\ A(x) \co \mathsf{prop}$ relative to $\TT$, then by induction we construct an isomorphism 
	\begin{center}
		\begin{tikzpicture}[baseline=(current  bounding  box.south), scale=2]
			
			\node (a0) at (0,0.8) {$\br{A}$};
			\node (b0) at (1.1,0.8) {$F(\{\bar{X}|A(\bar x)\} )$};
			\node (c0) at (0,0) {$\br{ \bar{X}}$};
			\node (d0) at (1.1,0) {$F(\{\bar{X}|\top(\bar y)\} )$};
			
			\path[font=\scriptsize]
			
			(a0) edge [->] node [above] {$\cong$} (b0)
			(a0) edge [>->] node [left] {} (c0)
			(b0) edge [>->] node [right] {$F([A(\bar x)\wedge \bar x=\bar y])$} (d0)
			(c0) edge [->] node [below] {$\cong$} (d0);
		\end{tikzpicture}
	\end{center}
	in $\K^{[1]}$. To show this we use the various lemmas of Section~\ref{section:SynT} which imply that the operations defining our formulas coincide with taking certain finite 2-limits or image-factorizations in $\Syn(\TT)$, and hence are preserved by $F$.
	
	Now, any axiom $(\bar x\co \bar{X})\ A\vdash B$ in $\TT$ induces the commutative triangle below left in $\Syn(\TT)$.
	\begin{center}
		\begin{tikzpicture}[baseline=(current  bounding  box.south), scale=2]
			
			\node (a0) at (0,0.8) {$\{\bar x\co\bar{X}|A(\bar x)\}$};
			\node (b0) at (2,0.8) {$\{\bar x'\co\bar{X}|B(\bar x')\}$};
			\node (c0) at (1,0) {$\{\bar x''\co\bar{X}|\top(\bar x'')\}$};
			
			\path[font=\scriptsize]
			
			(a0) edge [>->] node [above] {$[A(\bar x)\wedge \bar x=\bar x']$} (b0)
			(a0) edge [>->] node [left] {$[A(\bar x)\wedge \bar x=\bar x'']\ $} (c0)
			(b0) edge [>->] node [right] {$\ [B(\bar x')\wedge \bar x'=\bar x'']$} (c0);
		\end{tikzpicture}	
		\quad\quad
		\begin{tikzpicture}[baseline=(current  bounding  box.south), scale=2]
			
			\node (a0) at (0,0.8) {$\br{A}$};
			\node (b0) at (1.6,0.8) {$\br{B}$};
			\node (c0) at (0.8,0) {$\br{\bar{X}}$};
			
			\path[font=\scriptsize]
			
			(a0) edge [>->] node [above] {} (b0)
			(a0) edge [>->] node [left] {} (c0)
			(b0) edge [>->] node [right] {} (c0);
		\end{tikzpicture}	
	\end{center}
	By applying $F$ and using the isomorphisms above we obtain the commutative triangle above right, which says exactly that $M$ satisfies $(\bar x\co\bar{X})\ A\vdash B$. It follows that $M\in\Mod(\TT)$, and thanks to the isomorphisms above that $\Sigma M\cong F$.
\end{proof}

\section{Accessibility of 2-categories of models} 
\label{sec:accessibility}

\subsection{Basics} 

The notion of accessible 2-category that we use is a specialization of one that has been considered for enriched categories, for the first time, in~\cite{LR12:articolo}. Another notion of accessibility (for enriched categories) had been introduced before in~\cite{BQ96:articolo}, but  these coincide in the 2-categorical context by~\cite[Theorem~3.14]{LT21:articolo}.

\begin{defi} \leavevmode
\begin{itemize}
\item Let $\lambda$ be a regular cardinal. We say that a 2-category $\K$ is 
	\emph{$\lambda$-accessible} if it is equivalent to the free cocompletion of a small 2-category under $\lambda$-filtered colimits. We say that $\K$ is \emph{accessible} if it is $\lambda$-accessible for some $\lambda$.
	\item A functor $F\colon\K\to\L$ between accessible 2-categories is called \emph{accessible} if it preserves $\lambda$-filtered colimits for some $\lambda$.
	\end{itemize}
\end{defi}

Given a 2-category $\K$ with $\lambda$-filtered colimits, an object $X\in K$ is called 
\emph{$\lambda$-presentable} if $\K(X,-)\colon \K\to\Cat$ preserves $\lambda$-filtered colimits. Then, by~\cite[Proposition~3.15]{LT22:virtual}, a 2-category~$\K$ with $\lambda$-filtered colimits is $\lambda$-accessible if and only if there is a set $\G$ of $\lambda$-presentable objects such that every objects of $\K$ can be written as a $\lambda$-filtered colimit of objects in $\G$.  

The accessible 2-categories that we are interested in are those that admit all flexible limits; that is, have all small products, inserters, and equifiers. (Flexible limits also include splitting of idempotents, but these already exist in any accessible 2-category). Note that any 2-category with flexible limits has in particular also all (weighted) pseudolimits and (weighted) bilimits, see for instance~\cite{lack20102}.

Accessible 2-categories with flexible limits have been studied in~\cite{LR12:articolo,BLV:articolo,Bou2021:articolo}. 
We recall some of the key facts that will be used later.

\begin{prop}
	Any accessible 2-category with flexible limits also has all bicolimits. 
\end{prop}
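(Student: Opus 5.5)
The plan is to reduce the existence of bicolimits to a birepresentability statement, and then prove the latter by a 2-dimensional Freyd-type adjoint functor argument. Let $\K$ be $\lambda$-accessible with flexible limits, $\bb D$ a small 2-category, $W\co \bb D\to\Cat$ a small weight and $D\co \bb D\to \K$ a 2-functor. A bicolimit $W\ast_b D$ is exactly a birepresentation of the 2-functor
\[
\Phi\co \K\to\Cat,\qquad \Phi(X)\defeq \mathrm{Ps}[\bb D,\Cat]\big(W,\K(D-,X)\big),
\]
whose values are categories of pseudonatural transformations and modifications. So it suffices to show that $\Phi$ is birepresentable.

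Step 1 establishes the relevant properties of $\Phi$. The functor $\Phi$ is a weighted pseudolimit, taken in $\Cat$, of the 2-functors $\K(Dd,-)$ for $d\in\bb D$. Each of these preserves all limits, so $\Phi$ preserves flexible limits, because pseudolimits commute with limits. Next, I would enlarge $\lambda$ so that it meets two conditions:
\begin{itemize}
\item every $Dd$ is $\lambda$-presentable, which is possible since $\bb D$ is small and every object of an accessible 2-category is presentable for some cardinal;
\item $\lambda$-filtered colimits in $\Cat$ commute with pseudolimits of the size of $\bb D$ and $W$.
\end{itemize}
These conditions make $\Phi$ preserve $\lambda$-filtered colimits. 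Hence $\Phi$ is an accessible 2-functor preserving flexible limits.

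Step 2 produces a solution set. By accessibility, every object $x\in\Phi(X)$ factors through some $\Phi(G)$ with $G$ in the chosen small set $\G$ of $\lambda$-presentable generators. Concretely, write $X$ as a $\lambda$-filtered colimit of objects of $\G$; since $\Phi$ preserves this colimit, $x$ lifts to a stage of it. Then take the product $P=\prod_{(G,g)} G$, indexed by $G\in\G$ and $g\in\Phi(G)$, which exists by flexibility and is small because $\Phi(G)$ is small. The induced element $p\in\Phi(P)$ is then weakly bi-initial: for every $(X,x)$ there is a morphism $h\co P\to X$ with $\Phi(h)(p)\cong x$.

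Step 3, which I expect to be the main obstacle, cuts the weakly initial pair $(P,p)$ down to a genuinely bi-initial one. Here the 1-dimensional trick of taking the joint equaliser of all endomorphisms is unavailable, since equalisers are not flexible. I would work instead in the pseudo-2-category of elements of $\Phi$, whose objects are pairs $(X,x)$ and whose morphisms are pairs $(h,\theta)$ with $\theta\co\Phi(h)(x)\cong y$. This 2-category has flexible limits created from those of $\K$, because $\Phi$ preserves them. To force essential uniqueness of 1-cells out of the initial object, I would use iso-inserters, which are flexible, applied to all pairs of endomorphisms of $(P,p)$; iterating this transfinitely, with accessibility guaranteeing termination, should yield an object through which any two maps into $(X,x)$ are isomorphic. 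To force uniqueness of 2-cells, I would then use equifiers, which are flexible, applied to all parallel pairs of 2-cells. The delicate point is to check that these operations stay within the pseudo-category of elements and preserve weak initiality; I would follow Bourke's and Lack--Rosický's treatment of the bicategorical adjoint functor theorem for flexible limits.

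Once Step 3 is done, the resulting object of $\K$ is bi-initial in the category of elements of $\Phi$. It therefore birepresents $\Phi$, and so it is the required bicolimit $W\ast_b D$.
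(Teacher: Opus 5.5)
\textbf{Verdict: genuine gap in Step~3.} Steps~1 and~2 are a sound reduction. Bicolimits are birepresentations of $\Phi$, and $\Phi$ preserves flexible limits. It also preserves $\lambda$-filtered colimits, provided you enlarge $\lambda$ to a cardinal sharply above the original one so that $\K$ remains $\lambda$-accessible. The product $P=\prod_{(G,g)}G$ then gives an object $(P,p)$ of the iso-comma $\mathrm{El}(\Phi)$ that is weakly initial on objects.

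Step~3, however, carries essentially all the content of the statement: it is a two-dimensional version of Freyd's initial object theorem. You do not prove it, and the method you sketch would not work as stated.
\begin{itemize}
\item Iso-inserters of pairs of endomorphisms of $(P,p)$ give isomorphisms $ei\cong e'i$ with no coherence among them. Nothing makes the one for $(e,e)$ the identity, or makes them compatible with composition.
\item The projection $i$ has none of the cancellation property that the equaliser $E\rightarrowtail P$ has in the one-dimensional argument. There, monicity is exactly what transfers uniqueness from endomorphisms of $P$ to arbitrary parallel pairs out of $E$. An iso-inserter projection is not even pseudo-monic.
\item Iterating does not repair this. The limit stages of a descending transfinite sequence need conical limits of towers, which are not flexible. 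Nothing in accessibility makes such a sequence stabilise; well-poweredness does not help, since these projections are not monomorphisms.
\item You give no argument that the later equifiers preserve weak initiality, or the essential uniqueness of 1-cells obtained earlier. Nor do you explain why equifying 2-cells between endomorphisms of $P$ controls 2-cells between arbitrary maps out of the result.
\item At the end, bi-initiality in $\mathrm{El}(\Phi)$, whose 1-cells carry \emph{invertible} $\theta$, only shows that $\K(Q,X)\to\Phi(X)$ is essentially surjective and fully faithful on isomorphisms. You still need powers by $[1]$, which $\Phi$ preserves, to conclude it is an equivalence.
\end{itemize}

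\textbf{How the step can be closed.} No iteration is needed. Your ingredients (products, iso-inserters, equifiers) should be assembled into a single flexible limit. Take $Q$ to be the pseudo-limit of the inclusion $J$ of the one-object full sub-2-category of $\mathrm{El}(\Phi)$ on $(P,p)$. The real work is then a two-dimensional cofinality argument. Using weak initiality together with flexible limits in $\mathrm{El}(\Phi)$, one shows that for every object $Z$ the pseudo-colimit of $\mathrm{El}(\Phi)(J-,Z)$ is equivalent to the terminal category:
\begin{itemize}
\item pseudo-pullbacks give connectedness;
\item inverters make fibre 2-cells invertible after precomposing with a suitable endomorphism of $(P,p)$;
\item iso-inserters and equifiers identify parallel morphisms.
\end{itemize}
Consequently pseudo-cones over $J$ agree with pseudo-cones over the identity, so $Q$ is a bilimit of the identity. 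A bilimit of the identity is bi-initial: the cone isomorphisms $\pi_{\pi_Z}$ form an invertible modification $\pi\circ\pi_Q\cong\pi$, whence $\pi_Q\cong 1_Q$.

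The paper carries out none of this itself. Its proof simply cites Bourke--Lack--Vok\v{r}\'{\i}nek (Example~8.4) and Lack--Rosick\'y (Theorem~9.4), where accessibility produces suitable weak colimits that become bicolimits in the presence of flexible limits. That cited result is precisely what your Step~3 leaves open.
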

\begin{proof}
	See for instance \cite[Example~8.4]{BLV:articolo}, or \cite[Theorem~9.4]{LR12:articolo}, where it is shown that any accessible 2-category has a specific kind of ``weak'' colimits, and these subsume bicolimits.
\end{proof}

\begin{theo}\label{theo:left-biadj-exist}
	Let $U\colon\K\to \L$ be an accessible 2-functor between accessible 2-categories. If $\K$ has flexible limits and $U$ preserves them, then $U$ has a left biadjoint.
\end{theo}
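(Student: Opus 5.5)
The plan is to reduce to the standard solution-set argument for biadjoints and use accessibility to produce the solution set. For each $L\in\L$ we must build an object $FL\in\K$ and a morphism $\eta_L\colon L\to UFL$ such that, for every $K\in\K$, the functor
\[
\K(FL,K)\xrightarrow{U}\L(UFL,UK)\xrightarrow{\L(\eta_L,UK)}\L(L,UK)
\]
is an equivalence of categories. Equivalently, we must show that $\L(L,U-)\colon\K\to\Cat$ is birepresentable. The biadjoint is then assembled from these birepresentations in the usual way, so the real content lies in the pointwise statement.

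The first step is to choose $\lambda$ large enough. It should satisfy four conditions: $\K$ and $\L$ are $\lambda$-accessible, $U$ preserves $\lambda$-filtered colimits, $L$ is $\lambda$-presentable in $\L$, and $U$ sends $\lambda$-presentable objects to $\lambda$-presentable objects. Such a $\lambda$ exists by the usual uniformisation argument adapted to the enriched setting (as in \cite{LR12:articolo,LT22:virtual}). With this choice, $\L(L,U-)$ preserves $\lambda$-filtered colimits. Let $\G$ be a small set of representatives of the $\lambda$-presentable objects of $\K$. Every morphism $L\to UK$ then factors through $UG\to UK$ for some $G\in\G$ and some $G\to K$, because $K$ is a $\lambda$-filtered colimit of objects of $\G$ and $L$ is $\lambda$-presentable. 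The same holds for 2-cells, using $[1]$-powers, or by noting that a morphism of $\L(L,UK)$ is an object of $\L(L,UK^{[1]})$, and $U$ preserves flexible limits, hence powers. This yields a small solution set $\{(G,x)\mid G\in\G,\ x\colon L\to UG\}$.

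The second step builds the bi-initial object of the 2-category of elements $\mathrm{El}$ of $\L(L,U-)$, which is exactly what birepresentability amounts to. Since $\K$ has flexible limits and $U$ preserves them, $\mathrm{El}$ has flexible limits created from $\K$; in particular it has products, inserters and equifiers. We first form the product $P=\prod(G,x)$ over the solution set. This is weakly bi-initial: every object admits a morphism from it, and any two parallel morphisms out of it admit a 2-cell after precomposition. To upgrade weak bi-initiality to bi-initiality, we apply the 2-categorical analogue of Freyd's argument. We take the flexible ``bi-idempotent'' construction, namely the limit of the diagram of all endomorphisms and 2-cells of $P$, built from inserters and equifiers (as in Lack's treatment of bicolimits in 2-categories with flexible limits, \cite[Example~8.4]{BLV:articolo}), to cut $P$ down to an object $I$ whose hom-categories into any object are contractible.

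I expect this final step to be the main obstacle. In the 1-dimensional case one uses the joint equaliser of all endomorphisms. Here, equalisers are not flexible, so we must instead use iso-inserters and equifiers, and the resulting object is only bi-initial, not strictly initial. The plan is to import this step from \cite{BLV:articolo} or \cite{LR12:articolo}, rather than redo the coherence argument, and only check that their hypotheses are satisfied by $\mathrm{El}$.
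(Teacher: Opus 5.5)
Your proposal is correct and follows essentially the same route as the paper. The paper gives no argument of its own but cites \cite[Section~9.3]{BLV:articolo}, where the result is likewise obtained from a solution-set condition supplied by accessibility, together with a homotopical Freyd argument in the presence of flexible limits. The step you import can in fact be done directly: for weakly initial $P$, the pseudolimit $e\colon E\to P$ of the inclusion of the full sub-2-category on $P$ satisfies $me\cong 1_E$ for every $m\colon P\to E$, and applying weak initiality of $P$ to inserters and equifiers then makes every hom-category out of $E$ contractible.

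One point you should make explicit is why a bi-initial object of $\mathrm{El}$ gives a birepresentation of $\L(L,U-)$, since this is not automatic for an arbitrary 2-functor into $\Cat$. It holds here because $\L(L,U-)$ preserves powers by $[1]$, and that is what upgrades contractibility of the fibres of $\K(I,K)\to\L(L,UK)$ to this functor being an equivalence.
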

\begin{proof}
	This is shown in~\cite[Section~9.3]{BLV:articolo}.
\end{proof}

\subsection{Accessibility of models} 
We will now show that every 2-category of models of a isoregular theory is accessible with flexible limits.

\begin{theo}\label{theo.acc+flex}
	Let $\C$ and $\K$ be isoregular 2-categories, with $\C$ small. 
	\begin{enumerate}
		\item If $\K$ is accessible then $\IsoReg(\C,\K)$ is also accessible and the inclusion
		$$\IsoReg(\C,\K)\hookrightarrow [\C,\K]$$
		is an accessible 2-functor;
		\item If $\K$ has flexible limits and fully faithful regular epimorphisms are stable under them, then $\IsoReg(\C,\K)$ has flexible limits and the inclusion
		$$\IsoReg(\C,\K)\hookrightarrow [\C,\K]$$
		preserves them. 
	\end{enumerate} 
\end{theo}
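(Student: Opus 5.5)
For part~(i), the plan is to present $\IsoReg(\C,\K)$ as a 2-category of 2-functors satisfying a set of limit and colimit conditions, and then invoke the enriched theory of accessible 2-categories. By definition, an isoregular 2-functor $F\co\C\to\K$ is one that preserves a small set of specified cones and cocones. The cones are the finite 2-limits of $\C$: terminal object, binary products, equalisers and powers by $[1]$; by \cref{thm:enhance-conical-with-powers} these determine all finite 2-limits. The cocones are the coequalisers of fully faithful kernel pairs. Since $\C$ is small, these form a set. Finite weighted limits in $\K$ commute with $\lambda$-filtered colimits for $\lambda$ large enough, because $\K$ is accessible and $\Cat$ is locally finitely presentable. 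Hence preserving the limit cones is an accessible condition, by the enriched analogue of the fact that categories of models of limit-colimit sketches are accessible (\cite{LR12:articolo}, or the enriched sketch results used in \cite{RT25ERegular}).

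The colimit cones need a different treatment. Preserving the coequaliser $q\co B\to Q$ of the kernel pair of an \ffk-morphism $f\co A\to B$ is equivalent, given finite-limit preservation, to $Fq$ being a fully faithful regular epimorphism. By \cref{thm:wse-ffse} this amounts to $Fq$ being fully faithful and a strong epimorphism. Full faithfulness is a finite-limit condition: it says a certain square is a pullback. The strong-epi condition is an orthogonality or epi condition, and it can again be expressed as a colimit-type condition in an accessible way. Alternatively, one can use the standard fact that the full sub-2-category of $[\C,\K]$ on functors sending a set of cocones to colimit cocones is accessible and accessibly embedded, provided $\K$ is accessible. The inclusion is then accessible because the defining colimits and the finite limits are preserved by $\lambda$-filtered colimits computed pointwise in $[\C,\K]$.

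For part~(ii), the plan is to show that flexible limits in $[\C,\K]$, which are computed pointwise, restrict to $\IsoReg(\C,\K)$. Flexible limits commute with finite 2-limits, since limits commute with limits, so a pointwise flexible limit of finite-limit-preserving 2-functors again preserves finite limits. It remains to check that $L=\lim_W F_i$ sends a fully faithful regular epimorphism $q$ to one. Each $F_i q$ is such a morphism, and the hypothesis that fully faithful regular epimorphisms in $\K$ are stable under flexible limits gives that $Lq$ is one too. A fully faithful regular epimorphism automatically coequalises its kernel pair, and finite-limit preservation handles the kernel pairs themselves, so $L$ preserves the relevant coequalisers. Thus $L$ is isoregular and the inclusion preserves flexible limits.

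The main obstacle is the accessibility of the colimit-preservation condition in part~(i). Here one must be careful that coequalisers of \ffk-kernel pairs are weighted colimits in the 2-categorical sense, and that the cited enriched sketch results apply to $\Cat$-enriched accessible $\K$ rather than merely to locally presentable $\K$. I would first try to reduce everything to limit and orthogonality conditions, namely fully faithful plus strong epimorphism. For the strong-epi part I would use that an epimorphism condition $q$ is equivalent to its cokernel pair being trivial, which is again a colimit preserved under the relevant filtered colimits. The accessibility would then follow from \cite{LR12:articolo}.
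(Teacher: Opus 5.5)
You have part~(ii) right: it is the paper's argument. Flexible limits of lex 2-functors are computed pointwise and are again lex, and the stability hypothesis makes $Lq$ a fully faithful regular epimorphism. The limit half of part~(i) is also fine; the paper just quotes \cite[Corollary~5.13]{LT22:virtual} for the accessibility of $\Lex(\C,\K)$ and of its inclusion into $[\C,\K]$.

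The gap is at the step you yourself flag as the main obstacle, the colimit condition, and the proposal never settles it. Your concrete suggestion does not work:
\begin{itemize}
\item Triviality of the cokernel pair characterises epimorphisms, not strong epimorphisms. In an isoregular $\K$ a fully faithful epimorphism need not be regular. A finitely complete 1-category, viewed as a locally discrete 2-category, is isoregular; there fully faithful means monic and fully faithful regular epimorphism means isomorphism, yet $\mathbb{Z}\to\mathbb{Q}$ in rings is a monic epimorphism.
\item An isoregular $\K$ is not assumed to have cokernel pairs at all.
\item Orthogonality against all monomorphisms is a large condition, not a sketch condition.
\end{itemize}
Your fallback is a ``standard fact'' that sending a set of cocones to colimit cocones is an accessible condition for any accessible $\K$. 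You give it without proof or precise reference, and in this setting its proof is exactly the idea you are missing.

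That idea is to use the colimits $\K$ already has, so that colimit preservation becomes an invertibility condition. Take a fully faithful regular epimorphism $q\co A\to B$ of $\C$ and a lex $F$. Then $Fq$ is fully faithful, because the pullback square expressing full faithfulness is preserved. So its kernel pair is fully faithful and, $\K$ being isoregular, has a coequaliser $K_{F,q}$. Let $G_q(F)\co K_{F,q}\to FB$ be the induced comparison.
\begin{itemize}
\item A lex $F$ is isoregular if and only if every $G_q(F)$ is invertible.
\item Each $G_q\co\Lex(\C,\K)\to\K^{[1]}$ is accessible, since kernel pairs commute with $\lambda$-filtered colimits in $\K$ and coequalisers commute with colimits.
\end{itemize}
Hence $\IsoReg(\C,\K)$ is the pullback of the replete full inclusion $\prod_{q}\K^{\cong}\hookrightarrow\prod_{q}\K^{[1]}$ (an isofibration) along $(G_q)_q$. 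Then \cite[Theorem~5.5]{LT22:virtual} gives that $\IsoReg(\C,\K)$ is accessible. It also gives that the inclusion into $\Lex(\C,\K)$, and hence into $[\C,\K]$, is accessible.
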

\begin{proof}
	For part~(i), consider the following commutative square, which we shall prove is a pullback (for this we do not need to assume that $\K$ is accessible).
	\begin{center}
		\begin{tikzpicture}[baseline=(current  bounding  box.south), scale=2]
			
			\node (a0) at (0,0.9) {$\IsoReg(\C,\K)$};
			\node (b0) at (1.6,0.83) {$\prod_{q\in \tx{S}}\K^{\cong}$};
			\node (c0) at (0,0) {$\Lex(\C,\K)$};
			\node (d0) at (1.6,-0.07) {$\prod_{q\in\tx{S}}\K^{[1]}$};
			\node (e0) at (0.3,0.65) {$\lrcorner$};
			
			\path[font=\scriptsize]
			
			(a0) edge [->] node [above] {$(H_q)_{q\in\tx{S}}$} ([yshift=2]b0.west)
			(a0) edge [right hook->] node [left] {$J$} (c0)
			([yshift=2]b0.south) edge [right hook->] node [right] {$I$} ([yshift=-1.3]d0.north)
			(c0) edge [->] node [below] {$(G_q)_{q\in\tx{S}}$} ([yshift=2]d0.west);
		\end{tikzpicture}	
	\end{center}
	Above, $\tx{S}$ is the set of all fully faithful regular epimorphisms in $\C$, while $I$ and $J$ are full subcategory inclusions (and are also isofibrations). For $q\colon A\to B\in S$, we define the 2-functor $G_q$ as follows: given $F\in\Lex(\C,\K)$ we let $K_{F,q}$ be the coequaliser of the kernel pair of $Fq$ (this exists since $\K$ is isoregular and the kernel pair of $Fq$ is fully faithful); then $G_q(F)$ is defined as the morphism 
	$$ G_q(F)\colon K_{F,q}\longrightarrow FB $$
	induced by the universal property of the coequaliser. Then, the 2-functor $H_q$ can be seen as the restriction of $G_q$ along $J$; this is well defined since whenever $F$ is isoregular then $G_q(F)$ is an isomorphism by definition. 
	
	Now it is easy to see that the square is actually a pullback; indeed, a lex 2-functor $F$ is isoregular if and only if for any $q\in \tx S$ the morphism $Fq$ is a fully faithful regular epimorphism, if and only if $Fq$ is a regular epimorphism (since $Fq$ is automatically fully faithful), if and only if $G_q(F)$ is an isomorphism.
	
	If $\K$ is accessible, then by \cite[Corollary~5.13]{LT22:virtual} so is $\Lex(\C,\K)$ and its inclusion into $[\C,\K]$. Moreover, $\textstyle\prod_{q\in \tx{S}}\K^{\cong}$ and $\textstyle\prod_{q\in\tx{S}}\K^{[1]}$ are accessible too by \cite[Theorem~5.5]{LT22:virtual}. Finally, $I$ is an accessible 2-functor since it preserves all existing colimits (being induced by precomposition with the inclusion $[1]\to\ \cong$), and so are the $G_q$ since colimits commute with (filtered) colimits. It follows again by \cite[Theorem~5.5]{LT22:virtual} that $\IsoReg(\C,\K)$ is an accessible 2-category and $J$ is an accessible 2-functor. By post composing with the inclusion into $[\C,\K]$ we also obtain that
	$$\IsoReg(\C,\K)\hookrightarrow [\C,\K]$$
	is accessible.
	
For part~(ii), assume now that $\K$ has flexible limits and fully faithful regular epimorphisms are stable under them. Note first that since limits commute with themselves, $\Lex(\C,\K)$ has flexible limits too and they are computed pointwise. Then, if we take a flexible limit of isoregular 2-functors in $\Lex(\C,\K)$, then the resulting 2-functor $F$ is still isoregular since for any fully faithful regular epimorphism $q$ in $\C$, the morphism $Fq$ is a flexible limit of fully faithful regular epimorphisms in $\K$, and hence a fully faithful regular epimorphism itself by our assumptions. It follows that $\IsoReg(\C,\K)$ is closed in $\Lex(\C,\K)$ under flexible limits; hence the thesis follows.
\end{proof}

We deduce the following result from~\cref{theo.acc+flex} using \cref{theorem:models=functors}.

\begin{theo}\label{theo:accessibility-of-models}
	Let $\TT$ be an isoregular theory and $\K$ be an isoregular 2-category. Then:\begin{enumerate}
		\item if $\K$ is accessible then $\Mod(\TT,\K)$ is also accessible and so is the evaluation 2-functor
		$$\br{A}_{(-)} \colon \Mod(\TT,\K)\longrightarrow \K$$
		for any isoregular formula $(\bar x\co\bar X)\ A(\bar x)$;
		\item if $\K$ has flexible limits and fully faithful regular epimorphisms are stable under them, then $\Mod(\TT,\K)$ has flexible limits and the evaluation 2-functor
		$$\br{A}_{(-)} \colon \Mod(\TT,\K)\longrightarrow \K$$
		preserves them, for any isoregular formula $(\bar x\co\bar X)\ A(\bar x)$.
	\end{enumerate}
\end{theo}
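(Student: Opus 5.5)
The plan is to transport \cref{theo.acc+flex} along the equivalence of \cref{theorem:models=functors}, taking $\C = \Syn(\TT)$. This $\C$ is small (it has a set of formulas-in-context, and its hom-categories are quotients of sets of formulas) and isoregular by \cref{thm:synTisoregular}. The equivalence $\Sigma\colon \Mod(\TT,\K)\simeq \IsoReg(\Syn(\TT),\K)$ is a 2-equivalence. Accessibility and existence of flexible limits are invariant under 2-equivalence, because flexible limits are defined by representable isomorphisms and so transfer along equivalences. Hence the statements about $\Mod(\TT,\K)$ itself follow at once from parts~(i) and~(ii) of \cref{theo.acc+flex}.

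For the evaluation 2-functors I will identify $\br{A}_{(-)}$, up to isomorphism, with a composite
\[
\Mod(\TT,\K) \xrightarrow{\ \Sigma\ } \IsoReg(\Syn(\TT),\K) \hookrightarrow [\Syn(\TT),\K] \xrightarrow{\ \mathrm{ev}_{\{\bar x\co\bar X \,|\, A\}}\ } \K .
\]
By definition of $\Sigma$ we have $\Sigma M(\{\bar x\co \bar X\,|\,A\}) = \br{A}_M$. On morphisms, $\Sigma p$ has component $p_A$ from \cref{lemma:morphisms.respect.formulas}, and this is exactly the action of $\br{A}_{(-)}$. On 2-cells the action is defined through powers by $[1]$ and transposition, and the isomorphism $\br{A}_{N^{[1]}}\cong\br{A}_N^{[1]}$ used in the proof of \cref{lemma:powers of models} shows the two agree. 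So the triangle commutes strictly, or at worst up to 2-natural isomorphism, which suffices for preservation of colimits and limits.

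Each factor then has the required preservation property. $\Sigma$ is an equivalence, so it preserves and reflects all colimits and limits. The inclusion is accessible by \cref{theo.acc+flex}(i), or preserves flexible limits by \cref{theo.acc+flex}(ii). Evaluation at an object $[\Syn(\TT),\K]\to\K$ preserves all limits and colimits that exist in $\K$, since these are computed pointwise in the functor 2-category. For~(i), choose $\lambda$ such that $\K$ and the inclusion are $\lambda$-accessible; the composite then preserves $\lambda$-filtered colimits. For~(ii), the composite preserves flexible limits.

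I expect the main point needing care to be the identification of $\br{A}_{(-)}$ with the composite on 2-cells, i.e.\ checking that transposing $\Sigma(\eta^t)$ evaluated at $\{\bar X\,|\,A\}$ recovers the componentwise 2-cell on $\br{A}$. I will handle this via the commuting power square in the naturality part of the proof of \cref{theorem:models=functors}. A minor further point is that in~(i) the composite is a functor between accessible 2-categories, and $\K$ is accessible by hypothesis.
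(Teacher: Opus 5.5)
Your proposal is correct and is essentially the paper's argument: the paper likewise writes $\br{A}_{(-)}$ as the composite of $\Sigma$, the inclusion $\IsoReg(\Syn(\TT),\K)\hookrightarrow[\Syn(\TT),\K]$ and evaluation at $\{\bar X\,|\,A\}$. It then concludes from \cref{theo.acc+flex}, together with the fact that evaluation is continuous and cocontinuous; your remarks on smallness of $\Syn(\TT)$, invariance under 2-equivalence, and agreement on 2-cells only make explicit points the paper leaves implicit.
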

\begin{proof}
	Given an isoregular formula  $(\bar x\co\bar X)\ A(\bar x)$, the evaluation 2-functor can be seen as the composite
	$$ \Mod(\TT,\K)\xrightarrow{\ \Sigma\ } \IsoReg(\C_\TT,\K)\hookrightarrow [\C_\TT,\K]\xrightarrow{ \tx{ev}_{\{\bar X|A\}} }\K\mathrlap{,} $$
	where $\Sigma$ is the equivalence of Theorem~\ref{theorem:models=functors}. Then the result follows immediately from Theorem~\ref{theo.acc+flex} plus the fact that $\tx{ev}_{\{\bar X|A\}}$ is continuous and cocontinuous.
\end{proof}

\begin{cor}\label{cor:accessibility-forgetful}
	Let $\TT$ be isoregular over $\LL$ and $\TT'$ be isoregular over $\LL'$ with $\LL\subseteq \LL'$ and $\TT\subseteq \TT'$. Let $\K$ be an isoregular 2-category and consider the induced forgetful 2-functor
	$$ U\colon \Mod(\TT',\K)\longrightarrow \Mod(\TT,\K).$$
	\begin{enumerate}
		\item if $\K$ is accessible then $U$ is an accessible 2-functor;
		\item if $\K$ has flexible limits and fully faithful regular epimorphisms are stable under them, then $U$ preserves flexible limits.
	\end{enumerate}
\end{cor}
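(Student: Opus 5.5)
The plan is to reduce the statement to \cref{theo:accessibility-of-models}, by using the fact that the forgetful 2-functor commutes with evaluation at basic sorts. The tool that transfers accessibility and preservation of flexible limits along $U$ is the evaluation 2-functor
\[
\mathrm{ev} \co \Mod(\TT,\K) \longrightarrow \textstyle\prod_{S\in\S}\K ,
\]
where $\S$ is the set of basic sorts of $\LL$. Its components are the 2-functors $\br{S}_{(-)}$ with $S \in \S$, and these are instances of the evaluation 2-functors of \cref{theo:accessibility-of-models}, taking $A = \top$ in the context $(x \co S)$. Write $\mathrm{ev}'$ for the analogous 2-functor on $\Mod(\TT',\K)$, but with components only at the sorts in $\S \subseteq \S'$. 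Then $\mathrm{ev}\circ U = \mathrm{ev}'$ holds strictly, because $U$ only forgets the interpretation of the extra symbols of $\LL'$.

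For part~(ii), I would check two things. First, the components of $\mathrm{ev}'$ preserve flexible limits, by \cref{theo:accessibility-of-models}(ii) applied to $\TT'$. Second, $\mathrm{ev}$ creates flexible limits, in the sense that it preserves them and reflects limiting cones. To see the second point, recall that flexible limits in $\Mod(\TT,\K)$ exist by \cref{theo:accessibility-of-models}. Via $\Sigma$ and the proof of \cref{theo.acc+flex}(ii), they are computed pointwise in $[\Syn(\TT),\K]$, so on each sort they are computed in $\K$. Moreover, a cone in $\Mod(\TT,\K)$ whose image under $\mathrm{ev}$ is limiting is itself limiting. This uses two facts: a model is determined up to the isomorphisms of \cref{theorem:models=functors} by its values on sorts together with the interpretations of the symbols, and the relation symbols and formulas are subobjects of products of sorts. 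Granting this, consider a flexible limit in $\Mod(\TT',\K)$ and apply $U$. Its image under $\mathrm{ev}$ equals its image under $\mathrm{ev}'$, which is a limit in $\prod\K$. Hence the image under $U$ is a limit in $\Mod(\TT,\K)$.

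For part~(i), the argument is the same with $\lambda$-filtered colimits in place of flexible limits. Choose $\lambda$ large enough that $\mathrm{ev}$ and $\mathrm{ev}'$ preserve $\lambda$-filtered colimits; this is possible by \cref{theo:accessibility-of-models}(i). We also need $\lambda$-filtered colimits in $\Mod(\TT,\K)$ to be reflected by $\mathrm{ev}$. This is where the proof of \cref{theo.acc+flex}(i) enters: it exhibits $\Mod(\TT,\K) \simeq \IsoReg(\Syn(\TT),\K)$ as closed in $[\Syn(\TT),\K]$ under $\lambda$-filtered colimits for large $\lambda$, computed pointwise. Finally, for sufficiently large $\lambda$, $\lambda$-filtered colimits commute in $\K$ with the finite limits and factorisations that compute $\br{A}$ from the sorts.

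The main obstacle is this reflection step. Concretely, one must show that if a cocone of models is colimiting on every basic sort, then it is colimiting in $[\Syn(\TT),\K]$, i.e.\ at every formula-in-context. I expect to handle it by induction on formulas, using that $\Sigma M$ is isoregular. For a sufficiently large $\lambda$, $\lambda$-filtered colimits in the accessible 2-category $\K$ commute with finite 2-limits and with coequalisers of kernel pairs. A cruder alternative is to compose $U$ with the equivalence $\Sigma$ and observe that $U$ corresponds to restriction along an isoregular 2-functor $\Syn(\TT)\to\Syn(\TT')$. Restriction commutes with the pointwise computation of colimits and limits in functor 2-categories, and by \cref{theo.acc+flex} these are preserved by the inclusions; the induced comparison then gives (i) and (ii) directly. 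I would in fact adopt this second route as the main one, checking first that the interpretation of $\TT$-formulas in $\TT'$ yields an isoregular 2-functor $\Syn(\TT)\to\Syn(\TT')$ compatible with $\Sigma$.
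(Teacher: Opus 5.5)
\textbf{Your first route fails at the reflection step.} Evaluation at the basic sorts, $\mathrm{ev}\co\Mod(\TT,\K)\to\prod_{S\in\S}\K$, reflects neither limiting cones nor colimiting cocones. The reason is that the interpretations of relation symbols are part of a model but are not determined by the sorts, and $\mathrm{ev}$ forgets them.

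For a counterexample, let $\LL$ have one sort $S$ and one relation symbol $R\rightarrowtail S$, let $\TT$ be empty, and take $\K=\Cat$. Let $M$ and $N$ be the structures with $\br{S}_M=\br{S}_N=1$, $\br{R}_M=\emptyset$ and $\br{R}_N=1$.
\begin{itemize}
\item $\mathrm{ev}(M)$ is terminal, but $M$ is not terminal in $\Mod(\TT,\Cat)$, since there is no morphism $N\to M$. So $\mathrm{ev}$ does not reflect the empty product, which is a flexible limit.
\item The identity of $1$ gives a morphism $p\co M\to N$. Viewed as a cocone on a one-object (hence $\lambda$-filtered) diagram, it is colimiting on every sort but not at the formula $R(x)$.
\end{itemize}
Hence the induction on formulas you sketch for part~(i) breaks down at the base case $R(\bar s)$, and part~(ii) fails for the same reason.

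The fix is to evaluate at all $\LL$-formulas; the basic sorts together with the atomic formulas $R(\bar x)$ already suffice. That family is jointly conservative on $\Mod(\TT,\K)$. Each member satisfies $\br{A}_{(-)}\circ U=\br{A}_{(-)}$. Each preserves $\lambda$-filtered colimits and flexible limits, by \cref{theo:accessibility-of-models} applied to $\TT$ and to $\TT'$. So the comparison map between $U$ of a (co)limit and the (co)limit of the $U$-images is inverted by every member, and is therefore invertible. This is exactly the paper's proof.

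\textbf{The second route, which you say you would adopt, is sound and genuinely different.} The required checks all go through:
\begin{itemize}
\item The translation $I\co\Syn(\TT)\to\Syn(\TT')$ is a strict 2-functor. Every $\TT$-derivation is a $\TT'$-derivation, and composition, identities and the finite 2-limits of \cref{prop:limits-in-SynT} are given by the same formulas in both.
\item $I$ preserves fully faithful regular epimorphisms, by the syntactic characterisation in \cref{prop:swe-synT}.
\item $\Sigma_{\TT}\circ U\cong I^{*}\circ\Sigma_{\TT'}$, because $\br{A}_{UM'}=\br{A}_{M'}$ for every $\LL$-formula $A$.
\end{itemize}
Then precomposition with $I$ preserves pointwise (co)limits. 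The inclusion $\IsoReg(\Syn(\TT'),\K)\hookrightarrow[\Syn(\TT'),\K]$ preserves $\lambda$-filtered colimits and flexible limits by \cref{theo.acc+flex}. The full inclusion $\IsoReg(\Syn(\TT),\K)\hookrightarrow[\Syn(\TT),\K]$ reflects them. Here ``pointwise'' ranges over all objects of $\Syn(\TT)$, including $\{\bar x\co\bar X\mid R(\bar x)\}$, which is why this route avoids the problem above.

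Compared with the paper, you must construct $I$ and check that it is isoregular and compatible with $\Sigma$. The paper needs only joint conservativity of the evaluations and the statement of \cref{theo:accessibility-of-models}. In exchange, your argument exhibits $U$, up to equivalence, as restriction along an isoregular 2-functor. It therefore applies verbatim to any isoregular 2-functor between syntactic 2-categories, for instance interpretations sending symbols of $\LL$ to $\TT'$-formulas, and not only to inclusions $\TT\subseteq\TT'$.
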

\begin{proof}
	For any isoregular $\LL$-formula $(\bar x\co\bar X)\ A(\bar x)$, which can naturally be seen also as an isoregular $\LL'$-formula, the triangle
	\begin{center}
		\begin{tikzpicture}[baseline=(current  bounding  box.south), scale=2]
			
			\node (a0) at (0,0.8) {$\Mod(\TT',\K)$};
			\node (b0) at (1.5,0.8) {$\Mod(\TT,\K)$};
			\node (c0) at (0.75,0) {$\K$};
			
			\path[font=\scriptsize]
			
			(a0) edge [->] node [above] {$U$} (b0)
			(a0) edge [->] node [left] {$\br{A}_{(-)}$} (c0)
			(b0) edge [->] node [right] {$\ \ \br{A}_{(-)}$} (c0);
		\end{tikzpicture}	
	\end{center}
	commutes. Since the right vertical 2-functors are jointly conservative when we let $A$ vary among all isoregular $\LL$-formulas, the results follows from Theorem~\ref{theo:accessibility-of-models} above.
\end{proof}

\section{Applications}
\label{sec:applications}

\subsection{Preliminaries}
In this section we focus on models of isoregular theories in $\Cat$, providing several applications of our general results, both proving new fact and giving new proofs of known ones
in a uniform manner.  We begin by considering categories with finite limits and colimits (\cref{limits-colimits}), move on to categories satisfying a variety of exactness conditions, as studied in
categorical algebra (\cref{sec:exactness}), isofibrations and Grothendieck fibrations (\cref{sec:fibrations}) and conclude with categories of interest in categorical logic, such as comprehension categories and clans (\cref{sec:varia}).

Let us first make some general observations. If $\TT$ is an isoregular theory over a language $\LL$, and $M$ is an $\LL$-structure in $\Cat$ (or more generally in any isoregular $\K$ where the (strong epi, mono) factorization system exists), then the interpretation of any isoregular formula $(\bar X) \ A \co \prp$, relative to $\TT$, is well-defined as a subobject $\br A\rightarrowtail\br{\bar X}$ since to interpret existential quantification we can take the (strong epi, mono) factorisation of the relevant arrow. Note that such interpretation coincides with the one we introduced since the (fully faithful regular epimorphism, monomorphism)-factorisation, when it exists, coincides with the (strong epimorphism, monomorphism)-factorisation.

Because of this the statement of \cref{prop:1-dimensional-is-enough} below is well-formed. This result will be essential in recognising the 2-categories of models of isoregular theories, as it allows us test the validity of the axioms at the level of the underlying sets (of objects), as with ordinary satisfaction.

\begin{prop}\label{prop:1-dimensional-is-enough}
	Let $\TT$ be an isoregular theory on a language $\LL$, and $\H$ a full subcategory of $\Str(\LL,\Cat)$ closed under powers by $[1]$. Suppose that an $\LL$-structure $\M\defeq\br -$ lies in $\H$ if and only if for any axiom $(\bar X)\  A\vdash B$ in $\TT$ the inclusion
	$$ \tx{Ob}(\br{A})\subseteq \tx{Ob}(\br{B}), $$
	holds as subsets of $\tx{Ob}(\br{\bar X})$. Then $\H=\Mod(\TT,\Cat)$.
\end{prop}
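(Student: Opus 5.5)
The proof shows that an $\LL$-structure $\M$ in $\Cat$ satisfies an axiom $(\bar X)\ A\vdash B$ (i.e.\ $\br{A}\le\br{B}$ as subobjects of $\br{\bar X}$) if and only if both $\M$ and $\M^{[1]}$ satisfy it at the level of objects. Granting this, both inclusions follow. If $\M\in\H$, then $\M^{[1]}\in\H$ because $\H$ is closed under powers by $[1]$. The hypothesis then gives the object-level inclusions for $\M$ and for $\M^{[1]}$, so every axiom is valid in $\M$ and $\M\in\Mod(\TT,\Cat)$. Conversely, if $\M$ is a model, then every axiom holds as an inclusion of subobjects. Applying $\tx{Ob}$ to a monomorphism of subcategories gives an inclusion of object sets, so the object-level condition holds and $\M\in\H$.

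The key tool is an identification of the interpretation in $\M^{[1]}$ with a power of the interpretation in $\M$. Powers in $\Str(\LL,\Cat)$ are computed componentwise. By an induction on the formula, exactly as in the proof of \cref{lemma:powers of models}, one gets $\br{A}_{\M^{[1]}}\cong\br{A}_\M^{[1]}$ compatibly with the inclusions into $\br{\bar X}_{\M^{[1]}}\cong\br{\bar X}_\M^{[1]}$. The induction uses the following facts about $(-)^{[1]}\co\Cat\to\Cat$:
\begin{itemize}
\item it preserves finite limits, so pullbacks, equalisers and intersections commute with it;
\item it preserves monomorphisms;
\item it preserves the (strong epi, mono)-factorisations that interpret $\exists$.
\end{itemize}
The third point is the delicate case and is treated below. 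Granting the identification, note that $\tx{Ob}(\br{A}^{[1]})$ is the set of morphisms of the subcategory $\br{A}\subseteq\br{\bar X}$. Hence $\tx{Ob}(\br{A}_{\M^{[1]}})\subseteq\tx{Ob}(\br{B}_{\M^{[1]}})$ says exactly that every morphism of $\br{A}$ lies in $\br{B}$. Combined with the object-level inclusion for $\M$, this gives $\br{A}\subseteq\br{B}$ as subcategories, i.e.\ as subobjects.

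I expect the main obstacle to be the existential case. The definition of the interpretation in the preliminaries uses strong epi–mono factorisations for all formulas, whether or not the relevant map is an \ffk-morphism. For a general functor $g$, the strong-epi image is the subcategory generated by the image of $g$, which involves composites. Powering by $[1]$ need not commute with taking this generated subcategory. When the composite $\pr_1\circ m_B$ is an \ffk-morphism, however, the factorisation is (fully faithful regular epi, mono). Its image is the non-full subcategory whose objects are the $f(a)$ and whose morphisms are the $f(u)$, because faithfulness together with fullness on identities makes the image closed under composition. I would verify this directly, together with the fact that $(-)^{[1]}$ preserves fully faithful regular epimorphisms, i.e.\ surjective-on-objects fully faithful functors.

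The axioms of $\TT$ only contain formulas that are derivable relative to the theory. So the remaining issue is to show that, in any structure in $\H$ (or any model), the existentials occurring there are over \ffk-morphisms. This uses \cref{thm:semantics-eqfib}, but that lemma requires the formation judgements to be valid, which is itself a validity statement. I would resolve this by inducting along the inductive construction of the theory: at each stage, the formation premisses $\mathcal{J}_{\mathsf{mono}}$, $\mathcal{J}_{\mathsf{faithful}}$ and $\mathcal{J}_{\mathsf{full\text{-}id}}$ are theorems of an earlier theory, hence valid by soundness once that earlier stage has been established. Failing a clean argument here, I would fall back on showing directly, for strong-epi images in $\Cat$, that the morphisms of the image of $g^{[1]}$ are the squares whose sides come from the image of $g$, restricted to the \ffk case.
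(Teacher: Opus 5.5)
Your proposal is correct and follows the paper's route. Closure of $\H$ under powers gives the object-level inclusions for $\M^{[1]}$; via $\br{A}_{\M^{[1]}}\cong\br{A}_{\M}^{[1]}$ these become morphism-level inclusions for $\M$, which gives $\br{A}\le\br{B}$ since $[1]$ is a strong generator of $\Cat$. The paper simply cites ``the universal property of powers'' for that identification, which it only states for models. Your stage-by-stage induction, which ensures the existentials are taken over \ffk-morphisms so that the image factorisation commutes with $(-)^{[1]}$, justifies a step the paper leaves implicit.
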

\begin{proof}
	The hypotheses can be restates as saying that $\M$ is in $\H$ if and only if for any $(\bar X)\  A\vdash B$ in $\TT$ we have a commutative diagram as below left,
	\begin{center}
		\begin{tikzpicture}[baseline=(current  bounding  box.south), scale=2]
			
			\node (a0) at (0,0.8) {$\Cat(1,\br A)$};
			\node (b0) at (1.6,0.8) {$\Cat(1,\br B)$};
			\node (c0) at (0.8,0) {$\Cat(1,\br{\bar X})$};
			
			\path[font=\scriptsize]
			
			(a0) edge [>->] node [above] {} (b0)
			(a0) edge [>->] node [left] {$\Cat(1,m_A)$} (c0)
			(b0) edge [>->] node [right] {$\Cat(1,m_B)$} (c0);
		\end{tikzpicture}	
		\quad\quad\quad
		\begin{tikzpicture}[baseline=(current  bounding  box.south), scale=2]
			
			\node (a0) at (0,0.8) {$\Cat([1],\br A)$};
			\node (b0) at (1.6,0.8) {$\Cat([1],\br B)$};
			\node (c0) at (0.8,0) {$\Cat([1],\br{\bar X})$};
			
			\path[font=\scriptsize]
			
			(a0) edge [>->] node [above] {} (b0)
			(a0) edge [>->] node [left] {$\Cat([1],m_A)$} (c0)
			(b0) edge [>->] node [right] {$\Cat([1],m_B)$} (c0);
		\end{tikzpicture}	
	\end{center}
	where $m_A$ and $m_B$ denote the subobject inclusions. But since $\H$ is closed under powers by $[1]$, the structure $\M^{[1]}$ also satisfies the same property; thus, using the universal property of powers the commutativity of the triangle above left is equivalent to that of the triangle above right. 
	Since $[1]$ is a strong generator in $\Cat$, this last condition is equivalent to asking that $\br A\leq\br B$ as subobjects of $\br{\bar X}$, which says exactly that $\M\in\Mod(\TT,\Cat)$.
\end{proof}

Given this, it becomes quite easy to check when a given $\LL$-structure satisfies an entailment between isoregular formulas. Indeed, we are reduced to check that certain inclusions of sets hold, and at the level of object the interpretation of an isocartesian formula is what one expects it to be (remembering that we now have non-discrete arities):\begin{itemize}
	\item $\tx{Ob}(\br{S^{\bb b}})=\{f\co \bb b\to \br{S}|\ f \text{ is a functor} \}$; that is, the set of diagrams of shape $\bb b$ in $\br{S}$;
	\item $\tx{Ob}(\br{A^{[1]}})=\tx{Mor}(\br{A})$ is the set of morphisms in $\br{A}$;
	\item $\tx{Ob}(\br{A\wedge B})=\{ a\in \tx{Ob}(\br{\bar X}) |\  a\in \tx{Ob}(\br{A})\text{ and } a\in \tx{Ob}(\br{B})\}$;
	\item $\tx{Ob}(\br{(\exists y\co Y)A(\bar x,y)})=\{ a\in \tx{Ob}(\br{\bar X}) | \text{ there is } b\in \tx{Ob}(\br{Y})\text{ such that } (a,b)\in \tx{Ob}(\br{A})\}$;
\end{itemize}
For this reason, in the examples below we shall not dwell too much on trying to explain what it means for an $\LL$-structure to satisfy a given axiom (on object), as that simply amounts to translating into natural language what is written in symbols.

\subsection{Limits and colimits}\label{limits-colimits}

Fix a finite category $\bb b$. Denote by $\blim$ the 2-category of small categories with $\bb b$-limits, $\bb b$-limit preserving functors, and 2-natural transformations. 
 We describe an isoregular theory $\TT_{\bb b}$ whose models in $\Cat$ are exactly small categories with limits of shape $\bb b$; this follows the approach of~\cite[Example~4.18]{RT25ERegular}.

Let us set some notation first. We denote by $0*\bb b$ the category obtained by freely adding to $\bb b$ an initial object $0$; similarly, we let $[1]*\bb b$ be the category obtained by adding a further initial object to $0*\bb b$ (or equivalently, adding an ``initial arrow'' to $\bb b$). Finally, we let $\bb I*\bb b$ be the category obtained by adding an inverse to the new morphism of $[1]*\bb b$. To motivate the introduction of these notations, note that if $\C\in\Cat$, then an object of  $\C^{0*\bb b}$ is the same as a functor $\bb b\to \C$ together with a cone over it; while to give an object of $\C^{[1]*\bb b}$ is the same as giving a functor $\bb b\to \C$, two cones over it, and a morphism in $\C$ between the two cones; in $\C^{\bb I*\bb b}$ such morphism of cones is equipped with an inverse.

Now, define the language $\mathbb L_{\bb b}$ to have one sort $S$ and just a relation symbol $R_{\bb b}:S^{0*\bb b}$. The idea is that for each $\mathbb L$-structure $\br{-}$ in $\Cat$, we shall think of $\br{R_{\bb b}}$ as the category of limiting cones over diagrams of shape $\bb b$.

First, let us define the formula $(x,y\co S^{0*\bb b}, z\co S^{[1]*\bb b})\ A(x,y,z) $ as below
$$
(x,y\co S^{0*\bb b}, z\co S^{[1]*\bb b})\  R_{\bb b}(y)\wedge (z\cdot j_0=x) \wedge (z\cdot j_1=y) \mathrlap{,}
$$
where $j_0,j_1\colon 0*\bb b\to [1]*\bb b$ sending $\bb b$ to itself and picking out the domain and codomain of the freely added map respectively; below we will also use the inclusion $\iota\co \bb b\to 0*\bb b$.
We now define the theory $\TT_{\bb b}$ to consist of the following axioms:\begin{enumerate}
	\item $\J_{\mathsf{mono}}(A)$;
	\item $ (x,y\co S^{0*\bb b})\ R_{\bb b}(y),\ (x\cdot \iota=y\cdot \iota) \vdash (\exists z\co S^{[1]*\bb b})\ A(x,y,z)$;
	\item $ (z\co S^{\bb I*\bb b})\ R_{\bb b}(z\cdot j_0)\vdash R_{\bb b}(z\cdot j_1)$;
	\item $ (z\co S^{[1]\times 0*\bb b})\ R_{\bb b}(\dom(z)), R_{\bb b}(\cod(z))\vdash R_{\bb b}^{[1]}(z) $;
	\item $ (x\co S^{\bb b})\ \vdash (\exists y\co S^{0*\bb b})\ R_{\bb b}(y)\wedge (y\cdot \iota= x) $.
\end{enumerate}

Alternatively, one can axiomatise the same class of models with axioms~(i)--(iv) plus additional axioms that ensure that the existential quantifier in~(v) is well-formed. However, this is unnecessary since this can be derived from axioms~(i)--(iv), as we show in the proof of \cref{lemma:b-lim} below.

\begin{lemma}\label{lemma:b-lim}
	The theory $\TT_{{\bb b}}$ is isoregular and $\blim \cong \Mod(\TT_{\bb b}, \Cat)$.
%	$$ \Mod(\TT_{\bb b},\Cat)\cong\bb b\tx{-}\bo{lim}. $$		
\end{lemma}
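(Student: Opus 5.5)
The proof has two parts: showing that $\TT_{\bb b}$ is an isoregular theory, so that every existential quantifier in the axioms can be formed, and identifying its models in $\Cat$ with $\blim$.

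\emph{Well-formedness.} Axioms (i), (iii) and (iv) contain only relation symbols, equalities and conjunctions, so they are well-formed. Axiom (ii) quantifies over $z$ in $A(x,y,z)$. Since axiom (i) is exactly $\J_{\mathsf{mono}}(A)$, \cref{equ:exists-formation-mono} lets us form $(\exists z)A$ in the theory consisting of axiom (i) alone. This is why (i) is listed first: the inductive definition of isoregular theories allows adding (ii) only after (i).

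The delicate point is axiom (v). Here we need $\J_{\mathsf{faithful}}(B)$ and $\J_{\mathsf{full\text{-}id}}(B)$ for $B(x,y)\defeq R_{\bb b}(y)\wedge y\cdot\iota=x$ to be derivable from (i)--(iv). This is the main obstacle.
\begin{itemize}
\item For full-on-identities, take $y,y'$ with $B(x,y)$ and $B(x,y')$. Axiom (ii) gives $z$ with $A(y',y,z)$, a morphism of cones $y'\to y$ over $x$. We must turn $z\co S^{[1]*\bb b}$ into an element $u\co S^{[1]\times(0*\bb b)}$ with $\dom(u)=y$, $\cod(u)=y'$, $B^{[1]}(\id_x,u)$. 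I would do this by restricting along a suitable functor $[1]\times(0*\bb b)\to[1]*\bb b$, using the coequaliser/union rules of \cref{sec:deduction-rules}. The clause $R_{\bb b}^{[1]}(u)$ then comes from axiom (iv).
\item The direction of the comparison needs care, since (ii) produces the map into the limiting cone $y$. We want an arrow $y\to y'$, so we apply (ii) with the roles of $y$ and $y'$ swapped.
\item For faithfulness, two arrows $v,v'$ in $R_{\bb b}^{[1]}$ lying over the same arrow of $x$, with equal domain and codomain, both induce elements $z$ satisfying $A$ for the same pair of cones. Axiom (i) then forces equality. Repackaging $v,v'$ as elements of $S^{[1]*\bb b}$ should again be a restriction/union argument, now one dimension up using powers. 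This bookkeeping is the step I expect to be the most laborious.
\end{itemize}

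\emph{Identification of models.} Since $\blim$ is closed under powers by $[1]$ (viewed as a full subcategory of $\Str(\LL_{\bb b},\Cat)$, each category with limits being given the full subcategory of limit cones), we invoke \cref{prop:1-dimensional-is-enough}. It then suffices to check the axioms on objects, which reads as follows.
\begin{itemize}
\item Axioms (i) and (ii) say that every cone in $\br{R_{\bb b}}$ is limiting.
\item Axiom (v) says that limits exist.
\item Axiom (iii) says that $\br{R_{\bb b}}$ is closed under isomorphisms of cones, so it contains all limiting cones.
\item Axiom (iv) says that $\br{R_{\bb b}}$ is a full subcategory.
\end{itemize}
Hence a structure is a model iff $\br{R_{\bb b}}$ is precisely the full subcategory of limiting cones in a category with $\bb b$-limits, so models correspond bijectively to objects of $\blim$.

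For morphisms of structures, preservation of $R_{\bb b}$ means sending limiting cones to limiting cones, which is exactly $\bb b$-limit preservation. Structure 2-cells impose no condition, since relation symbols are full subcategories, so they are just natural transformations. This gives an isomorphism of 2-categories $\blim\cong\Mod(\TT_{\bb b},\Cat)$, not merely an equivalence, because the relation is determined by the category.
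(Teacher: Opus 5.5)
Your proposal is correct and is essentially the paper's proof. Axiom (ii) is formed via (i). Faithfulness for (v) is obtained by restricting $v,v'$ along the functor $\kappa\colon[1]*\bb b\to[1]\times(0*\bb b)$ that sends the added arrow to $[1]\times\{0\}$ and $\bb b$ to $\{1\}\times\bb b$, applying (i), and then recovering $v=v'$ because $\kappa$, $[1]\times\iota$ and the inclusion of $\{0\}\times(0*\bb b)$ are jointly epimorphic. After the restriction, both become morphisms from the same cone, namely $\dom(v)$ composed with the underlying arrow of diagrams, into the limit cone $\cod(v)$. Fullness on identities uses (ii) and (iv), and the models are identified via \cref{prop:1-dimensional-is-enough}.

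The only local difference is in the fullness step. The paper extends $z$ along two pushouts using Rule~\ref{equ:union}. Restricting $z$ along the retraction of $\kappa$, which sends $(i,0)$ to the $i$-th added object and $(i,b)$ to $b$, gives the same element directly, so no union or coequaliser rule is needed there.
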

\begin{proof}
	Let us begin by proving that $\TT_{\bb b}$ is isoregular. Note that the existential quantification in~(ii) is well-formed by~(i), so we only need to prove that the exists in (v) is well-formed; that is, that 
	$\J_{\mathsf{faithful}}(R_{\bb b}(y)\wedge (y\cdot \iota= x))$ and  $\J_{\mathsf{full}\text{-}\mathsf{id}}(R_{\bb b}(y)\wedge (y\cdot \iota= x))$ are derivable. 
	
	We first show that the faithfulness condition holds; the assumption is 
	$$ R_{\bb b}^{[1]}(v), R_{\bb b}^{[1]}(v'), (v\cdot \iota^{[1]}=u), (v'\cdot \iota^{[1]}=u),\dom(v)=\dom(v'), \cod(v)=\cod(v') $$
	in context $(u\co S^{[1]\times \bb b}, v,v'\co S^{[1]\times 0*\bb b})$, and we need to derive that $v=v'$. Notice that by transitivity of equality we can already deduce that $v\cdot \iota^{[1]}=v'\cdot \iota^{[1]}$, or equivalently $v\cdot [1]\times \iota=v'\cdot [1]\times \iota$. Now, let 
	$$\kappa\co [1]*\bb b\to [1]\times( 0*\bb b)$$ 
	be the functor obtained by identifying $\bb b$ with $\{1\}\times \bb b$ and $[1]$ with $[1]\times \{0\}$ (where we are using $[1]=\{0\to 1\}$).  By Rules~\ref{equ:action} and~\ref{equ:power-dom-cod-prop} we can easily derive 
	$$ A(\dom(v),\cod(v),v\cdot\kappa) \wedge A(\dom(v'),\cod(v'),v'\cdot\kappa). $$
	By axiom (i) and the faithfulness hypotheses we then deduce that $v\cdot\kappa=v'\cdot\kappa$. Now note that $\kappa$, $[1]\times \iota$, and the map $0*\bb b\to [1]\times 0*\bb b$ inducing $\dom$ are jointly epimorphic. Thus it follows from Rule~\ref{equ:epimorphic-family} that $v=v'$.
	
	Let us now show fullness on identities  whose assumption is 
	$$ (x\co S^{\bb b}, y,y'\co S^{0*\bb b})\quad R_{\bb b}(y), R_{\bb b}(y'), (y\cdot \iota=x), (y'\cdot \iota=x).$$
	By axiom (ii) we derive
	$$ (\exists z\co S^{[1]*\bb b})\quad z\cdot j_0=y \wedge z\cdot j_1=y'. $$
	Consider the two pushouts below.
	\begin{center}
		\begin{tikzpicture}[baseline=(current  bounding  box.south), scale=2]
			
			\node (a0) at (0,0.8) {$\bb b$};
			\node (b0) at (1.5,0.8) {$[1]\times \bb b$};
			\node (c0) at (0,0) {$0*\bb b $};
			\node (d0) at (1.5,0) {$0* ([1] \times\bb b) $};
			
			\path[font=\scriptsize]
			
			(a0) edge [right hook->] node [above] {$\{0\} \! \times \! 1_{\bb b}$} (b0)
			(a0) edge [right hook->] node [left] {$\iota$} (c0)
			(b0) edge [right hook->] node [right] {$\iota $} (d0)
			(c0) edge [right hook->] node [below] {$0\!*\!(\{0\}\!\times\! 1_{\bb b})$} (d0);
		\end{tikzpicture}	
		\qquad
		\begin{tikzpicture}[baseline=(current  bounding  box.south), scale=2]
			
			\node (a0) at (0,0.8) {$0*\bb b$};
			\node (b0) at (1.5,0.8) {$0* ([1]\times \bb b) $};
			\node (c0) at (0,0) {$[1]*\bb b $};
			\node (d0) at (1.5,0) {$[1]\times (0*\bb b) $};
			
			\path[font=\scriptsize]
			
			(a0) edge [right hook->] node [above] {$0\!*\!(\{1\}\!\times\! 1_{\bb b})$} (b0)
			(a0) edge [right hook->] node [left] {$j_0$} (c0)
			(b0) edge [right hook->] node [right] {$\rho$} (d0)
			(c0) edge [right hook->] node [below] {$\kappa$} (d0);
		\end{tikzpicture}	
	\end{center}
	By Rule~\ref{equ:union} applied to the pushout on the left we derive
	$$ (\exists v'\co S^{0* ([1] \times\bb b)} )\quad v'\cdot 0\!*\!(\{0\}\!\times\! 1_{\bb b})=y \wedge v\cdot \iota=\id_x. $$
	Then, by Rule~\ref{equ:union} applied to the pushout on the right, we obtain
	$$ (\exists v\co S^{[1]\times 0*\bb b})\quad v\cdot \kappa=z \wedge v\cdot \rho=v'$$
	using axiom (iv) and the previous hypotheses we derive 
	$$(\exists v\co S^{[1]\times 0*\bb b})\ R_{\bb b}^{[1]}(v)\wedge v\cdot\iota^{[1]}=\id_x\wedge \dom(v)=y\wedge \cod(v)=y'$$
	concluding the proof that the required judgements are derivable.
	
	It remains to prove that we have an isomorphism $\Mod(\TT_{\bb b},\Cat)\cong \blim$. First notice that $\blim$ can be identified with a full subcategory of $\Str(\LL_{\bb b},\Cat)$ by sending $\C\in \blim$ to the $\LL_{\bb b}$-structure with $\br S\defeq\C$ and $\br{R_{\bb b}}$ being the full subcategory of $\C^{0*\bb b}$ spanned by all the limiting cones. Note that the inclusion is full since a functor preserves $\bb b$-limits if and only if it preserves the limiting cones of diagrams of shape $\bb b$, and 2-cells in $\blim$ are just natural transformations. 
	
	It is easily seen that, under this identification, $\blim$ is closed in $\Str(\LL_{\bb b},\Cat)$ under powers by $[1]$; thus we can apply \cref{prop:1-dimensional-is-enough}. Therefore, it is enough to show that a structure $M$ satisfies the axioms of $\TT_{\bb b}$ on the underlying set of objects if an only if $M\in \blim$. It is easy to see that $M$ satisfies the axioms~(i) and~(ii) if every object of $\br{R_{\bb b}}$ is a limiting cone, it satisfies~(iii) if $\br{R_{\bb b}}$ is closed under isomorphism (and hence contains all limiting cones on a diagram), satisfies (iv) if any natural transformation between limit cones in an arrow in $\br{R}$, and finally satisfies (v) if every $\bb b$-shaped diagram admits a limiting cone in $\br{R_{\bb b}}$. Any object of $\blim$ clearly satisfies the conditions above. Conversely, if $M$ satisfies such conditions, then $\br S$ has $\bb b$-limits and $\br{R_{\bb b}}$ is the full subcategory of $\br{S}^{0*\bb b}$ spanned by all the limiting cones; thus $M$ lies in $\blim$.
\end{proof}

In a similar way we can construct a language $\LL_{{\bb b}}'$ and an isoregular theory $\TT'_{{\bb b}}$ such that 
\[
 \Mod(\TT'_{\bb b}, \Cat) \cong  \bb b\text{-}{\tx{colim}}
 \]
is isomorphic to the 2-category of small categories with $\bb b$-colimits, $\bb b$-colimit preserving functors, and 2-natural transformations.
The difference from the previous case is that, instead of freely adding an initial object (or initial arrow/isomorphism) to $\bb b$, we add a terminal object (or terminal arrow/isomorphism). So the language $\LL_{\bb b}'$ will still have one sort $S$, but a relation symbol $R_{\bb b}'\co S^{\bb b*0}$. The theory $\TT_{\bb b}'$ will have essentially the same axioms of $\TT_{\bb b}$, with properly modified arities. 

Now we can allow $\bb b$ to vary and define theories for the 2-categories: $\Lex$ of small categories with finite limits, finite-limit preserving functors, and natural transformations; $\Rex$ of small categories with finite colimits, finite-colimit preserving functors, and natural transformations; and $\RLex$ of small categories with finite limits and finite colimits, finite-limit and finite-colimit preserving functors, and natural transformations.

More precisely, we define languages 
$$\LL_{\tx{lex}}\defeq \bigcup_{\bb b\in\tx{finCat}} \LL_{\bb b}, \qquad \LL_{\tx{rex}}'\defeq \bigcup_{\bb b\in\tx{finCat}} \LL_{\bb b}', \qquad \LL_{\tx{rlex}}\defeq \LL_{\tx{lex}}\cup \LL_{\tx{rex}}$$
and corresponding isoregular theories
$$\TT_{\tx{lex}}\defeq \bigcup_{\bb b\in\tx{finCat}} \TT_{\bb b}, \qquad \TT_{\tx{rex}}'\defeq \bigcup_{\bb b\in\tx{finCat}} \TT_{\bb b}', \qquad \TT_{\tx{rlex}}\defeq \TT_{\tx{lex}}\cup \TT_{\tx{rex}}$$
respectively on $\LL_{\tx{lex}}$, $\LL_{\tx{rex}}$, and $\LL_{\tx{rlex}}$.

It follows immediately by Lemma~\ref{lemma:b-lim}, its dual, and how models are defined that the following holds. 

\begin{lemma} \label{thm:lex-isoreg}
	The theories $\TT_{\tx{lex}}$, $\TT_{\tx{rex}}$, and $\TT_{\tx{rlex}}$ are isoregular and \begin{enumerate}
		\item $ \Mod(\TT_{\tx{lex}},\Cat)\cong\Lex$,
		\item $ \Mod(\TT_{\tx{rex}},\Cat)\cong\Rex$.
		\item $ \Mod(\TT_{\tx{rlex}},\Cat)\cong\RLex$.
	\end{enumerate}
\end{lemma}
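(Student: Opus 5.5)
The plan is to reduce everything to \cref{lemma:b-lim} and its dual, applied one shape $\bb b$ at a time, and then to assemble the pieces. There are two separate claims: that the theories are isoregular, and that their 2-categories of models are as stated.

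For isoregularity, recall that isoregular theories are built inductively by adding sets of axioms whose propositions are already well-formed. The key point is that in $\TT_{\tx{lex}}$ the relation symbols $R_{\bb b}$ for different $\bb b$ never occur together in an axiom. For each $\bb b$, the proof of \cref{lemma:b-lim} shows that axioms (i)--(iv) of $\TT_{\bb b}$ suffice to derive $\J_{\mathsf{faithful}}$ and $\J_{\mathsf{full}\text{-}\mathsf{id}}$ for the formula quantified in axiom (v). These derivations only use rules of the calculus and axioms of $\TT_{\bb b}$, so they remain valid in any larger theory. I would therefore first add, in one step, all axioms (i), (iii), (iv) for every $\bb b$; these involve only atomic formulas and conjunctions. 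Next I would add all axioms (ii), which are well-formed by (i). Finally I would add all axioms (v), which are well-formed by the derivations just mentioned. This gives $\TT_{\tx{lex}}$ as an isoregular theory. The same argument works for $\TT_{\tx{rex}}$ via the dual of \cref{lemma:b-lim}, and for $\TT_{\tx{rlex}}$, where the two families are disjoint and can be added in parallel.

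For the models, I would use \cref{prop:1-dimensional-is-enough}. I identify $\Lex$ with the full sub-2-category of $\Str(\LL_{\tx{lex}},\Cat)$ whose objects are structures in which each $\br{R_{\bb b}}$ is the full subcategory of limiting cones. This identification is full because a functor preserves finite limits exactly when it preserves limiting cones of each finite shape, and 2-cells are just natural transformations. This sub-2-category is closed under powers by $[1]$, since $\C^{[1]}$ has pointwise finite limits and its limit cones are the pointwise ones. A structure satisfies the axioms of $\TT_{\tx{lex}}$ on objects exactly when, for each $\bb b$, its reduct satisfies $\TT_{\bb b}$ on objects. By the analysis in the proof of \cref{lemma:b-lim}, this happens exactly when each $\br{R_{\bb b}}$ is precisely the category of $\bb b$-limit cones and these exist. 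That is, the structure lies in $\Lex$. The cases of $\Rex$ and $\RLex$ are identical: for $\RLex$ the conditions are simply the conjunction of the two families, and morphisms must preserve both kinds of cones.

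The only step needing care is the stratification used in the isoregularity argument, namely checking that the derivations of the well-formedness judgements in \cref{lemma:b-lim} do not use axiom (v) itself. I expect this to be the main obstacle, but the proof of that lemma uses only (i), (ii) and (iv), so it should go through.
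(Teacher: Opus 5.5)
Your proposal is correct and follows the paper's route. The paper proves the lemma in one line: it follows from Lemma~\ref{lemma:b-lim}, its dual, and the fact that a model of a union of theories over a union of languages is just a structure satisfying each piece. Your explicit staging of the axioms ((i), (iii) and (iv) first, then (ii), then (v)) and your direct appeal to Proposition~\ref{prop:1-dimensional-is-enough} spell out the bookkeeping that the paper leaves implicit.
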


\begin{rmk} \label{thm:lex-rex-rlex} Since $\Cat$ is accessible, has flexible limits, and retract equivalences are stable under them, 
the combination of~\cref{theo:accessibility-of-models}  with~\cref{thm:lex-isoreg} gives a new proof that $\Lex$, $\Rex$, and $\RLex$ are accessible with flexible limits, which was shown in~\cite[Section~6.4]{Bou2021:articolo}. Additionally, the forgetful functors
	\begin{center}
		\begin{tikzpicture}[baseline=(current  bounding  box.south), scale=2]
			
			\node (a0) at (0,0.8) {$\RLex$};
			\node (b0) at (1,0.8) {$\Lex$};
			\node (c0) at (0,0) {$\Rex$};
			\node (d0) at (1,0) {$\Cat$};
			
			\path[font=\scriptsize]
			
			(a0) edge [->] node [above] {} (b0)
			(a0) edge [->] node [left] {} (c0)
			(b0) edge [->] node [right] {} (d0)
			(c0) edge [->] node [below] {} (d0);
		\end{tikzpicture}	
	\end{center}
	are accessible and preserve flexible limits, and therefore they all have left biadjoints  by \cref{theo:left-biadj-exist}. Note that the left biadjoint to $\RLex \to \Cat$
	provides a finite case of Joyal's bicompletion.
\end{rmk}

\subsection{Exactness conditions} \label{sec:exactness}
We now consider 2-categories of categories satisfying a variety of exactness conditions, namely regular, Mal'sev, exact, protomodular, and semiabelian categories
(see~\cite{Bor94:libro,borceux2004mal,GranM:intrc} for details).

We begin by considering regular categories and write $\Reg$ for the 2-category of regular categories, regular functors, and natural transformations.
Let $\LL_{\tx{reg}}$ be the language obtained by adding to $\LL_{\tx{lex}}$ one relation symbol $R_{\tx{ckp}}:S^{\bb p*0}$, where $\bb p$ is the free-living pair of arrows. This new relation will be used to encode the coequalisers of kernel pairs.
To define $\TT_{\tx{reg}}$ we extend $\TT_{\tx{lex}}$ by adding a few axioms. To begin with, we ask that every element of $R_{\tx{ckp}}$ is a coequaliser; this is done by taking the dual of axioms (i)-(iv) of the theory $\TT_{\bb b}$. In more detail, we first define the formula $(x,y\co S^{\bb p*0}, z\co S^{\bb p*[1]})\ A(x,y,z) $ as below
$$
(x,y\co S^{\bb p*0}, z\co S^{\bb p*[1]})\  R_{\tx{ckp}}(x)\wedge (z\cdot j_0=x) \wedge (z\cdot j_1=y)
$$
where $j_0,j_1\colon \bb p*0\to \bb p*[1]$ pick out the domain and codomain of the freely added map respectively. Then add to $\TT_{\tx{lex}}$ the axioms:\begin{enumerate}
	\item $\J_{\mathsf{mono}}(A)$;
	\item $ (x,y\co S^{\bb p*0})\ R_{\tx{ckp}}(x),\ (x\cdot \iota=y\cdot \iota) \vdash (\exists z\co S^{\bb p*[1]})\ A(x,y,z)$;
	\item $ (z\co S^{\bb p*\bb I})\ R_{\tx{ckp}}(z\cdot j_0)\vdash R_{\tx{ckp}}(z\cdot j_1)$;
	\item $ (z\co S^{[1]\times \bb p*0})\ R_{\tx{ckp}}(\dom(z)), R_{\tx{ckp}}(\cod(z))\vdash R_{\tx{ckp}}^{[1]}(z) $.
\end{enumerate}
The next step is to ask that every kernel pair has a coequaliser. To that end, we use that kernel pairs are just a specific kind of pullback. For simplicity denote by $R_{\tx{pb}}\defeq R_{\bb c}$, where $\bb c$ is the free cospan, the relation of $\LL_{\tx{reg}}$ containing the pullback squares. This has sort $S^{0*\bb c}$ which, since $0*\bb c\cong [1]\times [1]$, by substitution we can replace with $S^{[1]\times [1]}$. 
Consider the formula $(x\co S^{\bb p*0})\ A_{\tx{kp}}(x) $ as below
$$
 (\exists y\co S^{[1]\times [1]})\ R_{\tx{pb}}(y)\wedge (\prd(y)=\pr_q(x)) \wedge (\prr(y)=\pr_q(x))\wedge (\pru(y)=\pr_f(x)) \wedge (\prl(y)=\pr_g(x)),
$$
where $f,g$ denote the parallel arrows in $\bb p*0$ and $q$ the coequalising one. 

We now add the axiom stating that every kernel pair has a coequaliser:\begin{enumerate}
	\item[(v)]  $(x\co S^{\bb p*0})\ A_{\tx{kp}}(x)\vdash (\exists y:S^{\bb p*0})\ R_{\tx{ckp}}(y)\wedge (\pr_f(y)=\pr_f(x))\wedge (\pr_g(y)=\pr_g(x)). $
\end{enumerate}

Finally, we add an axiom saying that coequalisers of kernel pairs are stable under pullback. For that consider the finite category $\bb d$ generated by the commutative diagram below.
\begin{center}
	
	\begin{tikzpicture}[baseline=(current  bounding  box.south), scale=2, hookarrow/.style={{Hooks[right]}->}]
		
		\node (k) at (0.2,0) {$\bullet $};
		\node (a) at (1,0) {$\bullet$};
		\node (b) at (1.8,0) {$\bullet$};
		
		\node (l) at (0.2,-0.6) {$\bullet$};
		\node (c) at (1,-0.6) {$\bullet$};
		\node (e) at (1.8,-0.6) {$\bullet$};
		
		\path[font=\scriptsize]

		([yshift=1.5pt]k.east) edge [->] node [above] {} ([yshift=1.5pt]a.west)
		([yshift=-1.5pt]k.east) edge [->] node [below] {} ([yshift=-1.5pt]a.west)
		
		(a) edge [->] node [above] {} (b)
		
		([yshift=1.5pt]l.east) edge [->] node [above] {} ([yshift=1.5pt]c.west)
		([yshift=-1.5pt]l.east) edge [->] node [below] {} ([yshift=-1.5pt]c.west)
		
		(a) edge [->] node [left] {} (c)
		(c) edge [->] node [right] {} (e)
		(b) edge [->] node [left] {} (e);
	\end{tikzpicture}
\end{center}
Denote by $\iota_t,\iota_b\colon \bb p*0\to \bb d$ the inclusion of the top and bottom co-forks respectively, and by $\iota_{sq}\colon [1]\times [1]\to \bb d$ the inclusion of the square on the right. Then pullback stability of coequalisers of kernel pairs is expressed by the following axiom:\begin{enumerate}
	\item[(vi)]  $(x\co S^{\bb d})\ R_{\tx{pb}}(x\cdot \iota_{sq}),\ R_{\tx{ckp}}(x\cdot \iota_t),\ A_{\tx{kp}}(x\cdot \iota_b)  \vdash R_{\tx{ckp}}(x\cdot \iota_b). $
\end{enumerate}
% This concludes the definition of $\TT_{\tx{reg}}$. Now define $\Reg$ as the 2-category of small regular categories, regular functors, and natural transformations. 

\begin{lemma}  \label{thm:reg-isoregular-theory}
	The theory $\TT_{\tx{reg}}$ is isoregular and 
	$ \Mod(\TT_{\tx{reg}},\Cat)\cong\Reg$.
\end{lemma}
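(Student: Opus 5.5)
The proof follows the pattern of \cref{lemma:b-lim}. It has two parts: first, check that every existential quantifier appearing in the new axioms (i)--(vi) can be formed, so that $\TT_{\tx{reg}}$ is isoregular; second, identify the models in $\Cat$ using \cref{prop:1-dimensional-is-enough}.

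\emph{Isoregularity.} The axioms inherited from $\TT_{\tx{lex}}$ are already handled by \cref{thm:lex-isoreg}. Axioms (i)--(iv) are exactly the duals of axioms (i)--(iv) of $\TT_{\bb b}$, with $\bb b$ replaced by $\bb p$ and the cone point replaced by a cocone point. The quantifier in (ii) is therefore well-formed by (i), which is $\J_{\mathsf{mono}}(A)$.

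The quantifier in $A_{\tx{kp}}$ ranges over pullback squares with a prescribed cospan and prescribed legs. Once the whole square is fixed by the equations, the witness $y$ is uniquely determined, so $\J_{\mathsf{mono}}$ follows from the rule on jointly epimorphic families (\cref{equ:epimorphic-family}). This works because the four edge inclusions $[1]\to[1]\times[1]$ are jointly epimorphic.

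The main obstacle is the quantifier in (v), which asserts the existence of $y$ with $R_{\tx{ckp}}(y)$ and prescribed parallel pair. For it I must derive $\J_{\mathsf{faithful}}$ and $\J_{\mathsf{full\text{-}id}}$. I will dualise the argument given for axiom (v) of $\TT_{\bb b}$ in \cref{lemma:b-lim}.
\begin{itemize}
\item Faithfulness: use the dual functor $\kappa'\colon \bb p*[1]\to [1]\times(\bb p*0)$. Together with the appropriate inclusions it is jointly epimorphic, so from (i) and \cref{equ:epimorphic-family} two morphisms of coequaliser cocones that agree on the pair must be equal.
\item Fullness on identities: use (ii) to obtain the comparison map between two coequaliser cocones on the same pair. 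Then apply \cref{equ:union} to the dual pushouts to assemble a square $v\colon S^{[1]\times \bb p*0}$ that restricts to $\id$ on the pair. Finally, use (iv) to get $R_{\tx{ckp}}^{[1]}(v)$.
\end{itemize}
Axioms (iii) and (vi) introduce no new quantifiers, apart from $A_{\tx{kp}}$, which is already handled.

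\emph{Models.} Embed $\Reg$ into $\Str(\LL_{\tx{reg}},\Cat)$. A regular category $\C$ is sent to the structure whose part for $\LL_{\tx{lex}}$ is as in \cref{thm:lex-isoreg}. Its $\br{R_{\tx{ckp}}}$ is the full subcategory of $\C^{\bb p*0}$ spanned by all coequaliser coforks; it must be all coequalisers, not only those of kernel pairs, so that the subcategory is determined by $\C$ and is closed under isomorphism. This embedding is fully faithful: regular functors are exactly the lex functors preserving coequalisers of kernel pairs, and hence preserving those coequaliser diagrams. It is also closed under powers by $[1]$, since $\C^{[1]}$ is regular with pointwise structure.

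By \cref{prop:1-dimensional-is-enough}, it then suffices to check the axioms on objects. Axioms (i)--(iv) say that $\br{R_{\tx{ckp}}}$ is the full, isomorphism-closed subcategory of coequalisers. Axiom (v) says that kernel pairs have coequalisers. Axiom (vi) says that the pullback of such a coequaliser along any map, taken together with the induced pair on the pulled-back kernel pair, is again a coequaliser.

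This last condition is equivalent to pullback-stability of regular epimorphisms in a finitely complete category in which kernel pairs have coequalisers. The reason is that regular epis there are exactly the coequalisers of their kernel pairs, and the pullback of a kernel pair along a map is the kernel pair of the pulled-back map.

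Hence the models are precisely the regular categories, and the isomorphism $\Mod(\TT_{\tx{reg}},\Cat)\cong\Reg$ follows.
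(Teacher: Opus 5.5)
Your isoregularity argument is the paper's. Axioms (i)--(iv) are handled by dualising \cref{lemma:b-lim}, the quantifier in $A_{\tx{kp}}$ by \cref{equ:epimorphic-family}, and the quantifier in (v) by repeating the faithfulness and fullness-on-identities argument used for axiom (v) of $\TT_{\bb b}$.

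\textbf{The gap: your choice of $\br{R_{\tx{ckp}}}$.} The problem is in identifying the models. You deliberately interpret $\br{R_{\tx{ckp}}}$ as \emph{all} coequaliser coforks, whereas the paper takes the full subcategory of coequalisers of kernel pairs. This choice matters for two reasons.
\begin{enumerate}
\item Regular functors need not preserve arbitrary coequalisers. They preserve only coequalisers of kernel pairs, because they preserve kernel pairs and regular epimorphisms, and a regular epimorphism is the coequaliser of its kernel pair. For example, $(-)^2$ on finite sets preserves finite limits and surjections, so it is regular. It sends the coequaliser $1\rightrightarrows 2\to 1$ of the two points $a,b$ to $1\rightrightarrows 4\to 1$. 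The image pair picks out $(a,a)$ and $(b,b)$, and its coequaliser has three elements, not one. So under your interpretation a regular functor is in general not a morphism of $\LL_{\tx{reg}}$-structures, and your assignment $\Reg\to\Str(\LL_{\tx{reg}},\Cat)$ does not extend to 1-cells. Your justification, ``preserving coequalisers of kernel pairs, and hence preserving those coequaliser diagrams'', conflates the two classes.
\item The axioms do not force your interpretation. Axioms (i)--(iv) say only that members of $\br{R_{\tx{ckp}}}$ are coequalisers, and that $\br{R_{\tx{ckp}}}$ is full and closed under isomorphisms of cocones. Axiom (v) together with (iii) puts in only the coequalisers of kernel pairs. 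So a regular category with $\br{R_{\tx{ckp}}}$ equal to the kernel-pair coequalisers satisfies every axiom but lies outside your $\mathcal{H}$, and the hypothesis of \cref{prop:1-dimensional-is-enough} fails.
\end{enumerate}
Your stated reason for the choice also does not discriminate: the kernel-pair coequaliser coforks are equally determined by $\C$ and closed under isomorphism.

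\textbf{How to repair it.} Take the paper's interpretation, $\br{R_{\tx{ckp}}}$ equal to the coequalisers of kernel pairs; regular functors do preserve these, and the class is stable under powers by $[1]$. Even then, (i)--(v) only give
\[
\text{coequalisers of kernel pairs}\subseteq\br{R_{\tx{ckp}}}\subseteq\text{all coequalisers}.
\]
For the ``if'' direction of \cref{prop:1-dimensional-is-enough} you should therefore also include the isoregular axiom $(x\co S^{\bb p*0})\ R_{\tx{ckp}}(x)\vdash A_{\tx{kp}}(x)$. The paper's one-line identification passes over this point too. With that axiom added, the rest of your verification goes through, including your reading of (vi) as pullback-stability of regular epimorphisms.
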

\begin{proof}
	We begin by showing that $\TT_{\tx{reg}}$ is isoregular. First, we already know that $\TT_{\tx{lex}}$ is, and the axioms (i)-(iv) are isoregular by the dual of Lemma~\ref{lemma:b-lim}. The existential quantification in $(x\co S^{\bb p*0})\ A_{\tx{kp}}(x) $ is well-formed since the variable $y\co S^{[1]\times [1]}$ is uniquely determined by its four components (Rule~\ref{equ:epimorphic-family}). 
	
	Finally, the fact that the existential quantification in axiom (v) is well-formed follows the same arguments that we used in Lemma~\ref{lemma:b-lim} to prove that axiom (v) of $\TT_{\bb b}$ is.
	
	As for the 2-category of models, we can identify $\Reg$ as a full subcategory of $\Str(\LL_{\tx{reg}},\Cat)$ by sending a regular category $\C$ to the $\LL_{\tx{lex}}$-structure corresponding to its underlying lex category and  interpreting $R_{\tx{ckp}}$ as the full subcategory spanned by all coequalisers of kernel pairs. It is easy to see that, under this identification, $\Reg$ is closed under powers by $[1]$, so we can apply Proposition~\ref{prop:1-dimensional-is-enough}. Given this, it is immediate from the axioms that a model of $\TT_{\tx{reg}}$ is the same as a regular category.
\end{proof}

\begin{rmk}
The combination of \cref{theo:accessibility-of-models} and \cref{thm:reg-isoregular-theory} gives a new proof that $\Reg$ is accessible with flexible limits, originally shown in~\cite[Section~6.5]{Bou2021:articolo}.
\end{rmk}

Recall that a \emph{Mal'cev category} is a lex category where every reflexive relation is an equivalence relation~\cite{borceux2004mal}. Define $\Mal$ to be the full subcategory of $\Lex$ spanned by the Mal'cev categories, and $\RMal$ to be the full subcategory of $\Reg$ spanned by the regular Mal'cev categories. 

To define a theory whose models are Mal'cev categories, we first construct a theory that identifies all internal relations (that is, jointly monic pairs of arrows) in a category. Starting from the language and theory for lex categories, add a relation symbol $R_{\tx{irel}}:S^{\bb p}$, where $\bb p$ is the free-living pair, which will contain the internal relations; we call the new language $\LL_{\tx{irel}}$. We expand $\TT_{\tx{lex}}$ by adding the axioms:\begin{enumerate}
	\item $(x\co S^{\bb p})\ R_{\tx{irel}}(x) \vdash(\exists y\co S^{[1]\times [1]}) R_{\tx{pb}}(y)\wedge (\prd(y)=\pr_f(x)) \wedge (\prr(y)=\pr_g(x))\wedge$\\ 
	\hspace*{37pt}$(\pru(y)=\id_{\dom (\pru(y))}) \wedge (\prl(y)=\id_{\dom( \prl(y))})$
	\item $(x\co S^{\bb p})\  R_{\tx{pb}}(y)\wedge (\prd(y)=\pr_f(x)) \wedge (\prr(y)=\pr_g(x))\wedge (\pru(y)=\id_{\dom (\pru(y))}) \wedge$\\
	\hspace*{37pt}$ (\prl(y)=\id_{\dom( \prl(y))}) \vdash R_{\tx{irel}}(x);$
	\item $ (z\co S^{[1]\times \bb p})\ R_{\tx{irel}}(\dom(z)), R_{\tx{irel}}(\cod(z))\vdash R_{\tx{irel}}^{[1]}(z) $.
\end{enumerate}
These three axioms together ensure that $R_{\tx{irel}}$ is the full (axiom (iii)) subcategory of $S^{\bb p}$ spanned by all the internal relations (axioms (i) and (ii)). At this point we still have $ \Mod(\TT_{\tx{irel}},\Cat)\cong\Lex $, since we are not adding any new properties.

To construct the theory $\TT_{\tx{Mal}}$ for Mal'cev categories we now need to introduce formulas expressing when a relation is reflexive, symmetric, or transitive. Within $\TT_{\tx{irel}}$, the formula for reflexivity $(x\co S^{\bb p})\ A_{\tx{refl}}(x)$ is defined by
$$
(x\co S^{\bb p})\ (\exists y\co S^{\bb p'})\ R_{\tx{irel}}(x)\wedge (y_f=x_f) \wedge (y_g=x_g),
$$
where $\bb p'$ is the free split pair, obtained by adding a common splitting to the two arrows in $\bb p$. Note that the $y\co S^{\bb p'}$ above is unique since $x$ is an internal relation. The formulas $A_{\tx{sym}}(x)$ and $A_{\tx{tran}}(x)$ are constructed in a similar way.

Thus, we can define $\TT_{\tx{mal}}$ by adding to $\TT_{\tx{irel}}$ the axiom
$$ (x\co S^{\bb p})\ A_{\tx{refl}}(x) \vdash A_{\tx{sym}}(x)\wedge A_{\tx{tran}}(x). $$ 
Similarly, we can consider the isoregular theory $\TT_{\tx{rmal}}\defeq\TT_{\tx{reg}}\cup \TT_{\tx{mal}}$ which models regular Mal'cev categories.

\begin{lemma} \leavevmode
\begin{enumerate}
\item The theory $\TT_{\tx{mal}}$  is isoregular and $\Mod(\TT_{\tx{mal}},\Cat)\cong\Mal.$
\item The theory $\TT_{\tx{rmal}}$  is isoregular and $\Mod(\TT_{\tx{rmal}},\Cat)\cong \RMal$
\end{enumerate}
\end{lemma}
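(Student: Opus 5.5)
The proof follows the pattern of \cref{lemma:b-lim} and \cref{thm:reg-isoregular-theory}. It has two parts: checking that every formula occurring in $\TT_{\tx{mal}}$ is well-formed, and identifying the models via \cref{prop:1-dimensional-is-enough}. Part~(ii) will then follow from part~(i) together with \cref{thm:reg-isoregular-theory}. Indeed, $\TT_{\tx{rmal}}$ is a union of two isoregular theories over $\LL_{\tx{lex}}$-extensions whose new symbols are disjoint, so its formulas are well-formed. Its models in $\Cat$ are the structures that are simultaneously models of $\TT_{\tx{reg}}$ and of $\TT_{\tx{mal}}$, and these agree on their common $\TT_{\tx{lex}}$-reduct. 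They are therefore exactly the regular categories that are Mal'cev, with regular functors (which are in particular lex) and natural transformations, giving $\RMal$.

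For isoregularity of $\TT_{\tx{irel}}$ and $\TT_{\tx{mal}}$, the existential quantifiers to check are of three kinds. The first is the one over $y\co S^{[1]\times[1]}$ in axiom~(i) of $\TT_{\tx{irel}}$. Here the equations fix $\prd(y),\prr(y)$, and fix $\pru(y),\prl(y)$ to be identities on the vertex; the vertex is determined up to unique isomorphism only, so this is an \ffk-existential, not a unique one. I would handle it exactly as the quantifier in axiom~(v) of $\TT_{\bb b}$ was handled in \cref{lemma:b-lim}. Alternatively, and more simply, I would rewrite the condition as ``the canonical comparison $1\to$ the limit-vertex is an isomorphism'', that is, quantify over the pullback square $y$ with $R_{\tx{pb}}(y)$ and bottom/right edges $\pr_f(x),\pr_g(x)$. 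Faithfulness and fullness on identities of this quantifier are then precisely those already derived for $\TT_{\bb c}$, since conjoining with the extra equations on $\pru,\prl$ is a restriction along a monomorphism. Hence \cref{thm:comp-easy} and the closure lemma for \ffk-morphisms (composition with a fully faithful map followed by a mono) apply, after translating via \cref{thm:semantics-eqfib}. The second kind is the quantifier over $y\co S^{\bb p'}$ in $A_{\tx{refl}}$. I would check that $\J_{\mathsf{mono}}$ holds: if $y,y'$ are two common splittings of $x_f,x_g$, then the splitting maps $s,s'$ satisfy $x_f s=x_f s'$ and $x_g s=x_g s'$, and joint monicity of $(x_f,x_g)$ is given by $R_{\tx{irel}}(x)$ via axioms~(i),(ii). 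The third kind is the analogous quantifiers in $A_{\tx{sym}}$ and $A_{\tx{tran}}$; these are also unique existentials by joint monicity, with transitivity additionally using a pullback witness (again an \ffk-quantifier treated as above). Hence every quantifier is formed by Rule~\ref{equ:exists-formation-mono} or~\ref{equ:exists-formation-eqfib}.

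For the models, I would identify $\Mal$ with the full sub-2-category of $\Str(\LL_{\tx{mal}},\Cat)$ whose objects are lex structures as in \cref{thm:lex-isoreg}, with $\br{R_{\tx{irel}}}$ the full subcategory of $\br S^{\bb p}$ on jointly monic pairs. This is full because lex functors preserve jointly monic pairs, and it is closed under powers by $[1]$ because $\C^{[1]}$ is Mal'cev when $\C$ is: relations in $\C^{[1]}$ are computed componentwise, and reflexivity, symmetry and transitivity are detected componentwise. By \cref{prop:1-dimensional-is-enough} it then suffices to read off the axioms on objects. Axioms~(i),(ii) say that the objects of $\br{R_{\tx{irel}}}$ are exactly the jointly monic pairs, axiom~(iii) gives fullness, and the Mal'cev axiom says every reflexive relation is symmetric and transitive. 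The main obstacle is the first step, namely verifying the \ffk-conditions for the pullback-type existential in axiom~(i) with identity legs. I would reduce it to the already established \ffk-property of limit cones in \cref{lemma:b-lim} rather than redo the derivation.
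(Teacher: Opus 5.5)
Your outer structure is the paper's: check that the quantifiers are well formed, identify the models via Proposition~\ref{prop:1-dimensional-is-enough} using closure of Mal'cev categories under powers by $[1]$, and get part~(ii) by taking the union. However, the step you single out as the main obstacle is misdiagnosed, and the reduction you give for it is not valid.

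In axiom~(i) of $\TT_{\tx{irel}}$ the witness $y$ is not determined only up to isomorphism. An identity has equal domain and codomain. In a square, $\cod(\pru(y))=\dom(\prr(y))$ and $\cod(\prl(y))=\dom(\prd(y))$. Hence the conjuncts $\pru(y)=\id_{\dom(\pru(y))}$ and $\prl(y)=\id_{\dom(\prl(y))}$ force $\pru(y)=\id_{\dom(\pr_g(x))}$ and $\prl(y)=\id_{\dom(\pr_f(x))}$. All four edges of $y$ are therefore terms in $x$ alone. Since the four edge inclusions $[1]\to[1]\times[1]$ are jointly epimorphic, Rule~\ref{equ:epimorphic-family} yields $\J_{\mathsf{mono}}$, and the quantifier is formed by Rule~\ref{equ:exists-formation-mono}. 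This is exactly the paper's one-line argument (``$y$ is uniquely determined by its components''). The conjunct $R_{\tx{pb}}(y)$ plays no role, and there is no vertex to choose.

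Your reduction factors the relevant map as the restriction $\br{B}\rightarrowtail\br{B_0}$ cut out by the equations on $\pru,\prl$, followed by the \ffk-map coming from $\TT_{\bb c}$. You then infer \ffk-ness because the first map is a monomorphism, but that inference is false in general. Precomposing an \ffk-morphism with a monomorphism need not give an \ffk-morphism: the paper's example $\{0\}+\{1\}\to J\to\{\ast\}$ is precisely a monomorphism followed by an \ffk-morphism. Neither Corollary~\ref{thm:comp-easy} (fully faithful followed by mono) nor the closure lemma (which needs the first map to be fully faithful) covers this factorisation.

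The step can be rescued, because here the restriction happens to be fully faithful. A morphism between squares whose top and left edges are identities automatically has equal components at both ends of those edges. But this must be argued, and it amounts to the uniqueness observation above.

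Two smaller points:
\begin{enumerate}
\item Lemma~\ref{thm:semantics-eqfib} concerns validity in a structure, whereas the formation rules need the judgements to be derivable. You would get derivability from Lemma~\ref{thm:ff-kernel-definable} in the syntactic 2-category of the theory built so far.
\item Your transitivity case, ``treated as above'', needs the same repair. There the map forgetting the extra witness is a fully faithful monomorphism by joint monicity, so it goes through once this is said.
\end{enumerate}
The identification of the models and part~(ii) are fine as you have them.
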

\begin{proof} For part~(i), 
	the existential quantifications in the first and second axioms of $\TT_{\tx{irel}}$ are well-formed since $y\co S^{[1]\times [1]}$ is uniquely determined by its components. Finally, the existential quantification in $(x\co S^{\bb p})\ A_{\tx{refl}}(x)$ is well-formed since, using that $R_{\tx{irel}}(x)$ holds, the $y\co S^{\bb p'}$ is unique (use the formulas defining the universal property of pullbacks).
	Similar arguments apply for $A_{\tx{sym}}(x)$ and  $A_{\tx{tran}}(x)$.
	
	The isomorphism between the 2-category of models and $\Mal$ goes as usual. We can identify $\Mal$ as a full subcategory of $\LL_{\tx{irel}}$-structures by interpreting $R_{\tx{irel}}$ as the full subcategory of all internal relations. Then notice that we can apply Proposition~\ref{prop:1-dimensional-is-enough}.
	
	Part~(ii) is similar.
\end{proof}

A direct application of Corollary~\ref{cor:accessibility-forgetful} and Theorem~\ref{theo:left-biadj-exist} then gives the result below, ensuring the existence of free Mal'cev categories.

\begin{theo} \label{thm:mal} \label{thm:rmal}  The 2-categories $\Mal$ and $\RMal$ are accessible with flexible limits. Moreover, the forgetful 2-functors $\Mal\to \Lex$ and $\RMal \to \Reg$ are accessible and preserve flexible limits, and therefore they have  left biadjoints. 
\end{theo}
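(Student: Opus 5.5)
The plan is to apply the general machinery in two steps, using the identifications of the preceding lemma, $\Mal\cong\Mod(\TT_{\tx{mal}},\Cat)$ and $\RMal\cong\Mod(\TT_{\tx{rmal}},\Cat)$, together with $\Lex\cong\Mod(\TT_{\tx{lex}},\Cat)$ (\cref{thm:lex-isoreg}) and $\Reg\cong\Mod(\TT_{\tx{reg}},\Cat)$ (\cref{thm:reg-isoregular-theory}).

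\emph{Step 1: accessibility and flexible limits.} First record the facts about $\Cat$ used in \cref{thm:lex-rex-rlex}: $\Cat$ is isoregular, accessible (indeed locally finitely presentable), and has flexible limits. The one substantive point is that fully faithful regular epimorphisms in $\Cat$, i.e.\ functors that are surjective on objects and fully faithful (retract equivalences), are stable under flexible limits. Since flexible limits are generated by products, inserters and equifiers, I would check this directly on each. Products of surjective-on-objects fully faithful functors are again such. For inserters and equifiers, given a morphism of diagrams whose components are retract equivalences, a lift of an object of the target inserter is obtained by choosing preimages and using fullness to lift the inserted morphism. Faithfulness then forces the equifier equations to lift, and full faithfulness of the induced functor follows similarly. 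With this in hand, \cref{theo:accessibility-of-models} applied to $\TT_{\tx{mal}}$ and $\TT_{\tx{rmal}}$ with $\K=\Cat$ shows that $\Mal$ and $\RMal$ are accessible with flexible limits.

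\emph{Step 2: forgetful 2-functors.} The theory $\TT_{\tx{mal}}$ extends $\TT_{\tx{lex}}$, with $\LL_{\tx{lex}}\subseteq\LL_{\tx{irel}}$. Likewise $\TT_{\tx{rmal}}=\TT_{\tx{reg}}\cup\TT_{\tx{mal}}$ extends $\TT_{\tx{reg}}$. Hence \cref{cor:accessibility-forgetful} gives forgetful 2-functors $\Mod(\TT_{\tx{mal}},\Cat)\to\Mod(\TT_{\tx{lex}},\Cat)$ and $\Mod(\TT_{\tx{rmal}},\Cat)\to\Mod(\TT_{\tx{reg}},\Cat)$ that are accessible and preserve flexible limits. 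Next I would verify that, under the isomorphisms above, these correspond to the inclusions $\Mal\to\Lex$ and $\RMal\to\Reg$. This holds because forgetting the interpretation of $R_{\tx{irel}}$ leaves exactly the underlying lex (respectively regular) structure. Finally, \cref{theo:left-biadj-exist} applies, since the domain has flexible limits preserved by an accessible 2-functor between accessible 2-categories. This yields the left biadjoints.

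The main obstacle is the stability of retract equivalences under flexible limits in Step 1, specifically for inserters and equifiers. Everything else is a direct invocation of earlier results. If the direct check proves awkward, I would instead characterise retract equivalences as the maps with the right lifting property against $\emptyset\to[0]$ and the boundary inclusions $\{0,1\}\to[1]$ and $[1]+_{\{0,1\}}[1]\to[1]$, and argue that such lifting properties are inherited by flexible limits.
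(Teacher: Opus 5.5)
Your proposal is correct and follows the paper's route. The paper obtains the theorem as a direct application of \cref{cor:accessibility-forgetful} and \cref{theo:left-biadj-exist}, using the identifications $\Mal\cong\Mod(\TT_{\tx{mal}},\Cat)$ and $\RMal\cong\Mod(\TT_{\tx{rmal}},\Cat)$, with accessibility and flexible limits coming from \cref{theo:accessibility-of-models} exactly as in \cref{thm:lex-rex-rlex}. The only difference is that you check directly that retract equivalences are stable under products, inserters and equifiers, whereas the paper just asserts it; your check is sound, lifting objects and morphisms via surjectivity and fullness and lifting the equations via faithfulness of the codomain component.
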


Let us write $\Ex$ for the 2-category of (Barr) exact categories, exact functors, and natural transformations. Extending $\TT_{\tx{reg}}$ and $\TT_{\tx{irel}}$, we can define an isoregular theory $\TT_{\tx{ex}}$ such that
\[
 \Mod(\TT_{\tx{ex}},\Cat)\cong\Ex \mathrlap{.} 
 \]
 To do this, start from the language $\LL_{\tx{reg}}\cup \LL_{\tx{irel}}$ and add to $\TT_{\tx{reg}}\cup \TT_{\tx{irel}}$ the axiom
$$ (x\co S^{\bb p})\ A_{\tx{refl}}(x), A_{\tx{sym}}(x), A_{\tx{tran}}(x)\vdash (\exists y\co S^{\bb p*0})\ A_{\tx{kp}}(y)\wedge (\pr_f(y)=\pr_f(x)) \wedge (\pr_g(y)=\pr_g(x) $$
saying that every equivalence relation arises as a (unique up to isomorphism) kernel pair.

The next notion from categorical algebra that we examine is protomodularity. Recall that a category with pullbacks is called {\em protomodular} if for any commutative diagram
	\begin{center}
	\begin{tikzpicture}[baseline=(current  bounding  box.south), scale=1.7]
		
		\node (a) at (0,0.8) {$\bullet$};
		\node (b) at (0.9,0.8) {$\bullet$};
		\node (c) at (1.8,0.8) {$\bullet$};
		
		\node (d) at (0,0) {$\bullet$};
		\node (e) at (0.9,0) {$\bullet$};
		\node (f) at (1.8,0) {$\bullet$};
		
		\node (g) at (0.4,0.4) {$(a)$};
		\node (h) at (1.35,0.4) {$(b)$};

		\path[font=\scriptsize]
		
		(a) edge [->] node [above] {} (b)
		(b) edge [->] node [left] {} (c)
		(d) edge [->] node [right] {} (e)
		(e) edge [->] node [below] {} (f)
		
		(a) edge [->] node [above] {} (d)
		(b) edge [->] node [right] {$t$} (e)
		(c) edge [->] node [above] {} (f)
		
		([xshift=-2pt]e.north) edge [bend left=25, ->] node [left] {$s$} ([xshift=-2pt]b.south);
	\end{tikzpicture}	
\end{center}
where $t\circ s=1$, if (a) and (a+b) are pullback squares, then so is (b) (see~\cite{janelidze2002semi}). We can express this property with an isoregular theory. We denote by $\Ptm$ the 2-category of protomodular categories, pullback-preserving functors, and natural transformations between them.

We start from the theory $\TT_{\tx{pb}}$ of categories with pullbacks. Denote by $\bb{q}$ the category generated by the diagram above, and by $\iota_a,\iota_b,\iota_{a+b}\co [1]\times [1]\to \bb q$ the inclusions of the three squares. Then it is enough to add to $\TT_{\tx{pb}}$ the axiom
$$ (x\co S^{\bb q})\quad R_{\tx{pb}}(x\cdot \iota_a), R_{\tx{pb}}(x\cdot \iota_{a+b})\vdash R_{\tx{pb}}(x\cdot \iota_b). $$
Call $\TT_{\tx{ptm}}$ the new theory we obtain; it is then straightforward to show that

\begin{lemma}
	The theory $\TT_{\tx{ptm}}$  is isoregular and 
	$ \Mod(\TT_{\tx{ptm}},\Cat)\cong\Ptm$.
\end{lemma}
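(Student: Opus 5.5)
The proof splits into two parts: first that $\TT_{\tx{ptm}}$ is isoregular, then that its models in $\Cat$ are exactly the protomodular categories, identified through \cref{prop:1-dimensional-is-enough}.

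For isoregularity, $\TT_{\tx{ptm}}$ extends the theory $\TT_{\tx{pb}}\defeq\TT_{\bb c}$, with $\bb c$ the free cospan, by a single axiom. By \cref{lemma:b-lim}, $\TT_{\tx{pb}}$ is isoregular. The new axiom contains no existential quantifier. Its hypotheses and conclusion are relational atoms $R_{\tx{pb}}(x\cdot\iota)$, each obtained by restricting a term of sort $S^{\bb q}$ along a functor $[1]\times[1]\to\bb q$ and using $0*\bb c\cong[1]\times[1]$. All of these are well-formed propositions in $\TT_{\tx{pb}}$, so the inductive definition of isoregular theories applies directly.

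For the models, I would embed $\Ptm$ as a full sub-2-category of $\Str(\LL_{\tx{pb}},\Cat)$, just as in \cref{lemma:b-lim}. A protomodular category $\C$ is sent to $\br S=\C$, with $\br{R_{\tx{pb}}}$ the full subcategory of $\C^{[1]\times[1]}$ on the pullback squares. This embedding is full, since a functor preserves pullbacks exactly when it maps pullback squares into $\br{R_{\tx{pb}}}$, and 2-cells are arbitrary natural transformations. I would then check that the image is closed under powers by $[1]$. This amounts to two facts:
\begin{itemize}
\item $\C^{[1]}$ is again protomodular when $\C$ is, because pullbacks in $\C^{[1]}$ are computed pointwise and the protomodularity condition can therefore be checked componentwise;
\item the interpretation of $R_{\tx{pb}}$ in the power structure is the full subcategory of pointwise pullback squares.
\end{itemize}
With closure under powers in hand, \cref{prop:1-dimensional-is-enough} applies. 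It reduces the claim to showing that an $\LL_{\tx{pb}}$-structure satisfies the axioms of $\TT_{\tx{ptm}}$ on objects exactly when it lies in $\Ptm$. The axioms of $\TT_{\tx{pb}}$ hold on objects precisely when $\br S$ has pullbacks and $\br{R_{\tx{pb}}}$ is the full subcategory of all pullback squares, by the proof of \cref{lemma:b-lim}. The new axiom, read on objects, says that for every diagram of shape $\bb q$ in $\br S$, which includes the section $s$ with $t s=1$, if squares (a) and (a+b) are pullbacks then so is (b). This is exactly the definition of protomodularity.

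The one point needing care is the choice of the category $\bb q$. It must be generated by the two squares, with the arrow $s$ added and the relation $t\circ s=1$ imposed. Defined this way, objects of $\br S^{\bb q}$ are exactly the diagrams quantified over in the definition of protomodularity, and the composite square (a+b) is obtained by restriction along $\iota_{a+b}$. Once $\bb q$ is fixed, the rest is the same routine verification as in \cref{lemma:b-lim} and \cref{thm:reg-isoregular-theory}.
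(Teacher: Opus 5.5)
Your proposal is correct and follows the argument the paper intends; the paper gives no written proof and calls the lemma straightforward, but the template is that of \cref{lemma:b-lim}. Isoregularity is immediate because the one new axiom over $\TT_{\tx{pb}}$ adds no existential quantifier. The models are identified through \cref{prop:1-dimensional-is-enough}, after checking that protomodular categories are closed under powers by $[1]$, which holds because pullbacks in $\C^{[1]}$ are computed pointwise.
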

 
As a consequence, we can establish the existence of free protomodular categories over categories of pullbacks. Below, we write $\Pb$ for the 2-category of small categories with pullbacks, pullback-preserving functors, and natural transformations.

\begin{theo} \label{thm:ptm}
	The 2-category $\Ptm$ is accessible with flexible limits. Moreover, the forgetful 2-functor $\Ptm\to \Pb$ is accessible and preserves flexible limits, and therefore has a left biadjoint. 
\end{theo}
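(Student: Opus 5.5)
The plan is to follow the pattern used for \cref{thm:mal}: deduce the statement from the preceding lemma identifying $\Ptm$ with $\Mod(\TT_{\tx{ptm}},\Cat)$, together with \cref{theo:accessibility-of-models}, \cref{cor:accessibility-forgetful} and \cref{theo:left-biadj-exist}.

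First we need a theory for $\Pb$. I would take $\TT_{\tx{pb}}\defeq\TT_{\bb c}$, where $\bb c$ is the free cospan. By \cref{lemma:b-lim}, applied with $\bb b=\bb c$, this theory is isoregular and $\Mod(\TT_{\tx{pb}},\Cat)\cong\Pb$. By construction $\TT_{\tx{ptm}}$ extends $\TT_{\tx{pb}}$ over the same language $\LL_{\bb c}$. The only new axiom is an entailment between relation formulas, with no new existential quantifiers, so it is admissible in an isoregular theory. Under these isomorphisms, the forgetful 2-functor $U\colon \Mod(\TT_{\tx{ptm}},\Cat)\to\Mod(\TT_{\tx{pb}},\Cat)$ of \cref{cor:accessibility-forgetful} is identified with $\Ptm\to\Pb$. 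This requires checking that both isomorphisms send a category $\C$ to the structure $(\C,\text{pullback squares})$, which holds by their construction.

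Next we check the hypotheses on $\K=\Cat$. It is isoregular, it is accessible, and it has flexible limits. Its fully faithful regular epimorphisms are exactly the retract equivalences, i.e.\ the functors that are surjective on objects and fully faithful. These are stable under flexible limits. For products, inserters and equifiers this can be seen directly: surjectivity on objects of a limit map is obtained by lifting along the fully faithful surjective components and using full faithfulness to lift the required morphisms and equations. This was already used in \cref{thm:lex-rex-rlex}, so I would simply cite it there.

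\cref{theo:accessibility-of-models} then gives that $\Ptm\cong\Mod(\TT_{\tx{ptm}},\Cat)$ is accessible with flexible limits. \cref{cor:accessibility-forgetful} gives that $U$ is accessible and preserves flexible limits. Since $\Pb$ is accessible by the same argument, \cref{theo:left-biadj-exist} gives a left biadjoint.

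The main obstacle is not the formal deduction but the preceding identification $\Mod(\TT_{\tx{ptm}},\Cat)\cong\Ptm$. It follows from \cref{prop:1-dimensional-is-enough}, because $\Ptm$ is closed in $\Str$ under powers by $[1]$: pointwise pullbacks in $\C^{[1]}$ and pointwise checking of the protomodularity condition give this closure. Apart from that, the proof is routine.
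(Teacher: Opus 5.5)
Your proposal is correct and takes the same approach as the paper. The paper gives no separate proof of this theorem; it obtains it directly from the preceding lemma $\Mod(\TT_{\tx{ptm}},\Cat)\cong\Ptm$, combined with \cref{theo:accessibility-of-models}, \cref{cor:accessibility-forgetful} and \cref{theo:left-biadj-exist}, using $\TT_{\tx{pb}}=\TT_{\bb c}$ for $\Pb$ and the stability of retract equivalences in $\Cat$ under flexible limits, exactly as in your argument.
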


\begin{rmk}
By expanding $\TT_{\tx{ptm}}$ with the axioms in $\TT_{\tx{lex}}$ (respectively, $\TT_{\tx{reg}}$ or $\TT_{\tx{ex}}$) we also obtain that the free lex (respectively, regular or exact) protomodular category on a lex (respectively, regular or exact) category exists.
\end{rmk} 

Recall that a category is called {\em semi-abelian} if it is exact, protomodular, has finite coproducts, and has a zero object (see again see~\cite{janelidze2002semi}). Denote by $\SmAb$
the 2-category of semi-abelian categories, exact and finite coproduct-preserving functors, and natural transformations between them.
To construct a theory for semi-abelian category then it is enough to define $\TT_{\tx{smab}}$ as the union of:\begin{itemize}
	\item the theory $\TT_{\tx{ex}}$ of exact categories (which contains in particular a relation $R_\emptyset\co S^{[0]}$ collecting all terminal objects),
	\item the theory $\TT_{\tx{ptm}}$ of protomodular categories,
	\item the theory $\TT_{\tx{fc}}$ of categories with finite coproducts (which contains a relation $R'_\emptyset\co S^{[0]}$ collecting all the initial objects);
	\item the single axiom
	$$ (x\co S^{[0]})\quad R_\emptyset(x)\vdash R'_\emptyset(x) $$
	expressing the fact that every terminal object is also initial.
\end{itemize}   

Given this it is straightforward to show
 
\begin{lemma}
	The theory $\TT_{\tx{smab}}$  is isoregular and 
	$\Mod(\TT_{\tx{ptm}},\Cat)\cong\SmAb$.
\end{lemma}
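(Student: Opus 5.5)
The plan has two parts: show that $\TT_{\tx{smab}}$ is isoregular, and then identify its 2-category of models with $\SmAb$. (The statement writes $\TT_{\tx{ptm}}$ in the isomorphism; the intended theory is clearly $\TT_{\tx{smab}}$.)

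\textbf{Isoregularity.} An isoregular theory is built inductively, and a union of isoregular theories over a union of languages is again isoregular, since derivability of the formation judgements is preserved by enlarging the theory. So it is enough to check the component theories. $\TT_{\tx{ex}}$ is isoregular: the new axiom has an existential whose uniqueness follows from axioms (i)--(iv) of $\TT_{\tx{reg}}$ together with \cref{lemma:b-lim}, arguing as for axiom (v) of $\TT_{\bb b}$. $\TT_{\tx{ptm}}$ only adds an entailment between atomic formulas to $\TT_{\tx{pb}}$, and $\TT_{\tx{pb}}$ is isoregular by \cref{lemma:b-lim}. $\TT_{\tx{fc}}$ is isoregular by the dual of \cref{lemma:b-lim} applied to the discrete shapes. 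The final axiom $R_\emptyset(x)\vdash R'_\emptyset(x)$ involves only atomic propositions, so it is trivially well-formed.

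\textbf{Models.} Identify $\SmAb$ with a full sub-2-category $\H$ of $\Str(\LL_{\tx{smab}},\Cat)$. A semi-abelian category $\C$ is sent to the structure with $\br S=\C$, in which each relation symbol is interpreted as the full subcategory of the corresponding universal cones:
\begin{itemize}
\item finite limit cones,
\item coequalisers of kernel pairs,
\item internal relations,
\item finite colimit cones, including the initial objects $R'_\emptyset$.
\end{itemize}
Next come fullness and closure.
\begin{itemize}
\item \emph{Fullness on 1-cells.} A functor preserves these universal cones if and only if it preserves finite limits, coequalisers of kernel pairs and finite coproducts. Such a functor is exactly an exact, finite-coproduct-preserving functor, i.e.\ a 1-cell of $\SmAb$. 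In particular, it automatically preserves pullbacks and the zero object.
\item \emph{Fullness on 2-cells.} The 2-cells on both sides are natural transformations, so the embedding is full on 2-cells as well.
\item \emph{Closure under powers by $[1]$.} $\H$ is closed under powers by $[1]$, because semi-abelian categories are closed under $(-)^{[1]}$, with all the relevant limits and colimits computed pointwise.
\end{itemize}
We may therefore apply \cref{prop:1-dimensional-is-enough}. We then check on objects that a structure satisfies the axioms exactly when it lies in $\H$. By the earlier lemmas for $\TT_{\tx{ex}}$, $\TT_{\tx{ptm}}$ and $\TT_{\tx{fc}}$, the axioms force $\br S$ to be exact, protomodular and to have finite coproducts, with the relations interpreted as the full subcategories of universal cones. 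The last axiom then says that the terminal object, which exists since $\br S$ has finite limits, is initial, so $\br S$ has a zero object. The converse direction is immediate.

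\textbf{Main obstacle.} The delicate step is confirming that the morphisms of models match the chosen 1-cells of $\SmAb$. In particular, pullback preservation required by $\TT_{\tx{ptm}}$ must follow from finite-limit preservation, and no extra preservation condition may be hidden in the union. Both points follow because the languages share the symbols $R_{\tx{pb}}$ and $R_\emptyset$.
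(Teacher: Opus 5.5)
Your proposal is correct and is the argument the paper has in mind when it calls this lemma straightforward: $\TT_{\tx{smab}}$ is isoregular because it is a union of previously established isoregular theories plus one axiom between atomic formulas, and its models are identified via \cref{prop:1-dimensional-is-enough}, after checking fullness and closure of semi-abelian categories under powers by $[1]$. You are also right that $\TT_{\tx{ptm}}$ in the statement should read $\TT_{\tx{smab}}$.

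One justification needs tightening, namely your reason for the isoregularity of $\TT_{\tx{ex}}$. As the paper writes that axiom, it quantifies over an arbitrary $y\co S^{\bb p*0}$ with $A_{\tx{kp}}(y)$, that is, over any map $q$ having the given relation as kernel pair. Such a $q$ is not unique up to isomorphism, since you can postcompose it with any monomorphism, and $R_{\tx{ckp}}$ does not appear in this existential. So axioms (i)--(iv) of $\TT_{\tx{reg}}$ give the required uniqueness only once the conjunct $R_{\tx{ckp}}(y)$ is added, i.e.\ once the axiom says that every equivalence relation is the kernel pair of its chosen coequaliser. This is the formulation your appeal to axiom (v) of $\TT_{\bb b}$ implicitly uses.
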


As a consequence also free semi-abelian categories exist; below $\Fc$ is the 2-category of small categories with finite coproducts, finite-coproduct preserving functors, and natural transformations.

\begin{theo} \label{thm:smab}
	The 2-category $\SmAb$ is accessible with flexible limits. Moreover, the forgetful functors $\SmAb \to \Ptm$, $\SmAb \to \Fc$, $\SmAb \to \Ex$
	are accessible and preserve flexible limits, and therefore they all have left biadjoints. 
\end{theo}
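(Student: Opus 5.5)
The plan follows the pattern of \cref{thm:mal} and \cref{thm:ptm}. First, by the preceding lemma, $\SmAb\simeq\Mod(\TT_{\tx{smab}},\Cat)$. The isomorphism should be with $\TT_{\tx{smab}}$; the statement's $\TT_{\tx{ptm}}$ is evidently a typo.

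Next, I apply \cref{theo:accessibility-of-models} with $\K=\Cat$. Its hypotheses hold: $\Cat$ is isoregular, it is accessible (indeed locally finitely presentable), and it has flexible limits. I also need fully faithful regular epimorphisms in $\Cat$, which are exactly the retract equivalences, to be stable under flexible limits. For products this is clear. For inserters and equifiers, I would check directly that surjective-on-objects, fully faithful functors are preserved: lift objects and morphisms componentwise using full faithfulness. This is the same fact used in \cref{thm:lex-rex-rlex}. It follows that $\SmAb$ is accessible with flexible limits.

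For the forgetful 2-functors, I realise each as a forgetful 2-functor $U\colon\Mod(\TT',\Cat)\to\Mod(\TT,\Cat)$ with $\LL\subseteq\LL'$ and $\TT\subseteq\TT'$, taking $\TT'=\TT_{\tx{smab}}$ and $\TT\in\{\TT_{\tx{ptm}},\TT_{\tx{fc}},\TT_{\tx{ex}}\}$. Each of these is a subtheory by construction, since $\TT_{\tx{smab}}$ is a union containing them. Under the identifications $\Mod(\TT_{\tx{ptm}},\Cat)\cong\Ptm$ and $\Mod(\TT_{\tx{ex}},\Cat)\cong\Ex$, together with the analogous identification $\Mod(\TT_{\tx{fc}},\Cat)\cong\Fc$ obtained from the colimit version of \cref{lemma:b-lim}, the reduct 2-functor agrees with the evident forgetful 2-functor. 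The point to check is that morphisms match: morphisms of $\SmAb$, namely exact finite-coproduct-preserving functors, preserve pullbacks, so they land in $\Ptm$.

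\cref{cor:accessibility-forgetful} then shows that these forgetful 2-functors are accessible and preserve flexible limits. The codomains $\Ptm$, $\Fc$ and $\Ex$ are themselves accessible by \cref{theo:accessibility-of-models}, so \cref{theo:left-biadj-exist} yields left biadjoints.

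The main obstacle is the stability of retract equivalences under inserters and equifiers in $\Cat$. This is a direct computation, but it is the only non-formal input. The remaining care lies in matching the 2-categories of models with $\Fc$, $\Ex$ and $\Ptm$ on the nose, including their morphisms.
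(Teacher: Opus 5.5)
Your proposal is correct and follows essentially the same route as the paper. You identify $\SmAb$ with $\Mod(\TT_{\tx{smab}},\Cat)$ (you are right that the lemma's $\TT_{\tx{ptm}}$ is a typo), apply \cref{theo:accessibility-of-models} with $\K=\Cat$ using stability of retract equivalences under flexible limits, and then combine \cref{cor:accessibility-forgetful} for the reducts with \cref{theo:left-biadj-exist}; your stability check under products, inserters and equifiers is sound, and the splittings of idempotents you skip cause no trouble, since surjective-on-objects fully faithful functors are closed under retracts.
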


%\begin{center}
%		\begin{tikzpicture}[baseline=(current  bounding  box.south), scale=2]
%			
%			\node (a0) at (0,0.8) {$\SmAb$};
%			\node (b0) at (1,0.8) {$\Ex$};
%			\node (c0) at (0,0) {$\Ptm$};
%			\node (d0) at (0.85,0.12) {$\Fc$};
%			
%			\path[font=\scriptsize]
%			
%			(a0) edge [->] node [above] {} (b0)
%			(a0) edge [->] node [left] {} (c0)
%			(a0) edge [->] node [below] {} (d0);
%		\end{tikzpicture}	
%	\end{center}

\begin{rmk}
	Many more exactness conditions (such as extensive categories and pretoposes, as well as homological categories) can be expressed in this framework. We leave it to the reader to fill in the details of the corresponding isoregular theories.
\end{rmk}

\subsection{Fibrations} \label{sec:fibrations} \label{sec:isofib} We now consider isofibrations and Grothendieck fibrations. Recall that a functor $p\colon \E\to \B$ is an isofibration if and only if for any $x\in\E$ and any isomorphism $u\colon px\to y$ in $\B$ there exists an isomorphism $v\colon x\to x'$ in $\E$ with $pv=u$. We denote by $\IsoFib$ the full subcategory of $\Cat^{[1]}$ spanned by the isofibrations.

Given this, it is straightforward to write down the language and theory for isofibrations. We let $\LL_{\tx{isofib}}$ have two basic sorts $E,B$ and one function symbol $p\colon E\to B$; then $\TT_{\tx{isofib}}$ consists of the single axiom
$$ (x\co E, u\co B^{\bb I})\ \dom(u_0)=p(x) \vdash (\exists v\co E^{\bb I})\ \dom(v_0)=x \wedge p^{[1]}(v_0)=u_0. $$
As before, we denoted by $\bb I$ the category obtained by adding an inverse to the morphism in $[1]$; given $u\co S^{\bb I}$ we denote by $u_0\co S^{[1]}$ the restriction of $u$ along the inclusion. Similarly, we denote by $u_0^{-1}\co S^{[1]} $ the restriction along the inclusion $[1]\to\bb I$ picking the added inverse.

\begin{lemma} \label{thm:isofib-isoreg}
The theory $\TT_{\tx{isofib}}$  is isoregular and 
	$ \Mod(\TT_{\tx{isofib}},\Cat)\cong \IsoFib$.
	\end{lemma}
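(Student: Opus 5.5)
The proof has two parts, following the pattern of \cref{lemma:b-lim}. First I show that the single axiom of $\TT_{\tx{isofib}}$ is well-formed. Then I identify the 2-category of models with $\IsoFib$ via \cref{prop:1-dimensional-is-enough}.

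For isoregularity, the premise $\dom(u_0)=p(x)$ is an equation between terms, hence a proposition. The only issue is the existential quantifier over $v\co E^{\bb I}$ applied to
\[
B(x,u,v)\defeq \dom(v_0)=x \wedge p^{[1]}(v_0)=u_0 .
\]
I would use \cref{equ:exists-formation-eqfib}, which requires deriving $\mathcal{J}_{\mathsf{faithful}}(B)$ and $\mathcal{J}_{\mathsf{full\text{-}id}}(B)$.

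\textbf{Faithfulness.} Suppose $w,w'\co E^{[1]\times\bb I}$ lie over the same data, have equal domain and codomain, and satisfy $B^{[1]}$. The data fixes $p^{[1]}$ of the relevant restrictions, and the constraint $\dom(v_0)=x$ raised to the power fixes $w$ along the $\dom$-edge. The component of $w$ along $\{1\}\times\bb I$ is then determined by the $\{0\}\times\bb I$-component together with the edge $[1]\times\{0\}$. Since that edge composes with an isomorphism, it is determined by the endpoints. The plan is to show that the restrictions of $w$ along a jointly epimorphic family of subcategories of $[1]\times\bb I$ coincide with those of $w'$, and then conclude $w=w'$ by \cref{equ:epimorphic-family}. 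The point underlying this is that in $[1]\times\bb I$ every morphism is generated by the two copies of $\bb I$ together with the morphism $(0,0)\to(1,0)$.

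\textbf{Fullness on identities.} Given $v,v'\co E^{\bb I}$ with the same domain $x$ and the same image $u$, I would form $v'\circ v^{-1}$ and produce a square in $E^{[1]\times\bb I}$ whose $[1]$-direction is identity on $x$ and $v'v^{-1}$ on the codomain, and whose image under $p$ is $\id_u$. Syntactically this is done by gluing finite categories along pushouts and using \cref{equ:union} and \cref{equ:coeq}, as in the proof of \cref{lemma:b-lim}. I expect this step to be the main obstacle, because the finite-category gymnastics has to be expressed through the coequaliser and union rules. The key input is that inverses in $\bb I$ are forced, so the relevant composites exist as restrictions of a single diagram on a suitable finite category, for instance $\bb I$ glued with $\bb I$ along the domain object.

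For the models, an $\LL_{\tx{isofib}}$-structure in $\Cat$ is just a functor $\br{p}\co\br{E}\to\br{B}$. Morphisms and 2-cells of structures are exactly the morphisms and 2-cells in $\Cat^{[1]}$, so $\Str(\LL_{\tx{isofib}},\Cat)\cong\Cat^{[1]}$. Under this isomorphism $\IsoFib$ is a full sub-2-category. It is closed under powers by $[1]$, since $p^{[1]}$ is an isofibration whenever $p$ is: isomorphisms in $\br{B}^{[1]}$ are componentwise, and one lifts each component and transports the square. By \cref{prop:1-dimensional-is-enough} it therefore suffices to check the axiom on objects. On objects, the axiom says exactly that for every $x\in\br{E}$ and every isomorphism $u_0$ out of $px$ there is an isomorphism $v_0$ out of $x$ with $pv_0=u_0$; here an object of $\br{B}^{\bb I}$ is the same as an isomorphism together with its inverse. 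This is precisely the isofibration condition, giving $\Mod(\TT_{\tx{isofib}},\Cat)\cong\IsoFib$.
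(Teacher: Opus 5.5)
Your proposal is correct and follows essentially the paper's proof. Faithfulness: you check that $w,w'$ agree on a jointly epimorphic family (your two copies of $\bb I$ plus the edge $[1]\times\{0\}$, which the paper phrases as the upper, left and right edges) and apply \cref{equ:epimorphic-family}. Fullness on identities: you use \cref{equ:union} to build the square with top edge $\id_x$ and bottom edge $v'_0\circ v_0^{-1}$. Models: you identify them via \cref{prop:1-dimensional-is-enough}, using closure of isofibrations under powers by $[1]$.

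One small slip: the $\{1\}\times\bb I$-component of $w$ is not determined by the $\{0\}\times\bb I$-component and the edge $[1]\times\{0\}$. You do not need this, since $\cod(w)=\cod(w')$ is one of the hypotheses of $\mathcal{J}_{\mathsf{faithful}}$.
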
 
	
	\begin{proof}
	We need to show that $\J_\mathsf{ffk}(\dom(v_0)=x \wedge p^{[1]}(v_0)=u_0)$ is derivable to guarantee that the existential quantification is well-formed. For the faithfulness part, we assume to have
	\begin{align*}
		\dom^{[1]}(\tilde v_0)=\tilde x,\ &(p^{[1]})^{[1]}(\tilde v_0)=\tilde u_0,\
		\dom^{[1]}(\tilde v_0')=\tilde x,\ (p^{[1]})^{[1]}(\tilde v_0')=\tilde u_0',\\ 
		&\dom(\tilde v)=\dom(\tilde v'),\ \cod(\tilde v)=\cod(\tilde v') 
	\end{align*}
	in context $(\tilde x\co E^{[1]}, \tilde u\co B^{[1]\times \bb I},\tilde v,\tilde v'\co E^{[1]\times \bb I} )$. Now note that we can describe the components of a $\tilde v\co E^{[1]\times \bb I}$ as below
	\begin{center}
		
		\begin{tikzpicture}[baseline=(current  bounding  box.south), scale=2, hookarrow/.style={{Hooks[right]}->}]
			
			\node (a) at (1,0) {$\bullet$};
			\node (b) at (1,-0.7) {$\bullet$};
			
			\node (c) at (1.9,0) {$\bullet$};
			\node (e) at (1.9,-0.7) {$\bullet$};
			
			\path[font=\scriptsize]

			(a) edge [->] node [left] {$\dom (\tilde v)$} (b)
			(a) edge [->] node [right] {$\cong$} (b)
			(a) edge [->] node [above] {$\dom^{[1]}(\tilde v_0)$} (c)
			(c) edge [->] node [right] {$\cod (\tilde v)$} (e)
			(c) edge [->] node [left] {$\cong$} (e)
			(b) edge [->] node [below] {$\cod^{[1]}(\tilde v_0)$} (e);
		\end{tikzpicture}
	\end{center}
	Since the inclusions $\iota_u,\iota_l,\iota_r\co [1]\to [1]\times \bb I$ (of the upper, left, and right arrows) are jointly epic, it follows from the hypothesis and Rule~\ref{equ:epimorphic-family} that $\tilde v=\tilde v'$.
	
	It remains to prove fullness on identities. The hypotheses are
	$$ (x\co E, u\co B^{\bb I}, v,v'\co E^{\bb I}  )\quad \dom(v_0)=x,\ p^{[1]}(v_0)=u_0,\ \dom(v'_0)=x,\ p^{[1]}(v'_0)=u_0 $$
	and we need to derive that 
	$$ (\exists \tilde v\co E^{[1]\times \bb I})\ \dom^{[1]}(\tilde v_0)=\id_x\wedge  (p^{[1]})^{[1]}(\tilde v_0)=\id_u\wedge \dom(\tilde v)=v\wedge \cod(\tilde v)=v'. $$
	First notice that since $\cod (v_0^{-1})=\dom (v_0)=\dom(v_0')$ we can derive 
	$$ (\exists z\co E^{[2]})\ \tx{fst}(z)=v_0^{-1} \wedge \tx{snd}(z)=v_0';  $$
	then, using Rule~\ref{equ:union} a few times, we deduce
	$$ (\exists \tilde v\co E^{[1]\times \bb I})\  \dom(\tilde v)= v\wedge \cod (\tilde v)=v'\wedge \dom^{[1]}(\tilde v_0)= \id_x \wedge \cod^{[1]}(\tilde v_0)=\comp(z).$$
	It is easy to see that such $\tilde v$ satisfies the formula we want to derive, so that $\TT_{\tx{isofib}}$ is isoregular.
	
	Once more, since isofibrations are stable under powers by $[1]$, it is easy to observe that 
	$$ \Mod(\TT_{\tx{isofib}},\Cat)\cong\IsoFib $$
	using Proposition~\ref{prop:1-dimensional-is-enough}.
\end{proof}

Let us write $\GFib$ for the locally full subcategory of $\Cat^{[1]}$ spanned by the Grothendieck fibrations, and those commutative squares whose top functor preserves the cartesian arrows. 
The language $\LL_{\tx{Gfib}}$ for the theory of Grothendieck fibrations consists of two basic sorts $E,B$, a function symbol $p\co E\to B$, and a relation symbol $\tx{Cart}\rightarrowtail E^{[1]}$ that will collect all cartesian arrows in $E$. Before defining the theory, to set notation consider the finite categories below:
\begin{center}
	
	\begin{tikzpicture}[baseline=(current  bounding  box.south), scale=2, hookarrow/.style={{Hooks[right]}->}]
		
		\node (a') at (-1,0) {$\bullet$};
		\node (b') at (-1,-0.7) {$\bullet$};
		\node (e') at (-0.2,-0.7) {$\bullet$};
		\node (f') at (-1.25,-0.35) {$\bb e=$};

		\node (a) at (1,0) {$\bullet$};
		\node (b) at (1,-0.7) {$\bullet$};
		\node (e) at (1.8,-0.7) {$\bullet$};
		\node (f) at (0.7,-0.35) {$[2]=$};
		
		\node (a'') at (3,0) {$\bullet$};
		\node (b'') at (3,-0.7) {$\bullet$};
		\node (e'') at (3.8,-0.7) {$\bullet$};
		\node (f'') at (2.7,-0.35) {$\bb f=$};
		
		\path[font=\scriptsize]
		
		(a') edge [->] node [right] {comp} (e')
		(b') edge [->] node [below] {snd} (e')
		
		(a) edge [->] node [right] {fst} (b)
		(a) edge [->] node [right] {comp} (e)
		(b) edge [->] node [below] {snd} (e)
		
		(a'') edge [->] node [right] {$\cong$} (b'')
		(a'') edge [->] node [right] {comp} (e'')
		(b'') edge [->] node [below] {snd} (e'');
	\end{tikzpicture}
\end{center}
and denote by $\iota_{\bb e}\co \bb e \to [2]$ the inclusion. 

Now we define $\TT_{\tx{Gfib}}$ to consist of the following axioms:
\begin{enumerate}[itemsep=0.1cm]
	\item \hspace{16pt} $(z,w\co E^{[2]})\quad \tx{Cart}(\tx{snd}(z)),\ z\cdot \iota_{\bb e}=w\cdot \iota_{\bb e},\ p^{[1]}(\tx{fst}(z))=p^{[1]}(\tx{fst}(w))\vdash z=w$; 
	\item $(x\co E^{\bb e}, z\co B^{[2]})\quad \tx{Cart}(\tx{snd}(x)),\ p^{[1]}(\tx{snd}(x))=\tx{snd}(z),\ p^{[1]}(\tx{comp}(x))=\tx{comp}(z)\quad$ \\
	\hspace*{75pt} $\vdash  (\exists w\co E^{[2]})\quad w\cdot\iota_{\bb e}= x\wedge p^{[1]}(\tx{fst}(w))=\tx{fst}(z)\wedge \tx{Cart}(\tx{snd}(w))$;
	\item \hspace{31pt} $ (y\co E^{\bb f})\quad \tx{Cart}(\tx{snd}(y)) \vdash \tx{Cart}(\tx{comp}(y)) $;
	\item \hspace{12pt} $ (u\co E^{[1]\times [1]})\quad \tx{Cart}(\dom(u)), \tx{Cart}(\cod(u))\vdash \tx{Cart}^{[1]}(u) $;
	\item \hspace{3pt} $ (y\co E, u\co B^{[1]})\quad \cod(u)=p(y) \vdash (\exists v\co E^{[1]})\ \tx{Cart}(v)\wedge \cod(v)=y \wedge p^{[1]}(v)=u$.
\end{enumerate}

\begin{lemma} \label{thm:gfib-isoreg}
The theory $\TT_{\tx{Gfib}}$  is isoregular and 
	$ \Mod(\TT_{\tx{Gfib}},\Cat)\cong\GFib. $
	\end{lemma}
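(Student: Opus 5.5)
The plan has two parts, following the proofs of \cref{lemma:b-lim} and \cref{thm:isofib-isoreg}. First, show that $\TT_{\tx{Gfib}}$ is isoregular, i.e.\ that the existential quantifiers in axioms~(ii) and~(v) can be formed. Second, identify the models with $\GFib$ via \cref{prop:1-dimensional-is-enough}.

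\emph{Axiom (ii).} I would check $\J_{\mathsf{mono}}$ for the formula
\[
B(x,z,w)\defeq w\cdot\iota_{\bb e}= x\wedge p^{[1]}(\tx{fst}(w))=\tx{fst}(z)\wedge \tx{Cart}(\tx{snd}(w)).
\]
Given $w,w'$ with $B(x,z,w)$ and $B(x,z,w')$, we get $w\cdot\iota_{\bb e}=w'\cdot\iota_{\bb e}$ and $p^{[1]}(\tx{fst}(w))=p^{[1]}(\tx{fst}(w'))$. Axiom~(i) then yields $w=w'$, so the quantifier is formed by \cref{equ:exists-formation-mono}.

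\emph{Axiom (v).} This is the main step and mirrors \cref{thm:gfib-ffk}. Let
\[
C(y,u,v)\defeq \tx{Cart}(v)\wedge \cod(v)=y\wedge p^{[1]}(v)=u.
\]
For $\J_{\mathsf{faithful}}(C)$, work in context $\tilde y\co E^{[1]}$, $\tilde u\co B^{[1]\times[1]}$, $\tilde v,\tilde v'\co E^{[1]\times[1]}$. Assume $C^{[1]}$ holds for both $\tilde v$ and $\tilde v'$, and that they share $\dom$ and $\cod$. Then the cartesian edges $\dom(\tilde v)$ and $\cod(\tilde v)$ agree with those of $\tilde v'$, as does the right edge $\tilde y$. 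The remaining edge lives over the fixed edge of $\tilde u$ and is determined by the cartesian property: form the two triangles $E^{[2]}$ by composing with the cartesian codomain edge, then apply axiom~(i). With all edges equal, \cref{equ:epimorphic-family} applied to the jointly epic edge inclusions $[1]\to[1]\times[1]$ gives $\tilde v=\tilde v'$.

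For $\J_{\mathsf{full\text{-}id}}(C)$, take $v,v'$ cartesian over $u$ with the same codomain $y$. Form $x\co E^{\bb e}$ with $\tx{snd}(x)=v'$ and $\tx{comp}(x)=v$; this needs \cref{equ:union}/\cref{equ:coeq} to glue. Take $z\co B^{[2]}$ with $\tx{fst}(z)=\id_{\dom u}$ and $\tx{snd}(z)=u$. Axiom~(ii) then produces $w$ whose first component lies over an identity. Assemble $w$ and $\id$ into a square $E^{[1]\times[1]}$ via \cref{thm:easyprop}(i). Axiom~(iv) gives $\tx{Cart}^{[1]}$ of this square, and it lies over $\id_u$ as required. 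I expect this construction of the square to be the main obstacle: the faithfulness and mono checks are epi-family bookkeeping, whereas here the gluing along the right finite colimits of arities must be done carefully.

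\emph{Models.} Embed $\GFib$ fully into $\Str(\LL_{\tx{Gfib}},\Cat)$ by interpreting $\tx{Cart}$ as the full subcategory of $E^{[1]}$ on the cartesian arrows. Morphisms are then exactly squares preserving cartesian arrows, and 2-cells are pairs of natural transformations compatible with $p$; the required 2-cells on $\tx{Cart}$ exist because $\tx{Cart}$ is full. Cartesian arrows for $p^{[1]}$ are the pointwise cartesian ones, so this subcategory is closed under powers by $[1]$. By \cref{prop:1-dimensional-is-enough} it then suffices to check the axioms on objects. Axioms (i) and (ii) say each element of $\tx{Cart}$ is cartesian, axiom (iii) says $\tx{Cart}$ is closed under isomorphism (so it contains all cartesian arrows), axiom (iv) says $\tx{Cart}$ is full, and axiom (v) says cartesian lifts exist. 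Hence the models of $\TT_{\tx{Gfib}}$ are exactly the Grothendieck fibrations.
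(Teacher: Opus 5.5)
Your proposal is correct and follows the same route as the paper. Well-formedness of axiom (ii) comes from axiom (i), well-formedness of axiom (v) comes from checking $\J_{\mathsf{faithful}}$ and $\J_{\mathsf{full\text{-}id}}$, and the models are identified via \cref{prop:1-dimensional-is-enough} together with closure of Grothendieck fibrations under powers by $[1]$. Where the paper only says the check for axiom (v) goes ``as in the case of isofibrations'', you give the argument that is actually needed here: the inverses used for isofibrations are replaced by axiom (i) for faithfulness, and by axioms (ii) and (iv) to build the comparison square over $\id_u$.
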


\begin{proof}
	Axioms (i) and (ii) say that every map in $\tx{Cart}$ is cartesian (satisfies the unique lift property); the existential quantification in (ii) is well-formed because (i) proves uniqueness. Axiom (iii) says that $\tx{Cart}$ is closed under isomorphisms (hence its objects are all the cartesian maps), while (iv) imposes that $\tx{Cart}$ is a full subcategory of $E^{[1]}$. Finally, (v) asks that every map in $B$ with codomain of the form $p(y)$ has a cartesian lift. The fact that such lift is unique up to unique isomorphism (and hence that the existential quantification is well-formed) is proved as in the case of isofibrations.
	
	As usual, since we already know that Grothendieck fibrations are stable under powers by $[1]$, it is easy to conclude that we have the desired isomorphism of 2-categories.
\end{proof}

\begin{rmk}
Combining \cref{theo:accessibility-of-models} with \cref{thm:isofib-isoreg} and~\cref{thm:gfib-isoreg}, we obtain new proofs that $\IsoFib$ and $\GFib$ are accessible with flexible limits, originally shown in~\cite[Section~8]{Bou2021:articolo}.
\end{rmk}

\subsection{Categorical logic, adjoints, and multicategories} \label{sec:varia} We conclude by applying our results to various 2-categories of interest in categorical logic, 2-categories of adjoint functors, and multicategories.

Recall~\cite{JacobsB:catltt} that a {\em comprehension category} is the data of a commutative triangle
\begin{center}
	\begin{tikzpicture}[baseline=(current  bounding  box.south), scale=2]
		
		\node (a0) at (0,0.8) {$\E$};
		\node (b0) at (1,0.8) {$\B^{[1]}$};
		\node (c0) at (0.5,0.1) {$\B$};
		
		\path[font=\scriptsize]
		
		(a0) edge [->] node [above] {$\Sigma$} (b0)
		(a0) edge [->] node [left] {$p$} (c0)
		(b0) edge [->] node [right] {$\cod$} (c0);
	\end{tikzpicture}	
\end{center}
in $\Cat$, where $\cod$ is the codomain functor, $p$ is a Grothendieck fibration, and $\Sigma$ sends cartesian arrows to pullback squares. Comprehension categories assemble into a 2-category $\Cmp$ whose morphisms are of the form
\begin{center}
	\begin{tikzpicture}[baseline=(current  bounding  box.south), scale=2, on top/.style={preaction={draw=white,-,line width=#1}}, on top/.default=6pt]
		
		\node (a0) at (0,0.8) {$\E$};
		\node (b0) at (1,0.8) {$\B^{[1]}$};
		\node (c0) at (0.5,0.1) {$\B$};
		
		\node (a1) at (1,0.3) {$\F$};
		\node (b1) at (2,0.3) {$\C^{[1]}$};
		\node (c1) at (1.5,-0.4) {$\C$};
		
		\path[font=\scriptsize]
		
		(a0) edge [->] node [above] {$\Sigma$} (b0)
		(a0) edge [->] node [left] {$p$} (c0)
		(b0) edge [->] node [above] {} (c0)
		
		(a1) edge [->] node [above] {$\Delta$} (b1)
		(a1) edge [->] node [left] {$q$} (c1)
		(b1) edge [->] node [right] {$\cod$} (c1)
		
		(a0) edge [->, on top] node [above] {$\ F$} (a1)
		(b0) edge [->] node [above] {$\ \ \ G^{[1]}$} (b1)
		(c0) edge [->] node [below] {$G$} (c1);
	\end{tikzpicture}	
\end{center}
where $F$ preserves the cartesian arrows.
The 2-cells of $\Cmp$ are just 2-cells in $\Cat^{[2]}$ (the 2-category of commutative triangles in $\Cat$).

To define as isoregular theory describing comprehension categories, consider the language $\LL_{\tx{cmp}}$ consisting of two sorts $E,B$, two function symbols $p\co E\to B$ and $\Sigma\co E\to B^{[1]}$, and a relation symbol $\tx{Cart}\rightarrowtail E^{[1]}$. Then $\TT_{\tx{comp}}$ is defined by:\begin{enumerate}
	\item the axiom $ (x\co E)\ \cod(\Sigma(x))=p(x) ;$
	\item all of the axioms in of $\TT_{\tx{Gfib}}$, asserting that $p$ is a Grothendieck fibration and $\tx{Cart}$ collects all cartesian maps;
	\item axioms asserting that if $\tx{Cart}(u)$ holds, then $\Sigma^{[1]}(u)\co B^{[1]\times[1]}$ is a pullback square in $B$; this is done as in Section~\ref{limits-colimits}.
\end{enumerate}
Given what we have shown already about limits and Grothendieck fibrations, the next lemma follows easily.

\begin{lemma}
	The theory $\TT_{\tx{cmp}}$  is isoregular and 
	$ \Mod(\TT_{\tx{cmp}},\Cat)\cong\Cmp. $
\end{lemma}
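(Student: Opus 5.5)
The plan has two parts. First I show that $\TT_{\tx{cmp}}$ is isoregular, that is, that every existential quantifier appearing in its axioms is well-formed. Then I identify its 2-category of models with $\Cmp$ via \cref{prop:1-dimensional-is-enough}.

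For isoregularity, the axiom $(x\co E)\ \cod(\Sigma(x))=p(x)$ is a bare equation and contains no existential quantifier. The axioms in group (ii) are exactly those of $\TT_{\tx{Gfib}}$, so their well-formedness is \cref{thm:gfib-isoreg}; the relevant derivations only use symbols of $\LL_{\tx{Gfib}}\subseteq\LL_{\tx{cmp}}$, so they remain derivable in the larger theory. For group (iii) I state the axiom as
\[
(u\co E^{[1]})\quad \tx{Cart}(u)\vdash R_{\tx{pb}}(\Sigma^{[1]}(u))\mathrlap{,}
\]
where $R_{\tx{pb}}$ is a relation collecting pullback squares in $B$, governed by axioms (i)--(iv) of $\TT_{\bb c}$ from \cref{limits-colimits} with $\bb c$ the free cospan. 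Axiom (v) of $\TT_{\bb c}$ is omitted, since $B$ is not required to have all pullbacks. The well-formedness of the quantifier in axiom (ii) of $\TT_{\bb c}$ was already obtained from axiom (i) in the proof of \cref{lemma:b-lim}. The only new point is that the identification $0*\bb c\cong[1]\times[1]$ lets $\Sigma^{[1]}(u)\co B^{[1]\times[1]}$ be used as an argument of $R_{\tx{pb}}$, which is a restriction along an isomorphism.

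For the models, I embed $\Cmp$ fully into $\Str(\LL_{\tx{cmp}},\Cat)$. A comprehension category $(p,\Sigma)$ is sent to the structure with $\br E=\E$, $\br B=\B$, the functors $p$ and $\Sigma$, $\br{\tx{Cart}}$ the full subcategory of cartesian arrows, and $\br{R_{\tx{pb}}}$ the full subcategory of pullback squares. This embedding is full on 1-cells: preserving $p$ and $\Sigma$ is exactly commutativity of the prism, and preserving $\tx{Cart}$ is exactly the requirement that $F$ preserves cartesian arrows. Pullback squares need not be preserved, since $R_{\tx{pb}}$ is determined by the other data. So I must either check that morphisms of structures automatically preserve $R_{\tx{pb}}$, or, more simply, drop $R_{\tx{pb}}$ and express the pullback condition directly. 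The embedding is full on 2-cells because 2-cells of structures are pairs of natural transformations compatible with $p$ and $\Sigma$, which are precisely the 2-cells of $\Cat^{[2]}$. Comprehension categories are closed under powers by $[1]$ computed componentwise, because Grothendieck fibrations and pullbacks in $\B^{[1]}$ are computed pointwise. Hence \cref{prop:1-dimensional-is-enough} applies, and it remains only to read off the axioms on objects, which yields exactly the definition of a comprehension category.

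The main obstacle is the choice of encoding in (iii). The delicate point is that morphisms in $\Cmp$ are not required to preserve pullbacks in the base. I will therefore encode ``$\Sigma^{[1]}(u)$ is a pullback'' by the unique-lifting formulation used for cartesian maps in $\TT_{\tx{Gfib}}$: a uniqueness axiom stated as a $\mathcal{J}_{\mathsf{mono}}$ judgement, followed by an existence axiom whose quantifier is licensed by it. These are properties, not extra relation symbols, so the morphisms of $\Mod(\TT_{\tx{cmp}},\Cat)$ are exactly those of $\Cmp$.
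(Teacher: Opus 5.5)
Your final encoding matches the paper's language $\LL_{\tx{cmp}}$: there is no relation symbol for pullbacks, and the pullback property of $\Sigma^{[1]}(u)$ under $\tx{Cart}(u)$ is expressed by a $\mathcal{J}_{\mathsf{mono}}$ axiom plus an existence axiom, as in axioms (i)--(ii) of $\TT_{\bb b}$; your argument (isoregularity inherited from \cref{thm:gfib-isoreg} and \cref{lemma:b-lim}, then \cref{prop:1-dimensional-is-enough} using closure of comprehension categories under powers by $[1]$) is exactly what the paper means when it says the lemma follows easily from the results on limits and Grothendieck fibrations. Drop the $R_{\tx{pb}}$ detour entirely, because your claim that $R_{\tx{pb}}$ is ``determined by the other data'' is false: without axiom (v) that symbol is not fixed by the comprehension category, so models would not match objects of $\Cmp$, and interpreting it as all pullback squares would force morphisms to preserve all base pullbacks.
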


\begin{theo} \label{thm:comp}
	The 2-category $\Cmp$ is accessible with flexible limits. Moreover, the forgetful 2-functor $\Cmp\to \GFib$ is accessible and preserves flexible limits, and therefore has a left biadjoint.
\end{theo}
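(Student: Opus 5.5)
The plan follows the pattern already used for $\Mal$, $\Ptm$ and $\SmAb$. By the preceding lemma, $\Cmp \cong \Mod(\TT_{\tx{cmp}},\Cat)$, and by \cref{thm:gfib-isoreg}, $\GFib \cong \Mod(\TT_{\tx{Gfib}},\Cat)$. Since $\Cat$ is isoregular, accessible, has flexible limits, and fully faithful regular epimorphisms (the retract equivalences) are stable under flexible limits, \cref{theo:accessibility-of-models} shows that $\Cmp$ is accessible with flexible limits. What remains is the statement about the forgetful 2-functor.

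The first step is to realise $\Cmp \to \GFib$ as a forgetful 2-functor between models, so that \cref{cor:accessibility-forgetful} applies. The language $\LL_{\tx{Gfib}}$ (sorts $E,B$, function symbol $p$, relation symbol $\tx{Cart}$) is contained in $\LL_{\tx{cmp}}$, and by clause (ii) of its definition the theory $\TT_{\tx{Gfib}}$ is contained in $\TT_{\tx{cmp}}$. The limit relation symbols used to express clause (iii), such as $R_{\tx{pb}}$ for pullbacks in $B$, are also part of $\LL_{\tx{cmp}}$ and do not affect this inclusion. The induced 2-functor $U \co \Mod(\TT_{\tx{cmp}},\Cat)\to\Mod(\TT_{\tx{Gfib}},\Cat)$ forgets $\Sigma$ and the auxiliary relations. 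One checks that, under the isomorphisms of the two lemmas, $U$ corresponds to the forgetful 2-functor $\Cmp \to \GFib$ sending $(p,\Sigma)$ to $p$. This holds on 1-cells and 2-cells because in both 2-categories morphisms are determined by the functors on $E$ and $B$, and the relational data such as $\tx{Cart}$ is determined by the property of preserving it.

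Next, \cref{cor:accessibility-forgetful}, applied with $\K=\Cat$, gives that $U$ is accessible and preserves flexible limits. Both $\Cmp$ and $\GFib$ are accessible, and $\Cmp$ has flexible limits. Hence \cref{theo:left-biadj-exist} yields a left biadjoint to $U$.

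The main point of care is the identification in the first step, namely that the auxiliary relation symbols introduced for pullbacks in clause (iii) do not change the morphisms. Since those relations are fixed by axioms to be exactly the pullback squares, any morphism of $\LL_{\tx{cmp}}$-structures between models must preserve them. That is compatible with $\Cmp$ only if its morphisms preserve pullbacks in $B$, or if the pullback requirement is axiomatised without a new relation symbol. If necessary, I would instead express the pullback condition on $\Sigma^{[1]}(u)$ with a formula built from equalities and an $\exists$ of mono type over cones, which adds no new symbols. That way $U$ is strictly the evident forgetful 2-functor.
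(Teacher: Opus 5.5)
Your proposal is correct and follows the paper's intended argument. The theorem comes from combining $\Mod(\TT_{\tx{cmp}},\Cat)\cong\Cmp$ with \cref{theo:accessibility-of-models}, then \cref{cor:accessibility-forgetful} applied to $\TT_{\tx{Gfib}}\subseteq\TT_{\tx{cmp}}$, then \cref{theo:left-biadj-exist}, exactly as the paper does for $\Mal$. Your worry about auxiliary pullback relation symbols does not arise, because $\LL_{\tx{cmp}}$ as defined contains only $E$, $B$, $p$, $\Sigma$ and $\tx{Cart}$; clause (iii) is therefore stated directly on $\Sigma^{[1]}(u)$ under $\tx{Cart}(u)$, which is your fallback (with $\tx{Cart}(u)$ included as a conjunct so that $\mathcal{J}_{\mathsf{mono}}$ is derivable).
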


\begin{rmk}
	In a slightly different context, the free comprehension category over a Grothendieck fibration has been constructed explicitly in~\cite[Section~3.3]{giusto2024fibrazioni}. What differs is that their 2-category of comprehension categories a lax version of our notion of morphism.
\end{rmk}

Recall~\cite{JoyalA:notct,TaylorP:recdic} that a {\em clan} is a category $\C$ together with a set of morphisms $\tx{Fib}\subseteq \C^{[1]}$ (which we see as a full subcategory) such that:\begin{itemize}
	\item $\C$ has a terminal object $1$;
	\item pullbacks along maps in $\tx{Fib}$ exist;
	\item every isomorphism and every map with codomain $1$ lies in $\tx{Fib}$;
	\item the elements of $\tx{Fib}$ are closed under composition and stable under pullback.
\end{itemize}
Clans assemble in a 2-category $\Clan$ whose morphisms are functors preserving the class of fibrations as well as the terminal object and pullbacks along fibrations; 2-cells are just natural transformations.

The language $\LL_{\tx{clan}}$ for clans consists of a single sort $S$, a relation symbol $\tx{Fib}\rightarrowtail S^{[1]}$ for the set of fibrations, a relation symbol $R_\emptyset\rightarrowtail S$ to collect the terminal objects, and a relation symbol $R_{\tx{pb}}\rightarrowtail S^{[1]\times [1]}$ collecting all the pullbacks along maps in $\tx{Fib}$. The isoregular theory $\TT_{\tx{clan}}$ is then formed by:\begin{enumerate}
	\item the five axioms of Section~\ref{limits-colimits} for $\bb b=\emptyset$, saying that $S$ has a terminal object and $R_\emptyset$ collects all of them;
	\item the first four axioms of Section~\ref{limits-colimits} for $\bb b$ the free cospan (with inclusion $\iota\co \bb b\to [1]\times [1]$), saying that $R_{\tx{pb}}$ consists of pullback squares, is closed under isomorphisms, and is a full subcategory of $S^{[1]\times [1]}$;
	\item the axiom (similar to (v) in Section~\ref{limits-colimits})
	$$ (x\co S^{\bb b})\  \tx{Fib}(\prr(x))\vdash (\exists y\co S^{[1]\times [1]})\ R_{\tx{pb}}(y)\wedge (y\cdot \iota= x) $$
	for $\bb b$ the free co-span, saying that pullbacks along maps in $\tx{Fib}$ exists and are in $R_{\tx{pb}}$;
	\item the axiom
	$$ (z\co S^{[1]\times [1]})\ \tx{Fib}(\dom(z)), \tx{Fib}(\cod(z))\vdash \tx{Fib}^{[1]}(z) $$
	saying that $\tx{Fib}$ defines a full subcategory;
	\item the axiom
	$$ (u\co S^{\bb I}) \vdash \tx{Fib}(u_0) $$
	saying that $\tx{Fib}$ contains all isomorphism (we follow the notation used in Section~\ref{sec:isofib});
	\item the axiom
	$$ (u\co S^{[2]})\  \tx{Fib}(\tx{fst}(u)), \tx{Fib}(\tx{snd}(u))\vdash \tx{Fib}(\comp(u)) $$
	saying that $\tx{Fib}$ is closed under composition;
	\item the axiom
	$$ (z\co S^{[1]\times [1]})\ R_{\tx{pb}}(z)\vdash \tx{Fib}(\prl(z))\wedge \tx{Fib}(\prr(z))  $$
	saying that the vertical legs of every square in $R_{\tx{pb}}$ lie in $\tx{Fib}$. Together with (iii) this implies that $\tx{Fib}$ is stable under pullbacks.
\end{enumerate}

\begin{lemma}
	The theory $\TT_{\tx{clan}}$  is isoregular and 
	$ \Mod(\TT_{\tx{clan}},\Cat)\cong\Clan. $
\end{lemma}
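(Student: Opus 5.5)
The plan follows the pattern of \cref{lemma:b-lim}, \cref{thm:reg-isoregular-theory} and \cref{thm:isofib-isoreg}. There are two tasks: show that $\TT_{\tx{clan}}$ is isoregular, and identify its 2-category of models with $\Clan$.

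For isoregularity, the only axioms that need attention are those containing an existential quantifier, since the remaining ones are entailments between atomic formulas and conjunctions. Axioms (i) and (ii) are the axioms of Section~\ref{limits-colimits} for $\bb b=\emptyset$ and for $\bb b$ the free cospan, respectively. Their well-formedness therefore follows exactly as in the proof of \cref{lemma:b-lim}. That argument uses only axioms (i)--(iv) of $\TT_{\bb b}$ to derive $\J_{\mathsf{faithful}}$ and $\J_{\mathsf{full\text{-}id}}$ for the formula $R_{\tx{pb}}(y)\wedge (y\cdot\iota=x)$. Axiom (iii) of $\TT_{\tx{clan}}$ quantifies over exactly the same formula, $R_{\tx{pb}}(y)\wedge(y\cdot\iota=x)$, in the same context. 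The extra hypothesis $\tx{Fib}(\prr(x))$ sits only on the left of the turnstile and does not affect formation. So the existential in (iii) is well-formed by the very derivation already given. The key observation is that the proof of \cref{lemma:b-lim} never used axiom (v) (existence of limits) to justify formation. Consequently the partial version, in which pullbacks exist only along fibrations, causes no problem.

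For the models, I would embed $\Clan$ as a full sub-2-category of $\Str(\LL_{\tx{clan}},\Cat)$. A clan $\C$ is sent to the structure with $\br S=\C$, with $\br{\tx{Fib}}$ the full subcategory of $\C^{[1]}$ on the fibrations, $\br{R_\emptyset}$ the full subcategory of terminal objects, and $\br{R_{\tx{pb}}}$ the full subcategory of $\C^{[1]\times[1]}$ on the pullback squares whose vertical legs are fibrations. This embedding is fully faithful, because a functor preserves fibrations, the terminal object and pullbacks along fibrations precisely when it maps these subcategories into one another; 2-cells are just natural transformations on both sides.

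Closure of this full sub-2-category under powers by $[1]$ must then be checked. For $\C^{[1]}$ one takes pointwise fibrations, pointwise pullbacks and pointwise terminal object, and all clan axioms hold pointwise. This is the step I expect to need the most care. Pullbacks along pointwise fibrations in $\C^{[1]}$ must exist and be computed pointwise, which holds because each component pullback exists. However, the square $R_{\tx{pb}}$ in $\C^{[1]}$ must also correspond to the power structure $\br{R_{\tx{pb}}}^{[1]}$. This requires the vertical legs to be fibrations in $\C^{[1]}$, which is exactly what axiom (vii) guarantees componentwise.

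With closure in hand, \cref{prop:1-dimensional-is-enough} reduces the comparison to checking the axioms on objects. A routine translation does this: (i) gives a terminal object with $R_\emptyset$ the terminal objects; (ii)+(iii)+(vii) make $R_{\tx{pb}}$ exactly the pullback squares along fibrations and ensure these exist; (iv) makes $\tx{Fib}$ full; (v) and the instance of (iii) with a terminal codomain handle isomorphisms and maps to $1$; (vi) and (vii) give composition and pullback stability. One point needs care here: that maps to $1$ are fibrations. This is not literally among the listed axioms, so I would either observe that it is intended (adding the axiom $R_\emptyset(\cod(u))\vdash \tx{Fib}(u)$) or treat it as part of the definition encoded by the theory.
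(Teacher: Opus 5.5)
Your route is the paper's: isoregularity by reusing the formation argument of \cref{lemma:b-lim}, then embedding $\Clan$ in $\Str(\LL_{\tx{clan}},\Cat)$, checking closure under powers by $[1]$, and applying \cref{prop:1-dimensional-is-enough}. Your observation that the formation argument uses only axioms (i)--(iv) of $\TT_{\bb b}$, so that pullbacks existing only along fibrations is harmless for axiom (iii), is a sharper justification than the paper's ``by the same arguments''.

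The point you flag about maps into $1$ is a genuine defect of $\TT_{\tx{clan}}$ as listed. The paper's proof misses it too, since it claims that (v)--(vii) yield all the clan conditions. The condition is not derivable. Take the category $[1]$ with terminal object $1$, and let $\tx{Fib}$ be the identities, $R_\emptyset=\{1\}$, and $R_{\tx{pb}}$ the pullback squares with identity vertical legs. This satisfies (i)--(vii) on objects, and likewise for its power by $[1]$, which is again a poset with top whose $\tx{Fib}$ consists of identities. So it is a model, yet $0\to 1\notin\tx{Fib}$.

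Hence adding the axiom $(u\co S^{[1]})\ R_\emptyset(\cod(u))\vdash\tx{Fib}(u)$ is necessary; it is not one of two options. ``Treating it as part of the definition'' would leave $\Mod(\TT_{\tx{clan}},\Cat)\cong\Clan$ false. For the same reason, drop your claim that (v) together with the instance of (iii) with terminal codomain handles maps to $1$. That instance only produces $X\times A$ when $A\to 1$ is already a fibration.

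With the extra axiom your argument goes through. The axiom is quantifier-free, so isoregularity is unaffected. It holds pointwise in $\C^{[1]}$, so closure under powers is unaffected.
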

\begin{proof}
	The only existential quantifications in $\TT_{\tx{clan}}$ appear to give the universal property of a limit or to say that a limit exists, so they are well-formed by the same arguments of Lemma~\ref{lemma:b-lim}; thus $\TT_{\tx{clan}}$ is isoregular.
	
	Given this, a model $M$ of $\TT_{\tx{clan}}$ is determined by the category $\br{S}$ and the relation $\br{\tx{Fib}}$, which is a full subcategory of $\br{S}^{[1]}$ by (iv). Indeed, then $\br{R_{\emptyset}}$ is forced to be the full subcategory of all terminal objects in $\br{S}$, and $\br{R_{\tx{pb}}}$ the full subcategory of all pullback squares with vertical maps in $\br{\tx{Fib}}$. Then $\br{\tx{Fib}}$ satisfies the conditions required to form a clan by (v)-(vii). One concludes as usual by using that clans are stable under powers by $[1]$.
\end{proof}

\begin{theo} \label{thm:clan}
	The 2-category $\Clan$ is accessible with flexible limits. Moreover, the forgetful 2-functor $\Clan\to \Cat$ is accessible, preserves flexible limits, and therefore has a left biadjoint.
\end{theo}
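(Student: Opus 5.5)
The plan follows the pattern used for \cref{thm:mal,thm:ptm,thm:smab,thm:comp}: combine the preceding lemma, $\Mod(\TT_{\tx{clan}},\Cat)\cong\Clan$, with the general results of \cref{sec:accessibility}. Since $\Cat$ is isoregular, locally presentable (hence accessible), has flexible limits, and its fully faithful regular epimorphisms are exactly the retract equivalences, \cref{theo:accessibility-of-models} applies to $\TT_{\tx{clan}}$ once we know that retract equivalences (surjective-on-objects fully faithful functors) are stable under flexible limits in $\Cat$. This is the same fact used in \cref{thm:lex-rex-rlex}. It reduces to checking stability under products, inserters and equifiers, which I would do by a direct computation. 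Products of surjective-on-objects fully faithful functors are again such. For inserters and equifiers, the induced comparison is fully faithful because each is a full subcategory, or a category of pairs, cut out by conditions on morphisms. Surjectivity on objects follows by lifting objects along the chosen sections and lifting the extra 2-cell components using fullness. It is then immediate that $\Clan$ is accessible with flexible limits.

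For the forgetful 2-functor, I would view $\Clan\to\Cat$ as the forgetful 2-functor $U\colon\Mod(\TT_{\tx{clan}},\Cat)\to\Mod(\emptyset,\Cat)$. Here the empty theory is taken over the sub-language with the single sort $S$, and its 2-category of models is $\Cat$ itself. \cref{cor:accessibility-forgetful} then gives that $U$ is accessible and preserves flexible limits. Alternatively, $U$ is the evaluation $\br{A}_{(-)}$ at the formula $(x\co S)\ \top$, so \cref{theo:accessibility-of-models} yields both properties directly. Since $\Clan$ and $\Cat$ are accessible, $\Clan$ has flexible limits, and $U$ preserves them, \cref{theo:left-biadj-exist} provides the left biadjoint.

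The only step that is not a citation is the stability of retract equivalences under flexible limits in $\Cat$. Of the cases, inserters need the most care, since lifting an inserted morphism requires fullness of the functor at the target component. I expect this to be routine. The remaining work is bookkeeping: the isomorphism of the lemma must be compatible with the forgetful functors, and this holds because both send a clan to its underlying category $\br{S}$.
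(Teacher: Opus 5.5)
Your proposal is correct and follows the paper's route. The paper obtains the theorem by combining $\Mod(\TT_{\tx{clan}},\Cat)\cong\Clan$ with \cref{theo:accessibility-of-models} (or \cref{cor:accessibility-forgetful}, taking the empty theory on the single sort $S$) and then \cref{theo:left-biadj-exist}, exactly as for the other examples of \cref{sec:applications}. The one ingredient the paper only asserts, in \cref{thm:lex-rex-rlex}, is that retract equivalences in $\Cat$ are stable under flexible limits; your direct check of this is sound, provided you note that for equifiers the lifted object satisfies the equifying condition by faithfulness of the codomain component, and that splittings of idempotents are covered because retracts of retract equivalences are again retract equivalences.
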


	\begin{rmk}
		One could also give an isoregular theory for small categories with a terminal object and a natural number object, functors preserving them, and natural transformations.
	\end{rmk}

For an example of a slightly different nature, let us consider $\Radj$,  the locally full sub-2-category of $\Cat^{[1]}$ whose objects are right adjoint functors $R\co\C\to\D$, and morphisms are squares
\begin{center}
	\begin{tikzpicture}[baseline=(current  bounding  box.south), scale=2]
		
		\node (a0) at (0,0.7) {$\C$};
		\node (b0) at (0.8,0.7) {$\C'$};
		\node (c0) at (0,0) {$\D$};
		\node (d0) at (0.8,0) {$\D'$};
		
		\path[font=\scriptsize]
		
		(a0) edge [->] node [above] {$F$} (b0)
		(a0) edge [->] node [left] {$R$} (c0)
		(b0) edge [->] node [right] {$R'$} (d0)
		(c0) edge [->] node [below] {$G$} (d0);
	\end{tikzpicture}	
\end{center}
were $F$ preserves universal arrows; that is, if $\eta$ is a universal arrow from $c\in \C$ to $R$, then $F\eta$ is a universal arrow from $Fc$ to $R'$. This is equivalent to requiring that $F$ preserves the units of the adjunction up to isomorphism.

At this point the reader will probably guess how to construct an isoregular theory that models $\Radj$. The idea is that, beside having a function symbol $R\co C\to D$ between two sorts, one also adds a relation symbol $\tx{Univ}\rightarrowtail C^{[1]}$ to collect (via given axioms) all universal arrows; \ie all units of the adjunction. Then we add an axiom stating that for any object of $C$ there exists an element of $\tx{Univ}$ (which is unique up to isomorphism) with domain the given object. When taking models in $\Cat$ this is equivalent to asking the existence of a left adjoint to $R$; since morphisms need to respect the relation symbol, the 2-category of models will be the same as $\Radj$.

The same arguments apply for the 2-category $\Ladj$ whose objects are left adjoint functors $L\co\C\to\D$, and where the morphism preserve universal arrows from $L$ to $d\in\D$.

As our final example consider the 2-category $\Mult$ of multicategories, multifunctors, and multinatural transformations. To find an isoregular theory presenting $\Mult$ we see the data of a multicategory $\M$ as:\begin{itemize}
	\item a category $M_n$ of $n$-ary morphisms and multisquares between them, for any $n\geq 0$, where $0$-ary morphisms are the objects;
	\item unit $1\co M_0\to M_1$, source $s_n\co M_n\to(M_0)^n$, and target $t_n\co M_n\to M_0$ functors (for any $n> 0$);
	\item subcategories $\tx{Comp}_{n}^{m_1,\cdots,m_n}\rightarrowtail M_n\times M_{m_1}\times\cdots\times  M_{m_n}\times M_{m_1+\cdots m_n}$ to encode that\\ $\tx{Comp}_{n}^{m_1,\cdots,m_n}(f,g_1,\cdots,g_n,h)$ holds if and only if $f\circ (g_1,\cdots,g_n)$ is well-typed and coincides with $h$.
\end{itemize}
All of this data identifies a language $\LL$. The existence of composites is expressed via unique existential quantification and is subject to axioms expressing associativity and unitality. Finally one also needs to add axioms saying that morphisms in $M_n$ are just compatible $(n+1)$-tuples of objects from $M_1$. This suffices to decribe $\Mult$.

From this, we can add relation symbols $\tx{Rep_n}\rightarrowtail M_n$, for any $n\geq 1$, collecting (via specific axioms) all strong universal arrows $\eta:(x_1,\cdots, x_n)\to \hat x$ in the sense of~\cite[Definition~8.3]{hermida2000representable}. If we further add an axiom saying that for any $n$-tuple of objects $(x_1,\cdots, x_n)$ there is a (unique up to isomorphism) strong universal arrow with source $(x_1,\cdots, x_n)$, we obtain an isoregular theory that present the 2-category $\RMult$ of representable multicategories, multifunctors that preserve strong universal arrows, and multinatural transformations.

Since $\RMult$ is 2-equivalent to the 2-category $\MonCat$ of monoidal categories, strong monoidal functors, and monoidal natural transformations, we obtain a new proof that $\MonCat$ is accessible with flexible limits, as shown in~\cite[Section~6.1]{Bou2021:articolo}.

\section{Directions for future work}

The paper suggests several promising ideas, including possible generalisations of our results which we expect to be true, whose investigation we leave for future work.

\subsubsection*{Internal languages}

Given a small isoregular 2-category $\C$, we expect to be possible do define an isoregular theory $\TT_\C$ such that
$$\IsoReg(\C,\K)\simeq \Mod(\TT_\C,\K) \mathrlap{,} $$
for any isoregular 2-category $\K$. This would strengthen the view of isoregular 2-categories as the semantical 
counterpart of isoregular theories.

\subsubsection*{Barr's embedding}

Given a small isoregular 2-category $\C$, a natural question that arises is whether the evaluation 2-functor
$$ \tx{ev}\co \C\longrightarrow [\IsoReg(\C,\Cat),\Cat]$$
is fully faithful, generalising the ordinary embedding for regular categories proved by Barr~\cite{Bar86:articolo}. We believe that an adaptation of the arguments in~\cite{LT20:articolo} could work in this context. 

Such a theorem would be very important for at least two reasons. On one hand, it would imply a completeness theorem for isoregular logic relative to models in $\Cat$. On the other, it would mean that isoregular categories could be captured in the context of lex-colimits~\cite{GL12:articolo}, and all of the theory developed in that paper would apply.

\subsubsection*{Makkai's conceptual completeness}

Assuming to have proved an analogue of Barr's embedding as above, then is easy to see that the evaluation 2-functor restricts to 
$$ J\co \C\longrightarrow \tx{FlexFilt}(\IsoReg(\C,\Cat),\Cat)$$
where the codomain is the full subcategory of $[\IsoReg(\C,\Cat),\Cat]$ spanned by those 2-functors that preserve flexible limits and filtered colimits. Is it possible to add exactness condition on $\C$ to enforce that $J$ is a 2-equivalence? This would give a 2-dimensional version of Makkai's theorem for Barr-exact categories~\cite{Mak90:articolo}.

\subsubsection*{Infinitary case}

Let $\lambda$ be a regular cardinal. One can then define ``$\lambda$-isoregular'' logic by allowing arities from $\Cat_\lambda$ (the full subcategory of $\Cat$ spanned by the $\lambda$-presentable objects) and $\lambda$-small conjunctions. 
Semantically this corresponds to asking an isoregular 2-category also to have all $\lambda$-small products (and hence all $\lambda$-small limits), plus the requirement that fully faithful regular epimorphisms are stable under $\lambda$-small products.

Then all the results of the paper, and possibly those envisaged in this section, should extend to the infinitary setting without major efforts. As a consequence, one would be able to characterise accessible 2-categories with flexible limits exactly as the 2-categories of models of infinitary isoregular theories in $\Cat$.

\appendix

\section{Deduction rules for isoregular theories} 
\label{sec:deduction-rules}

As in 1-dimensional first-order logic, we assume the standard structural and equality rules (see for instance \cite[D1.3.1,(a)--(b)]{Joh02:libro}). We also assume two rules expressing unitality and associativity of term substitution, which we do not spell out. All the other rules of isoregular logic are listed below.

\subsection*{Deduction rules for conjunction} 
\label{sec:deduction-conjunction}

\begin{drule}   \label{equ:conj} 
\[
\begin{prooftree}
A_1, A_2 \vdash B 
\justifies
A_1 \land A_2 \vdash B
\end{prooftree} \qquad
\begin{prooftree}
 A_1, \ldots, A_n \vdash B_1  \quad
A_1, \ldots, A_n \vdash B_2 
 \justifies
A_1, \ldots, A_n \vdash B_1 \land B_2
\end{prooftree}
\]
\end{drule}

\subsection*{Deduction rules for existential quantifiers} 
\label{sec:deduction-existential}

\begin{drule}
\[
\begin{gathered} 
\begin{prooftree}
(\bar{X}) \quad (\exists x \co X) A \co \mathsf{prop}  \qquad
(\bar{X}) \quad s \co X \qquad 
(\bar{X}) \quad A_1, \ldots, A_n \vdash A[s/x]  
\justifies
(\bar{X}) \quad A_1, \ldots, A_n \vdash (\exists x \co X) A
\end{prooftree} \medskip \\ 
\begin{prooftree}
(\bar{X}, x \co X) \quad A(x) \vdash B
\justifies
(\bar{X})  \quad (\exists x \co X) A(x) \vdash B
\end{prooftree}
\end{gathered}
\]
\end{drule}
\begin{drule}[Frobenius rule] \label{equ:frobenius} 
\[
\begin{prooftree}
A_1, \ldots, A_n \vdash A \qquad 
A_1, \ldots, A_n \vdash (\exists y \co Y) B
\justifies
A_1, \ldots, A_n \vdash (\exists y\co Y)( A \land B)
\end{prooftree}
\]
\end{drule}

\subsection*{Terminal object rule} 
\label{sec:deduction-finite-products}

\begin{drule} \label{equ:initial} 
\[	
	\begin{prooftree}
		s \co S^{0} \quad t \co S^{0}
		\justifies
		s=t
	\end{prooftree}
\]
\end{drule}

\subsection*{Deduction rules for restriction} 
\label{sec:deduction-restriction}

\begin{drule}  \label{equ:action} For functors $f \co \bb d \to \bb c$ and $g \co \bb e \to \bb d$, 
\[
\begin{prooftree}
s \co S^{\bb c}
\justifies
(s \cdot f) \cdot g = s \cdot (f \circ g)
\end{prooftree} \qquad\qquad
\begin{prooftree}
s \co S^{\bb c}
\justifies
s \cdot 1_{\bb c}  = s 
\end{prooftree}
\]
\end{drule}
\begin{drule} \label{equ:epimorphic-family} For a jointly epimorphic family of functors
$(f_i \co \bb d \to \bb c)_{1 \leq i \leq n}$, 
\[
\begin{prooftree}
s \co S^{\bb c} \quad
t \co S^{\bb c} \qquad
s \cdot f_1 = t \cdot f_1 \quad
\ldots \quad
s \cdot f_n = t \cdot f_n 
\justifies
s = t
\end{prooftree}
\]
\end{drule}
\begin{drule} \label{equ:union} For every pushout diagram
\[
\begin{tikzcd}
\bb b \ar[r, "i_2"] \ar[d, "i_1"'] & \bb c_2 \ar[d, "j_2"] \\
\bb c_1 \ar[r, "j_1"'] & \bb d  \mathrlap{,}
\end{tikzcd}
\]
the rule
\[
\begin{prooftree}
s_1 \co S^{\bb c_1} \quad
s_2 \co S^{\bb c_2} \quad
s_1 \cdot i_1 = s_2 \cdot i_2 
\justifies
(\exists x \co S^{\bb d}) \big( x \cdot j_1 = s_1 \land x \cdot j_2 = s_2 \big)
\end{prooftree}
\]
The existential quantifier in the conclusion can be formed since  $j_1$, $j_2$ are jointly epimorphic.

\end{drule} 
\begin{drule} \label{equ:coeq} For $q\colon \bb b\to \bb c$ the coequaliser of $f,g\co \bb a\to \bb b$, the rule
\[
	\begin{prooftree}
		s \co S^{\bb b} \quad
		s \cdot f = s \cdot g 
		\justifies
		(\exists x \co S^{\bb c}) \big( x \cdot q = s \big)
	\end{prooftree}
\]
The existential quantifier in the conclusion can be formed since  $q$ is an epimorphism.
\end{drule}

\subsection*{Deduction rules for powers.}
\label{sec:deduction-powers}

\begin{drule} For a functor $f \co \bb d \to \bb c$
	\begin{equation*}
		\label{equ:powers-restriction} 
		\begin{prooftree}
			s \co S^{\bb c}
			\justifies
			(s\cdot f)^{[1]} = s^{[1]}\cdot ([1]\times f) (s) 
		\end{prooftree}
	\end{equation*}
\end{drule}

\begin{drule} 
	\begin{equation*}\label{equ:comp}
		\begin{prooftree}
			(\bar{x} \co \bar{X})\  
			\justifies
			(\bar{u} \co \bar{X}^{[1]})\  \var_{\bar X, i}^{[1]}(\bar u) = \var_{\bar X^{[1]}, i}(\bar u)
		\end{prooftree}\qquad
		\begin{prooftree}
			(\bar x\co \bar{X})\ s  \co X \qquad
			(\Delta, x \co X) \ t \co Y
			\justifies
			\big( t[s/x] \big)^{[1]} = t^{[1]} [ s^{[1]} / u ] 
		\end{prooftree}
	\end{equation*}
\end{drule}

\begin{drule} 
	\begin{equation*}
		\label{equ:naturality-dom-cod-id}
		\begin{gathered}
			\begin{prooftree}
				(\bar x\co \bar X)\ s(\bar x) \co S^{\bb c} 
				\justifies
				(\bar x\co \bar X)\ s^{[1]}(\id_{\bar x})=\id_{s(\bar x)}
			\end{prooftree} \medskip \\
			\begin{prooftree}
				(\bar x\co \bar X)\ s(\bar x) \co S^{\bb c} 
				\justifies
				(\bar u\co \bar X^{[1]})\ \dom (s^{[1]}(\bar u))=s(\dom(\bar u))
			\end{prooftree} \qquad
			\begin{prooftree}
				(\bar x\co \bar X)\ s(\bar x) \co S^{\bb c} 
				\justifies
				(\bar u\co \bar X^{[1]})\ \cod (s^{[1]}(\bar u))=s(\cod(\bar u))
			\end{prooftree}
		\end{gathered}
	\end{equation*}
\end{drule}

\begin{drule} 
\begin{equation*}
	\begin{prooftree}
		s \co X \qquad
		t \co X 
		\justifies
		(s = t)^{[1]} \vdash s^{[1]}=t^{[1]}
	\end{prooftree} \qquad
	\begin{prooftree}
		s \co X \qquad
		t \co X 
		\justifies
		s^{[1]}=t^{[1]} \vdash (s = t)^{[1]}  
	\end{prooftree}
\end{equation*}
\end{drule}
\begin{drule} 
\begin{equation*}
	\begin{prooftree}
		s_1 \co X_1 \quad
		\ldots \quad
		s_n \co X_n 
		\justifies
		R(s_1, \ldots, s_n)^{[1]} \vdash 	R^{[1]}(s^{[1]}_1, \ldots, s^{[1]}_n)
	\end{prooftree}
	\qquad
	\begin{prooftree}
		s_1 \co X_1 \quad
		\ldots \quad
		s_n \co X_n 
		\justifies
		R^{[1]}(s^{[1]}_1, \ldots, s^{[1]}_n) \vdash R(s_1, \ldots, s_n)^{[1]} 	
	\end{prooftree}
\end{equation*}
\end{drule}

\begin{drule} \label{equ:power-functoriality-formulas}
	\[
	\begin{prooftree}
		(\bar{x} \co \bar{X}) \quad A_1, \ldots, A_n \vdash A 
		\justifies
		(\bar{x} \co \bar{X})^{[1]} \quad A_1^{[1]}, \ldots, A_n^{[1]} \vdash A^{[1]}
	\end{prooftree}
	\]
\end{drule}
\begin{drule} \label{equ:power-functoriality-terms}
\[
\begin{prooftree}
s \co S^{\bb c}
\justifies
A(s)  \vdash  A^{[1]}(\id_s)
\end{prooftree}
\qquad
\begin{prooftree} 
s \co (S^{\bb c})^{[2]}
\justifies
A^{[1]}( \mathsf{fst}(s) ),  A^{[1]}( \mathsf{snd}(s) ) \vdash A^{[1]}( \mathsf{comp}(s) ) 
\end{prooftree}
\]
\end{drule}
\medskip 
\begin{drule} \label{equ:power-dom-cod-prop}
\[
\begin{prooftree}
s \co (S^{\bb c })^{[1]}
\justifies
A^{[1]}(s)  \vdash A( \dom(s) ) 
\end{prooftree} \qquad
\begin{prooftree}
s \co (S^{\bb c })^{[1]}
\justifies
A^{[1]}(s)  \vdash A( \cod(s) ) 
\end{prooftree}
\]
\end{drule}
\medskip
\begin{drule}\label{equ:power-squares}
\[
\begin{prooftree}
s \co (S^{\bb c})^{[1] \times [1]} 
\justifies
A^{[1]}( \pru(s) ), 
A^{[1]}( \prd(s) ), 
A^{[1]}( \prl(s) ), 
A^{[1]}( \prr(s) ) 
\vdash 
A^{[1] \times [1]}( s ) 
\end{prooftree}
\]
Here, $A^{[1] \times [1]}\defeq (A^{[1]})^{[1]}$ with the usual convention that the first $[1]$ on the left-hand side corresponds to the most external $[1]$ on the right-hand side.
\end{drule}
\begin{drule} 
\[
\begin{prooftree}
		(\bar{x} \co \bar{X}) \quad A_1, \ldots A_n \vdash (\exists x \co X) A(x) 
		\justifies
		(\bar{x} \co \bar{X})^{[1]}  \quad A_1^{[1]}, \ldots A_n^{[1]} \vdash (\exists u \co X^{[1]}) A^{[1]}(u)    \mathrlap{.}
		% \thickness=0.05em
	\end{prooftree}
\]
\end{drule}

\bibliography{biblio}

@article{UemuraT:genfst,
	author = {T. Uemura},
	journal = {Mathematical Structures in Computer Science},
	pages = {134--179},
	title = {A general framework for the semantics of type theory},
	volume = {33},
	year = {2023}}

@book{JacobsB:catltt,
	author = {B. Jacobs},
	publisher = {Elsevier},
	title = {Categorical logic and type theory},
	year = {1999}}

@phdthesis{TaylorP:recdic,
	author = {P. Taylor},
	school = {University of Cambridge},
	title = {Recursive domains, indexed category theory and polymorphism},
	year = {1987}}

@misc{JoyalA:notct,
	author = {A. Joyal},
	howpublished = {arXiv:1710.10238},
	title = {Notes on clans and tribes},
	year = {2017}}

@incollection{GranM:intrc,
	author = {M. Gran},
	booktitle = {New Perspectives in Algebra, Topology and Categories},
	editor = {M. M. Clementino and A. Facchini and M. Gran},
	pages = {113--145},
	publisher = {Springer},
	title = {An introduction to regular categories},
	year = {2021}}

@article{giusto2024fibrazioni,
	author = {Giusto, A.},
	journal = {Master Thesis, Universit\`a degli {S}tudi di {G}enova},
	title = {Fibrations with comprehensions and their completions},
	year = {2024}}

@article{AczelP:gentti,
	author = {P. Aczel and N. Gambino},
	journal = {Journal of Symbolic Logic},
	number = {1},
	pages = {67--103},
	title = {The generalised type-theoretic intepretation of constructive set theory},
	volume = {71},
	year = {2006}}

@incollection{nordstrom-petersson-smith:ml,
	author = {B. Nordstr\"om and K. Petersson and J. Smith},
	booktitle = {Handbook of Logic in Computer Science},
	editor = {S. Abramsky and D. M. Gabbay and T. S. E. Maibaum},
	pages = {1-37},
	publisher = {Oxford University Press},
	title = {Martin-{L}\"of type theory},
	volume = {5},
	year = {2001}}

@article{MakkaiM:gensfc-II,
	author = {M. Makkai},
	journal = {Journal of Pure and Applied Algebra},
	number = {2},
	pages = {197--212},
	title = {Generalised sketches as a framework for completeness theorem. Part~{II}},
	volume = {115},
	year = {1997}}

@article{MakkaiM:gensfc-I,
	author = {M. Makkai},
	journal = {Journal of Pure and Applied Algebra},
	number = {1},
	pages = {49--79},
	title = {Generalised sketches as a framework for completeness theorem. Part {I}},
	volume = {115},
	year = {1997}}

@article{StreetR:twodst,
	author = {R. Street},
	journal = {Journal of Pure and Applied Algebra},
	pages = {251--270},
	title = {Two-dimensional sheaf theory},
	volume = {23},
	year = {1982}}

@article{VoevodskyV:explfm,
	author = {V. Voevodsky},
	journal = {Mathematical Structures in Computer Science},
	number = {5},
	pages = {1278--1294},
	title = {An experimental library of formalised {M}athematics based on the univalent foundations},
	volume = {25},
	year = {2015}}

@book{HoTT-Book,
	address = {Institute for Advanced Study},
	author = {The {Univalent Foundations Program}},
	publisher = {\url{https://homotopytypetheory.org/book}},
	title = {Homotopy Type Theory: Univalent Foundations of Mathematics},
	year = {2013}}

@article{LackS:homtam,
	author = {S. Lack},
	journal = {Journal of Homotopy and Related Structures},
	number = {2},
	pages = {229--260},
	title = {Homotopy-theoretic aspects of $2$-monads},
	volume = {2},
	year = {2007}}

@article{KellyGM:mongcl,
	author = {G. M. Kelly and I. Le Creurer},
	journal = {Cahiers de Topologie et G\'{e}om\'{e}trie Diff\'{e}rentielle},
	pages = {179--191},
	title = {On the monadicity over graphs of categories with limits},
	volume = {38},
	year = {1997}}

@article{KellyGM:monccc,
	author = {G. M. Kelly and S. Lack},
	journal = {Theory and Applications of Categories},
	number = {7},
	pages = {148--170},
	title = {On the monadicity of categories with chosen colimits},
	volume = {7},
	year = {2000}}

@article{DiLibertiI:biabp2c,
	author = {I. Di Liberti and A. Osmond},
	journal = {Applied Categorical Structures},
	number = {3},
	pages = {1--64},
	title = {Bi-accessible and bipresentable 2-categories},
	volume = {33},
	year = {2025}}

@article{KockA:monwsa,
	author = {A. Kock},
	journal = {Journal of Pure and Applied Algebra},
	pages = {41 -- 53},
	title = {Monads whose structures are adjoint to units},
	volume = {104},
	year = {1993}}

@article{KellyG:prolt,
	author = {G. M. Kelly and S. Lack},
	journal = {Theory and Applications of Categories},
	number = {9},
	pages = {213 -- 250},
	title = {On property-like structures},
	volume = {3},
	year = {1997}}

@article{BlackwellR:twodmt,
	author = {R. Blackwell and G. M. Kelly and A. J. Power},
	journal = {Journal of Pure and Applied Algebra},
	pages = {1--41},
	title = {Two-dimensional monad theory},
	volume = {59},
	year = {1989}}

@article{KellyGM:eleo2c,
	author = {G. M. Kelly},
	journal = {Bulletin of the Australian Mathematical Society},
	pages = {301--317},
	title = {Elementary observations on 2-categorical limits},
	volume = {39},
	year = {1989}}

@book{Joh02:libro,
	author = {Johnstone, P.},
	publisher = {Oxford Logic Guides},
	title = {Sketches of an Elephant: A Topos Theory Compendium},
	year = {2002}}

@book{AR94:libro,
	author = {Ad\'{a}mek, J. and Rosick\'{y}, J.},
	doi = {10.1017/CBO9780511600579},
	isbn = {0-521-42261-2},
	mrclass = {18Axx (18-02)},
	mrnumber = {1294136},
	mrreviewer = {J. R. Isbell},
	pages = {xiv+316},
	publisher = {Cambridge University Press, Cambridge},
	series = {London Mathematical Society Lecture Note Series},
	title = {Locally presentable and accessible categories},
	url = {https://doi.org/10.1017/CBO9780511600579},
	volume = {189},
	year = {1994}}

@book{MP89:libro,
	author = {Makkai, M. and Par\'e, R.},
	publisher = {Springer},
	title = {Accessible Categories: The Foundations of Categorical Model Theory},
	volume = {104},
	year = {1989}}

@book{Bor94:libro,
	author = {Borceux, F.},
	publisher = {Cambridge University Press},
	title = {Handbook of Categorical Algebra: Volume 2, Categories and Structures},
	year = {1994}}

@book{Lur09:libro,
	author = {Lurie, J.},
	doi = {10.1515/9781400830558},
	isbn = {978-0-691-14049-0; 0-691-14049-9},
	mrclass = {18-02 (18B25 18E35 18G30 18G55 55U40)},
	mrnumber = {2522659},
	mrreviewer = {Mark Hovey},
	pages = {xviii+925},
	publisher = {Princeton University Press, Princeton, NJ},
	series = {Annals of Mathematics Studies},
	title = {Higher topos theory},
	url = {https://doi.org/10.1515/9781400830558},
	volume = {170},
	year = {2009}}

@incollection{lack20102,
	author = {Lack, S.},
	booktitle = {Towards higher categories},
	pages = {105--191},
	publisher = {Springer},
	title = {A 2-categories companion},
	year = {2010}}

@book{Law63:articolo,
	author = {Lawvere, F. W.},
	publisher = {Ph.D. thesis Columbia University},
	title = {Functorial Semantics of Algebraic Theories},
	year = {1963}}

@book{ARValgebraic,
	author = {Ad{\'a}mek, J. and Rosick{\`y}, J. and Vitale, E. M.},
	publisher = {Cambridge University Press},
	title = {Algebraic theories: a categorical introduction to general algebra},
	volume = {184},
	year = {2010}}

@article{Mak90:articolo,
	author = {Makkai, M.},
	journal = {Annals of Pure and Applied Logic 47},
	pages = {225-268},
	title = {A Theorem on {B}arr-Exact Categories, with an Infinitary Generalization},
	year = {1990}}

@article{Bar86:articolo,
	author = {Barr, M.},
	journal = {Journal of Pure and Applied Algebra},
	pages = {113-137},
	title = {Representation of Categories},
	volume = {41},
	year = {1986}}

@misc{Lur18:articolo,
	author = {Lurie, J.},
	howpublished = {Available from \textnormal{\url{https://www.math.ias.edu/~lurie/papers/Conceptual.pdf}}},
	title = {Ultracategories},
	year = {2018}}

@article{LR12:articolo,
	author = {Lack, S. and Rosick\'{y}, J.},
	journal = {Journal of Pure and Applied Algebra},
	pages = {1807-1822},
	title = {Enriched weakness},
	volume = {216},
	year = {2012}}

@article{GL12:articolo,
	author = {Garner, R. and Lack, S.},
	doi = {10.1016/j.jpaa.2012.01.003},
	issn = {0022-4049},
	journal = {Journal of Pure and Applied Algebra},
	mrclass = {18A35 (18B25 18D05 18E10)},
	mrnumber = {2890508},
	mrreviewer = {Josep Elgueta},
	number = {6},
	pages = {1372--1396},
	title = {Lex colimits},
	url = {https://doi.org/10.1016/j.jpaa.2012.01.003},
	volume = {216},
	year = {2012}}

@article{Mak87:articolo,
	author = {Makkai, M.},
	journal = {Advances in Mathematics},
	pages = {97-170},
	title = {Stone Duality for First Order Logic},
	volume = {65},
	year = {1987}}

@article{LT20:articolo,
	author = {Lack, S. and Tendas, G.},
	doi = {https://doi.org/10.1016/j.jpaa.2019.106268},
	issn = {0022-4049},
	journal = {Journal of Pure and Applied Algebra},
	number = {6},
	pages = {106268},
	title = {Enriched regular theories},
	url = {http://www.sciencedirect.com/science/article/pii/S0022404919302816},
	volume = {224},
	year = {2020}}

@article{BLV:articolo,
	author = {Bourke, J. and Lack, S. and Vok{\v{r}}{\'\i}nek, L.},
	journal = {Advances in Mathematics},
	pages = {108812},
	publisher = {Elsevier},
	title = {Adjoint functor theorems for homotopically enriched categories},
	volume = {412},
	year = {2023}}

@article{BQ96:articolo,
	author = {Borceux, F. and Quinteiro, C.},
	journal = {Bulletin of the Australian Mathematical Society},
	number = {3},
	pages = {489--501},
	publisher = {Cambridge University Press},
	title = {Enriched accessible categories},
	volume = {54},
	year = {1996}}

@article{BG14:articolo,
	author = {Bourke, J. and Garner, R.},
	issn = {0022-4049},
	journal = {Journal of Pure and Applied Algebra},
	number = {7},
	pages = {1346--1371},
	title = {Two-dimensional regularity and exactness},
	volume = {218},
	year = {2014}}

@article{BKPS89:articolo,
	author = {Bird, G.J. and Kelly, G.M. and Power, A.J. and Street, R.H.},
	doi = {https://doi.org/10.1016/0022-4049(89)90065-0},
	issn = {0022-4049},
	journal = {Journal of Pure and Applied Algebra},
	number = {1},
	pages = {1 - 27},
	title = {Flexible limits for 2-categories},
	volume = {61},
	year = {1989}}

@article{Bou2021:articolo,
	author = {J. Bourke},
	doi = {https://doi.org/10.1016/j.jpaa.2020.106519},
	issn = {0022-4049},
	journal = {Journal of Pure and Applied Algebra},
	number = {3},
	pages = {106519},
	title = {Accessible aspects of 2-category theory},
	url = {https://www.sciencedirect.com/science/article/pii/S0022404920302206},
	volume = {225},
	year = {2021}}

@article{LT21:articolo,
	author = {Lack, S. and Tendas, G.},
	journal = {Advances in Mathematics},
	pages = {108381},
	publisher = {Elsevier},
	title = {Flat vs. filtered colimits in the enriched context},
	volume = {404},
	year = {2022}}

@article{LT22:virtual,
	author = {Lack, S. and Tendas, G.},
	doi = {https://doi.org/10.1016/j.jpaa.2022.107196},
	journal = {Journal of Pure and Applied Algebra},
	number = {2},
	pages = {107196},
	title = {Virtual concepts in the theory of accessible categories},
	volume = {227},
	year = {2023}}

@article{RT23EUA,
	author = {Rosick{\`y}, J. and Tendas, G.},
	journal = {Selecta Mathematica},
	number = {2},
	pages = {21},
	publisher = {Springer},
	title = {Towards enriched universal algebra},
	volume = {32},
	year = {2026}}

@article{RT25ERegular,
	author = {Rosick{\'y}, J. and Tendas, G.},
	journal = {The Journal of Symbolic Logic},
	title = {Enriched concepts of regular logic},
	year = {2025}}

@article{janelidze2002semi,
	author = {Janelidze, G. and M{\'a}rki, L. and Tholen, W.},
	journal = {Journal of Pure and Applied Algebra},
	number = {2-3},
	pages = {367--386},
	publisher = {Elsevier},
	title = {Semi-abelian categories},
	volume = {168},
	year = {2002}}

@book{borceux2004mal,
	author = {Borceux, F. and Bourn, D.},
	publisher = {Springer Science \& Business Media},
	title = {Mal'cev, protomodular, homological and semi-abelian categories},
	volume = {566},
	year = {2004}}

@article{hermida2000representable,
	author = {Hermida, C.},
	journal = {Advances in Mathematics},
	number = {2},
	pages = {164--225},
	publisher = {Elsevier},
	title = {Representable multicategories},
	volume = {151},
	year = {2000}}
\bibliographystyle{alpha}

\end{document}